%Master document. Run latex, dvips, ps2pdf.

\documentclass[a4paper,11pt]{article}
\usepackage{amsmath,amsfonts,amssymb,amsthm}
\usepackage{pstricks}
\usepackage[labelsep=none]{caption}

%-------------------------------------------

\newtheorem{theorem}{Theorem}[section]
\newtheorem{corollary}[theorem]{Corollary}
\newtheorem{lemma}[theorem]{Lemma}
\newtheorem{proposition}[theorem]{Proposition}
\newtheorem{claim}[theorem]{Claim}

\theoremstyle{definition}
\newtheorem{definition}[theorem]{Definition}
\newtheorem{remark}[theorem]{Remark}
\newtheorem{convention}[theorem]{Convention}
\newtheorem{construction}[theorem]{Construction}

%--------------------------------------------

\newcommand{\sphere}{\mathbb{S}^2}  														%2-sphere
\newcommand{\Sphere}{\mathbb{S}^3}  														%3-sphere
\newcommand{\disc}{\mathbb{D}}   																%disc
\newcommand{\intvl}{\mathrm{I}}              										%[0,1]
\DeclareMathOperator*{\Int}{int}  															%interior
\DeclareMathOperator{\ext}{ext}   															%S^3\L
\DeclareMathOperator*{\nhd}{\mathcal{N}} 												%neighbourhood in mfld

\newcommand{\rap}[2]{\Delta_{#1}(#2)}														%reduced alexander poly of #1 evaluated at #2
\newcommand{\nrap}[2]{\Delta^{0}_{#1}(#2)}											%reduced alexander poly of #1 evaluated at #2, normalised

\DeclareMathOperator*{\dist}{d}   															%distance 
\DeclareMathOperator{\V}{V}   																	%vertices
\DeclareMathOperator{\E}{E}   																	%edges
\newcommand{\Ends}{\varepsilon}  																%endpoints of an edge
\newcommand{\ocon}[1]{$#1$--connected}   												%#1-connected, cf defn of digraph
\newcommand{\ball}[3]{\mathcal{B}(#1,#2,#3)}  									%#3-ball in #1 about #2

\newcommand{\spantree}[1]{directed $#1$--spanning subtree}			%rooted SUBtree reaching verts of #1
\DeclareMathOperator{\Tr}{Tr}  																	%rooted SUBtree in graph with given origin
\DeclareMathOperator{\chr}{char}     														%characteristic of tree in graph
\newcommand{\mmtree}[1]{$#1$--maximal directed spanning subtree}%rooted SUBtree with maximal no. edges in #1
\DeclareMathOperator{\HTr}{Tr^{\mathcal{H}}}  									%\mmtree{H} in graph with origin
\DeclareMathOperator{\KTr}{Tr^{\mathcal{K}}}  									%\mmtree{K} in graph with origin
\newcommand{\graph}[2]{G^{\mathcal{#1}}(#2)}										%graph of diagram #2
\newcommand{\graphm}[1]{G^{\mathcal{#1}}}		  				 					%graph as a map
\newcommand{\nmfld}[2]{N^{\mathcal{#1}}(#2)}     								%construct product mfld from graph
\newcommand{\mmfld}[2]{M^{\mathcal{#1}}(#2)}     								%complementary to N^#1

\DeclareMathOperator{\ms}{MS}    																%Kakimizu complex
\DeclareMathOperator{\is}{IS}    																%incompressible Kakimizu complex

%--------------------------------------------

\begin{document}
\title{Homogeneous links, Seifert surfaces, digraphs and the reduced Alexander polynomial}
\author{Jessica E. Banks}
\date{}
\maketitle

\begin{abstract}
We give a geometric proof of the following result of Juhasz.

\emph{Let $a_g$ be the leading coefficient of the Alexander polynomial of an alternating knot $K$. If $|a_g|<4$ then $K$ has a unique minimal genus Seifert surface.}

In doing so, we are able to generalise the result, replacing `minimal genus' with `incompressible' and `alternating' with `homogeneous'. We also examine the implications of our proof for alternating links in general.
\end{abstract}

\tableofcontents
%------------------------------

\section{Introduction}
%Master document is alexpolypaper.tex.

The Alexander polynomial was the first knot polynomial, being defined by Alexander in 1928 (\cite{MR1501429}). 
Crowell and Murasugi have proved the following result relating the genus of an alternating link $L$ to its reduced Alexander polynomial $\nrap{L}{t}$.

\begin{theorem}[\cite{MR0099665} Theorem 3.5; \cite{MR0099664} II Theorem 4.1]\label{degreerapthm}
For an alternating link $L$ with $m$ link components, let $R$ be a Seifert surface given by applying Seifert's algorithm to an alternating diagram for $L$. Then
$\deg\nrap{L}{t}=2g(R)+m-1=1-\chi(R)$, where $\deg$ denotes degree.
\end{theorem}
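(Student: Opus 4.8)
The plan is to split the statement into three parts: an elementary Euler-characteristic computation, a soft upper bound on the degree valid for every diagram, and a lower bound where the alternating hypothesis does all the work. Throughout I assume the standard identification of $\nrap{L}{t}$ (up to units) with the single-variable determinant of a Seifert form.

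First I would dispose of the right-hand equality and normalise the setup. For a connected diagram $D$ with $c$ crossings and $s$ Seifert circles, Seifert's algorithm builds $R$ from $s$ discs and $c$ bands, so $\chi(R)=s-c$ and $1-\chi(R)=c-s+1$; since $R$ is connected with $m$ boundary components this equals $2g(R)+m-1$, which is the second equality of the statement. After removing nugatory crossings and handling split diagrams one summand at a time, I may assume $D$ is a connected reduced alternating diagram. The surface $R$ retracts onto its Seifert graph, so $b_1(R)=1-\chi(R)$, and with respect to a cycle basis coming from a spanning tree of that graph the Seifert form is recorded by a square matrix $V$ of size $1-\chi(R)$. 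Then $\nrap{L}{t}\doteq\det(V-tV^{T})$, whence $\deg\nrap{L}{t}\le 1-\chi(R)$ with no hypothesis on $D$. The coefficient of $t^{1-\chi(R)}$ in this determinant is $\pm\det V$, so the entire theorem reduces to the single assertion that $\det V\neq 0$.

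The lower bound is the heart of the matter. Here I would exploit the checkerboard colouring to orient the Seifert graph as a digraph, and argue that the Seifert surface of an alternating diagram is an iterated Murasugi sum (plumbing) of the Seifert surfaces of \emph{special} alternating sub-diagrams, the plumbing regions being read off from the nesting of the Seifert circles. Murasugi summation renders the Seifert matrix block triangular, so $\det V=\prod_i\det V_i$ over the pieces, and it suffices to treat each piece. For a special alternating diagram every crossing contributes with one fixed sign, so once a consistent orientation of the cycle basis is fixed the symmetrised form $V_i+V_i^{T}$ is positive or negative definite; definiteness of $V_i+V_i^{T}$ forces $x^{T}V_i x\neq 0$ for every $x\neq 0$, and hence $\det V_i\neq 0$. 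Multiplying over the pieces gives $\det V\neq 0$, so $\deg\nrap{L}{t}=1-\chi(R)$, completing the proof.

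I expect the main obstacle to lie entirely in the two geometric inputs behind the lower bound: verifying cleanly that an alternating Seifert-algorithm surface really does decompose as a plumbing of special alternating pieces, and checking that the special alternating case genuinely produces a definite symmetrised form after the cycle basis is coherently oriented. The remaining ingredients — the Euler-characteristic count, the passage to the Seifert matrix, and the degree bound — are routine, and the alternating (later, homogeneous) condition is invoked only to force the sign-coherence that prevents the leading coefficient $\det V$ from cancelling.
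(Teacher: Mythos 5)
Your proposal is essentially correct, but it is a genuinely different proof from the one this paper relies on. The cited Crowell--Murasugi argument (recapitulated in Section \ref{alexpolysec}) is combinatorial: expanding Alexander's region determinant, one shows that all terms contributing to an extreme coefficient carry the same sign and are in bijection with directed spanning subtrees of a connected planar digraph built from the diagram (Theorem \ref{polyisctrees} and the construction of $\graph{M}{D}$); connectivity guarantees at least one such tree, so the extreme coefficients cannot vanish and the degree is $1-\chi(R)$. You instead work with the Seifert form: the degree bound and the identification of the extreme coefficients with $\pm\det V$ are immediate, the $*$--product decomposition of an alternating diagram into special alternating pieces (the same decomposition recalled in Section \ref{prelimsec}) makes $V$ block triangular so that $\det V=\prod_i \det V_i$, and definiteness of $V_i+V_i^{T}$ rules out cancellation. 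Both proofs succeed for the same underlying reason --- sign coherence in the leading coefficient --- but they package it differently: yours is shorter given standard machinery and yields extra information (definiteness, hence the signature of special alternating links), whereas Crowell--Murasugi's yields the identity $\nrap{L}{0}=|\Tr(\graph{M}{D},v)|$, which is exactly the combinatorial handle the rest of this paper is built on; the two viewpoints meet in the matrix-tree theorem, since $|\det(V_i+V_i^{T})|$ counts spanning trees of the checkerboard graph. Two steps in your sketch need to be made honest, though neither is a gap: the identification $\nrap{L}{t}\doteq\det(V-tV^{T})$ must be justified for links and not only knots, since the paper defines $\nrap{L}{t}$ via Alexander's region matrix (this is classical but not trivial); and ``every crossing contributes with one fixed sign'' is not by itself a proof of definiteness --- the clean argument identifies $V_i+V_i^{T}$ with the Goeritz matrix of the white regions of the special alternating diagram, i.e.\ a reduced graph Laplacian, which is definite because it is irreducible and diagonally dominant.
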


In \cite{MR2443240} Juhasz gives the following relationship between the coefficients of $\Delta^0_L$ and the Seifert surfaces for $L$.
He proves this using sutured Floer homology.

\begin{theorem}[\cite{MR2443240} Corollary 2.4]\label{juhaszthm}
Suppose that $K$ is an alternating knot in $\Sphere$ of genus $g$ and let $n>0$. If the leading coefficient $a_g$ of its Alexander polynomial satisfies $|a_g|<2^{n+1}$ then $K$ can have at most $n$ distinct minimal genus Seifert surfaces that are disjoint in their interiors. 
In particular, if $|a_g|<4$ then $K$ has a unique minimal genus Seifert surface.
\end{theorem}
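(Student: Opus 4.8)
The plan is to descend from Juhász's sutured Floer homology to the level of the diagram, replacing the homological rank calculation by combinatorial bookkeeping on a digraph. First I would fix a reduced alternating (more generally homogeneous) diagram $D$ of $K$ and let $R_0$ be the surface produced by Seifert's algorithm. By Theorem~\ref{degreerapthm} with $m=1$ we have $\deg\nrap{K}{t}=2g(R_0)$, so $R_0$ attains the Alexander-polynomial genus bound and is therefore of minimal genus; this provides a canonical basepoint among the minimal genus surfaces. Alongside $R_0$ I would record the Seifert graph $\Gamma$ (one vertex per Seifert circle, one edge per crossing, signed by the crossing), which in each homogeneous block carries a coherent orientation of its edges. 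The surface $R_0$ is a disk–band surface modelled on $\Gamma$, and its Seifert form can be read off directly from $\Gamma$.

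Second, I would compute $|a_g|$ from $\Gamma$. Since $R_0$ is a disk–band surface, its Seifert matrix $V$ is $2g\times 2g$ and the top coefficient of $\det(V-tV^{\mathsf T})$ is $a_g=\pm\det V$. Expanding this determinant should, via a matrix–tree type identity, enumerate the directed spanning subtrees of $\Gamma$ (the objects the title alludes to): I expect a formula of the shape $|a_g|=\#\{\text{maximal directed spanning subtrees of }\Gamma\}$, or a product of such counts over the blocks of $\Gamma$. In particular $|a_g|=1$ would correspond to $\Gamma$ carrying a unique such tree, i.e.\ to $R_0$ being a fibre surface and hence the unique minimal genus surface.

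Third, and this is the geometric heart, I would set up a correspondence between incompressible Seifert surfaces for $K$ and these directed spanning subtrees. Given an arbitrary incompressible surface $R$, I would isotope it into a normal form with respect to $R_0$ (equivalently, with respect to $D$), so that $R$ is reassembled from the same disks and bands after a controlled cut-and-paste; the combinatorial residue of this normalisation is a maximal directed spanning subtree of $\Gamma$. Two surfaces that are disjoint in their interiors should then differ by moves that can be performed independently, each move reversing a directed edge — equivalently, sliding across a product region lying between the two surfaces.

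Finally I would run the counting argument. A family of $k$ pairwise interior-disjoint minimal genus surfaces can be linearly ordered $R_1,\dots,R_k$ in the complement, with consecutive pairs cobounding homology-product regions that fail to be genuine products; each such region forces a true branching in the set of directed spanning subtrees, multiplying the count by at least $2$. Hence $k$ such surfaces force $|a_g|\ge 2^{k}$, so $|a_g|<2^{n+1}$ can accommodate at most $n$ of them, and $|a_g|<4$ forces uniqueness. The two steps I expect to be genuinely hard are the third — showing that every incompressible surface really is carried by the diagram's combinatorics, which needs a careful innermost-disk and cut-and-paste analysis to control compressions — and the independence assertion in the count, namely that interior-disjointness yields \emph{independent} doublings rather than merely $k+1$ distinct trees.
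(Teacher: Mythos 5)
Your steps 1 and 2 are sound and essentially match the paper's starting point: Theorem~\ref{degreerapthm} makes the Seifert's-algorithm surface minimal genus, and the identification of $|a_g|$ (equivalently $\nrap{K}{0}$, by the symmetry of the coefficients) with a count of directed spanning subtrees is exactly Crowell's Theorem~\ref{polyisctrees}; the paper derives it from Alexander's matrix rather than from the Seifert matrix, but that difference is cosmetic. The genuine gap is that your steps 3 and 4 --- the normal-form correspondence assigning a directed spanning subtree to an \emph{arbitrary} incompressible Seifert surface, and the claim that $k$ pairwise interior-disjoint minimal genus surfaces force $k$ \emph{independent} doublings of the tree count, hence $|a_g|\ge 2^k$ --- are precisely the content of the theorem, and you supply no mechanism for either. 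Nothing like your step 3 appears in the paper, and it is not known how to do it: the paper never puts a general surface into combinatorial normal form relative to the diagram. Its actual route to uniqueness is quite different: classify the prime planar digraphs with alternating in/out edges admitting fewer than four directed spanning trees (Theorem~\ref{gammafinitethm} plus the explicit enumeration in Section~\ref{juhaszsec}, giving the four models $\mathcal{G}_{\alpha},\mathcal{G}_{\beta},\mathcal{G}_{\gamma},\mathcal{G}_{\delta}$), quote Kobayashi's results that the complementary sutured manifolds of these models are almost product sutured manifolds, show that the digraph reduction moves correspond to product disc decompositions (Lemmas~\ref{mgraphmoveonelemma} and~\ref{mgraphmovetwolemma}), and then convert ``almost product'' into uniqueness via connectivity of the Kakimizu complex (Theorem~\ref{connectedthm}); the homogeneous case is assembled from prime special pieces by Murasugi sums (Corollary~\ref{incompcor}). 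The sutured-manifold machinery exists exactly to avoid the surface-by-surface cut-and-paste analysis you propose.

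Moreover, your step 4 is unsupported even in its weakest instance: uniqueness already requires that two disjoint, non-isotopic minimal genus surfaces force $|a_g|\ge 4$, a multiplicative rather than additive bound, and neither your sketch nor anything in the paper produces the independence of branchings that this needs. The paper is explicit on this point: it proves only the $|a_g|<4$ case of Theorem~\ref{juhaszthm} (via Theorems~\ref{juhaszthmspecial} and~\ref{fulljuhaszproof}), and its closing section states that these methods are unlikely to yield the full $2^{n+1}$ statement, because the argument is anchored to the single distinguished Seifert's-algorithm surface and it is not known how to construct maximal-dimensional simplices of $\ms(L)$ in general. So the program you outline --- a diagrammatic, tree-counting proof of the full bound for $n$ disjoint surfaces --- is not a reconstruction of the paper's argument but an open problem; the only known proof of that statement remains Juhász's sutured Floer homology argument.
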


We provide an alternative proof of the case $|a_g|<4$, extended as follows. 
Recall that the class of homogeneous links generalises both alternating links and positive links.

\begin{theorem}\label{incompthm}
Let $L$ be a homogeneous link that is not split, and let $a_g$ be the leading coefficient of the reduced Alexander polynomial of $L$. If $|a_g|<4$ then $L$ has a unique incompressible Seifert surface.
\end{theorem}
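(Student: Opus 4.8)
The plan is to work throughout with a single fixed homogeneous diagram $D$ for $L$ and the Seifert surface $R$ obtained by applying Seifert's algorithm to $D$. Since $L$ is homogeneous, each block of the Seifert graph of $D$ is alternating (all its crossings have one sign), and by the classical result of Cromwell the resulting surface $R$ is incompressible; by Theorem~\ref{degreerapthm} (in the form $\deg\nrap{L}{t}=1-\chi(R)$) it is also of minimal genus. So the real content of the statement is \emph{uniqueness}: I would aim to show that every incompressible Seifert surface for $L$ is isotopic to $R$. The first concrete step is to record the block decomposition of the Seifert graph and to describe, block by block, the manifold obtained by cutting $\Sphere$ along $R$, since homogeneity makes this complement very rigid.

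Next I would encode this combinatorial data in a digraph $\graphm{H}$ built from $D$ (together with its companion $\graphm{K}$ from the dual side), with vertices recording the Seifert discs and complementary regions and directed edges recording the bands, oriented according to the sign of each block. The structural heart of the argument is a bijection between isotopy classes of incompressible Seifert surfaces for $L$ and the maximal directed spanning subtrees of $\graph{H}{D}$, under which $R$ itself corresponds to a distinguished subtree. The mechanism behind such a correspondence is that the only way a second incompressible surface $S$ can differ from $R$ is by re-routing across an essential product region (a product annulus or product disc) of the complement, and each such region is exactly an edge at which a directed spanning subtree is free to branch.

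With the bijection in place the two ingredients combine. On the combinatorial side, the number of distinct incompressible surfaces equals the number of maximal directed spanning subtrees, and each independent place where the subtree can branch multiplies this count. On the algebraic side I would compute the leading coefficient $a_g$ of $\nrap{L}{t}$ directly from the same digraph, via a transfer/trace evaluation (the operators $\HTr$ and $\KTr$) expressing $|a_g|$ as a signed count of directed spanning subtrees, i.e.\ a Matrix--Tree--type identity. The quantitative point is that a single genuine branching already doubles a count that is itself at least $2$, so the existence of any incompressible surface other than $R$ forces $|a_g|\ge 4$ (and $n$ independent branchings give $|a_g|\ge 2^{n+1}$, recovering the shape of Juhász's bound). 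Contrapositively, $|a_g|<4$ forbids any branching, so $\graph{H}{D}$ has a unique maximal directed spanning subtree and $R$ is the only incompressible Seifert surface.

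I expect the main obstacle to be establishing the bijection, and in particular its surjectivity: showing that an \emph{arbitrary} incompressible Seifert surface, not merely one visibly assembled from $D$, is captured by some maximal directed spanning subtree. This needs a normal-form argument — innermost-disc and outermost-arc reductions — controlling how such a surface meets $R$ and the product regions of the complement, together with a check that distinct subtrees really yield non-isotopic surfaces. By comparison, extracting $a_g$ from the digraph and verifying the estimate $|a_g|\ge 4$ should be comparatively routine once the transfer formalism $\HTr,\KTr$ on $\graphm{H}$ and $\graphm{K}$ is set up.
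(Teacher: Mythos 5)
Your proposal founders on its central claim: there is no bijection between isotopy classes of incompressible Seifert surfaces for $L$ and \mmtree{}s of $\graph{H}{D}$. By Crowell's theorem (Theorem \ref{polyisctrees}) and the remark following it, the number of such subtrees is \emph{exactly} the coefficient you are trying to bound: $|\HTr(\mathcal{G},v)|=|\KTr(\mathcal{G},v)|=\nrap{L}{0}$, and via the collapsing bijection $A_v$ this equals $|\Tr(\graph{H}{D},A(v))|$. If your bijection existed, every link with $\nrap{L}{0}=2$ or $3$ would have exactly $2$ or $3$ incompressible Seifert surfaces, contradicting the very statement being proved, which asserts uniqueness whenever $\nrap{L}{0}<4$. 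Such links exist: the digraphs $\mathcal{G}_{\beta},\mathcal{G}_{\gamma},\mathcal{G}_{\delta}$ of Figure \ref{threetreegraphspic1} correspond under $\graphm{M}$ to genuine prime, special, alternating links with $\nrap{L}{0}\in\{2,3\}$, each of which has a unique incompressible Seifert surface. In other words, spanning subtrees count $\nrap{L}{0}$, not surfaces; Juhász's statement is genuinely an inequality (at most $n$ disjoint surfaces when $|a_g|<2^{n+1}$), and the mechanism you describe, where each ``branching'' of the tree multiplies the surface count, has no basis. A second, smaller gap: $\graphm{H}$ and $\graphm{K}$ are only defined for \emph{special} diagrams (their construction collapses the Seifert circles lying in $\mathcal{H}$), so they cannot be applied directly to a general homogeneous diagram as you propose.

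The actual proof is forced into a different shape precisely because the tree count does not see surfaces directly. For the prime, special, alternating case the paper (i) reduces $\graph{M}{D}$ by moves 1 and 2, which preserve the tree count and correspond on the sutured manifold side to equivalences and product disc decompositions (Lemmas \ref{mgraphmoveonelemma}, \ref{mgraphmovetwolemma}); (ii) \emph{classifies explicitly} all digraphs in $\Gamma$ with fewer than $4$ spanning trees, obtaining only $\mathcal{G}_{\alpha},\mathcal{G}_{\beta},\mathcal{G}_{\gamma},\mathcal{G}_{\delta}$ up to reflection; and (iii) invokes Kobayashi's results that the complementary sutured manifolds of these four digraphs are almost product sutured manifolds, which is the step that actually yields uniqueness of the incompressible surface. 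The general homogeneous case is then assembled not from one digraph but by splitting $D$ into prime special pieces $D_1,\cdots,D_n$, using multiplicativity $\nrap{L}{0}=\prod_i\nrap{L_i}{0}$ to force $\nrap{L_i}{0}=1$ for all but one piece, deducing that those pieces are fibred (Corollary \ref{fibrednesscor}), and applying the Murasugi sum result Corollary \ref{incompcor}, which crucially requires that fibredness. Your outline contains no substitute for the classification step, the sutured manifold input, or the fibredness hypothesis in the reassembly, and the bijection that was meant to replace them is false.
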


This proof is based on those of Crowell and Murasugi, and involves studying certain digraphs defined from link diagrams. Our main result here is that $|a_g|$ defines a finite set of building blocks from which the digraph given by $L$ can be constructed. This has the following as a corollary.

\begin{theorem}\label{finitenessthm}
For fixed $n\in\mathbb{N}$, there is a finite set $\mathcal{S}$ of surfaces embedded in $\Sphere$ with the following property. Any non-split, homogeneous link $L$ with $\nrap{L}{0}\leq n$ has a minimal genus Seifert surface $R$ built from surfaces in $\mathcal{S}$ by reflection, Murasugi sum and plumbing with Hopf bands. 

If $\nrap{L}{0}$ is prime, $R$ can be formed using only one element of $\mathcal{S}$.
\end{theorem}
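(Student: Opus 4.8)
The plan is to read Theorem~\ref{finitenessthm} as the surface-theoretic shadow of a finiteness statement about the digraphs that encode homogeneous diagrams. Fix a homogeneous diagram $D$ for $L$ and let $R$ be the result of Seifert's algorithm applied to $D$; by the work underlying Theorem~\ref{degreerapthm} this $R$ has minimal genus. Let $G$ be the associated (signed, oriented) digraph, with one vertex per Seifert circle and one edge per crossing. The first step is the combinatorial interpretation, in the spirit of Crowell and Murasugi, that $\nrap{L}{0}$ equals the number of directed spanning subtrees of $G$: evaluating the reduced Alexander matrix at $t=0$ produces a Laplacian-type matrix of $G$, and the relevant minor is exactly the arborescence count by the directed Matrix--Tree (Kirchhoff) theorem.

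The engine of the argument is multiplicativity. On the level of graphs, a Murasugi sum of two Seifert surfaces corresponds to identifying them at a cut vertex, and the number of directed spanning subtrees of a digraph is the product of the counts of its blocks. Thus if $R$ is an iterated Murasugi sum of pieces $R_1,\dots,R_k$ lying over the blocks $B_1,\dots,B_k$ of $G$, then $\nrap{L}{0}=\prod_i c(B_i)$ with each $c(B_i)\ge 1$. A block that is a single (cut) edge has $c=1$ and its piece is a Hopf band, which is why plumbing with Hopf bands is a free operation; every other block is $2$-connected and satisfies $2\le c(B_i)\le n$.

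To conclude I would invoke the finiteness of the building blocks. A $2$-connected digraph block $B$ with $c(B)\le n$ has boundedly many vertices and edges: already a cycle on $v$ vertices contributes $v$, two vertices joined by $m$ parallel edges (a twist region) contribute $m$, and in general any $2$-connected block dominates these configurations in its arborescence count, so $c(B)\le n$ forces the size of $B$ below a bound depending only on $n$. Hence, up to isomorphism, only finitely many such blocks occur across all homogeneous diagrams. Taking the surface pieces lying over this finite list of blocks, together with their mirror images to realise the two admissible signs of a homogeneous diagram, yields the finite set $\mathcal{S}$: reflection produces the sign change, Murasugi sum reassembles the pieces at the cut vertices, and Hopf-band plumbing inserts the $c=1$ blocks. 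For the final assertion, if $\nrap{L}{0}=p$ is prime then $\prod_i c(B_i)=p$ with each factor $\ge 2$ forces a single essential block of count $p$ and all remaining blocks equal to Hopf bands, so $R$ is built from just one element of $\mathcal{S}$.

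The step I expect to be the main obstacle is making the dictionary between Murasugi summands of $R$ and blocks of $G$ fully precise, in two respects: verifying that the orientation and sign data are carried across faithfully, so that ``reflection'' corresponds exactly to switching a block between positive and negative, and establishing the boundedness claim uniformly over all homogeneous diagrams rather than merely for cycles and twist regions, since the directed structure and parallel edges must be controlled simultaneously. Once the Matrix--Tree interpretation of $\nrap{L}{0}$ and the multiplicativity of arborescence counts over blocks are secured, the remainder is bookkeeping.
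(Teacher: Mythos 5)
Your skeleton runs parallel to the paper's: interpret $\nrap{L}{0}$ as a directed spanning tree count, use multiplicativity to split into pieces (the paper splits the homogeneous diagram into prime special alternating pieces via connected sums and $*$--products and applies Proposition \ref{alexpolyproductprop}; you split the Seifert digraph into blocks), then claim finiteness of the pieces with bounded count, with the prime case following because all factors but one equal $1$. The genuine gap is the finiteness step, which is the entire content of the theorem, and the justification you offer for it is false. Two vertices joined by $m$ parallel edges do \emph{not} ``contribute $m$'' in the directed setting that actually arises from link diagrams: the $(2,k)$ torus links have exactly this block for every $k$, yet $\nrap{T(2,k)}{0}=1$ (they are fibred), and already the trefoil has three parallel edges but constant coefficient $1$. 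Your quoted counts (a cycle contributes its length, parallel edges contribute their number) are the \emph{undirected} spanning tree counts; the directed counts relevant here collapse on precisely these configurations, so a $2$--connected block with arborescence count at most $n$ can be arbitrarily large, and no ``domination'' argument can bound its size. This is exactly why the statement of Theorem \ref{finitenessthm} allows plumbing with Hopf bands: arbitrarily long twist regions (white bigons) must first be stripped off as Hopf-band plumbings (the paper's move 2, Definition \ref{movedefn}), and nugatory crossings by move 1, before any finiteness can hold.

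After that reduction the pieces are digraphs in $\Gamma$ --- planar, prime in the sense of Definition \ref{primedefn}, with incoming and outgoing edges alternating at every vertex, hence in-degree at least $2$ everywhere --- and the finiteness of $\Phi_n=\{\mathcal{G}\in\Gamma:\exists v\in\V(\mathcal{G}),\ |\Tr(\mathcal{G},v)|\leq n\}$ is a genuine theorem (Theorem \ref{gammafinitethm}), not an observation. Its proof occupies Section \ref{infgraphssec}: valence and face-length bounds (Proposition \ref{leaforiginprop}, Corollaries \ref{boundedvalencecor} and \ref{boundedsizeregionscor}, which themselves need primality), then a compactness argument assembling an infinite limit digraph $\mathcal{G}_\infty$ from a putative infinite family, and Menger's theorem producing spear-pairs that yield $2^n$ directed spanning subtrees, contradicting the bound $n$. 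Nothing in your proposal substitutes for this. Two smaller corrections: a single-edge block of the Seifert graph is a nugatory crossing, not a Hopf band (Hopf bands correspond to bigons), and in the prime case the count-one pieces are not Hopf bands but fibred special alternating pieces, which by Corollary \ref{fibrednesscor} are discs plumbed with Hopf bands --- it is this identification that lets one conclude $R$ uses a single element of $\mathcal{S}$.
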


In Section \ref{prelimsec} we give standard definitions we will need and set out conventions we will adopt, some of which are non-standard. In addition, we recall some known results, and prove a number of others. 
In Section \ref{alexpolysec} we examine the definition of the reduced Alexander polynomial, as considered by Alexander, Murasugi and Crowell. From this we define the digraphs referred to above. These digraphs are the focus of Section \ref{infgraphssec}, in which we prove Theorem \ref{finitenessthm}.
We then complete the proof of Theorem \ref{incompthm} in Section \ref{juhaszsec}.

I wish to thank Marc Lackenby for his help and guidance over the course of this work.

%------------------------------

\section{Preliminaries}\label{prelimsec}
\subsection{Links and Seifert surfaces}
%Master document is alexpolypaper.tex.

We will define homogeneous links in Definition \ref{homogeneousdefn}, but for the majority of this paper we will only need to consider alternating links.

\begin{convention}
We consider oriented links in $\Sphere$. In addition, we study links that are not split and link diagrams that are connected. If a link $L$ is not split, any diagram of $L$ is connected. Conversely for an alternating link $L$, Menasco has shown  (\cite{MR721450} Theorem 1(a)) that if an alternating diagram $D$ of $L$ is connected then $L$ is not split.
\end{convention}

\begin{lemma}\label{connectedsumlemma}
Let $D$ be a diagram of a link $L$. Suppose there is a simple closed curve $\rho$ in $\sphere$  missing the crossings of $D$ and meeting the edges exactly twice transversely. Then $L=L_1\# L_2$ for some links $L_1,L_2$ with diagrams $D_1,D_2$ respectively. If $D$ is alternating then so are $D_1$ and $D_2$.
\end{lemma}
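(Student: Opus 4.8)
The plan is to realise the combinatorial hypothesis geometrically: the curve $\rho$ determines a $2$--sphere in $\Sphere$ meeting $L$ transversely in exactly two points, which is precisely a connecting sphere for a connected sum. First I would apply the Jordan curve theorem to cut $\sphere$ along $\rho$ into two closed discs $B_1$ and $B_2$ with $\rho=B_1\cap B_2$. Because $\rho$ avoids every crossing of $D$, near $\rho$ the link $L$ lies flat on the projection sphere and crosses $\rho$ transversely in the two prescribed points. I can therefore thicken $\rho$ in the direction normal to $\sphere$ and cap it off on each side to produce an embedded $2$--sphere $S\subset\Sphere$ with $S\cap\sphere=\rho$ and $S\cap L$ equal to those same two points, with $S$ separating $\Sphere$ into two balls. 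A parity argument (a closed curve meets a separating sphere an even number of times) shows both intersection points lie on a single component of $L$, so $S$ is a genuine connecting sphere.

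Next I would describe the two summands diagrammatically. Write $p,q$ for the two points of $\rho\cap D$. Since $D\cap B_2$ meets $\rho$ only at $p$ and $q$, there is a neighbourhood of $\rho\setminus\{p,q\}$ disjoint from $D$, so I can join $p$ to $q$ by an embedded arc $\alpha_1$ lying just inside $B_2$ and running parallel to one of the two arcs of $\rho$; by construction $\alpha_1$ is disjoint from $D\cap B_2$ and from itself, hence introduces no crossings. Set $D_1=(D\cap B_1)\cup\alpha_1$, and symmetrically $D_2=(D\cap B_2)\cup\alpha_2$ with $\alpha_2\subset B_1$. Each $D_i$ is a connected link diagram whose crossings are exactly the crossings of $D$ lying in $B_i$; let $L_i$ be the link it presents. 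The arcs $\alpha_i$ are unknotted and boundary-parallel in the relevant ball, so capping $L$ on either side of $S$ recovers precisely $L_1$ and $L_2$, giving $L=L_1\# L_2$.

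For the alternating claim I would fix a checkerboard colouring of the regions of $\sphere\setminus D$ and use the standard characterisation that $D$ is alternating if and only if, with respect to such a colouring, every crossing has the same type (the over/under configuration relative to the two adjacent colours). Each $D_i$ is obtained from $D$ by deleting the crossings lying in the other disc and inserting the crossing-free arc $\alpha_i$, which affects only the regions meeting $\rho$. The colouring of $D$ therefore induces a checkerboard colouring of $D_i$ in which every crossing keeps the local colour configuration and over/under strand it had in $D$. Since all crossings of $D$ share one type, so do all crossings of $D_i$, and $D_i$ is alternating.

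The main obstacle I expect is not the existence of $S$ but the careful matching of the abstractly-defined connected summands with the links presented by the surgered diagrams: verifying that the closing arcs $\alpha_i$ can be taken simultaneously embedded, crossing-free, and boundary-parallel in their respective balls, and that the induced colouring genuinely witnesses the alternating property of each $D_i$ rather than merely restricting combinatorially. These points are routine but require attention to how the regions adjacent to $\rho$ are re-glued after the surgery.
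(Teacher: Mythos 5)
The paper states this lemma without proof, treating it as a standard fact (it is immediately followed by Menasco's primeness criterion, also quoted without proof), so there is no argument of the author's to compare yours against; judged on its own merits, your proposal is correct and is essentially the standard argument. The geometric half works as you describe: since $\rho$ misses the crossings, capping a normal thickening of $\rho$ on either side of the projection sphere gives an embedded sphere $S$ meeting $L$ in exactly the two points $p,q$, and your parity observation correctly forces both points onto a single component of $L$, so $S$ is a genuine decomposing sphere and the surgered diagrams $D_i$ present the two summands. Two points deserve slightly more care than you give them, though neither is a gap in context. First, the checkerboard characterisation you invoke (alternating if and only if all crossings have the same type relative to the shading) is a statement about \emph{connected} diagrams, and your assertion that each $D_i$ is connected itself uses connectivity of $D$: any component of $D\cap B_i$ must have $p$ or $q$ in its closure, because a path in $D$ from that component to the rest of $D$ can only cross $\rho$ at $p$ or $q$; attaching $\alpha_i$ then connects everything. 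This is harmless here because the paper's standing convention is that all diagrams are connected, but it should be said. Second, when transporting the colouring to $D_i$ you should note why the two arcs of $\rho\setminus\{p,q\}$ lie in regions of \emph{opposite} colours: transversality of $\rho$ to the edge at $p$ places the two arcs locally on opposite sides of that edge, whose adjacent regions are oppositely coloured. This is exactly what makes the induced colouring of $D_i$ well defined, with the new edge running through $\alpha_i$ separating a black region from a white one; every crossing of $D_i$ then retains the type it had in $D$, and the characterisation applies.
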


\begin{lemma}[\cite{MR721450} Theorem 1]
Let $D$ be an alternating diagram of a link $L$ with no nugatory crossings. Then $L$ is prime if and only if, whenever $\rho$ is as described above, either the inside or the outside of $\rho$ contains no crossings of $D$.
\end{lemma}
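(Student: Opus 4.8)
The plan is to prove the two implications separately, the easy direction using only Lemma \ref{connectedsumlemma} and a crossing-number fact, and the geometric heart lying in the contrapositive of the other direction.

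For the direction \emph{prime} $\Rightarrow$ \emph{every $\rho$ has a crossing-free side}, I would argue by contrapositive. Suppose some $\rho$ has crossings of $D$ both inside and outside. By Lemma \ref{connectedsumlemma} this gives $L=L_1\#L_2$ with alternating diagrams $D_1,D_2$, where $D_i$ carries the crossings on one side of $\rho$ and hence has at least one crossing. The first step is to check that each $D_i$ is again reduced: the capping arc supplied by Lemma \ref{connectedsumlemma} is crossing-free and joins the two points of $\rho\cap D$, so any circle exhibiting a nugatory crossing of $D_i$ would be disjoint from this arc and would therefore be a nugatory circle for $D$ itself, contrary to hypothesis. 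Knowing each $D_i$ is a reduced alternating diagram with at least one crossing, I would invoke the fact that reduced alternating diagrams realise the crossing number, so neither $L_i$ is trivial. Thus $L=L_1\#L_2$ is a nontrivial connected sum and $L$ is not prime.

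For the reverse direction I would again work by contrapositive: assuming $L$ is not prime, I produce a curve $\rho$ with crossings on both sides. Write $L=L_1\#L_2$ nontrivially and let $S$ be a decomposing $2$--sphere in $\Sphere$, meeting $L$ transversely in exactly two points with a nontrivial summand on each side; nontriviality of the summands makes $S$ incompressible and meridionally incompressible in the complement of $L$. I would then place $D$ into Menasco's standard position on $\sphere\subset\Sphere$, replacing each crossing by a bubble and pushing the over and under strands into the two complementary balls $B_\pm$.

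The main work, and the step I expect to be the obstacle, is the standard-position analysis of $S$ against the bubbled diagram. Using incompressibility and meridional incompressibility together with the alternating, reduced structure of $D$, I would isotope $S$ so that $S\cap\sphere$ is a disjoint union of simple closed curves and $S$ meets each bubble only in saddle disks, and then run the usual innermost-disk and outermost-arc arguments to eliminate every saddle and every inessential intersection curve. Here the alternating condition is exactly what rules out the offending configurations, while the absence of nugatory crossings blocks the trivial reductions that would otherwise stall the argument. Once $S$ is disjoint from all bubbles it meets $\sphere$ in a single simple closed curve that avoids the crossings and meets the edges of $D$ in two points, that is, a curve $\rho$ of the required type; since the two sides of $S$ carry the two nontrivial summands, each side of $\rho$ must contain at least one crossing, which is the desired contradiction. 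I expect this standard-position step to be the crux, since it is precisely the technical core of Menasco's theorem and depends delicately on the interaction between the incompressible sphere and the alternating diagram.
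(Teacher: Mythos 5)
This lemma is nowhere proved in the paper: it is quoted directly from Menasco (\cite{MR721450}, Theorem 1), and the surrounding text only records it so that ``looks prime'' can then be adopted as the working definition of primeness for alternating links. Your proposal is therefore an attempt to reprove Menasco's theorem, and it should be judged on its own merits. The elementary direction you treat first is essentially correct, with two caveats. First, the reducedness step is too quick: a circle $\gamma$ witnessing a nugatory crossing $c$ of $D_i$ is indeed disjoint from the capping arc, but that alone does not make it a nugatory circle for $D$, since inside the disc bounded by $\rho$ the diagram $D$ consists of the tangle that was deleted when forming $D_i$, and $\gamma$ may still cross that tangle. One must also isotope each arc of $\gamma\cap(\text{disc})$ out across $\rho$; this works because the capping arc separates any such arc from the two points of $D\cap\rho$, so it can be pushed across a sub-arc of $\rho$ that misses $D$, but the step has to be said. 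Second, nontriviality of the summands rests on the Kauffman--Murasugi--Thistlethwaite crossing-number theorem (and you also silently use that a non-prime $L$ here is a nontrivial connected sum, which needs the split and trivial cases excluded -- available from Menasco's Theorem 1(a) and the same crossing-number theorem); these are legitimate but heavy external inputs.

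The genuine gap is the second direction, which is where the entire theorem lives. What you give there is a programme, not a proof: placing the decomposing sphere $S$ in standard position and asserting that ``the usual innermost-disk and outermost-arc arguments'' together with alternation eliminate all saddles and all inessential curves of $S\cap\sphere$ is a description of what Menasco proves, with none of the case analysis performed. Concretely, you never establish the key structural facts -- that each curve of $S\cap\sphere$ must pass through a bubble (this is where alternation and connectedness of $D$ actually enter), that configurations involving saddles can always be simplified, and that the simplification terminates with $S\cap\sphere$ a single curve meeting $D$ in two edge points. Those steps are the technical content of the result; as written, your proposal proves the easy converse and defers the theorem itself back to \cite{MR721450}.
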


\begin{convention}
In other words, a link $L$ with alternating diagram $D$ is prime if and only if $D$ looks prime. We will therefore use this as the definition of prime for alternating links.
\end{convention}

\begin{convention}
If $M$ is a manifold and $W\subseteq M$, then $\nhd(W)$ will denote a regular open neighbourhood of $W$ in $M$, unless otherwise stated.
\end{convention}

\begin{definition}
A \textit{Seifert surface} for a link $L$ is a compact, connected surface $R$ embedded in $\Sphere$ such that $R$ is oriented and $\partial R=L$ as an oriented link. We consider such surfaces up to ambient isotopy in $\Sphere$. The surface $R$ can also be viewed as properly embedded in $\Sphere\setminus\nhd(L)$, up to ambient isotopy of $\Sphere\setminus\nhd(L)$. We will not explicitly distinguish between these two settings.
\end{definition}

\begin{theorem}[\cite{MR860665} Theorem 4]
Let $L$ be an alternating link. If $R$ is a surface given
by applying Seifert's algorithm to an alternating diagram of $L$, then $R$ is a minimal genus Seifert surface.
\end{theorem}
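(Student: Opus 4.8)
The plan is to combine the Crowell--Murasugi degree formula (Theorem \ref{degreerapthm}), which pins down the Euler characteristic of the Seifert-algorithm surface, with the elementary fact that the reduced Alexander polynomial bounds the complexity of \emph{every} Seifert surface from below. Gabai's original argument proceeds via sutured manifolds and foliations, but given the tools already assembled here the Alexander-polynomial route is much shorter.

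First I would establish the universal inequality
\[
\deg\nrap{L}{t}\leq 1-\chi(S)
\]
for an arbitrary Seifert surface $S$ of $L$. Since we work with connected surfaces bounding the $m$-component link $L$, writing $g(S)$ for the genus we have $\chi(S)=2-2g(S)-m$, so $H_1(S)$ has rank $2g(S)+m-1=1-\chi(S)$. Choosing a basis for $H_1(S)$ and recording the Seifert linking form produces a square Seifert matrix $V$ of size $1-\chi(S)$. The reduced Alexander polynomial is, up to multiplication by a unit $\pm t^k$, a determinant built from $V$ and its transpose, and such a determinant is a polynomial whose degree cannot exceed the size of the matrix. This yields the displayed bound.

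Next, Theorem \ref{degreerapthm} supplies the matching equality for the surface of interest: if $R$ is obtained by Seifert's algorithm from an alternating diagram of $L$, then $1-\chi(R)=\deg\nrap{L}{t}$. Combining the two gives
\[
1-\chi(R)=\deg\nrap{L}{t}\leq 1-\chi(S)
\]
for every Seifert surface $S$, hence $\chi(R)\geq\chi(S)$. Because all these surfaces are connected (that $R$ itself is connected follows from the diagram being connected) with the same boundary $L$, the relation $\chi=2-2g-m$ shows that maximising $\chi$ is exactly minimising the genus; therefore $R$ attains the minimal genus, as claimed.

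The hard part will be the careful statement and proof of the universal degree bound, and in particular matching the normalisation of the \emph{reduced} polynomial $\nrap{L}{t}$ used throughout the paper to the Seifert-matrix determinant, so that the inequality appears with precisely the same constant $1-\chi$ as the equality in Theorem \ref{degreerapthm}. Once the two are placed on the same footing the deduction is immediate; the genuinely delicate geometric input, namely that Seifert's algorithm on an alternating diagram is efficient, has already been absorbed into Theorem \ref{degreerapthm}.
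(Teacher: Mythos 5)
Your argument is correct, but it is not the proof that the paper points to: the paper gives no proof of this statement at all, importing it as Theorem~4 of \cite{MR860665}, where Gabai establishes it by a purely geometric argument (disc decompositions of the complementary sutured manifold, with no reference to the Alexander polynomial). What you have written is essentially the original Crowell--Murasugi route: the universal bound $\deg\nrap{L}{t}\leq 1-\chi(S)$ for every (connected) Seifert surface $S$, coming from the fact that $\rap{L}{t}$ agrees up to units $\pm t^k$ with $\det(tV-V^{T})$ for a Seifert matrix $V$ of size $1-\chi(S)$, combined with the equality $\deg\nrap{L}{t}=1-\chi(R)$ of Theorem~\ref{degreerapthm} for the Seifert-algorithm surface $R$. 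Given that Theorem~\ref{degreerapthm} is already quoted in the paper, your deduction is short and legitimate, and there is no circularity, since the Crowell--Murasugi degree formula is proved combinatorially (by the tree counts that this paper revisits) without assuming minimality. The points needing care are exactly the ones you flag: for links one must check that the diagram-determinant polynomial $\nrap{L}{t}$, in the paper's normalisation, really is the Seifert-matrix determinant up to units (this is where the paper's convention of connected diagrams and non-split links matters, since otherwise $\rap{L}{t}=0$ and the degree comparison is vacuous), and one should phrase the bound in terms of the breadth of $\det(tV-V^{T})$, which equals the degree after normalising the constant term to be nonzero. What Gabai's route buys instead is independence from the polynomial and stronger geometric output (it feeds directly into the sutured-manifold machinery this paper uses elsewhere); what your route buys is brevity and the fact that it needs nothing beyond results already stated in Section~\ref{alexpolysec}.
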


\begin{definition}[see \cite{przytycki-2010}]
Let $L$ be a link, and let $\ext(L)=\Sphere\setminus\nhd(L)$. Define the \textit{Kakimizu complex} $\ms(L)$ of $L$ to be the following flag simplicial complex. Its vertices are ambient isotopy classes of minimal genus Seifert surfaces for $L$. Two distinct vertices span an edge if they have representatives $R,R'$ such that a lift of $\ext(L)\setminus R'$ to the infinite cyclic cover of $\ext(L)$ intersects exactly two lifts of $\ext(L)\setminus R$.

Define $\is(L)$ to be the analogous complex whose vertices are ambient isotopy classes of incompressible Seifert surfaces for $L$.
\end{definition}

\begin{remark}
If the link $L$ is not split and is not a boundary link, two Seifert surfaces span an edge in $\ms(L)$ or in $\is(L)$ if and only if they can be isotoped to be disjoint. A link is a \textit{boundary link} if it has a disconnected Seifert surface.
\end{remark}

\begin{theorem}[\cite{MR1177053} Theorem A]\label{connectedthm}
$\ms(L)$ and $\is(L)$ are connected.
\end{theorem}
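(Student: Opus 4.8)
The plan is to follow the infinite cyclic cover strategy. Let $E=\ext(L)$ and let $p\colon \widetilde{E}\to E$ be the infinite cyclic cover, with $t$ a generator of the deck group. First I would establish the basic lemma that a connected incompressible Seifert surface $R$ lifts to a family of mutually disjoint incompressible surfaces $R_i=t^i\widetilde{R}$; this is where non-splitness enters, guaranteeing that $\widetilde{E}$ is irreducible, so that an innermost-disc exchange removes any intersection circles between distinct lifts without changing isotopy class. These disjoint lifts are linearly ordered by $t$ and cut $\widetilde{E}$ into chambers $B_i$, each a lift of the connected region $E\setminus R$, with $B_i$ lying between $R_i$ and $R_{i+1}$.

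With this in hand, the adjacency relation of the statement becomes transparent: for two vertices $R,S$ I would define a distance $d(R,S)$ to be one less than the number of chambers $B_i$ that a single lift of $E\setminus S$ meets, this count being well defined by equivariance. The definition preceding the statement says exactly that $R$ and $S$ span an edge precisely when $d(R,S)=1$. A short argument shows $d$ is symmetric and finite, and that $d(R,S)=0$ forces $R=S$; here the non-boundary-link hypothesis from the preceding remark is used to identify ``distance one'' with ``isotopic to be disjoint''. Connectedness of $\ms(L)$ and of $\is(L)$ then reduces to showing that any two vertices are joined by a chain of edges, which I would prove by induction on $d(R,S)$.

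The inductive heart of the argument is this: given $R,S$ with $d(R,S)=k\geq 2$, produce a new vertex $S'$, again minimal genus (respectively, incompressible), with $d(S,S')=1$ and $d(R,S')=k-1$; the induction hypothesis then yields a chain from $R$ to $S'$, and appending the edge $S'S$ completes the path. To build $S'$ I would isotope $S$ to meet $R$ minimally, so that $R\cap S$ contains no circles bounding discs and no boundary-parallel arcs, and then perform the oriented double-curve-sum cut-and-paste of $R$ and $S$ along an outermost family of intersection curves selected using the ordering of the lifts in $\widetilde{E}$. The resulting surface $S'$ still has oriented boundary $L$; in the minimal genus case an Euler-characteristic count shows the exchange cannot raise genus, while in the incompressible case I would argue directly that $S'$ admits no compressing disc, since such a disc would, after an innermost-arc analysis against $R\cap S$, yield a compression of $R$ or of $S$.

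The hard part will be this final step: verifying simultaneously that the cut-and-paste surface $S'$ is genuinely a vertex of the complex and that it reduces the distance to $R$ by exactly one while staying adjacent to $S$. This requires choosing the curves to exchange carefully in the cover, taking the outermost lift of $R$ that the $S$-region overruns, and then checking that the exchange peels off precisely one chamber crossing rather than merely rearranging the lifts. Controlling that $S'$ is distinct from $S$ and remains incompressible under the exchange, rather than only non-increasing in genus, is the delicate combinatorial-topological point, and is where minimality of $|R\cap S|$ and irreducibility of $\widetilde{E}$ must be used together.
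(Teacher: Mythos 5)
A preliminary point: the paper does not prove this statement at all. It is Kakimizu's Theorem A, quoted with a citation to \cite{MR1177053} and used downstream as a black box (via the remark identifying edges with disjointness and the almost-product sutured manifold criterion for uniqueness). So your proposal can only be measured against Kakimizu's published argument, and in outline it coincides with it: pass to the infinite cyclic cover, order the lifts, define the chamber-counting distance $d$, and induct on $d$ by producing an intermediate vertex via oriented cut-and-paste. For the minimal genus case this is also the Scharlemann--Thompson strategy. So the plan is the right one.

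The genuine gap is the step you yourself label ``the hard part'': it is not a final verification to be filled in, it is the entire theorem. Constructing $S'$ with $d(S,S')\leq 1$ and $d(R,S')=k-1$, and proving that $S'$ is again a vertex of the complex, occupies essentially all of Kakimizu's paper. In the minimal genus case one can close this with the Euler characteristic count you mention, since the two surfaces produced by the oriented double curve sum satisfy $\chi(S')+\chi(S'')=\chi(R)+\chi(S)$ and every spanning surface has Euler characteristic at most the maximum, forcing equality (modulo checking connectedness and discarding closed components). But in the incompressible case the argument you sketch --- that a compressing disc for $S'$ would, after an innermost-arc analysis, yield a compression of $R$ or of $S$ --- fails as stated: cut-and-paste does not reflect incompressibility, and a compressing disc for a double curve sum need not produce one for either summand. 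This is exactly where Kakimizu must combine the ordering of the lifts, minimality of $|R\cap S|$, and irreducibility of the exterior, and it cannot be waved through; without it, connectivity of $\is(L)$ is unproved. There is also a small error at the outset: the lifts $t^i\widetilde{R}$ are automatically disjoint, being the connected components of $p^{-1}(R)$ for the embedded surface $R$ (each component maps homeomorphically since $H_1(R)\to H_1(\ext(L))\to\mathbb{Z}$ is zero), so no innermost-disc exchange and no irreducibility are needed there; non-splitness enters later, in the identification of ``distance one'' with ``isotopic to be disjoint'' and in the compression analysis, not in making the lifts disjoint.
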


\subsection{Sutured Manifolds}
%Master document is alexpolypaper.tex.

\begin{definition}
A \textit{sutured manifold} $(M,s)$ is a compact, orientable 3--manifold $M$, together with a finite set $s$ of disjoint simple closed curves on $\partial M$, called the \textit{sutures}. The sutures divide $\partial M$ into two (possibly disconnected) compact, oriented surfaces $S_+(M)$ and $S_-(M)$ such that $S_+(M)\cap S_-(M)=s$ and, if $\rho$ is a suture, $S_+(M)$ and $S_-(M)$ meet at $\rho$ with opposite orientations. In addition, for $\rho\in s$ we choose a product neighbourhood $\gamma(\rho)=\rho\times[-1,1]$ of $\rho$ in $\partial M$, so $\gamma(s)$ consists of $|s|$ disjoint annuli.
\end{definition}

\begin{remark}
We could instead first choose suitable annuli $\gamma(s)$, and then take $s$ to be a set of oriented core curves of $\gamma(s)$.
\end{remark}

\begin{definition}
A \textit{product sutured manifold} is a sutured manifold $(M,s)$ that is homeomorphic to $S_+(M)\times[-1,1]$ with $s=\partial S_+(M)\times\{0\}$ (and $\gamma(s)=\partial M\times[-1,1]$).
\end{definition}

\begin{definition}
Let $T$ be a surface properly embedded in $M$ with ${\partial T=s}$. Say $T$ is \textit{parallel} to $S_+(M)$ if there is an embedding $\eta\colon T\times[0,1]\to M$ such that $\eta(\partial T\times[0,1])\subseteq\gamma(s)$, while $\eta(T\times\{0\})=T$ and $\eta(T\times\{1\})={S_+(M)\setminus\Int_{\partial M}(\gamma(s))}$.
\end{definition}

\begin{definition}
A sutured manifold $(M,s)$ is an \textit{almost product sutured manifold} if every incompressible surface $T$ properly embedded in $M$ with $\partial T=s$ is parallel to $S_+(M)$ or to $S_-(M)$.
\end{definition}

\begin{definition}
A disc $T$ properly embedded in a sutured manifold $(M,s)$ is a \textit{product disc} if $\partial T$ meets $s$ at exactly two points, where it crosses $s$ transversely. Up to isotopy of $T$, or of $\gamma(s)$, we may assume $\partial T\cap\gamma(s)$ consists of two simple arcs that are essential in $\gamma(s)$. 
\end{definition}

\begin{definition}
Let $(M,s)$ be a sutured manifold that contains a product disc $T$. Let $\rho$ be a simple arc on $T$ joining the two points of $\partial T\cap s$ and let $T\times[-1,1]$ be a product neighbourhood of $T$ in $M$. The sutured manifold $(M',s')$ obtained from $(M,s)$ by a \textit{product disc decomposition} along $T$ has $M'=M\setminus T\times (-1,1)$ and $s'=(s\cap M')\cup (\rho\times\{\pm 1\})$. Figure \ref{suturedmfldspic1} shows what happens in a neighbourhood of $T$.
\begin{figure}[htbp]
\centering
%LaTeX with PSTricks extensions
%%Creator: 0.46
%%Please note this file requires PSTricks extensions
\psset{xunit=.45pt,yunit=.45pt,runit=.45pt}
\begin{pspicture}(740,220)
{
\newgray{lightgrey}{.8}
\newgray{lightishgrey}{.7}
\newgray{grey}{.6}
\newgray{midgrey}{.4}
\newgray{darkgrey}{.3}
}
{
\pscustom[linewidth=2,linecolor=darkgrey,linestyle=dashed,dash=4 2]
{
\newpath
\moveto(83,60.00000262)
\lineto(170,100.00000262)
}
}
{
\pscustom[linewidth=1,linecolor=black,linestyle=dashed,dash=2 2]%circle
{
\newpath
\moveto(170,90)
\curveto(170,45.84)(149.84,10)(125,10)
\curveto(100.16,10)(80,45.84)(80,90)
\curveto(80,134.16)(100.16,170)(125,170)
\curveto(149.84,170)(170,134.16)(170,90)
\closepath
}
}
{
\pscustom[linewidth=1,linecolor=black,fillstyle=solid,fillcolor=lightishgrey]%disc
{
\newpath
\moveto(207,89.999997)
\curveto(207,45.839997)(186.84,9.999997)(162,9.999997)
\curveto(137.16,9.999997)(117,45.839997)(117,89.999997)
\curveto(117,134.159997)(137.16,169.999997)(162,169.999997)
\curveto(186.84,169.999997)(207,134.159997)(207,89.999997)
\closepath
}
}
{
\pscustom[linewidth=2,linecolor=black,linestyle=dashed,dash=12 4]
{
\newpath
\moveto(54.999997,100.00000262)
\lineto(315,100.00000262)
}
}
{
\pscustom[linewidth=2,linecolor=darkgrey]
{
\newpath
\moveto(120,60.00000262)
\lineto(207,100.00000262)
}
}
{
\pscustom[linewidth=1,linecolor=black]
{
\newpath
\moveto(34.99999,170.00000262)
\lineto(295,170.00000262)
}
}
{
\pscustom[linewidth=1,linecolor=black]
{
\newpath
\moveto(35,9.99998262)
\lineto(295,9.99998262)
}
}
{
\pscustom[linewidth=1,linecolor=black,fillstyle=solid,fillcolor=lightishgrey]%disc
{
\newpath
\moveto(595,90)
\curveto(595,45.84)(574.84,10)(550,10)
\curveto(525.16,10)(505,45.84)(505,90)
\curveto(505,134.16)(525.16,170)(550,170)
\curveto(574.84,170)(595,134.16)(595,90)
\closepath
}
}
{
\pscustom[linewidth=2,linecolor=black,linestyle=dashed,dash=12 4]
{
\newpath
\moveto(474.99999,100.00002262)
\lineto(735,100.00002262)
}
}
{
\pscustom[linewidth=2,linecolor=darkgrey]
{
\newpath
\moveto(508,60.00000262)
\lineto(595,100.00000262)
}
}
{
\pscustom[linewidth=1,linecolor=black]
{
\newpath
\moveto(454.99999,170.00000262)
\lineto(550,170.00000262)
}
}
{
\pscustom[linewidth=1,linecolor=black]
{
\newpath
\moveto(455,9.99998262)
\lineto(550,9.99998262)
}
}
{
\pscustom[linewidth=1,linecolor=black,fillstyle=solid,fillcolor=lightishgrey]%disc
{
\newpath
\moveto(680,90)
\curveto(680,45.84)(659.84,10)(635,10)
\curveto(610.16,10)(590,45.84)(590,90)
\curveto(590,134.16)(610.16,170)(635,170)
\curveto(659.84,170)(680,134.16)(680,90)
\closepath
}
}
{
\pscustom[linewidth=1,linecolor=black]
{
\newpath
\moveto(680,90)
\curveto(680,45.84)(659.84,10)(635,10)
\curveto(610.16,10)(590,45.84)(590,90)
\curveto(590,134.16)(610.16,170)(635,170)
\curveto(659.84,170)(680,134.16)(680,90)
\closepath
}
}
{
\pscustom[linewidth=2,linecolor=darkgrey]
{
\newpath
\moveto(593,60.00000262)
\lineto(680,100.00000262)
}
}
{
\pscustom[linewidth=2,linecolor=black]
{
\newpath
\moveto(593,60.00000262)
\lineto(685,60.00000262)
}
}
{
\pscustom[linewidth=1,linecolor=black]
{
\newpath
\moveto(635,170.00000262)
\lineto(715,170.00000262)
}
}
{
\pscustom[linewidth=1,linecolor=black]
{
\newpath
\moveto(635,9.99998262)
\lineto(715,9.99998262)
}
}
{
\pscustom[linewidth=3,linecolor=black]%arrow
{
\newpath
\moveto(340,100.00000262)
\curveto(360,100.00000262)(420,100.00000262)(420,100.00000262)
}
}
{
\newrgbcolor{curcolor}{0 0 0}
\pscustom[linewidth=3,linecolor=black,fillstyle=solid,fillcolor=black]%arrowhead
{
\newpath
\moveto(390,100.00000262)
\lineto(378,88.00000262)
\lineto(420,100.00000262)
\lineto(378,112.00000262)
\lineto(390,100.00000262)
\closepath
}
}
{
\pscustom[linewidth=1,linecolor=black,fillstyle=solid,fillcolor=black]%arrowhead
{
\newpath
\moveto(455,60.00000262)
\lineto(451,56.00000262)
\lineto(465,60.00000262)
\lineto(451,64.00000262)
\lineto(455,60.00000262)
\closepath
}
}
{
\pscustom[linewidth=1,linecolor=black,fillstyle=solid,fillcolor=black]%arrowhead
{
\newpath
\moveto(650,60.00000262)
\lineto(646,56.00000262)
\lineto(660,60.00000262)
\lineto(646,64.00000262)
\lineto(650,60.00000262)
\closepath
}
}
{
\pscustom[linewidth=1,linecolor=black,fillstyle=solid,fillcolor=black]%arrowhead
{
\newpath
\moveto(60,60.00000262)
\lineto(56,56.00000262)
\lineto(70,60.00000262)
\lineto(56,64.00000262)
\lineto(60,60.00000262)
\closepath
}
}
{
\pscustom[linewidth=1,linecolor=black,fillstyle=solid,fillcolor=black]%arrowhead
{
\newpath
\moveto(715,100.00000262)
\lineto(719,104.00000262)
\lineto(705,100.00000262)
\lineto(719,96.00000262)
\lineto(715,100.00000262)
\closepath
}
}
{
\pscustom[linewidth=1,linecolor=black,fillstyle=solid,fillcolor=black]%arrowhead
{
\newpath
\moveto(490,100.00000262)
\lineto(494,104.00000262)
\lineto(480,100.00000262)
\lineto(494,96.00000262)
\lineto(490,100.00000262)
\closepath
}
}
{
\pscustom[linewidth=1,linecolor=black,fillstyle=solid,fillcolor=black]%arrowhead
{
\newpath
\moveto(85,100.00000262)
\lineto(89,104.00000262)
\lineto(75,100.00000262)
\lineto(89,96.00000262)
\lineto(85,100.00000262)
\closepath
}
}
{
\pscustom[linewidth=1,linecolor=black,linestyle=dashed,dash=2 2]
{
\newpath
\moveto(245,89.999991)
\curveto(245,45.839991)(224.84,9.999991)(200,9.999991)
\curveto(175.16,9.999991)(155,45.839991)(155,89.999991)
\curveto(155,134.159991)(175.16,169.999991)(200,169.999991)
\curveto(224.84,169.999991)(245,134.159991)(245,89.999991)
\closepath
}
}
{
\pscustom[linewidth=2,linecolor=darkgrey,linestyle=dashed,dash=4 2]
{
\newpath
\moveto(160,60.00000262)
\lineto(247,100.00000262)
}
}
{
\pscustom[linewidth=2,linecolor=black]
{
\newpath
\moveto(5,60.00000262)
\lineto(265,60.00000262)
}
}
{
\pscustom[linewidth=2,linecolor=black]
{
\newpath
\moveto(425,60.00002262)
\lineto(508,60.00000262)
}
}
{
\put(133,78){\scriptsize$\rho$}
\put(253,48){\scriptsize$s$}
\put(145,144){\scriptsize$T$}
\put(20,145){\scriptsize$T\!\times\!\{-1\}$}
\put(235,145){\scriptsize$T\!\times\!\{1\}$}
\put(360,120){$T$}
\put(60,180){$M$}
\put(500,180){$M'$}
}
\end{pspicture}
\caption{\label{suturedmfldspic1}}
\end{figure}
\end{definition}

\begin{proposition}[see \cite{MR1026928} Lemmas 2.1, 2.2]
Let $(M,s)\stackrel{T}{\rightarrow}(M',s')$ be a product disc decomposition. Then $M$ is an almost product sutured manifold if and only if $M'$ is.
\end{proposition}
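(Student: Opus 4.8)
The plan is to produce a bijection, up to proper isotopy, between the incompressible surfaces $S\subseteq M$ with $\partial S=s$ and the incompressible surfaces $S'\subseteq M'$ with $\partial S'=s'$, realised by cutting along $T$, and to check that it carries surfaces parallel to $S_+(M)$ to surfaces parallel to $S_+(M')$ (and likewise for $S_-$). Granting this, both directions of the proposition follow at once and symmetrically: if $M$ is almost product and $S'$ is incompressible in $M'$ with $\partial S'=s'$, then the corresponding $S$ is incompressible with $\partial S=s$, hence parallel to $S_+(M)$ or $S_-(M)$, so $S'$ is parallel to $S_+(M')$ or $S_-(M')$; and conversely.

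First I would normalise the intersection $S\cap T$. Since $\partial T$ meets $s$ transversely in exactly two points and $\partial S=s$, the two surfaces already meet in these two boundary points, so $S\cap T\neq\emptyset$. After making $S$ transverse to $T$, I would remove closed curves of $S\cap T$ by an innermost-disc argument: an innermost circle bounds a disc in $T$ and, by incompressibility of $S$, a disc in $S$, and these let one isotope $S$ so as to reduce $|S\cap T|$ (using irreducibility of $M$, which holds in the situations of interest). Because $\Int S$ lies in $\Int M$ while $\partial T\subseteq\partial M$, every arc of $S\cap T$ must have its endpoints among the two points of $s\cap\partial T$; hence once the circles are gone, $S\cap T$ is a single arc $\alpha$ joining these two points, a copy of $\rho$. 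Cutting $M$ along $T$ then cuts $S$ along $\alpha$, yielding a properly embedded $S'\subseteq M'$, and a boundary count gives $\partial S'=(s\cap M')\cup(\rho\times\{\pm1\})=s'$. The inverse operation glues the product neighbourhood $T\times[-1,1]$ back on and caps $S'$ along $\rho\times\{\pm1\}$, recovering $S$ up to isotopy.

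Next I would verify that parallelism is respected. A product region $\eta\colon S\times[0,1]\to M$ realising a parallelism of $S$ to $S_+(M)$, with $\eta(S\times\{1\})=S_+(M)\setminus\Int(\gamma(s))$, can be arranged to meet $T\times[-1,1]$ in a product, so cutting along $T$ restricts it to a parallelism of $S'$ to $S_+(M')$; regluing runs this in reverse. The decisive point is that the two new sutures $\rho\times\{+1\}$ and $\rho\times\{-1\}$ split each copy of $T$ into a half assigned to $S_+$ and a half assigned to $S_-$ exactly according to the orientations, so the correspondence genuinely matches ``parallel to $S_+$'' with ``parallel to $S_+$'' and ``parallel to $S_-$'' with ``parallel to $S_-$''.

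The main obstacle is not this logical skeleton but showing that the cut-and-glue preserves incompressibility in both directions. Going from $M$ to $M'$, one wants a compressing disc for $S$ to descend to one for $S'$, which I would obtain by making the disc disjoint from $T$ via an outermost-arc argument, valid because $T$ is a disc. The more delicate direction is the converse: a compressing disc for $S'$ reglues to a disc in $M$ bounded by a curve $\gamma$ in $S$, and one must rule out that $\gamma$ bounds a disc in $S$ meeting $\alpha$ while $\partial D'$ was essential in $S'$; this amounts to a careful analysis of how essential curves in $S'$ behave when the arc $\alpha$ is reglued, and is where the real care is needed. These are standard points in sutured manifold theory (as in \cite{MR1026928}), and I would verify them directly through innermost-disc and outermost-arc isotopies to close the proof.
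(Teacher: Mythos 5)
The paper offers no proof of this proposition at all: it is imported verbatim from Kobayashi (\cite{MR1026928}, Lemmas 2.1 and 2.2), so there is no in-paper argument to compare yours against. What you have written is, in outline, the standard cut-and-paste proof (essentially Kobayashi's): normalise $S\cap T$ to a single arc $\alpha$ isotopic to $\rho$ (your use of incompressibility of $S$ to kill circles, together with irreducibility of $M$ to realise the surgery by an isotopy, is correct, and irreducibility does hold for the complementary sutured manifolds to which this paper applies the proposition, since they are complements of connected compact bodies in $\Sphere$); then cut and reglue to pass between surfaces with $\partial S=s$ and $\partial S'=s'$, checking that incompressibility and parallelism to $S_{\pm}$ are preserved in both directions. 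The counting argument showing $S\cap T$ is exactly one arc with endpoints the two points of $s\cap\partial T$ is also right.

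Two remarks on where you have placed the difficulty. First, the step you flag as ``where the real care is needed'' --- regluing a compressing disc $D'$ for $S'$ and ruling out that $\partial D'$ bounds a disc in $S$ meeting $\alpha$ --- is actually immediate: $\alpha$ is a properly embedded arc with both endpoints on $\partial S$, whereas an embedded disc in $S$ whose boundary misses $\alpha$ has interior disjoint from $\partial S$, so it can neither contain $\alpha$ nor meet it; hence a curve disjoint from $\alpha$ is essential in $S'$ if and only if it is essential in $S$. Second, the genuinely substantive points are the two you pass over in a clause each: (i) the outermost-arc surgery making a compressing disc for $S$ disjoint from $T$, where after surgering along an outermost half-disc of $T$ you must still argue that if both resulting curves bound discs in $S$ then so did $\partial D$, contradicting essentiality; and (ii) the claim that a product region realising parallelism of $S$ to $S_+(M)$ ``can be arranged to meet $T\times[-1,1]$ in a product''. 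Point (ii) is the real content of the cited lemmas: one first shows the product region is forced to meet $T\times\{0\}$ precisely in the closure of the half-disc on the $S_+$ side of $\rho$ (a frontier argument), and then that such a properly embedded disc in a product $S\times[0,1]$, with boundary running over both ends and two vertical arcs, is vertical up to isotopy. Both facts are true and standard, so your approach does close, but as written they are assertions rather than proofs, and they --- not the regluing step --- are where the work lies.
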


\begin{definition}
For a sutured manifold $(M,s)$ embedded in $\Sphere$, the \textit{complementary sutured manifold} $(M',s')$ is defined by $M'=\Sphere\setminus \Int_{\Sphere}(M)$ and $s'=s$.

By the \textit{complementary sutured manifold to a Seifert surface $R$} we mean the complementary sutured manifold to the product sutured manifold given by a product neighbourhood of $R$.
\end{definition}

\begin{remark}
Let $(M,s_M)$ be the complementary sutured manifold to a minimal genus/incompressible Seifert surface $R$. By Theorem \ref{connectedthm}, $M$ is an almost product sutured manifold if and only if $R$ is unique.
\end{remark}
\subsection{Graphs}
%Master document is alexpolypaper.tex.

\begin{definition}
A \textit{graph} $\mathcal{G}$ consists of a set of vertices, denoted $\V(\mathcal{G})$, a set of edges, denoted $\E(\mathcal{G})$, and a function $\Ends{}$ that assigns to each edge $e\in\E(\mathcal{G})$ two vertices, called the endpoints of $e$.
\end{definition}

\begin{convention}
Unless otherwise stated, we assume $\V(G)$ and $\E(G)$ are finite. In general we allow a graph to contain multiedges (distinct $e,e'\in\E(G)$ with $\Ends{}(e)=\Ends{}(e')$) and loops ($e\in\E(G)$ whose two endpoints are the same). By convention these are usually excluded in the definition of the term `graph', but we will need them later.
\end{convention}

\begin{convention}
We will always assume a graph to be connected (although we may consider subgraphs that are disconnected).
\end{convention}

\begin{definition}
Given a set $A\subseteq\V(\mathcal{G})$, the \textit{induced subgraph} $\mathcal{G}[A]$ is the graph with vertex set $A$ and edge set $\{e\in\E(\mathcal{G}):\Ends{}(e)\subseteq A\}$.

For $B\subseteq\E(\mathcal{G})$, denote by $\mathcal{G}\setminus B$ the graph obtained by deleting all edges of $B$ from $\mathcal{G}$. That is, $\V(\mathcal{G}\setminus B)=\V(\mathcal{G})$ and $\E(\mathcal{G}\setminus B)=\E(\mathcal{G})\setminus B$.

Given $e\in\E(\mathcal{G})$, $\mathcal{G}/ e$ is the graph obtained by contracting $e$ to a point. This means $\V(\mathcal{G}/ e)=\left(\V(\mathcal{G})\setminus \Ends{}(e\right))\cup\{v_e\}$ where $v_e\notin\V(\mathcal{G})$, while $\E(\mathcal{G}/ e)=\E(\mathcal{G})\setminus e$ and $v_e$ replaces both ends of $e$ in $\Ends{}$. If $B=\{e_1,\cdots,e_n\}$ for some $n\in\mathbb{N}$ and $e_i\in\E(\mathcal{G})$ then $\mathcal{G}/B=((\cdots(\mathcal{G}/e_1)/e_2)\cdots)/e_n$.
\end{definition}

\begin{definition}
A \textit{pointed graph} $(\mathcal{G},v)$ is a graph $\mathcal{G}$ with a distinguished vertex $v$.
\end{definition}

\begin{definition}
For $u,v\in\V(\mathcal{G})$, the distance $\dist(u,v)$ is the minimum length of a path between $u$ and $v$. 
The \textit{radius} of a pointed graph $(\mathcal{G},v)$ is $\max\{\dist(v,w):w\in\V(\mathcal{G})\}$.
\end{definition}

\begin{convention}
For $n\in\mathbb{N}$ and $v\in\V(\mathcal{G})$, denote by $\ball{\mathcal{G}}{v}{n}$ the digraph $\mathcal{G}[A_n]$, where $A_n=\{w\in\V(\mathcal{G}):\dist(v,w)\leq n\}$.
\end{convention}

\begin{definition}
A \textit{digraph} $(\mathcal{G},\mathcal{O})$ is a graph $\mathcal{G}$ together with an orientation $\mathcal{O}$, which assigns to each $e\in\E(\mathcal{G})$ an initial endpoint $\iota(e)$ and a terminal endpoint $\tau(e)$ such that $\{\iota(e),\tau(e)\}=\Ends{}(e)$. We say that $e$ starts at $\iota(e)$ and ends at $\tau(e)$.

Define the \textit{in-degree} of a vertex $v\in\V(G)$ to be the number of edges $e\in\E(G)$ with $\tau(e)=v$, and define the \textit{out-degree} analogously. 

$\mathcal{G}$ is called the \textit{underlying graph} of $(\mathcal{G},\mathcal{O})$. We will at times consider more than one orientation on the same graph $\mathcal{G}$. Where the choice of orientation is clear, or not important, we will denote $(\mathcal{G},\mathcal{O})$ by $\mathcal{G}$.
\end{definition}

\begin{definition}
A \textit{directed path} in $G$ is a path $v_0,e_1,\cdots,e_n,v_n$ such that $\iota(e_i)=v_{i-1}$ and $\tau(e_i)=v_i$ for $1\leq i \leq n$.

$\mathcal{G}$ is \textit{\ocon{\mathcal{O}}{}} if for any $u,v\in\V(\mathcal{G})$ there is a directed path in $(\mathcal{G},\mathcal{O})$ from $u$ to $v$.

A \textit{cycle} is a directed path $v_0,e_1,\dots,e_n,v_n$ with $v_0=v_n$.

Two directed paths $v_0,e_1,\cdots,e_n,v_n$ and $v'_0,e'_1,\cdots,e'_m,v'_m$ are said to be \textit{edge-disjoint} if there do not exist $n_0,m_0$ with $e_{n_0}=e'_{m_0}$.
\end{definition}

\begin{convention}
A digraph is \textit{planar} if it has an embedding into $\sphere$.
We shall regard this embedding as fixed (some authors call such a graph `plane').
\end{convention}

\begin{definition}
Given a planar graph $\mathcal{G}$, we may define the \textit{dual graph} $\mathcal{G}^*$, which is again planar. It has a vertex for each region of $\sphere\setminus\mathcal{G}$. There is one edge $e'$ in $\mathcal{G}^*$ for each $e\in\E(\mathcal{G})$, joining the vertices corresponding to the regions of $\sphere\setminus\mathcal{G}$ adjacent to $e$.
\end{definition}

\begin{definition}
Given a link $L$ with diagram $D$, the \textit{underlying graph} $\mathcal{G}$ has a vertex at each crossing in $D$, and an edge for each arc in $D$ joining two crossings.
The \textit{induced orientation} $O$ is that given by the orientation of the link $L$.
We will later put other orientations on the underlying graph.
\end{definition}

\begin{remark}
$\mathcal{G}$ is planar.
We can reconstruct $D$ from $(\mathcal{G},\mathcal{O})$, with its embedding into $\sphere$, provided we also know, for each crossing, which arc is the overcrossing and which is the undercrossing.
\end{remark}
\subsection{Special alternating links and Murasugi sums}
%Master document is alexpolypaper.tex.

\begin{definition}
A \textit{Seifert circle} $C$ in a diagram $D$ is any of the simple closed curves in $\sphere$ created by Seifert's algorithm.
$C$ may also be seen as a cycle in the underlying digraph $(\mathcal{G},O)$ that turns at every crossing it meets (the direction it turns will always be determined by $O$). We will not explicitly distinguish between these viewpoints.
\end{definition}

\begin{definition}
Let $D$ be a diagram of a link $L$. A Seifert circle $C$ in $D$ is \textit{special} if it bounds a disc in $\sphere\setminus D$. The diagram $D$ is \textit{special} if every Seifert circle in $D$ is special. $L$ is \textit{special} if some diagram of $L$ is special. 
\end{definition}

\begin{remark}
A special, alternating link diagram is either positive or negative. That is, either every crossing is positive, or every crossing is negative.
\end{remark}

Let $C$ be a non-special Seifert circle in $D$. We can split $D$ along $C$ to create two new non-trivial link diagrams $D_1$ and $D_2$ as follows. View $C$ as a simple closed curve in the underlying digraph $(\mathcal{G},O)$. Let $S$ be one component of $\sphere\setminus C$ (so $S$ is an open disc). Let $A=\{v\in V(\mathcal{G}):v\notin S\}$. Then all vertices of $\mathcal{G}[A]$ are 4--valent except for some of the vertices on $C$, which are 2--valent. Since $C$ is a cycle, such a vertex has in-degree and out-degree 1. Let $(\mathcal{G}_1,O_1)$ be the digraph obtained by contracting each edge of $\mathcal{G}[A]$ whose terminal vertex is 2--valent. Now take $(\mathcal{G}_1,O_1)$ as the underlying graph of $D_1$. The choice of undercrossing and overcrossing arcs at a crossing $c$ in $D_1$ is induced by that at $c$ in $D$. The diagram $D_2$ is defined analogously from the other component of $\sphere\setminus C$.

If $D$ is alternating, then so are $D_1$ and $D_2$ (see Figure \ref{speciallinkspic1}).
\begin{figure}[htbp]
\centering
%LaTeX with PSTricks extensions
%%Creator: 0.46
%%Please note this file requires PSTricks extensions
\psset{xunit=.45pt,yunit=.45pt,runit=.45pt}
\begin{pspicture}(630,200)
{
\pscustom[linewidth=1,linecolor=black]
{
\newpath
\moveto(25,85.00002262)
\curveto(26.227824,87.45566262)(68.13467,80.65896262)(80,95.00002262)
\curveto(94.03133,111.95902262)(100.00523,164.99610262)(100,165.00002262)
}
}
{
\pscustom[linewidth=1,linecolor=black]
{
\newpath
\moveto(50,165.00002262)
\curveto(50,165.00002262)(53.675445,122.64913262)(60,110.00002262)
\curveto(65,100.00002262)(70,95.00002262)(70,95.00002262)
}
}
{
\pscustom[linewidth=1,linecolor=black]
{
\newpath
\moveto(110,9.99998262)
\curveto(110,9.99998262)(113.67545,52.35088262)(120,65.00002262)
\curveto(125,75.00002262)(130,80.00002262)(130,80.00002262)
}
}
{
\pscustom[linewidth=1,linecolor=black]
{
\newpath
\moveto(170,165.00002262)
\curveto(170,165.00002262)(173.67544,122.64913262)(180,110.00002262)
\curveto(185,100.00002262)(190,95.00002262)(190,95.00002262)
}
}
{
\pscustom[linewidth=1,linecolor=black]
{
\newpath
\moveto(80,90.00002262)
\curveto(80,85.16999262)(127.49925,92.96255262)(140,80.00000262)
\curveto(155.27937,64.15618262)(160.00523,10.00398262)(160,9.99998262)
}
}
{
\pscustom[linewidth=1,linecolor=black]
{
\newpath
\moveto(140,85.00002262)
\curveto(140,89.83005262)(187.49925,82.03749262)(200,95.00004262)
\curveto(215.27937,110.84386262)(220.00523,164.99604262)(220,165.00004262)
}
}
{
\pscustom[linewidth=1,linecolor=black]
{
\newpath
\moveto(200,90.00002262)
\curveto(205,85.00002262)(240,85.00002262)(240,85.00002262)
}
}
{
\pscustom[linewidth=3,linecolor=black,linestyle=dashed,dash=3 3]
{
\newpath
\moveto(5,85.00000262)
\lineto(25,85.00000262)
}
}
{
\pscustom[linewidth=3,linecolor=black,linestyle=dashed,dash=3 3]
{
\newpath
\moveto(260,85.00000262)
\lineto(240,85.00000262)
}
}
{
\pscustom[linewidth=3,linecolor=black]
{
\newpath
\moveto(25,85.00000262)
\curveto(65,85.00000262)(75,90.00000262)(75,90.00000262)
}
}
{
\pscustom[linewidth=3,linecolor=black]
{
\newpath
\moveto(79,90.00000262)
\curveto(84.349889,86.74945262)(103.0777,87.96132262)(110,88.00000262)
\curveto(126.90448,88.09445262)(133,84.00000262)(135,83.00000262)
}
}
{
\pscustom[linewidth=3,linecolor=black]
{
\newpath
\moveto(139,85.00000262)
\curveto(144.34989,88.25055262)(163.0777,87.03868262)(170,87.00000262)
\curveto(186.90448,86.90555262)(193,91.00000262)(195,92.00000262)
}
}
{
\pscustom[linewidth=3,linecolor=black]
{
\newpath
\moveto(200,90.00000262)
\curveto(207,84.00000262)(239.3242,84.50179262)(241,85.00000262)
}
}
{
\pscustom[linewidth=1,linecolor=black,fillstyle=solid,fillcolor=black]%arrowhead
{
\newpath
\moveto(216.96116135,141.80580937)
\lineto(213.82330319,146.51259662)
\lineto(215,132.00000262)
\lineto(221.66794859,144.94366754)
\lineto(216.96116135,141.80580937)
\closepath
}
}
{
\pscustom[linewidth=1,linecolor=black,fillstyle=solid,fillcolor=black]%arrowhead
{
\newpath
\moveto(95.96116135,142.80580937)
\lineto(92.82330319,147.51259662)
\lineto(94,133.00000262)
\lineto(100.66794859,145.94366754)
\lineto(95.96116135,142.80580937)
\closepath
}
}
{
\pscustom[linewidth=1,linecolor=black,fillstyle=solid,fillcolor=black]%arrowhead
{
\newpath
\moveto(113.96116135,38.80578937)
\lineto(110.82330319,43.51257662)
\lineto(112,28.99998262)
\lineto(118.66794859,41.94364754)
\lineto(113.96116135,38.80578937)
\closepath
}
}
{
\pscustom[linewidth=1,linecolor=black,fillstyle=solid,fillcolor=black]%arrowhead
{
\newpath
\moveto(53.96116324,136.19418624)
\lineto(58.66795108,133.05632898)
\lineto(52,145.99999262)
\lineto(50.82330598,131.48739839)
\lineto(53.96116324,136.19418624)
\closepath
}
}
{
\pscustom[linewidth=1,linecolor=black,fillstyle=solid,fillcolor=black]%arrowhead
{
\newpath
\moveto(173.96116324,136.19418624)
\lineto(178.66795108,133.05632898)
\lineto(172,145.99999262)
\lineto(170.82330598,131.48739839)
\lineto(173.96116324,136.19418624)
\closepath
}
}
{
\pscustom[linewidth=1,linecolor=black,fillstyle=solid,fillcolor=black]%arrowhead
{
\newpath
\moveto(156.96116135,34.19417586)
\lineto(161.66794859,31.0563177)
\lineto(155,43.99998262)
\lineto(153.82330319,29.48738862)
\lineto(156.96116135,34.19417586)
\closepath
}
}
{
\pscustom[linewidth=1,linecolor=black]
{
\newpath
\moveto(385,85.00000262)
\curveto(386.22782,87.45564262)(428.13467,80.65894262)(440,95.00000262)
\curveto(454.03133,111.95900262)(460.00523,164.99608262)(460,165.00000262)
}
}
{
\pscustom[linewidth=1,linecolor=black]
{
\newpath
\moveto(410,165.00000262)
\curveto(410,165.00000262)(413.67545,122.64911262)(420,110.00000262)
\curveto(425,100.00000262)(430,95.00000262)(430,95.00000262)
}
}
{
\pscustom[linewidth=1,linecolor=black]
{
\newpath
\moveto(530,165.00000262)
\curveto(530,165.00000262)(533.67544,122.64911262)(540,110.00000262)
\curveto(545,100.00000262)(550,95.00000262)(550,95.00000262)
}
}
{
\pscustom[linewidth=1,linecolor=black]
{
\newpath
\moveto(580,165.00002262)
\curveto(580.00523,164.99602262)(575.27937,110.84384262)(560,95.00002262)
\curveto(547.49925,82.03747262)(455,80.00000262)(440,90.00000262)
}
}
{
\pscustom[linewidth=1,linecolor=black]
{
\newpath
\moveto(560,90.00000262)
\curveto(565,85.00000262)(600,85.00000262)(600,85.00000262)
}
}
{
\pscustom[linewidth=3,linecolor=black,linestyle=dashed,dash=3 3]
{
\newpath
\moveto(365,84.99998262)
\lineto(385,84.99998262)
}
}
{
\pscustom[linewidth=3,linecolor=black,linestyle=dashed,dash=3 3]
{
\newpath
\moveto(620,84.99998262)
\lineto(600,84.99998262)
}
}
{
\pscustom[linewidth=3,linecolor=black]
{
\newpath
\moveto(385,84.99998262)
\curveto(425,84.99998262)(435,89.99998262)(435,89.99998262)
}
}
{
\pscustom[linewidth=3,linecolor=black]
{
\newpath
\moveto(440,90.00000262)
\curveto(453,78.70247262)(556.4109,84.38733262)(555,92.00000262)
}
}
{
\pscustom[linewidth=3,linecolor=black]
{
\newpath
\moveto(560,89.99998262)
\curveto(567,83.99998262)(599.3242,84.50177262)(601,84.99998262)
}
}
{
\pscustom[linewidth=1,linecolor=black,fillstyle=solid,fillcolor=black]%arrowhead
{
\newpath
\moveto(576.96116135,141.80578937)
\lineto(573.82330319,146.51257662)
\lineto(575,131.99998262)
\lineto(581.66794859,144.94364754)
\lineto(576.96116135,141.80578937)
\closepath
}
}
{
\pscustom[linewidth=1,linecolor=black,fillstyle=solid,fillcolor=black]%arrowhead
{
\newpath
\moveto(455.96116135,142.80578937)
\lineto(452.82330319,147.51257662)
\lineto(454,132.99998262)
\lineto(460.66794859,145.94364754)
\lineto(455.96116135,142.80578937)
\closepath
}
}
{
\pscustom[linewidth=1,linecolor=black,fillstyle=solid,fillcolor=black]%arrowhead
{
\newpath
\moveto(413.96116324,136.19416624)
\lineto(418.66795108,133.05630898)
\lineto(412,145.99997262)
\lineto(410.82330598,131.48737839)
\lineto(413.96116324,136.19416624)
\closepath
}
}
{
\pscustom[linewidth=1,linecolor=black,fillstyle=solid,fillcolor=black]%arrowhead
{
\newpath
\moveto(533.96116324,136.19416624)
\lineto(538.66795108,133.05630898)
\lineto(532,145.99997262)
\lineto(530.82330598,131.48737839)
\lineto(533.96116324,136.19416624)
\closepath
}
}
{
\pscustom[linewidth=2,linecolor=black]%arrow
{
\newpath
\moveto(280,85.00000262)
\lineto(345,85.00000262)
}
}
{
\pscustom[linewidth=2,linecolor=black,fillstyle=solid,fillcolor=black]%arrowhead
{
\newpath
\moveto(325,85.00000262)
\lineto(317,77.00000262)
\lineto(345,85.00000262)
\lineto(317,93.00000262)
\lineto(325,85.00000262)
\closepath
}
}
{
\put(50,60){$C$}
\put(410,60){$C$}
\put(10,170){$D$}
\put(360,170){$D_1$}
}
\end{pspicture}
\caption{\label{speciallinkspic1}}
\end{figure}

\begin{definition}[\cite{MR0137107} 3.1]\label{starproddefn}
Let $L$ be a link with a diagram $D$ that can be split into diagrams $D_1,D_2$ as above. Let $L_1,L_2$ be the links with diagrams $D_1,D_2$ respectively. We say that $L$ is the \textit{$*$--product} of $L_1$ and $L_2$, written $L=L_1 * L_2$. Note that $D_1$ and $D_2$ alone do not tell us how to construct $D$.
\end{definition}

\begin{remark}
Let $C$ be the non-special Seifert circle along which $D$ was split. For $i=1,2$, let $R_i$ be the Seifert surface for $L_i$ given by applying Seifert's algorithm to $D_i$, and let $S_i$ be the disc in $R_i$ bounded by $C$. Let $R$ be given by identifying $S_1$ and $S_2$. Then $R$ is given by applying Seifert's algorithm to $D$.

The interaction of $*$--product with link diagrams makes it a useful tool in \cite{MR0099665} and \cite{MR0099664}, and below.
\end{remark}

\begin{definition}[\cite{MR1002465} p536]\label{homogeneousdefn}
A link diagram $D$ is \textit{homogeneous} if it is formed from special alternating link digrams by taking connected sums as in Lemma \ref{connectedsumlemma} and $*$--products.
A link $L$ is \textit{homogeneous} if it has a homogeneous diagram.
\end{definition}

\begin{proposition}[\cite{MR1002465} Corollary 4.1]
The Seifert surface given by applying Seifert's algorithm to a homogeneous diagram of a link is minimal genus.
\end{proposition}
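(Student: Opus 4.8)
The plan is to prove the sharper statement that the surface $R$ produced by Seifert's algorithm on a homogeneous diagram $D$ of $L$ satisfies $\deg\nrap{L}{t}=1-\chi(R)$, and to deduce minimality from this. Every Seifert surface of $L$ has the same number $m$ of boundary components, and a connected such surface $R'$ has $1-\chi(R')=2g(R')+m-1$; hence minimising the genus is the same as minimising $1-\chi$. Moreover, for \emph{any} Seifert surface $R'$ a Seifert matrix $V'$ is a square matrix of size $\operatorname{rank}H_1(R')=1-\chi(R')$, and $\nrap{L}{t}$ agrees up to a unit $\pm t^k$ with $\det(V'-t(V')^{T})$, a polynomial of degree at most $1-\chi(R')$. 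Thus the span of $\det(V'-t(V')^{T})$, which equals $\deg\nrap{L}{t}$, is at most $1-\chi(R')$ for every $R'$; once the displayed equality is known for $R$, it follows that $R$ realises the minimum of $1-\chi$, i.e.\ $R$ is minimal genus.

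The equality reduces to a determinant condition. Writing $N=1-\chi(R)=\operatorname{rank}H_1(R)$ and letting $V$ be an $N\times N$ Seifert matrix of $R$, the polynomial $\det(V-tV^{T})$ has constant coefficient $\det V$ and top coefficient $(-1)^N\det V$. Since both extreme coefficients are $\pm\det V$, its span equals $N$ precisely when $\det V\neq 0$, and so $\deg\nrap{L}{t}=1-\chi(R)$ if and only if $\det V\neq 0$. (Consistently, this identifies $\pm\det V$ with both the leading coefficient $a_g$ and the constant term $\nrap{L}{0}$ featuring in Theorems \ref{juhaszthm} and \ref{finitenessthm}.) It therefore suffices to prove $\det V\neq 0$ for the Seifert's-algorithm surface of every homogeneous diagram.

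I would prove this by induction on the number of special alternating blocks in the decomposition of $D$ given by Definition \ref{homogeneousdefn}. For the base case $D$ is special alternating, hence alternating, so Theorem \ref{degreerapthm} yields $\deg\nrap{L}{t}=1-\chi(R)$, which by the previous paragraph forces $\det V\neq 0$. For the inductive step $D$ is a connected sum or a $*$-product of diagrams $D_1,D_2$ with Seifert's-algorithm surfaces $R_1,R_2$, for which $\det V_1,\det V_2\neq 0$ by hypothesis. In either case $R$ is obtained by gluing $R_1$ and $R_2$ along a boundary arc (connected sum) or along the disc $S$ bounded by the splitting Seifert circle $C$ ($*$-product, as in the Remark after Definition \ref{starproddefn}); either gluing region is contractible, so Mayer--Vietoris gives $H_1(R)\cong H_1(R_1)\oplus H_1(R_2)$ and in particular $N=N_1+N_2$. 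Choosing a basis adapted to this splitting, the two families of cycles lie in the two balls cut off by the separating sphere, so at least one off-diagonal block of the Seifert form vanishes and $V=\left(\begin{smallmatrix}V_1 & B\\ 0 & V_2\end{smallmatrix}\right)$ is block triangular (block diagonal in the connected-sum case). Hence $\det V=\det V_1\cdot\det V_2\neq 0$, completing the induction.

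The main obstacle is the block-triangularity in the inductive step: one must verify that for a class carried by $R_2$ (represented by a cycle disjoint from the interior of $S$) the positive push-off links trivially with every class carried by $R_1$, which is exactly where the geometry of the Murasugi sum—the separating sphere meeting $R$ in the single disc $S$—is used. The Euler-characteristic bookkeeping, the connected-sum case, and the universal lower bound are routine, and the base case is handed to us by Theorem \ref{degreerapthm}; the crux is this vanishing of one Seifert block, which is precisely the $*$-product behaviour of the Alexander data analysed by Murasugi in \cite{MR0137107}.
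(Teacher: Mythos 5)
The paper never proves this proposition: it is imported verbatim from Cromwell \cite{MR1002465} (Corollary 4.1) and used as a black box, so there is no internal proof to compare yours against. Judged on its own, your argument is correct in outline, and it is essentially the classical route (indeed close to Cromwell's original): you reduce minimality to the non-vanishing of $\det V$ for the Seifert matrix $V$ of the algorithm surface, using the standard identification of $\nrap{L}{t}$, up to units $\pm t^{k}$, with $\det(V'-t(V')^{T})$ and the resulting bound $\deg\nrap{L}{t}\leq 1-\chi(R')$ for \emph{every} Seifert surface $R'$; the base case (special alternating) is handed to Crowell--Murasugi, which the paper records as Theorem \ref{degreerapthm}; and the inductive step is block-triangularity of the Seifert form under Murasugi sum. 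Your push-off argument for the vanishing block is the right one and does work: if the positive normal of $R$ along the gluing disc points into the ball containing $R_2$, then the positive push-off of any cycle carried by $R_2$ lies in the open ball on that side, while any cycle carried by $R_1$ bounds a $2$-chain in the closed complementary ball, so those linking numbers vanish and $\det V=\det V_1\cdot\det V_2\neq 0$. Two ingredients are treated as known and should be cited explicitly rather than asserted: (i) that Alexander's diagrammatic polynomial for links agrees up to units with $\det(V-tV^{T})$ (this is what converts Theorem \ref{degreerapthm} into $\det V_1\neq 0$ and back); and (ii) that the Seifert-algorithm surface of a connected sum or $*$--product genuinely is a Murasugi sum in the sense of Definition \ref{msumdefn}, i.e.\ that the separating $2$--sphere exists --- the paper states the gluing description in the remark after Definition \ref{starproddefn} and relies on this Murasugi-sum structure itself (e.g.\ in the proof of Theorem \ref{fulljuhaszproof}), so your use of it is consistent with the text, though not proved there. (A small notational point: in Definition \ref{msumdefn} the sphere is $S$ and the gluing disc is $T$; you use $S$ for the disc.) By contrast, the machinery the paper actually develops --- spanning-tree counts, the digraphs $\graphm{M}$ and $\graphm{H}$, sutured-manifold decompositions --- is aimed at the finer Theorems \ref{incompthm} and \ref{finitenessthm} and plays no role in this proposition; your Seifert-matrix route is the appropriate, more elementary argument for it.
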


\begin{proposition}[\cite{MR1002465} Corollary 3.1, Theorem 8]
A link with a connected homogeneous diagram is not split and is not a boundary link.
\end{proposition}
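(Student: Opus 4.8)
The plan is to induct on the number of special alternating pieces used to build the connected homogeneous diagram $D$ (Definition \ref{homogeneousdefn}), proving the combined statement that $L$ is both non-split and not a boundary link. Write $m$ for the number of components of $L$, and throughout keep track of the Seifert surface $R$ produced by Seifert's algorithm: it is connected precisely because $D$ is connected, and it is minimal genus by the preceding proposition. Note that I deliberately avoid trying to deduce non-splitness from incompressibility of $R$, since for a link a minimal genus surface can genuinely admit a compressing disc that separates its boundary (the annulus spanning a two-component unlink is the model), and such a disc reflects exactly the splitting one is trying to rule out; the argument would be circular.

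For the base case $D$ is a single connected special alternating diagram, hence positive or negative by the Remark above. Non-splitness is then immediate from Menasco's theorem, since such a diagram is in particular alternating and connected. For the boundary statement, if $m=1$ there is nothing to prove, so suppose $m\geq 2$. If $L$ were a boundary link then all its pairwise linking numbers would vanish; but in a positive (respectively negative) diagram $\mathrm{lk}(K_i,K_j)=\pm\tfrac12(\#\text{crossings between }K_i\text{ and }K_j)$, so the vanishing of every linking number forces every crossing of $D$ to be a self-crossing of a single component. Distinct components would then have disjoint projections, making $D$ disconnected, a contradiction. This settles the base case.

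For the inductive step, $D=D_1\#D_2$ or $D=D_1*D_2$, where $D_1,D_2$ are connected homogeneous diagrams built from fewer pieces, so by induction their links $L_1,L_2$ are non-split and not boundary links. The connected-sum case is handled by a standard innermost-disc argument: a hypothetical splitting sphere, or a hypothetical system of disjoint Seifert surfaces for the components witnessing a boundary-link structure, can be isotoped off the connected-sum sphere of Lemma \ref{connectedsumlemma}, after which it lies in one summand and yields a splitting (respectively a disjoint surface system) for $L_1$ or $L_2$, contradicting the inductive hypothesis.

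The main obstacle is the $*$-product step, where $R$ is a Murasugi sum $R_1\cup_S R_2$ of the two pieces along the disc $S$ bounded by the non-special Seifert circle $C$ used to split $D$. The relevant separating surface is the Murasugi sphere $P$ with $R\cap P=S$, and one must show that a splitting sphere $\Sigma$ for $L$, or a system of disjoint surfaces realising a boundary structure, can be put into standard position with respect to \emph{both} $P$ and $R$ and then cut along $P$ to produce the corresponding data for $L_1$ or $L_2$. Controlling $\Sigma\cap P$ and $\Sigma\cap R$ (or the analogous intersections for the surface system) simultaneously is the delicate point; I would carry it out in the spirit of ``Murasugi sum is a natural geometric operation'', exhausting $\Sigma$ by the discs it is cut into by $P$ and pushing it across $S$. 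As an alternative to the boundary-link half of this step, one expects to use multiplicativity of the reduced Alexander polynomial under $*$-product together with Theorem \ref{degreerapthm}: a boundary link of at least two components has degenerate reduced Alexander polynomial, whereas the product structure keeps $\nrap{L}{t}$ of the degree forced by $\chi(R)$, giving the contradiction without a direct de-plumbing argument.
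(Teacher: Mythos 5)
You should know at the outset that the paper contains no proof of this proposition: it is imported wholesale from Cromwell \cite{MR1002465}, whose own argument is a link-polynomial argument, not a geometric induction. Judged on its own merits, your proposal has a sound base case (Menasco for non-splitness; the positivity-of-linking-numbers argument also works for the paper's notion of boundary link, since positivity turns ``total linking number zero'' into ``no crossings between the two groups of components''), and the connected-sum step is standard. The genuine gap is the $*$--product step, and it is not a minor one. For a connected sum the summing sphere meets $L$ in two points and can be arranged to meet a Seifert surface in a single arc, which is what makes innermost-disc bookkeeping work; for a $*$--product the sphere $P$ meets $R$ in the entire $2n$--gon $S$ and meets $L$ in the $2n$ boundary arcs of $S$ (Definitions \ref{starproddefn}, \ref{msumdefn}), and there is no elementary way to isotope a splitting sphere, or a disconnected Seifert surface, off $P$. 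The assertion that such objects can be ``pushed across $S$'' to yield corresponding objects for $L_1$ or $L_2$ \emph{is} the proposition, specialised to Murasugi sums; in the literature this kind of statement is obtained with sutured-manifold or foliation technology (Gabai), not by exhausting a sphere by discs. Writing ``in the spirit of Murasugi sum is a natural geometric operation'' leaves exactly the hard content unproved.

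Your fallback is in fact the correct proof --- it is essentially Cromwell's --- but it must be promoted from an alternative for half of one step to the whole argument, at which point the induction disappears. The unlabelled connected-sum proposition in Section \ref{alexpolysec} together with Proposition \ref{alexpolyproductprop} gives $\nrap{L}{0}=\prod_i\nrap{L_i}{0}$ over the special alternating pieces $L_i$ of the homogeneous diagram; each piece has a connected alternating diagram, so is non-split and has $\nrap{L_i}{0}\geq 1$ (apply Theorem \ref{degreerapthm} to the \emph{pieces}, or use the spanning-tree count $\nrap{L_i}{0}=|\Tr(\graph{M}{D_i},v)|\geq 1$); hence $\rap{L}{t}\not\equiv 0$. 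Since split links have identically vanishing Alexander polynomial, $L$ is not split, and the same conclusion for boundary links rules out a disconnected Seifert surface. Two repairs are needed even here. First, as written you invoke Theorem \ref{degreerapthm} for $L$ itself; it is stated only for alternating links, and $L$ is merely homogeneous, so the degree statement ``forced by $\chi(R)$'' is not available --- but it is also not needed, since non-vanishing of $\nrap{L}{0}$ suffices. Second, the claim that a boundary link has vanishing Alexander polynomial is asserted, not proved, and you need it for the paper's definition (\emph{some} disconnected Seifert surface), which is weaker than the classical one (all components bound disjoint surfaces). It is true --- cutting the exterior along a disconnected Seifert surface shows $H_1$ of the infinite cyclic cover surjects onto a free $\mathbb{Z}[t,t^{-1}]$--module of rank one, so the order ideal vanishes --- but since this is the entire boundary-link half of the statement, it must be proved or properly cited rather than described as ``expected''.
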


Definition \ref{starproddefn} has since been generalised as follows.

\begin{definition}\label{msumdefn}
Let $R,R_1,R_2$ be Seifert surfaces for links $L,L_1,L_2$ respectively. $R$ is the \textit{Murasugi sum} of $R_1$ and $R_2$ if the following hold.
\begin{itemize}
	\item There is a 2--sphere $S\subset\Sphere$ dividing $\Sphere$ into two closed 3--balls $V_1$ and $V_2$.
	\item $R_1\subset V_1$, $R_2\subset V_2$ and $R=R_1\cup R_2$.
	\item $T=R_1\cap S=R_2\cap S$ is a closed disc.
	\item $T$ is a $2n$--gon for some $n\in\mathbb{N}$. That is, $\partial T$ consists of $2n$ simple arcs $\rho^1_1,\rho^2_1,\cdots,\rho^1_n,\rho^2_n$ such that, for all $i$, the arc $\rho^1_i$ is part of $L_1=\partial R_1$ and properly embedded in $R_2$ whereas $\rho^2_i$ is part of $L_2=\partial R_2$ and properly embedded in $R_1$.
\end{itemize}

When $n=2$, this operation is known as \textit{plumbing}.
\end{definition}

\begin{remark}
The connected sum of two links can be seen as a Murasugi sum, for example by taking $n=1$.
\end{remark}

\subsection{Controlling surfaces under Murasugi summation}
%Master document is alexpolypaper.tex.

When one of the surfaces involved is a Hopf band, plumbing becomes a fairly rigid process, with product disc decomposition providing a reverse operation as follows. 
See \cite{coward-2008} for the definitions.

\begin{theorem}[\cite{coward-2008} Theorem 2.3]
Let $R$ be a Seifert surface for a link $L$. Then there is a bijective correspondence between the following:

(i) decompositions of $R$, up to equivalence, as the plumbing of two surfaces, where the first surface is a Hopf band;

(ii) clean alternating directed product discs for $R$, up to ambient isotopy that leaves $\nhd(R)$ invariant and maintains the disc as a product disc throughout.
\end{theorem}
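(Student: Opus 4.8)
The plan is to build explicit maps in both directions between the two sets and check they are mutually inverse, using the standard principle that plumbing a Hopf band onto $R$ is, in the complementary sutured manifold, the operation reverse to decomposing along a product disc. Accordingly I would realise (ii) as follows: a clean alternating directed product disc for $R$ is a product disc $T$ in the complementary sutured manifold $(M,s)$ to $R$, with $M=\Sphere\setminus\Int(\nhd(R))$ and sutures $s=\partial R$, satisfying the orientation and pattern conditions of \cite{coward-2008}. The bijection then reads: a Hopf-band plumbing of $R$ corresponds to the product disc along which one decomposes $(M,s)$ to undo it.

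For the forward map (i)$\to$(ii), suppose $R=H*R_0$ with $H$ a Hopf band. The plumbing is supported in a ball $B\subset\Sphere$ meeting $R$ in the plumbing square, the $4$-gon of Definition \ref{msumdefn} with $n=2$. Inside $B$ the complement of $R$ contains a canonical product disc $T$ dual to the clasp of $H$; I would check directly that $\partial T$ crosses the sutures $s$ exactly twice, so $T$ is a product disc, and that the single clasp forces the two arcs of $\partial T$ on $S_+(M)$ and $S_-(M)$ into the pattern Coward calls \emph{alternating}, with the co-orientation from $R$ making $T$ \emph{directed} and the sign of the crossing recorded by this orientation. That $T$ is \emph{clean} is immediate since it meets $R$ only in $\partial T$. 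Finally I would verify that $T$ is well defined up to the stated isotopy, independently of the choices made in realising the plumbing inside $B$.

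For the reverse map (ii)$\to$(i), given such a $T$ I would perform the product disc decomposition of $(M,s)$ along $T$ and track the effect on $R$. The two points of $\partial T\cap s$, together with the alternating condition, should show that the local model near $T$ is exactly a clasp, so that $R$ is recovered as the plumbing of a single Hopf band $H$ (whose sign is read from the directed data) with a residual surface $R_0$. The key local computation is that one product disc contributes precisely one clasp, hence one Hopf band rather than a more complicated Murasugi summand; this is where the restriction to $n=2$, as opposed to a general $2n$-gon, is used.

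The main obstacle, and the bulk of the work, is matching the two equivalence relations and proving the composite maps are the identity. On side (i) decompositions are taken up to equivalence of plumbings, while on side (ii) product discs are taken up to ambient isotopy fixing $\nhd(R)$ and keeping $T$ a product disc throughout. I expect the hard part to be promoting an isotopy of $T$ to an isotopy of the whole plumbing ball carrying $H$ and $R_0$, and conversely; and, most delicately, showing that \emph{clean alternating directed} is not merely sufficient but exactly characterises the product discs that arise from Hopf-band plumbings, so that no such disc deplumbs something other than a Hopf band. It is here that I would rely most heavily on the precise definitions of \cite{coward-2008}.
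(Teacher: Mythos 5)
This statement is not proved in the paper at all: it is quoted, with its attribution, from \cite{coward-2008} (Theorem 2.3), and the paper only \emph{uses} it (immediately afterwards, to see that removing a white bigon from a special alternating diagram is a product disc decomposition that pulls a Hopf band off the Seifert surface). So there is no in-paper proof to compare your attempt against, and it can only be judged on its own terms. Your framework is the correct one, and it matches the mechanism the paper describes informally and exploits elsewhere: a Hopf band $H$ is a fibre surface, so its complementary sutured manifold is a product $H\times[1,2]$, and the product disc realising a plumbing $R=H*R_0$ is $\sigma\times[1,2]$ for a cocore arc $\sigma$ of $H$ chosen disjoint from the plumbing square --- exactly the construction used in the proof of Corollary \ref{incompcor}, where decomposing along such discs undoes a Murasugi sum with a fibre.

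The genuine gap is the one you yourself flag, and it is not a technicality to be outsourced: it is the theorem. Decomposing the complementary sutured manifold along an arbitrary product disc is generically \emph{not} a Hopf deplumbing. In a product sutured manifold $F\times[1,2]$, \emph{every} essential properly embedded arc $\sigma\subset F$ yields a product disc $\sigma\times[1,2]$ (this is how Corollary \ref{incompcor} cuts an arbitrary fibre surface down to a disc, with no Hopf bands in sight), yet the surface $F$ need not be a Hopf band plumbed onto $F$ cut along $\sigma$; the resulting operation on the surface is attachment of a band whose twisting and knotting can be anything. Consequently the adjectives ``clean'', ``alternating'' and ``directed'' carry the entire content of the correspondence: they must be shown to hold for the disc produced by your forward map, and, much harder, to force the local model near an abstract product disc to be a single clasp so that the decomposition is a Hopf deplumbing and nothing else. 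Your reverse map asserts this (``should show that the local model near $T$ is exactly a clasp'') without the definitions needed to even state the verification, and the matching of the two equivalence relations --- promoting an isotopy of discs to an equivalence of plumbings and conversely, so that the composites are identities --- is likewise deferred. A plan that postpones precisely these points to ``the precise definitions of \cite{coward-2008}'' reproduces the statement's architecture but not a proof of it.
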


Let $D$ be a special, alternating diagram of a link $L$. We may colour the regions of $D$ in a checkerboard pattern by making the inside of each Seifert circle black and colouring the remaining regions white. 
Then the Seifert surface $R$ given by applying Seifert's algorithm to $D$ is formed from the black regions. If a region $r$ of $D$ is a white bigon, it defines a product disc in the complement of $R$. 
The effect on $D$ of the product disc decomposition along this disc is to remove the region $r$, replacing the two crossings of $r$ with a single crossing. Such a change to $D$ therefore has the effect of pulling off a Hopf band from $R$.

The behaviour of Seifert surfaces under Murasugi summation depends on whether or not the links involved are fibred.

\begin{theorem}[\cite{MR1026928} Theorem 5.1]\label{msumthm}
Let $L_1,L_2$ be links with minimal genus Seifert surfaces $R_1,R_2$ respectively, and let $R$ be a Murasugi sum of $R_1$ and $R_2$. Then $L=\partial R$ has a unique minimal genus Seifert surface if and only if $L_1,L_2$ each have a unique minimal genus Seifert surface and $L_i$ is fibred with fibre $R_i$ for either $i=1$ or $i=2$.
\end{theorem}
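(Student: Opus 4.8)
The plan is to translate the statement completely into the language of sutured manifolds and then to isolate the point at which fibredness is forced. Write $(M,\gamma)$ for the complementary sutured manifold to $R$ and $(M_i,\gamma_i)$ for that of $R_i$. By the Remark following the definition of the complementary sutured manifold, $L$ (respectively $L_i$) has a unique minimal genus Seifert surface if and only if $(M,\gamma)$ (respectively $(M_i,\gamma_i)$) is an almost product sutured manifold; and $L_i$ is fibred with fibre $R_i$ precisely when $(M_i,\gamma_i)$ is a product sutured manifold. So the theorem is equivalent to the purely sutured-manifold assertion: $(M,\gamma)$ is almost product if and only if both $(M_i,\gamma_i)$ are almost product and at least one of them is a product.

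First I would set up the geometric decomposition. The defining sphere $S$ of the Murasugi sum meets $R$ exactly in the $2n$-gon $T$, so $E:=S\cap M=S\setminus\nhd(T)$ is a single disc properly embedded in $M$ that separates it into $M\cap V_1$ and $M\cap V_2$. Capping off the ball on the far side of $S$ (which does not change the homeomorphism type) identifies these two pieces, with their induced sutured structures, with $(M_1,\gamma_1)$ and $(M_2,\gamma_2)$. The boundary $\partial E$ runs past the $2n$ corners of $T$, and hence crosses $\gamma$ in exactly $2n$ points, cutting $\partial E$ into $n$ arcs lying in $S_+(M)$ and $n$ arcs lying in $S_-(M)$.

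For the connected-sum case $n=1$, the disc $E$ is a product disc, so the Proposition on product disc decomposition gives that $(M,\gamma)$ is almost product if and only if the decomposed manifold $(M_1,\gamma_1)\sqcup(M_2,\gamma_2)$ is. The heart of the matter is then an elementary observation about disjoint unions: an incompressible surface with boundary $\gamma_1\sqcup\gamma_2$ splits as $T_1\sqcup T_2$, and the ``mixed'' choice $S_+(M_1)\sqcup S_-(M_2)$ is incompressible with the correct boundary yet parallel to neither $S_+$ nor $S_-$ of the union, unless one factor has $S_+$ parallel to $S_-$, that is, unless one factor is a product. Thus $(M_1,\gamma_1)\sqcup(M_2,\gamma_2)$ is almost product exactly when both factors are almost product and one is a product. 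This is precisely where the fibredness hypothesis is consumed, and it explains why it is genuinely needed even for connected sums.

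For general $n$ the disc $E$ is no longer a product disc, and bridging this gap is the step I expect to be hardest. Rather than relying on a single decomposition, I would argue directly: given an incompressible $T$ with $\partial T=\gamma$, isotope it to meet $E$ minimally, so that no closed curves or boundary-parallel arcs survive (using incompressibility and the irreducibility of $M$) and each $T\cap M_i$ is incompressible in $M_i$. In the direction where one factor, say $M_1$, is a product $R_1\times\intvl$, I would use the product structure to absorb the $M_1$-side pieces, pushing $T$ off $E$ and reducing to a surface supported in $M_2$, to which the almost-product property of $M_2$ applies; reassembling shows $T$ is parallel to $S_\pm(M)$. Conversely, if both factors are almost product but neither is a product, I would glue the $S_+$ side of $M_1$ to the $S_-$ side of $M_2$ across $E$ to manufacture an incompressible surface with boundary $\gamma$ that is parallel to neither $S_+(M)$ nor $S_-(M)$, exhibiting a second minimal genus Seifert surface for $L$. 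The technical obstacle throughout is controlling the arcs of $T\cap E$ and verifying that the glued and absorbed surfaces are genuinely incompressible and genuinely (non-)parallel; this is exactly where the combinatorics of the $2n$-gon and the product structure interact, and where I would expect to spend most of the effort.
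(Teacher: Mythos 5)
First, a point of orientation: the paper does not prove this statement at all --- Theorem \ref{msumthm} is quoted from Kobayashi (\cite{MR1026928}, Theorem 5.1). What the paper does prove is the incompressible-surface analogue, Corollary \ref{incompcor} (which, being an ``if and only if'' given that one factor is fibred, delivers both directions of the theorem in that case), and it attributes the necessity of fibredness to the dual-surface phenomenon of Kakimizu (\cite{MR2131376}, Propositions 2.4, 2.5). Your skeleton matches the framework the paper sets up: your translation into almost product sutured manifolds is the paper's Remark following the definition of complementary sutured manifolds, your $n=1$ reduction via a product disc decomposition is sound, and the surface you build by gluing $S_+(M_1)$ to $S_-(M_2)$ across $E=T'$ is exactly the dual surface $R'$ of Definition \ref{dualdefn}.

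The genuine gaps sit precisely at the points you defer. (1) In the `if' direction for general $n$, your plan analyses $T\cap E$ and ``absorbs'' the pieces $T\cap M_i$ using the product structure; but those pieces do not have boundary $s_i$, so the almost product hypothesis on $M_i$ says nothing about them, and it is unclear how the absorption terminates. The idea that actually makes this direction work --- used in the paper's proof of Corollary \ref{incompcor} --- is to never cut along $E$ at all: choose arcs $\sigma_1,\dots,\sigma_m$ in the \emph{fibred} factor $R_2$, disjoint from the $2n$--gon $T$ (this is what the paper's non-separating-arc lemma exists for), decompose $M$ along the product discs $\sigma_i\times[1,2]$, and invoke the cited proposition that product disc decomposition preserves the almost product property in both directions; this collapses $M_2$ to a ball with a single suture and identifies $M$ with $M_1$. (2) In the `only if' direction you treat only the case that both $M_i$ are almost product and neither is a product; for $n\ge 2$ you must also show that if some $M_i$ fails to be almost product then so does $M$ (your disjoint-union argument covers this only when $n=1$). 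More importantly, the non-parallelism of the dual surface when neither factor is fibred is not a verification to be done at the end --- it is the entire content of this direction, and in the paper it is a cited theorem of Kakimizu rather than something your sketch indicates how to prove. (3) Your opening translation, ``$L$ has a unique minimal genus Seifert surface iff $(M,s)$ is almost product'', is immediate only in the direction (almost product $\Rightarrow$ unique): the converse can fail, since $M$ may contain incompressible Seifert surfaces of non-minimal genus that are parallel to neither $S_+(M)$ nor $S_-(M)$. This is exactly why the result the paper proves for itself (Corollary \ref{incompcor}) is stated for incompressible surfaces, where the equivalence is clean; for the minimal genus statement you are proving, the reformulation you start from already requires an argument.
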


\begin{proposition}[\cite{MR2131376} Proposition 2.3]\label{fibredsumprop}
$L$ is fibred with fibre $R$ if and only if both $L_1$ and $L_2$ are fibred with fibres $R_1$ and $R_2$ respectively.
\end{proposition}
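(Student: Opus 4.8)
The plan is to recast fibredness in sutured-manifold language, where the Murasugi sum has a transparent geometric meaning, and then to track a product structure across the summing sphere. The key reformulation, due to Stallings, is that a link $L$ is fibred with fibre $R$ if and only if the complementary sutured manifold to $R$ is a product sutured manifold: cutting $\Sphere$ along a fibre yields exactly $R\times[-1,1]$, and conversely a product structure on the complement reassembles into a fibration of $\Sphere\setminus L$ over the circle. Writing $M,M_1,M_2$ for the complementary sutured manifolds to $R,R_1,R_2$, the proposition therefore becomes the statement that $M$ is a product sutured manifold if and only if $M_1$ and $M_2$ both are.

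First I would analyse the summing sphere $S$ of Definition \ref{msumdefn}. Since $R\cap S=T$ is the gluing $2n$-gon, $S$ meets $M$ in the disc $T'=S\setminus\nhd(T)$ (the neighbourhood taken in $S$), properly embedded in $M$, and $\partial T'$ crosses the sutures of $M$ in exactly $2n$ points, one at each corner of the $2n$-gon. Cutting $M$ along $T'$ separates it into the two pieces $M\cap V_1$ and $M\cap V_2$ lying in the summing balls. I would check that, after filling the complementary ball $V_j$ back in (attaching a $3$-ball along a boundary disc, which changes neither the homeomorphism type nor the sutures), the piece in $V_i$ is precisely $M_i$; the alternation of $L_1$- and $L_2$-arcs around $\partial T$ is exactly what makes the sutures match $\partial R_i$ under this identification.

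For the backward implication I would reassemble. Given product structures $M_i\cong R_i\times[-1,1]$, the gluing of the two pieces along $T'$ prescribed by the $2n$-gon is a product gluing, so the reassembled $M$ is again $R\times[-1,1]$; this is the sutured-manifold form of Stallings' observation that the monodromy of a Murasugi sum is the composite of the monodromies of its factors. For the forward implication I would run the decomposition in reverse, cutting the product $M\cong R\times[-1,1]$ along $T'$ and arguing that each resulting piece is forced to be a product.

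The forward implication is where the difficulty lies. For $n=1$ the disc $T'$ meets the sutures twice and is an honest product disc, so decomposing the product $M$ along it yields products directly, by the sutured-manifold fact that a product disc decomposition of a product sutured manifold gives a product. For $n>1$, however, $T'$ crosses the sutures $2n$ times and is not a product disc, so that fact does not apply as stated. I would resolve this either by subdividing $T'$ into $n$ product discs compatible with the $2n$-gon and decomposing iteratively, or by invoking the tautness and essential uniqueness of sutured-manifold hierarchies to conclude that decomposing a product along a norm-minimising disc can only produce products. The delicate point, to be pinned down in either approach, is the suture bookkeeping: verifying that the pieces produced by the decomposition really are $M_1$ and $M_2$ with their correct sutures, and that the product structure survives filling the auxiliary ball back in.
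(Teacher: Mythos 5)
The paper itself gives no proof of this proposition: it is quoted from Kakimizu (ultimately it is Gabai's theorem that Murasugi summation preserves and reflects fibredness), so the only comparison available is with the known proof. Your reformulation --- $L$ is fibred with fibre $R$ exactly when the complementary sutured manifold is a product --- is correct and is the right framework; your description of cutting $M$ along the disc $T'=S\setminus\Int_S(T)$ into pieces that become $M_1$ and $M_2$ after capping with a ball agrees with the paper's own discussion following Definition \ref{dualdefn}. The backward implication is essentially Stallings' argument, and although the phrase ``product gluing'' conceals a genuine construction (one must flatten $R_2\times[-1,1]$ down onto the gluing disc using collars, and carry out precisely the suture bookkeeping you defer), that direction is fixable as sketched.

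The genuine gap is the forward implication, and neither of your proposed repairs works. Subdivision of $T'$ is not available: the subdividing arcs lie in the interior of $M$, so the sub-discs are not properly embedded and ``decomposing iteratively along them'' is undefined; product disc decomposition applies only to properly embedded discs meeting the sutures twice, and there is no reason the single decomposition along $T'$ factors through a sequence of such decompositions (as you note, only the case $n=1$ is an honest product disc). The appeal to ``tautness and essential uniqueness of sutured-manifold hierarchies'' is likewise not a real tool: no such uniqueness statement exists in the needed form, and the principle ``cutting a product along a norm-minimising disc yields products'' is, in the generality required, exactly the hard content of the theorem, so invoking it is circular. The idea that is actually missing is a homotopy-theoretic characterisation of products: for $M$ irreducible, $(M,s)$ is a product sutured manifold if and only if the inclusion $S_+(M)\hookrightarrow M$ induces an isomorphism on $\pi_1$ (Stallings/Waldhausen). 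With that in hand, your setup finishes the proof in both directions at once: since $T'$ is a disc, van Kampen gives $\pi_1(M)\cong\pi_1(M_1)\ast\pi_1(M_2)$; since $T$ is a disc, $\pi_1(R)\cong\pi_1(R_1)\ast\pi_1(R_2)$, compatibly with the inclusions; a homomorphism of free products $f_1\ast f_2$ is an isomorphism if and only if each $f_i$ is; and irreducibility passes between $M$ and the $M_i$ across a disc gluing by an innermost-circle argument. Without this characterisation (or an equivalent substitute) the forward implication in your proposal does not go through.
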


\begin{definition}\label{dualdefn}
If $R$ is the Murasugi sum of $R_1, R_2$ as in Definition \ref{msumdefn}, let $T'=S\setminus\Int_S(T)$. Then $R'=(R\setminus T)\cup T'$ is another Seifert surface for $L$ (see Figure \ref{incompressiblepic1}). Kakimizu \cite{MR2131376} calls $R'$ a \textit{dual} of $R$.
\begin{figure}[htbp]
\centering
%LaTeX with PSTricks extensions
%%Creator: 0.46
%%Please note this file requires PSTricks extensions
\psset{xunit=.5pt,yunit=.5pt,runit=.5pt}
\begin{pspicture}(340,300)
{
\newgray{lightgrey}{.8}
\newgray{lightishgrey}{.7}
\newgray{grey}{.6}
\newgray{midgrey}{.4}
\newgray{darkgrey}{.3}
}
{
\pscustom[linestyle=none,fillstyle=solid,fillcolor=grey]
{
\newpath
\moveto(200,9.99998262)
\curveto(198,26.99998262)(200,69.99999262)(200,69.99999262)
\lineto(101,69.99999262)
\curveto(99.672786,50.11038262)(99.851226,30.05358262)(100,9.99998262)
\curveto(100,9.99998262)(197,9.99998262)(200,9.99998262)
\closepath
}
}
{
\pscustom[linestyle=none,fillstyle=solid,fillcolor=grey]
{
\newpath
\moveto(268,49.99998262)
\curveto(266.76669,95.61667262)(266.08146,99.95927262)(266,100.00000262)
\lineto(166,100.00000262)
\curveto(166.92371,82.82297262)(168.18988,66.50178262)(168,49.99998262)
\lineto(268,49.99998262)
\closepath
}
}
{
\pscustom[linestyle=none,fillstyle=solid,fillcolor=lightgrey]
{
\newpath
\moveto(118.14913626,230.00002603)
\lineto(356.29826041,230.00002603)
\lineto(258.14914953,60.00000573)
\lineto(20.00002539,60.00000573)
\lineto(118.14913626,230.00002603)
\closepath
}
}
{
\pscustom[linewidth=1,linecolor=black,linestyle=dashed,dash=2 4]
{
\newpath
\moveto(118.14913626,230.00002603)
\lineto(356.29826041,230.00002603)
\lineto(258.14914953,60.00000573)
\lineto(20.00002539,60.00000573)
\lineto(118.14913626,230.00002603)
\closepath
}
}
{
\pscustom[linewidth=1,linecolor=black]
{
\newpath
\moveto(213,120.00002262)
\curveto(196.96487,120.56149262)(199.2089,11.38448262)(200,9.99998262)
}
}
{
\pscustom[linestyle=none,fillstyle=solid,fillcolor=grey]
{
\newpath
\moveto(148,179.00000262)
\curveto(148,179.00000262)(113,120.00000262)(113,120.00000262)
\curveto(95.98589,128.79949262)(79.67373,236.64365262)(82,240.00000262)
\lineto(117,300.00000262)
\curveto(123.60605,254.82734262)(127.57852,203.46249262)(148,179.00000262)
\closepath
}
}
{
\pscustom[linewidth=3,linecolor=black]
{
\newpath
\moveto(112,120.00000262)
\curveto(92,140.00000262)(82,240.00000262)(82,240.00000262)
}
}
{
\pscustom[linestyle=none,fillstyle=solid,fillcolor=midgrey]
{
\newpath
\moveto(147.32037699,180.00000452)
\lineto(247.32037699,180.00000452)
\lineto(212.67951882,120.00000512)
\lineto(112.67951882,120.00000512)
\lineto(147.32037699,180.00000452)
\closepath
}
}
{
\pscustom[linewidth=3,linecolor=black]
{
\newpath
\moveto(147,180.00000262)
\curveto(127,200.00000262)(117,300.00000262)(117,300.00000262)
}
}
{
\pscustom[linewidth=1,linecolor=black]
{
\newpath
\moveto(248,180.00002262)
\curveto(268,180.00002262)(268,49.99998262)(268,49.99998262)
}
}
{
\pscustom[linewidth=1,linecolor=black]
{
\newpath
\moveto(113,120.00004262)
\curveto(96.96487,120.56151262)(99.2089,11.38448262)(100,9.99998262)
}
}
{
\pscustom[linewidth=3,linecolor=black]
{
\newpath
\moveto(112,120.00000262)
\lineto(213,120.00000262)
}
}
{
\pscustom[linewidth=3,linecolor=black]
{
\newpath
\moveto(147,180.00000262)
\lineto(248,180.00000262)
}
}
{
\pscustom[linewidth=1,linecolor=black,linestyle=dashed,dash=4 2]
{
\newpath
\moveto(147,180.00000262)
\curveto(167,180.00000262)(167,49.99998262)(167,49.99998262)
}
}
{
\pscustom[linestyle=none,fillstyle=solid,fillcolor=grey]
{
\newpath
\moveto(212,120.00000262)
\curveto(222.61747,122.17334262)(227.90667,137.14599262)(228,137.00000262)
\lineto(263,197.00000262)
\lineto(247,180.00000262)
\lineto(212,120.00000262)
\closepath
}
}
{
\pscustom[linestyle=none,fillstyle=solid,fillcolor=grey]
{
\newpath
\moveto(234,155.00000262)
\curveto(242.73572,181.73260262)(243.72168,239.66517262)(243,240.00000262)
\lineto(278,300.00000262)
\curveto(276.2867,234.48416262)(268.51167,206.82091262)(259,190.00000262)
\curveto(249.19472,181.34213262)(241.90363,166.82728262)(234,155.00000262)
\closepath
}
}
{
\pscustom[linewidth=1,linecolor=black,linestyle=dashed,dash=4 2]
{
\newpath
\moveto(247,180.00000262)
\lineto(150,180.00000262)
}
}
{
\pscustom[linewidth=1,linecolor=black,linestyle=dashed,dash=4 2]
{
\newpath
\moveto(248,180.00000262)
\curveto(278,190.00000262)(278,300.00000262)(278,300.00000262)
}
}
{
\pscustom[linewidth=3,linecolor=black]
{
\newpath
\moveto(213,120.00000262)
\curveto(243,130.00000262)(243,240.00000262)(243,240.00000262)
}
}
{
\pscustom[linewidth=3,linecolor=black]
{
\newpath
\moveto(255,185.00000262)
\curveto(278,207.00000262)(277.7355,298.81472262)(278,300.00000262)
}
}
{
\pscustom[linewidth=1,linecolor=black,linestyle=dashed,dash=4 2]
{
\newpath
\moveto(248,180.00000262)
\curveto(268,180.00000262)(268,49.99998262)(268,49.99998262)
}
}
{
\put(290,170){$T'$}
\put(60,200){$L_1$}
\put(95,245){$R_1$}
\put(150,20){$R_2$}
\put(180,140){$T$}
}
\end{pspicture}
\caption{\label{incompressiblepic1}}
\end{figure}
\end{definition}

$R'$ is equivalent to $R$ exactly if $L_i$ is fibred with fibre $R_i$ for either $i=1$ or $i=2$ (see \cite{MR2131376} Propositions 2.4, 2.5), which is part of the reason for Theorem \ref{msumthm}. 
In the context of Murasugi sums arising from non-special diagrams, this dual corresponds to a different choice of ordering of the heights of the discs in $\Sphere$ when applying Seifert's algorithm. For a special diagram, no such choice is available.

We wish to prove the `if' direction of Theorem \ref{msumthm} for incompressible surfaces.

First we consider in more detail the effect of the Murasugi sum operation on the complementary sutured manifolds to the surfaces involved. 
We retain the notation of Definitions \ref{msumdefn}, \ref{dualdefn}. 
Let $\nhd(R)=R\times[1,2]$ be a product neighbourhood of $R$ in $\Sphere$, and $\nhd(R_1),\nhd(R_2)$ product neighbourhoods of $R_1,R_2$ respectively, with $T\times\{1\}\subseteq\V_1$ in each case. Let $(M,s),(M_1,s_1),(M_2,s_2)$ be the corresponding complementary sutured manifolds. Then, for example, $\gamma(s)=\partial R\times[1,2]$. In addition, for $i=1,2$, let $(M'_i,s'_i)$ be the sutured manifold with $M'_i=M\cap V_i$ and $\gamma(s'_i)=\gamma(s)\cap V_i$.

We see that $(M'_i,s'_i)$ is homeomorphic to $(M_i,s_i)$ as a sutured manifold for each $i$, and that $(M,s)$ is given by gluing $(M'_1,s'_1)$ to $(M'_2,s'_2)$ along $T'$. The homeomorphism from $M'_1$ to $M_1$ takes $T'$ to $T\times\{2\}$. Thus, as seen from $M_1$, the Murasugi sum is given by attaching another manifold along $T\times\{2\}\subseteq\partial M_1$. This is clearly unaffected by any changes made to $M_1$ that leave a neighbourhood of $T\times\{2\}$ unchanged.

\begin{remark}
If $R_2$ is a disc then $R$ is ambient isotopic to $R_1$ in $\Sphere$.
\end{remark}

Now focus on $L_2$, and suppose it is fibred. Then $M_2$ has a product structure $R_2\times[1,2]$. Choose this product structure so that $R_2\times\{2\}\subseteq M_2$ is identified with $R_2\times\{2\}\subseteq \nhd(R_2)$ by the identity map. In general, the identification between $R_2\times\{1\}\subseteq M_2$ and $R_2\times\{1\}\subseteq \nhd(R_2)$ will not be the identity.
For an arc $\rho$ properly embedded in $R_2$, the surface $\rho\times[1,2]$ forms a product disc in $M_2$.

\begin{lemma}
Let $S$ be a connected, compact, orientable surface that is not a disc. Let $T\subset S$ be a $2n$--gon in $S$ for some $n\in\mathbb{N}$. That is, $T$ is an embedded disc and  $\partial T$ consists of $2n$ simple arcs $\rho^1_1,\rho^2_1,\cdots,\rho^1_n,\rho^2_n$ such that, for all $i$, the arc $\rho^1_i$ is part of $\partial S$ whereas $\rho^2_i$ is properly embedded in $S$. Then there is a non-separating arc properly embedded in $S$ that is disjoint from $T$.
\end{lemma}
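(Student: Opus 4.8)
The plan is to cut $S$ along the properly embedded arcs $\rho^2_1,\dots,\rho^2_n$ and analyse the result. Write $S_0=\overline{S\setminus T}$; cutting $S$ along $\rho^2:=\rho^2_1\cup\cdots\cup\rho^2_n$ peels $T$ off as a disc and leaves $S_0$, and $S$ is recovered by regluing $T$ to $S_0$ along the $\rho^2_i$. The boundary $\partial S_0$ consists of the arcs $\rho^2_i$ together with the ``free'' arcs $\partial_0:=\partial S\setminus\bigcup_i \mathrm{int}(\rho^1_i)$ (each $\rho^1_i$ is absorbed into $T$ and disappears from $S_0$). I would first record two reductions. First, any properly embedded arc $\alpha$ of $S_0$ whose endpoints lie in $\partial_0$ is automatically disjoint from $T$: it misses $\partial T=\bigcup_i(\rho^1_i\cup\rho^2_i)$, hence lies wholly inside or wholly outside $T$, and the former is impossible as $\alpha$ has endpoints on $\partial S$. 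Second, since $S=S_0\cup T$ with $T$ connected and attached along the $\rho^2_i$, such an $\alpha$ is non-separating in $S$ exactly when cutting $S_0$ along $\alpha$ produces no component disjoint from every $\rho^2_i$ (any such component would be cut off from $T$). As $S$ is connected, every component of $S_0$ already meets at least one $\rho^2_i$.

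A key observation is that, on $\partial S_0$, each arc $\rho^2_i$ is flanked at both ends by free boundary: at each endpoint of $\rho^2_i$ the boundary of $S$ continues as a $\rho^1$-arc on the $T$-side and as a free arc on the $S_0$-side (consecutive $\rho^1$-arcs never share a vertex in the $2n$-gon, so a genuine free arc always appears). With this I would split into two cases according to how the $\rho^2_i$ are distributed among the components of $S_0$. If some component $C$ meets at least two of them, take $\alpha$ parallel to one, say $\rho^2_i$, with its endpoints on the flanking free arcs; then $\alpha$ cuts off a disc whose only $\rho^2$-arc is $\rho^2_i$, while the rest of $C$ still meets another $\rho^2_j$, so no $\rho^2$-free component is created and $\alpha$ is non-separating in $S$. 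Otherwise every component of $S_0$ meets exactly one $\rho^2_i$, so $S$ is the boundary connected sum of these components (the disc $T$ being glued to each along a single arc); since $S$ is not a disc, some component $C$ is not a disc. Then $C$ carries a non-separating properly embedded arc, which I can choose to have both endpoints on $\partial C\setminus\rho^2_i\subset\partial_0$; being non-separating in $C$ and disjoint from $\rho^2_i$, it leaves $C$ connected and still attached to $T$, hence is non-separating in $S$.

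The main thing to get right is the boundary bookkeeping: verifying that cutting along $\rho^2$ really peels $T$ off as a disc, that the $\rho^1_i$ vanish from $S_0$ while each $\rho^2_i$ acquires free boundary on both sides, and that ``non-separating in $S$'' is captured precisely by the condition that no $\rho^2$-free piece is cut off. The only genuinely surface-topological input is the standard fact that a connected compact orientable surface with boundary admits a non-separating properly embedded arc if and only if it is not a disc (equivalently, $H_1(\cdot,\partial)\neq 0$), used in the second case; there one must also check that such an arc can be taken with endpoints off the single arc $\rho^2_i$, which is clear since $\partial C\setminus\rho^2_i$ still contains an essential part of each relevant boundary component.
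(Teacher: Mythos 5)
Your proof is correct, and it is essentially the paper's own argument in different packaging: the paper's dichotomy --- some $\rho^2_m$ non-separating in $S$ (handled by pushing $\rho^2_m$ off $T$) versus all $\rho^2_i$ separating (handled by cutting along all the $\rho^2_i$, taking a non-separating arc in a non-disc component, and adjusting its endpoints) --- coincides exactly with yours, since a component of $S_0$ meets at least two of the $\rho^2_i$ if and only if some $\rho^2_i$ is non-separating in $S$, and in that case your push-off $\alpha$ is precisely the paper's isotoped copy of $\rho^2_m$. The remaining differences are bookkeeping: you certify non-separation through your gluing criterion (no piece cut off from $T$), and you move the arc's endpoints off $\rho^2_i$ by sliding along $\partial C$, where the paper instead appends segments of $\rho^2_m$ and $\partial T$.
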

\begin{proof}
Suppose $\rho^2_m$ is non-separating for some $m\leq n$. Up to isotopy, this arc can be made disjoint from $T$, and we are done.

Suppose instead that $\rho^2_i$ is separating for each $i\leq n$. Cut $S$ along each $\rho^2_i$, and let $S'$ be a component of the resulting surface that is not a disc (since each arc we have cut along is separating in $S$, and $T$ is a disc while $S$ is not, at least one such component exists). Let $\rho'$ be a non-separating arc properly embedded in $S'$. In $S$, the arc $\rho'$ may have one or both of its endpoints on $\partial T$. If $\rho'(0)$ lies on $\rho^2_m$, add part of $\rho^2_m$ to the start of $\rho'$. Similarly add part of $\partial T$ to the end of $\rho'$ if needed. This gives an arc $\rho''$ that is properly embedded in $S$ and is disjoint from $T$.  Note that if both ends of $\rho'$ lie on some $\rho^2_i$, we can clearly still ensure that $\rho''$ is embedded. Since $\rho''$ is isotopic to $\rho'$ in $S'$, it is non-separating in $S$.
\end{proof}

\begin{corollary}\label{incompcor}
Let $L_1,L_2$ be links with incompressible Seifert surfaces $R_1,R_2$ respectively, such that $L_2$ is fibred with fibre $R_2$. Let $R$ be a Murasugi sum of $R_1$ and $R_2$, and let $L=\partial R$. Then $L$ has a unique incompressible Seifert surface if and only if $L_1$ does.
\end{corollary}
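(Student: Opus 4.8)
The plan is to recast both sides of the biconditional in terms of complementary sutured manifolds and then to strip away the fibred summand by a sequence of product disc decompositions. Write $(M,s)$ for the complementary sutured manifold to $R$ and $(M_1,s_1)$ for that to $R_1$. By the remark that an incompressible Seifert surface is unique exactly when its complementary sutured manifold is an almost product sutured manifold (a consequence of Theorem \ref{connectedthm}), the assertion of the corollary is equivalent to: $M$ is an almost product sutured manifold if and only if $M_1$ is. Recall from the discussion preceding the statement that, because $L_2$ is fibred with fibre $R_2$, the summand $M_2$ carries a product structure $R_2\times[1,2]$, and that $M$ is obtained from $M_1$ by gluing this product piece on along $T$, the gluing being unaffected by anything done to $M_2$ away from a neighbourhood of the gluing region.

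I would argue by induction on $-\chi(R_2)$. If $R_2$ is a disc then, by the remark that $R$ is ambient isotopic to $R_1$ in this case, we have $M\cong M_1$ as sutured manifolds and there is nothing to prove. Otherwise $R_2$ is not a disc, so the preceding Lemma, applied with the ambient surface equal to $R_2$ and with the $2n$--gon $T$ being precisely the summing region of Definition \ref{msumdefn}, supplies a non-separating arc $\rho$ properly embedded in $R_2$ and disjoint from $T$. Then $P=\rho\times[1,2]$ is a product disc in $M_2=R_2\times[1,2]$: its boundary meets the suture exactly at the two points where $\{\rho(0)\}\times[1,2]$ and $\{\rho(1)\}\times[1,2]$ cross it.

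Since $\rho$ is disjoint from $T$, the disc $P$ lies in $M_2$ away from the gluing region, so it is also a product disc in $M$ and the sutures of $M$ near $\partial P$ agree with those of $M_2$. Performing the product disc decomposition $(M,s)\to(M'',s'')$ along $P$ and invoking the cited fact that such a decomposition preserves the almost product property in both directions (\cite{MR1026928} Lemmas 2.1, 2.2), I conclude that $M$ is almost product if and only if $M''$ is. Because $\rho$ avoids $T$, cutting commutes with the gluing: $M''$ is again $M_1$ with a product piece attached along $T$, this time the product over the surface obtained by cutting $R_2$ along $\rho$. As $\rho$ is non-separating, this cut surface is connected, still contains $T$ as a $2n$--gon, and has strictly smaller $-\chi$, so the inductive hypothesis gives that $M''$ is almost product if and only if $M_1$ is. Chaining the two biconditionals completes the induction.

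The step I expect to require the most care is the claim that decomposing $M$ along $P$ really produces the analogous gluing over the cut surface, that is, that cutting $M$ along $P$ leaves the $M_1$ side and the gluing locus $T$ completely untouched and only simplifies the product summand. This is exactly where disjointness of $\rho$ from $T$ is essential, and it is the hinge that lets the abstract decomposition result be applied repeatedly without ever disturbing $M_1$. A secondary point to verify is that the surfaces remaining at each stage are genuine incompressible Seifert surfaces, so that the uniqueness/almost-product dictionary supplied by Theorem \ref{connectedthm} is valid throughout.
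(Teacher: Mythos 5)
Your proof is correct and follows essentially the same route as the paper's: translate uniqueness into the almost-product property of the complementary sutured manifold via the earlier remark, use the preceding lemma to produce non-separating arcs in $R_2$ disjoint from $T$, and strip the fibred summand away by product disc decompositions (each a product disc $\rho\times[1,2]$ avoiding the gluing region $T\times\{2\}$) until only a disc remains, at which point the sutured manifold is homeomorphic to $M_1$. The only difference is bookkeeping: the paper constructs all the cutting arcs $\sigma_1,\cdots,\sigma_m$ at once and decomposes along all the discs $\sigma_i\times[1,2]$ in a single step, whereas you interleave the decompositions into an induction on $-\chi(R_2)$.
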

\begin{proof}
Let $T\subseteq R_2$ be the $2n$--gon along which $R_1$ and $R_2$ are joined in the Murasugi sum. Inductively construct disjoint arcs $\sigma_1,\cdots,\sigma_m$, all properly embedded in $R_2$ and disjoint from $T$, such that cutting $R_2$ along $\sigma_1,\cdots,\sigma_m$ gives a single disc.

Let $M_1,M_2$ be the complementary sutured manifolds constructed from $R_1,R_2$ respectively, and let $M$ be obtained by gluing $M_1$ and $M_2$ as described above. Then $M$ is the complementary sutured manifold given by $R$.
Choose a product structure $R_2\times[1,2]$ for $M_2$ as above.
Let $\mathbb{D}_i$ be the product disc $\sigma_i\times[1,2]$ in $M_2$ for each $i$. Since $\sigma_i$ is disjoint from $T$, the disc $\mathbb{D}_i$ is disjoint from a neighbourhood of $T\times\{2\}$ in $M_2\subseteq M$, and so forms a product disc in $M$.

Let $M'$ be the sutured manifold given by decomposing $M$ along each of the discs $\mathbb{D}_i$. Then $M'$ is an almost product sutured manifold if and only if $M$ is. Since $\sigma_1,\cdots,\sigma_m$ divide $R_2$ into a disc, we see that $M'$ is constructed by gluing $M_1$ to a sphere with a single suture. Thus $M'$ is the complementary sutured manifold to a surface $R'$, which is the Murasugi sum of $R_1$ with a disc. This gives that $M'$ is homeomorphic to $M_1$.
\end{proof}
\subsection{Trees}
%Master document is alexpolypaper.tex.

\begin{definition}
A subgraph $\mathcal{T}$ of a digraph $(\mathcal{G},\mathcal{O})$ is a \textit{directed tree} if it is connected, it contains no simple closed curves, and any $v\in\V(\mathcal{G})$ is the terminal vertex of at most one edge of $\mathcal{T}$. 
There is then one vertex $u\in\V(\mathcal{T})$ that is not the terminal vertex of any edge of $\mathcal{T}$. 
This vertex is called the \textit{origin} of $\mathcal{T}$. 
For $v\in\V(\mathcal{T})$, the unique simple path from $u$ to $v$ in $\mathcal{T}$ is a directed path. 
Any $v\in\V(\mathcal{T})$ that is not the initial vertex of any edge of $\mathcal{T}$ is called a \textit{leaf}.

A directed tree $\mathcal{T}$ is a \textit{directed spanning subtree} of $\mathcal{G}$ if $\V(\mathcal{T})=\V(\mathcal{G})$. 
Define $\Tr(\mathcal{G},v)=\{\mathcal{T}:\textrm{$\mathcal{T}$ is a directed spanning subtree of $\mathcal{G}$ with origin $v$}\}$.

Figure \ref{treedefnspic1} shows an example of a digraph $\mathcal{G}$ with a directed spanning subtree $\mathcal{T}$. 
The origin of $\mathcal{T}$ is $u$, and $v,v'$ are leaves.
\begin{figure}[htbp]
\centering
%LaTeX with PSTricks extensions
%%Creator: 0.47
%%Please note this file requires PSTricks extensions
\psset{xunit=.35pt,yunit=.35pt,runit=.35pt}
\begin{pspicture}(219.99990845,269.99990845)
{
\pscustom[linewidth=2,linecolor=black,fillstyle=solid,fillcolor=black]%vertex
{
\newpath
\moveto(30,159.99990583)
\curveto(30,154.47705833)(25.5228475,149.99990583)(20,149.99990583)
\curveto(14.4771525,149.99990583)(10,154.47705833)(10,159.99990583)
\curveto(10,165.52275333)(14.4771525,169.99990583)(20,169.99990583)
\curveto(25.5228475,169.99990583)(30,165.52275333)(30,159.99990583)
\closepath
}
}
{
\pscustom[linewidth=2,linecolor=black,fillstyle=solid,fillcolor=black]%vertex
{
\newpath
\moveto(210,39.99990583)
\curveto(210,34.47705833)(205.5228475,29.99990583)(200,29.99990583)
\curveto(194.4771525,29.99990583)(190,34.47705833)(190,39.99990583)
\curveto(190,45.52275333)(194.4771525,49.99990583)(200,49.99990583)
\curveto(205.5228475,49.99990583)(210,45.52275333)(210,39.99990583)
\closepath
}
}
{
\pscustom[linewidth=2,linecolor=black,fillstyle=solid,fillcolor=black]%vertex
{
\newpath
\moveto(190,179.99990583)
\curveto(190,174.47705833)(185.5228475,169.99990583)(180,169.99990583)
\curveto(174.4771525,169.99990583)(170,174.47705833)(170,179.99990583)
\curveto(170,185.52275333)(174.4771525,189.99990583)(180,189.99990583)
\curveto(185.5228475,189.99990583)(190,185.52275333)(190,179.99990583)
\closepath
}
}
{
\pscustom[linewidth=2,linecolor=black,fillstyle=solid,fillcolor=black]%vertex
{
\newpath
\moveto(90,239.99990583)
\curveto(90,234.47705833)(85.5228475,229.99990583)(80,229.99990583)
\curveto(74.4771525,229.99990583)(70,234.47705833)(70,239.99990583)
\curveto(70,245.52275333)(74.4771525,249.99990583)(80,249.99990583)
\curveto(85.5228475,249.99990583)(90,245.52275333)(90,239.99990583)
\closepath
}
}
{
\pscustom[linewidth=2,linecolor=black,fillstyle=solid,fillcolor=black]%vertex
{
\newpath
\moveto(90,79.99990583)
\curveto(90,74.47705833)(85.5228475,69.99990583)(80,69.99990583)
\curveto(74.4771525,69.99990583)(70,74.47705833)(70,79.99990583)
\curveto(70,85.52275333)(74.4771525,89.99990583)(80,89.99990583)
\curveto(85.5228475,89.99990583)(90,85.52275333)(90,79.99990583)
\closepath
}
}
{
\pscustom[linewidth=1,linecolor=black]%edge
{
\newpath
\moveto(180,179.99990845)
\curveto(180,179.99990845)(202.02931,198.66746845)(205,209.99994845)
\curveto(209.14963,225.82981845)(195.84324,245.73425845)(180,244.99994845)
\curveto(164.17807,244.26661845)(151.92968,224.83687845)(155,209.99994845)
\curveto(157.37406,198.52763845)(180,179.99990845)(180,179.99990845)
\closepath
}
}
{
\pscustom[linewidth=1,linecolor=black]%edge
{
\newpath
\moveto(80,79.99990845)
\lineto(80,239.99990845)
\lineto(180,179.99990845)
\lineto(80,79.99990845)
\closepath
}
}
{
\pscustom[linewidth=1,linecolor=black]%edge
{
\newpath
\moveto(200,39.99990845)
\lineto(80,79.99990845)
}
}
{
\pscustom[linewidth=1,linecolor=black]%edge
{
\newpath
\moveto(20,159.99990845)
\lineto(80,239.99990845)
}
}
{
\pscustom[linewidth=1,linecolor=black]%edge
{
\newpath
\moveto(83.75644,80.00500845)
\curveto(83.75644,80.00500845)(138.21261,98.77423845)(160,89.99990845)
\curveto(181.78739,81.22557845)(200,39.99990845)(200,39.99990845)
\curveto(200,39.99990845)(145.41047,9.51828845)(120,19.99988845)
\curveto(98.39845,28.91038845)(83.75644,80.00500845)(83.75644,80.00500845)
\closepath
}
}
{
\pscustom[linewidth=3,linecolor=black]%tree edge
{
\newpath
\moveto(200,39.99990845)
\lineto(80,79.99990845)
\lineto(80,239.99990845)
\lineto(20,159.99990845)
}
}
{
\pscustom[linewidth=3,linecolor=black]%tree edge
{
\newpath
\moveto(80,79.99990845)
\lineto(180,179.99990845)
}
}
{
\pscustom[linewidth=2,linecolor=black,fillstyle=solid,fillcolor=black]%arrowhead
{
\newpath
\moveto(128.85786438,128.85777282)
\lineto(128.85786438,117.54406432)
\lineto(143,142.99990845)
\lineto(117.54415588,128.85777282)
\lineto(128.85786438,128.85777282)
\closepath
}
}
{
\pscustom[linewidth=2,linecolor=black,fillstyle=solid,fillcolor=black]%arrowhead
{
\newpath
\moveto(50.82727327,201.12800836)
\lineto(49.10694261,212.31015764)
\lineto(39,184.99990845)
\lineto(62.00942255,202.84833902)
\lineto(50.82727327,201.12800836)
\closepath
}
}
{
\pscustom[linewidth=2,linecolor=black,fillstyle=solid,fillcolor=black]%arrowhead
{
\newpath
\moveto(80,144.99990845)
\lineto(88,136.99990845)
\lineto(80,164.99990845)
\lineto(72,136.99990845)
\lineto(80,144.99990845)
\closepath
}
}
{
\pscustom[linewidth=2,linecolor=black,fillstyle=solid,fillcolor=black]%arrowhead
{
\newpath
\moveto(136.86652239,60.82314076)
\lineto(126.88383842,55.49904265)
\lineto(156,54.99990845)
\lineto(131.54242427,70.80582473)
\lineto(136.86652239,60.82314076)
\closepath
}
}
{
\pscustom[linewidth=2,linecolor=black,fillstyle=solid,fillcolor=black]%arrowhead
{
\newpath
\moveto(132.14985851,208.70999334)
\lineto(143.12576796,211.4539707)
\lineto(115,218.99990845)
\lineto(134.89383588,197.73408389)
\lineto(132.14985851,208.70999334)
\closepath
}
}
{
\pscustom[linewidth=2,linecolor=black,fillstyle=solid,fillcolor=black]%arrowhead
{
\newpath
\moveto(178.11145618,245.94418036)
\lineto(167.37832989,242.36647159)
\lineto(196,236.99990845)
\lineto(174.53374742,256.67730665)
\lineto(178.11145618,245.94418036)
\closepath
}
}
{
\pscustom[linewidth=2,linecolor=black,fillstyle=solid,fillcolor=black]%arrowhead
{
\newpath
\moveto(164.85786438,88.14204407)
\lineto(153.54415588,88.14204407)
\lineto(179,73.99990845)
\lineto(164.85786438,99.45575257)
\lineto(164.85786438,88.14204407)
\closepath
}
}
{
\pscustom[linewidth=2,linecolor=black,fillstyle=solid,fillcolor=black]%arrowhead
{
\newpath
\moveto(121.05152457,19.82980563)
\lineto(132.34017552,20.58237434)
\lineto(106,32.99990845)
\lineto(121.80409327,8.54115468)
\lineto(121.05152457,19.82980563)
\closepath
}
}
{
\put(0,230){$\mathcal{G}$}
\put(150,120){$\mathcal{T}$}
\put(45,60){$u$}
\put(35,140){$v$}
\put(200,160){$v'$}
}
\end{pspicture}
\caption{\label{treedefnspic1}}
\end{figure}
\end{definition}

\begin{lemma}\label{deletecontractlemma}
Let $v\in\V(\mathcal{G})$ and $e\in\E(\mathcal{G})$. 
Then ${|\Tr(\mathcal{G},v)|}={|\Tr(\mathcal{G}\setminus e,v)|}+{|\Tr(\mathcal{G}/ e,v)|}$.
\end{lemma}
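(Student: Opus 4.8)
The plan is to prove the identity by a standard deletion--contraction argument: I would partition $\Tr(\mathcal{G},v)$ according to whether or not the fixed edge $e$ occurs in the subtree, and match the two resulting classes with the two terms on the right-hand side. Writing $\Tr(\mathcal{G},v)=\mathcal{A}\sqcup\mathcal{B}$ with $\mathcal{A}=\{\mathcal{T}\in\Tr(\mathcal{G},v):e\notin\E(\mathcal{T})\}$ and $\mathcal{B}=\{\mathcal{T}\in\Tr(\mathcal{G},v):e\in\E(\mathcal{T})\}$, it suffices to exhibit bijections $\mathcal{A}\to\Tr(\mathcal{G}\setminus e,v)$ and $\mathcal{B}\to\Tr(\mathcal{G}/e,v)$, whence $|\Tr(\mathcal{G},v)|=|\mathcal{A}|+|\mathcal{B}|$ is the claimed sum. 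Before starting I would dispose of the degenerate case where $e$ is a loop: then $e$ is a simple closed curve and so lies in no directed tree, giving $\mathcal{B}=\emptyset$, and I would check that $\mathcal{G}/e$ and $\mathcal{G}\setminus e$ then account correctly for the count; otherwise I assume $\iota(e)\neq\tau(e)$.

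For $\mathcal{A}$ the bijection is essentially the identity. A directed spanning subtree of $\mathcal{G}$ that omits $e$ has all its edges in $\E(\mathcal{G})\setminus e=\E(\mathcal{G}\setminus e)$, spans the same vertices, and keeps origin $v$; conversely every element of $\Tr(\mathcal{G}\setminus e,v)$ is such a subtree. The only point to verify is that the defining properties (connectedness, absence of a simple closed curve, each vertex being the terminal vertex of at most one edge, and origin $v$) depend only on the edges actually present in $\mathcal{T}$, so they are unaffected by whether we regard $\mathcal{T}$ inside $\mathcal{G}$ or inside $\mathcal{G}\setminus e$. This step is routine.

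The substance of the proof is the bijection $\mathcal{B}\to\Tr(\mathcal{G}/e,v)$, and this is where I expect the difficulty to lie. The forward map sends $\mathcal{T}\in\mathcal{B}$ to its contraction $\mathcal{T}/e\subseteq\mathcal{G}/e$. That $\mathcal{T}/e$ is connected and spanning is clear, and contracting a single edge of a tree cannot create a simple closed curve. The delicate bookkeeping is at the contracted vertex $v_e$: since $e\in\E(\mathcal{T})$, the edge $e$ is forced to be the unique edge of $\mathcal{T}$ terminating at $\tau(e)$, so after removing $e$ the vertex $v_e$ inherits only the at-most-one edge of $\mathcal{T}$ that terminated at $\iota(e)$; thus $v_e$ is again the terminal vertex of at most one edge, and it becomes the origin exactly when $v=\iota(e)$ (note that $e\in\E(\mathcal{T})$ forces $v\neq\tau(e)$), consistently with reading $v$ as $v_e$ in $\mathcal{G}/e$ when $v\in\Ends{}(e)$. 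The hard part will be the inverse map: given $\mathcal{S}\in\Tr(\mathcal{G}/e,v)$ I would split $v_e$ back into $\iota(e)$ and $\tau(e)$, reattach each edge of $\mathcal{S}$ to whichever of these two vertices it actually met in $\mathcal{G}$, and adjoin $e$. One then has to confirm that the unique edge of $\mathcal{S}$ terminating at $v_e$, if any, reattaches to $\iota(e)$ and not to $\tau(e)$, so that after adjoining $e$ the vertex $\tau(e)$ is still the terminal vertex of exactly one edge, and that the origin is correctly restored to $v$ with no simple closed curve introduced. This analysis at the head of $e$ is the main obstacle, and it is precisely the step where the conventions governing $\mathcal{G}/e$ and the choice of origin must be used with care; once the two maps are seen to be mutually inverse, the stated equality follows.
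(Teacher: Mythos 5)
The paper states this lemma without proof, so your attempt has to be judged on its own merits, and it has a genuine gap located exactly at the step you yourself flag as the main obstacle. Your partition into $\mathcal{A}$ and $\mathcal{B}$ and the identification of $\mathcal{A}$ with $\Tr(\mathcal{G}\setminus e,v)$ are fine, and the contraction map $\mathcal{B}\to\Tr(\mathcal{G}/e,v)$, $\mathcal{T}\mapsto\mathcal{T}/e$, is well defined and injective. But it is not surjective, and the property you propose to ``confirm'' -- that the unique edge of $\mathcal{S}\in\Tr(\mathcal{G}/e,v)$ terminating at $v_e$, if any, comes from an edge of $\mathcal{G}$ terminating at $\iota(e)$ rather than at $\tau(e)$ -- is simply not true. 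Nothing in the definition of a directed spanning subtree of $\mathcal{G}/e$ rules out the other case, and when the edge entering $v_e$ comes from an edge of $\mathcal{G}$ entering $\tau(e)$, pulling $\mathcal{S}$ back and adjoining $e$ gives a subgraph in which $\tau(e)$ is the terminal vertex of two edges, so $\mathcal{S}$ has no preimage in $\mathcal{B}$. Your argument therefore only proves the inequality $|\Tr(\mathcal{G},v)|\leq|\Tr(\mathcal{G}\setminus e,v)|+|\Tr(\mathcal{G}/e,v)|$.

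No amount of care closes this gap, because the lemma is false as stated with the paper's definition of $\mathcal{G}/e$ (which keeps every edge other than $e$, rerouting into $v_e$ those incident to the endpoints of $e$). Concretely, let $\V(\mathcal{G})=\{r,u,w\}$ with edges $d$ from $r$ to $u$, $e$ from $u$ to $w$, and $g$ from $r$ to $w$, and take $v=r$. Then $\Tr(\mathcal{G},r)=\{\{d,e\},\{d,g\}\}$ (the set $\{e,g\}$ is excluded since $w$ would be the terminal vertex of two edges), $\Tr(\mathcal{G}\setminus e,r)=\{\{d,g\}\}$, and $\Tr(\mathcal{G}/e,r)=\{\{d\},\{g\}\}$, so the claimed identity reads $2=1+2$; the tree $\{g\}$ is precisely a ``bad'' $\mathcal{S}$ of the kind above. (The correct identity for rooted directed trees requires the contraction to delete, besides $e$, every other edge of $\mathcal{G}$ whose terminal vertex is $\tau(e)$.) Your disposal of the loop case fails for a similar reason: for a loop, $\mathcal{G}/e$ and $\mathcal{G}\setminus e$ are the same graph, so the right-hand side is twice the left-hand side and equality fails whenever $\Tr(\mathcal{G},v)\neq\emptyset$. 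For context, the paper only ever uses this lemma to bound $|\Tr(\mathcal{G},v_0)|$ below by the in-degree of a leaf $w$ of a spanning tree $\mathcal{T}$, and that bound has a direct proof (replace the edge of $\mathcal{T}$ entering $w$ by each edge of $\mathcal{G}$ entering $w$), so the defect is repairable downstream -- but neither your argument nor any other can establish the equality as stated.
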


\begin{lemma}
Suppose $\mathcal{G}$ is \ocon{\mathcal{O}}. 
Then any directed tree 
$\mathcal{T}$ in $\mathcal{G}$ can be extended to a directed spanning subtree with the same origin.
\end{lemma}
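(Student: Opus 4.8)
The plan is to build up $\mathcal{T}$ one vertex at a time, each time using the $\mathcal{O}$--connectedness of $\mathcal{G}$ to adjoin a single new leaf while preserving all three defining properties of a directed tree and keeping the origin fixed at $u$. Since $\V(\mathcal{G})$ is finite, it suffices to show that whenever $\V(\mathcal{T})\neq\V(\mathcal{G})$ we may enlarge $\mathcal{T}$ by exactly one vertex and one edge to a directed tree $\mathcal{T}'$ with the same origin $u$; iterating this step then terminates with a directed spanning subtree.

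So suppose $\V(\mathcal{T})\subsetneq\V(\mathcal{G})$ and fix any $w\in\V(\mathcal{G})\setminus\V(\mathcal{T})$. Because $\mathcal{G}$ is $\mathcal{O}$--connected, there is a directed path $P=u,e_1,\dots,e_k,w$ from the origin $u$ to $w$. Since $u\in\V(\mathcal{T})$ but $w\notin\V(\mathcal{T})$, walking along $P$ there is a first edge $e=e_j$ whose terminal vertex $y=\tau(e)$ lies outside $\V(\mathcal{T})$; its initial vertex $x=\iota(e)$ then still lies in $\V(\mathcal{T})$. I would take $\mathcal{T}'$ to be the subgraph obtained from $\mathcal{T}$ by adjoining the vertex $y$ and the edge $e$.

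The key step is to verify that $\mathcal{T}'$ is again a directed tree with origin $u$. It is connected, since $e$ joins the new vertex $y$ to $x\in\V(\mathcal{T})$ and $\mathcal{T}$ is connected. It contains no simple closed curve, because $y$ has valence one in $\mathcal{T}'$ so no cycle can pass through it, and $\mathcal{T}$ had none. Every vertex remains the terminal vertex of at most one edge: the incidences at the vertices of $\mathcal{T}$ are unchanged, and $y$, which was the terminal vertex of no edge of $\mathcal{T}$, is now the terminal vertex of exactly $e$. In particular $u$ is still the unique vertex that is the terminal vertex of no edge (the adjoined edge $e$ terminates at $y\neq u$), so $u$ remains the origin; the required directed path from $u$ to each vertex then follows automatically, being for $y$ the concatenation of the directed path in $\mathcal{T}$ from $u$ to $x$ with the edge $e$.

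The argument is routine and presents no serious obstacle. The single point meriting attention is that $\mathcal{O}$--connectedness supplies a \emph{directed} path from $u$ to $w$: this is exactly what guarantees that the first edge leaving $\V(\mathcal{T})$ is directed out of the tree, so that the adjoined leaf $y$ acquires an incoming edge and the condition that $u$ be the unique origin is preserved. Repeating the enlargement finitely many times, once for each vertex of $\V(\mathcal{G})\setminus\V(\mathcal{T})$, produces the desired directed spanning subtree of $\mathcal{G}$ with origin $u$.
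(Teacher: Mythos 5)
Your proof is correct. The paper states this lemma without proof, treating it as routine; your greedy one-vertex-at-a-time extension, using a directed path from the origin $u$ and adjoining the first edge whose terminal vertex leaves $\V(\mathcal{T})$, is exactly the standard argument the paper implicitly relies on, and you verify all the defining properties of a directed tree (connectedness, no simple closed curves, in-degree at most one, preservation of the origin) correctly.
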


\begin{definition}
For a set $\mathcal{F}\subseteq\V(\mathcal{G})$, by a \textit{\spantree{\mathcal{F}}} of $\mathcal{G}$ we will mean a directed tree $\mathcal{T}$ such that $\mathcal{F}\subseteq\V(\mathcal{T})$ and every leaf of $\mathcal{T}$ is in $\mathcal{F}$.
\end{definition}

\begin{lemma}\label{extendtreeslemma}
Suppose $\mathcal{G}$ is \ocon{\mathcal{O}}, and that, for some $v\in\V(\mathcal{G})$, there are $n$ distinct \spantree{\mathcal{F}}s of $\mathcal{G}$ with origin $v$. 
Then $|\Tr(\mathcal{G},v)|\geq n$. In particular, if $v\in\mathcal{F}$ then $|\Tr(\mathcal{G},v)|\geq |\Tr(\mathcal{G}[\mathcal{F}],v)|$.
\end{lemma}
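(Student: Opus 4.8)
The plan is to deduce this from the preceding extension lemma together with a reconstruction argument. Let $\mathcal{T}_1,\dots,\mathcal{T}_n$ be the $n$ distinct \spantree{\mathcal{F}}s of $\mathcal{G}$ with origin $v$. Since $\mathcal{G}$ is \ocon{\mathcal{O}}, each $\mathcal{T}_i$ is a directed tree that the previous lemma allows me to enlarge to a directed spanning subtree $\widehat{\mathcal{T}}_i$ of $\mathcal{G}$ with the same origin $v$, so that $\widehat{\mathcal{T}}_i\in\Tr(\mathcal{G},v)$. This immediately produces $n$ elements of $\Tr(\mathcal{G},v)$; the entire content of the lemma is then to check that the assignment $\mathcal{T}_i\mapsto\widehat{\mathcal{T}}_i$ is injective, for which it suffices to recover $\mathcal{T}_i$ from $\widehat{\mathcal{T}}_i$ using only the fixed data $\mathcal{F}$.

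To carry out this recovery, I would use that in any directed spanning subtree with origin $v$ there is, for each vertex $w$, a unique directed path from $v$ to $w$. Because $\mathcal{T}_i$ is a directed subtree of $\widehat{\mathcal{T}}_i$ with the same origin, for every $w\in\V(\mathcal{T}_i)$ this path lies in $\mathcal{T}_i$ and hence coincides with the $v$-to-$w$ path computed in $\widehat{\mathcal{T}}_i$. Now I claim that $\mathcal{T}_i$ is exactly the union, taken inside $\widehat{\mathcal{T}}_i$, of the directed paths from $v$ to the vertices of $\mathcal{F}$. One inclusion is clear since $\mathcal{F}\subseteq\V(\mathcal{T}_i)$ and the relevant paths lie in $\mathcal{T}_i$; for the reverse inclusion I use that every vertex of $\mathcal{T}_i$ lies on the path from $v$ to some leaf of $\mathcal{T}_i$, and every such leaf belongs to $\mathcal{F}$ by the definition of a \spantree{\mathcal{F}}. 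Thus $\mathcal{T}_i$ is determined by $\widehat{\mathcal{T}}_i$ and $\mathcal{F}$ alone, so $\widehat{\mathcal{T}}_i=\widehat{\mathcal{T}}_j$ would force $\mathcal{T}_i=\mathcal{T}_j$; the extensions are therefore distinct and $|\Tr(\mathcal{G},v)|\geq n$.

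For the final assertion, suppose $v\in\mathcal{F}$ and let $\mathcal{T}$ be any directed spanning subtree of $\mathcal{G}[\mathcal{F}]$ with origin $v$. Then $\mathcal{T}$ is a directed tree in $\mathcal{G}$ with $\V(\mathcal{T})=\mathcal{F}$, so $\mathcal{F}\subseteq\V(\mathcal{T})$ and, trivially, every leaf of $\mathcal{T}$ lies in $\mathcal{F}$; hence $\mathcal{T}$ is a \spantree{\mathcal{F}} of $\mathcal{G}$ with origin $v$. Distinct such subtrees of $\mathcal{G}[\mathcal{F}]$ remain distinct as subtrees of $\mathcal{G}$, so applying the first part with $n=|\Tr(\mathcal{G}[\mathcal{F}],v)|$ yields $|\Tr(\mathcal{G},v)|\geq|\Tr(\mathcal{G}[\mathcal{F}],v)|$.

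The step I expect to be the main obstacle is verifying that the reconstruction recovers exactly $\mathcal{T}_i$ and nothing more: the reverse inclusion above depends essentially on the leaf condition built into the definition of a \spantree{\mathcal{F}}, and it is precisely this condition (rather than the mere containment $\mathcal{F}\subseteq\V(\mathcal{T}_i)$) that prevents two different $\mathcal{F}$-spanning subtrees from admitting a common extension. The existence of the extensions themselves is free from the preceding lemma, so injectivity is the only genuine point.
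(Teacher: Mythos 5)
Your proof is correct and takes essentially the same approach as the paper: extend each $\mathcal{F}$-spanning subtree to a directed spanning subtree via the preceding lemma, then use uniqueness of directed paths in a directed tree together with the condition that every leaf lies in $\mathcal{F}$ to conclude that distinct $\mathcal{F}$-trees have distinct extensions. Your reconstruction argument is just the contrapositive of the paper's, which exhibits a witness $w\in\mathcal{F}$ whose $v$-to-$w$ path differs in the two extensions.
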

\begin{proof}
Since $\mathcal{G}$ is \ocon{\mathcal{O}}, any \spantree{\mathcal{F}} can be extended to a directed spanning subtree with the same origin.

Let $\mathcal{T}_1$ and $\mathcal{T}_2$ be extensions of distinct \spantree{\mathcal{F}}s $\mathcal{T}^{\mathcal{F}}_1$ and $\mathcal{T}^{\mathcal{F}}_2$ respectively. 
Then there exists $w\in\mathcal{F}$ such that the directed path from $v$ to $w$ in $\mathcal{T}^{\mathcal{F}}_1$ is different to that in $\mathcal{T}^{\mathcal{F}}_2$. 
Then the directed path from $v$ to $w$ in $\mathcal{T}_1$ is different to that in $\mathcal{T}_2$. Thus $\mathcal{T}_1\neq\mathcal{T}_2$.
\end{proof}

\begin{lemma}
Suppose that $\mathcal{G}$ is planar and that for every region $r$ of $\sphere\setminus\mathcal{G}$ the boundary of $r$ is a cycle. 
Then $\mathcal{G}$ is \ocon{\mathcal{O}}.
In particular, if incoming and outgoing edges alternate at every vertex of $\mathcal{G}$ then $\mathcal{G}$ is $\mathcal{O}$--connected.
\end{lemma}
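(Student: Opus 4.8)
The plan is to prove the general statement first, using strong connectivity, and then to obtain the ``in particular'' by verifying that the alternating condition forces every face boundary to be a cycle, so that the general case applies.

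For the general statement I would work with the relation $\approx$ on $\V(\mathcal{G})$ defined by setting $x\approx y$ when there are directed paths both from $x$ to $y$ and from $y$ to $x$. This is an equivalence relation (reflexivity via the trivial directed path, transitivity via concatenation of paths), and $\mathcal{G}$ is \ocon{\mathcal{O}} precisely when $\approx$ has a single class. The key point is that the two endpoints of every edge are $\approx$--equivalent. Given $e\in\E(\mathcal{G})$, the edge $e$ lies on the boundary of some region $r$ of $\sphere\setminus\mathcal{G}$, and by hypothesis $\partial r$ is a cycle, say $v_0,e_1,\dots,e_n,v_n$ with $v_0=v_n$ and $e=e_k$, so that $\iota(e)=v_{k-1}$ and $\tau(e)=v_k$. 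Removing $e$ from this cycle leaves the directed path $v_k,e_{k+1},\dots,e_n,e_1,\dots,e_{k-1},v_{k-1}$ from $\tau(e)$ to $\iota(e)$; together with $e$ itself this gives $\iota(e)\approx\tau(e)$. Since the underlying graph $\mathcal{G}$ is connected, any two vertices are joined by a path whose consecutive vertices are endpoints of a common edge and hence $\approx$--equivalent; transitivity then yields a single class, so $\mathcal{G}$ is \ocon{\mathcal{O}}.

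For the ``in particular'' it suffices to check that, when incoming and outgoing edges alternate at every vertex, the boundary of each region $r$ is a cycle. I would traverse $\partial r$ as a closed walk $w_0,a_1,w_1,\dots,a_k,w_k=w_0$ keeping $r$ on a fixed side, so that at each visited vertex $w_j$ the two boundary edges $a_j,a_{j+1}$ are consecutive in the rotation around $w_j$. Assign $f_j=+1$ if $a_j$ is traversed in agreement with $\mathcal{O}$ and $f_j=-1$ otherwise. At the corner of $r$ at $w_j$, the alternation hypothesis says exactly one of the consecutive edges $a_j,a_{j+1}$ points into $w_j$; since $a_j$ points into $w_j$ iff $f_j=+1$ while $a_{j+1}$ points into $w_j$ iff $f_{j+1}=-1$, this forces $f_j=f_{j+1}$. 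Hence all the $f_j$ agree, and either the walk or its reverse is a directed cycle, so $\partial r$ is a cycle and the first part applies.

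The substantive step is this last local computation, matching the combinatorial alternation condition to the direction in which $\partial r$ is traversed; I expect getting that direction-bookkeeping exactly right to be the main obstacle. Two smaller points need care. First, that the two boundary edges at each corner really are adjacent in the rotation system is the standard description of faces in a planar embedding, and the alternation hypothesis is precisely a statement about such adjacent pairs. Second, one should confirm the face boundary is an honest closed walk rather than one that doubles back along a bridge; but alternation makes the in-degree equal the out-degree at every vertex, and a flow-conservation argument across any edge then rules out bridges, so each edge bounds two distinct regions and the traversal is well behaved.
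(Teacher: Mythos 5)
Your proof is correct. Note that the paper states this lemma without proof (it is one of several elementary facts about digraphs recorded in the preliminaries), so there is no argument of the author's to compare against; your write-up supplies a complete justification. Both halves are sound: for the general statement, every edge lies on the boundary of some region, and since that boundary is by hypothesis a \emph{directed} cycle, deleting the edge from it leaves a directed path from $\tau(e)$ back to $\iota(e)$, so the two endpoints of every edge are mutually reachable; combined with the paper's standing convention that graphs are connected, transitivity collapses everything to one class. For the ``in particular'' clause, your corner-by-corner sign argument is exactly the right local computation: at each corner of a region the two boundary half-edges are consecutive in the rotation at that vertex, alternation says exactly one of them is incoming, and the bookkeeping $f_j=f_{j+1}$ forces the boundary walk (or its reverse) to be coherently oriented. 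You are also right to worry about the walk doubling back along a bridge, and your flow-conservation remark disposes of it: alternation gives in-degree equal to out-degree at every vertex, which is incompatible with a bridge, so each edge appears at most once in each region's boundary walk and the traversal really is a cycle in the paper's sense.
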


\begin{lemma}
Let $e\in\E(\mathcal{G})$ be a loop and let $v\in\V(\mathcal{G})$. Then ${|\Tr(\mathcal{G}\setminus e,v)|}={|\Tr(\mathcal{G},v)|}$.
\end{lemma}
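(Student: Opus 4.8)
The plan is to show that the two sets $\Tr(\mathcal{G},v)$ and $\Tr(\mathcal{G}\setminus e,v)$ are in fact \emph{identical}, so that their cardinalities trivially agree. The entire argument rests on a single observation: a loop can never occur in a directed tree.

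First I would name the vertex $w$ that constitutes both endpoints of the loop $e$, so that $\Ends(e)=\{w\}$ and $\iota(e)=\tau(e)=w$. I then claim that no directed tree $\mathcal{T}$ in $\mathcal{G}$ contains $e$. Indeed, $e$ on its own traces out the closed path $w,e,w$, which is a simple closed curve, and by definition a directed tree contains no simple closed curves. Hence every directed spanning subtree of $\mathcal{G}$ uses only edges of $\E(\mathcal{G})\setminus\{e\}=\E(\mathcal{G}\setminus e)$.

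Next I would record the bookkeeping points. Deleting an edge does not alter the vertex set, so $\V(\mathcal{G}\setminus e)=\V(\mathcal{G})$, and deleting a loop from a connected graph leaves it connected, so $\mathcal{G}\setminus e$ still meets our standing conventions. Consequently a subgraph is a directed spanning subtree of $\mathcal{G}$ with origin $v$ precisely when it is a directed spanning subtree of $\mathcal{G}\setminus e$ with origin $v$: the spanning condition, connectedness, acyclicity and the terminal-vertex condition all refer only to the subgraph $\mathcal{T}$ itself, which never involves $e$ in either case. This yields $\Tr(\mathcal{G},v)=\Tr(\mathcal{G}\setminus e,v)$ as sets, whence the two cardinalities coincide.

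The main obstacle is essentially that there is none; the only substantive point is that a loop is itself a simple closed curve and is therefore barred from every directed tree. It is worth stressing that one should \emph{not} attempt to derive this from Lemma \ref{deletecontractlemma}, since the deletion--contraction identity is not compatible with contracting a loop (contracting $e$ would merely delete it while relabelling $w$, and the origin $v$ might not even survive in $\mathcal{G}/e$). This lemma is precisely the separate ingredient required to handle loops directly.
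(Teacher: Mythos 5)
Your proof is correct, and it is essentially the argument the paper has in mind: the lemma is stated there without proof precisely because a loop is a simple closed curve and so can appear in no directed tree, giving $\Tr(\mathcal{G},v)=\Tr(\mathcal{G}\setminus e,v)$ as sets. Your closing remark is also apt — this lemma is indeed the separate, loop-specific complement to Lemma \ref{deletecontractlemma}, whose deletion–contraction identity fails for loops.
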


\begin{lemma}
Let $e\in\E(\mathcal{G})$ be such that $\tau(e)$ has in-degree 1. 
Then $|\Tr(\mathcal{G}/ e,v)|=|\Tr(\mathcal{G},v)|$ for any $v\in\V(\mathcal{G})\setminus \{\tau(e)\}$.
\end{lemma}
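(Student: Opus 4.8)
The plan is to reduce everything to the deletion--contraction formula of Lemma~\ref{deletecontractlemma} by showing that its deletion term vanishes. First I would establish a structural fact about directed spanning subtrees: in any $\mathcal{T}\in\Tr(\mathcal{G},v)$, every vertex other than the origin $v$ is the terminal vertex of exactly one edge of $\mathcal{T}$. Indeed, since $\tau(e)\neq v$ by hypothesis, the directed-tree structure supplies a unique directed path from $v$ to $\tau(e)$; its last edge ends at $\tau(e)$, and the defining condition that each vertex be the terminal vertex of at most one edge of $\mathcal{T}$ forbids a second such edge.

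Next I would bring in the in-degree hypothesis. Because $\tau(e)$ has in-degree $1$ in $\mathcal{G}$, the edge $e$ is the only edge of $\mathcal{G}$ whose terminal vertex is $\tau(e)$. Combining this with the previous paragraph forces $e\in\E(\mathcal{T})$ for every $\mathcal{T}\in\Tr(\mathcal{G},v)$: the unique edge of $\mathcal{T}$ ending at $\tau(e)$ has no choice but to be $e$.

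The key consequence is that $|\Tr(\mathcal{G}\setminus e,v)|=0$. Here the cleanest argument is direct: in $\mathcal{G}\setminus e$ the vertex $\tau(e)$ has in-degree $0$, so it cannot be the terminal vertex of any edge; yet it is distinct from the origin $v$, and a non-origin vertex of a directed spanning subtree must be the terminal vertex of an edge. Hence $\mathcal{G}\setminus e$ admits no directed spanning subtree with origin $v$.

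Finally I would invoke Lemma~\ref{deletecontractlemma} to write $|\Tr(\mathcal{G},v)|=|\Tr(\mathcal{G}\setminus e,v)|+|\Tr(\mathcal{G}/ e,v)|=|\Tr(\mathcal{G}/ e,v)|$, which is exactly the claim. The only point requiring care is the bookkeeping of the origin under contraction: since $v\neq\tau(e)$, either $v$ survives unchanged in $\mathcal{G}/ e$ or, in the case $v=\iota(e)$, it is identified with the contracted vertex $v_e$, and in either case $\Tr(\mathcal{G}/ e,v)$ is the set intended in the statement (and already used in Lemma~\ref{deletecontractlemma}). I expect this origin bookkeeping, together with confirming the vanishing of the deletion term, to be the only delicate parts; the conceptual heart is simply that the in-degree-$1$ hypothesis makes $e$ a forced edge of every tree, after which the arithmetic is immediate.
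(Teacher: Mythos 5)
Your proposal is correct and follows essentially the same route as the paper: the paper's entire proof is the observation that every directed spanning subtree of $\mathcal{G}$ contains $e$ (forced by the in-degree-$1$ hypothesis), with the vanishing of the deletion term and the appeal to Lemma \ref{deletecontractlemma} left implicit. Your write-up simply makes those implicit steps, and the origin bookkeeping under contraction, explicit.
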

\begin{proof}
Any directed spanning subtree in $\mathcal{G}$ contains $e$.
\end{proof}

\begin{definition}\label{movedefn}
Call removing a loop from a digraph $\mathcal{G}$ a \textit{move 1} on $\mathcal{G}$, and collapsing an edge whose terminal vertex has no other incoming edge a \textit{move 2} on $\mathcal{G}$.
\end{definition}

%------------------------------

\section{The Alexander polynomial}\label{alexpolysec}
\subsection{Alexander's definition}
%Master document is alexpolypaper.tex.

Alexander (\cite{MR1501429}) defines a link invariant as follows.

Take a link $L$ with a reduced diagram $D$. Let $\mathcal{G}$ be the underlying graph of $D$ with induced orientation $O$. Each region $r_i$ of $\sphere{}\setminus\mathcal{G}$ has a corner $\hat{c}_{ij}$ at each crossing $c_j$ on its boundary. At each crossing, two of the four corners are dotted and each is assigned a value in $\{\pm t,\pm 1\}$ as shown in Figure \ref{alexanderpic1}.
\begin{figure}[htbp]
\centering
%LaTeX with PSTricks extensions
%%Creator: 0.46
%%Please note this file requires PSTricks extensions
\psset{xunit=.5pt,yunit=.5pt,runit=.5pt}
\begin{pspicture}(140,140)
{
\pscustom[linewidth=1,linecolor=black]%\ line
{
\newpath
\moveto(110,29.99998262)
\curveto(30,110.00000262)(30,110.00000262)(30,110.00000262)
}
}
{
\pscustom[linewidth=1,linecolor=black,fillstyle=solid,fillcolor=black]%arrowhead
{
\newpath
\moveto(37.07106693,102.92893392)
\lineto(42.72792118,102.92893321)
\lineto(30,110.00000262)
\lineto(37.07106622,97.27207967)
\lineto(37.07106693,102.92893392)
\closepath
}
}
{
\pscustom[linewidth=1,linecolor=black]%bottom / line
{
\newpath
\moveto(30,29.99998262)
\lineto(65,65.00000262)
}
}
{
\pscustom[linewidth=1,linecolor=black]%top / line
{
\newpath
\moveto(75,75.00000262)
\lineto(110,110.00000262)
}
}
{
\pscustom[linewidth=1,linecolor=black,fillstyle=solid,fillcolor=black]%circle
{
\newpath
\moveto(75,55)
\curveto(75,53.62)(73.88,52.5)(72.5,52.5)
\curveto(71.12,52.5)(70,53.62)(70,55)
\curveto(70,56.38)(71.12,57.5)(72.5,57.5)
\curveto(73.88,57.5)(75,56.38)(75,55)
\closepath
}
}
{
\pscustom[linewidth=1,linecolor=black,fillstyle=solid,fillcolor=black]%circle
{
\newpath
\moveto(57.5,72.5)
\curveto(57.5,71.12)(56.38,70)(55,70)
\curveto(53.62,70)(52.5,71.12)(52.5,72.5)
\curveto(52.5,73.88)(53.62,75)(55,75)
\curveto(56.38,75)(57.5,73.88)(57.5,72.5)
\closepath
}
}
{
\put(20,65){$-t$}
\put(68,30){$t$}
\put(90,65){$-1$}
\put(68,95){$1$}
}

\end{pspicture}
\caption{\label{alexanderpic1}}
\end{figure}

Let $a_{ij}$ be the value so assigned to the corner $\hat{c}_{ij}$. The matrix $\textbf{A}=(a_{ij})$ is called the \textit{Alexander matrix}. Since $\mathcal{G}$ is 4--valent, $\textbf{A}$ is an $(n+2)\times n$ matrix, where $n$ is the number of crossings in $D$. Choose adjacent regions $r_p,r_q$, and denote by $\textbf{A}(p,q)$ the matrix given by deleting rows $p$ and $q$ from $\textbf{A}$.
Let $\rap{L}{t}=\det\textbf{A}(p,q)$.

Alexander shows that, up to a factor of $\pm t^m$ for some $m\in\mathbb{Z}$, this definition of $\rap{L}{t}$ is independent of the choice of the pair of adjacent regions $r_p,r_q$ and of the choice of the diagram $D$. We define $\nrap{L}{t}$ to be $\rap{L}{t}$ normalised such that $\nrap{L}{0}$ is defined and strictly positive, except when $\rap{L}{t}= 0$.

\begin{definition}
The \textit{reduced Alexander polynomial} of $L$ is $\nrap{L}{t}$.
\end{definition}

\begin{proposition}[see \cite{MR1501429} p301]
If $L=L_1 \# L_2$ for some links $L_1,L_2$ then $\nrap{L}{t}{}=\nrap{L_1}{t}{}\cdot\nrap{L_2}{t}{}$.
\end{proposition}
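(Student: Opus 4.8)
The plan is to compute both sides directly from Alexander's matrix definition, choosing a convenient diagram for $L$ together with a convenient pair of deleted rows so that the determinant factorises. Since $L = L_1 \# L_2$, we may present it by a reduced diagram $D$ built from reduced diagrams $D_1, D_2$ of $L_1, L_2$ together with a simple closed curve $\rho\subset\sphere$ meeting $D$ in no crossings and in exactly two edges, as in Lemma \ref{connectedsumlemma}. Write $n_i$ for the number of crossings of $D_i$, so $D$ has $n = n_1 + n_2$ crossings. The crossings of $D$ are partitioned into those coming from $D_1$ and those coming from $D_2$, which induces a partition of the columns of the Alexander matrix $\textbf{A}$ of $D$.

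First I would analyse the regions of $D$. A connected diagram with $k$ crossings has $k+2$ complementary regions, so $D_1, D_2, D$ have $n_1+2$, $n_2+2$, $n+2$ regions respectively. The two points of $\rho\cap D$ cut $\rho$ into two arcs, each lying in a region of $D$; these two regions $r_a, r_b$ are precisely the ones obtained by gluing a region of $D_1$ to a region of $D_2$ across $\rho$, which accounts for exactly the $(n_1+2)+(n_2+2)-(n+2)=2$ identifications. Every other region of $D$ lies entirely on one side of $\rho$, hence is a region of $D_1$ or of $D_2$, and such a region is incident only to crossings of the corresponding diagram. Moreover $r_a$ and $r_b$ are adjacent in $D$, being separated only by the two edges crossing $\rho$, and each of the two regions of $D_i$ glued to form $r_a, r_b$ is likewise an adjacent pair in $D_i$.

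The key step is then to take $r_a, r_b$ as the adjacent pair deleted in forming $\textbf{A}(p,q)$. With rows indexed by the remaining (pure) regions, namely $n_1$ regions of $D_1$ and $n_2$ of $D_2$, and columns indexed by crossings as above, every entry of $\textbf{A}(p,q)$ lying in a $D_1$-region row and a $D_2$-crossing column vanishes, and symmetrically, because a pure region meets only crossings on its own side. Hence $\textbf{A}(p,q)$ is block diagonal,
\[
\textbf{A}(p,q)=\begin{pmatrix} B_1 & 0 \\ 0 & B_2 \end{pmatrix},
\]
with $B_i$ an $n_i\times n_i$ block. Since the corner weights of Figure \ref{alexanderpic1} are determined locally by the orientation and over/under data at each crossing, which are unaffected by the connected sum, $B_i$ is exactly the matrix obtained from the Alexander matrix of $D_i$ by deleting the adjacent pair of regions that were glued into $r_a, r_b$. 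Therefore $\det\textbf{A}(p,q)=\det B_1\cdot\det B_2$, giving $\rap{L}{t}=\rap{L_1}{t}\cdot\rap{L_2}{t}$ up to the usual ambiguity $\pm t^m$.

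Finally I would pass to the normalised polynomials. The product $\nrap{L_1}{t}\cdot\nrap{L_2}{t}$ has constant term $\nrap{L_1}{0}\cdot\nrap{L_2}{0}>0$ and differs from $\rap{L}{t}$ only by a unit $\pm t^m$, so by uniqueness of the normalisation it equals $\nrap{L}{t}$. I expect the only real subtlety to be the region bookkeeping of the second paragraph---verifying that exactly two regions arise by gluing and that these form an adjacent pair on each side---since once the block structure is established, both the factorisation of the determinant and the normalisation step are routine.
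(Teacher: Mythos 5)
Your proof is correct, and it is essentially the argument behind the paper's citation: the paper itself gives no proof of this proposition, quoting it from Alexander (\cite{MR1501429} p301), where the factorisation is obtained exactly as you do it — choosing the deleted adjacent pair of regions to be the two regions meeting the connected-sum curve $\rho$, observing that the resulting matrix $\textbf{A}(p,q)$ is block diagonal with blocks equal to $\textbf{A}_1(p_1,q_1)$ and $\textbf{A}_2(p_2,q_2)$ for adjacent pairs in $D_1,D_2$, and absorbing signs and powers of $t$ into the normalisation. The only detail worth adding is the degenerate case $\rap{L_i}{t}=0$, where both sides vanish and the normalisation step is vacuous.
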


\begin{lemma}[see \cite{MR1277811} 8C4, 7A]
Let $D'$ be the reflection of $D$, and let $L'$ be the link with diagram $D'$. Then $\nrap{L}{t}=\nrap{L'}{t}$.  
\end{lemma}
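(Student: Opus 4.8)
The plan is to read the effect of reflection straight off Alexander's combinatorial recipe, rather than appeal to the Alexander module. The most convenient reflection to use is the one in the plane carrying the diagram, $(x,y,z)\mapsto(x,y,-z)$: on the link this is orientation-reversing, so $L'$ is the mirror image, but on the projection it does nothing to the shadow. Concretely, the underlying graph $\mathcal{G}$, its regions, and the induced orientation $O$ are all unchanged, and the only effect is to exchange the overcrossing and undercrossing strand at every crossing (the higher strand is sent to the lower one). Hence $D'$ has exactly the same Alexander matrix layout as $D$ — the same rows (regions) and columns (crossings) — and the sole change is in the four corner labels at each crossing.

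First I would determine how the labels of Figure \ref{alexanderpic1} transform when a single crossing is switched, keeping its position and the two strand orientations fixed. Switching the crossing relocates the dotted corners, so that at each crossing the entries carrying $\pm t$ and those carrying $\pm 1$ trade places (up to signs). Carrying this out at every crossing simultaneously, I expect the resulting matrix $\textbf{A}'$ to be obtained from $\textbf{A}$ by the substitution $t\mapsto t^{-1}$, together with multiplication of each column by a suitable power of $t$ and an overall sign. Deleting the same pair of adjacent regions $r_p,r_q$ and taking the determinant would then yield $\rap{L'}{t}\doteq\rap{L}{t^{-1}}$, where $\doteq$ denotes equality up to a factor $\pm t^{m}$.

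The second ingredient is the symmetry of the Alexander polynomial, $\rap{L}{t^{-1}}\doteq\rap{L}{t}$, which I would quote from \cite{MR1277811}; combining the two gives $\rap{L'}{t}\doteq\rap{L}{t}$. Since $\nrap{L}{t}$ and $\nrap{L'}{t}$ are by definition the normalisations of $\rap{L}{t}$ and $\rap{L'}{t}$ chosen so that the value at $0$ is defined and strictly positive, any two representatives agreeing up to a factor $\pm t^{m}$ have the same normalisation, and therefore $\nrap{L'}{t}=\nrap{L}{t}$.

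The main obstacle is the bookkeeping in the first step: pinning down the precise permutation of the corner labels under a crossing switch from the convention of Figure \ref{alexanderpic1}, and then tracking the column-wise powers of $t$ and the signs carefully enough to be certain that the global effect on $\det\textbf{A}(p,q)$ really is the substitution $t\mapsto t^{-1}$ up to a unit. Everything else is either purely formal (the normalisation step) or a citation (the symmetry of $\rap{L}{t}$).
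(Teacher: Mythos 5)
Your proposal fails at its first and central step: the claim that switching every crossing turns the Alexander matrix $\textbf{A}$ into $\textbf{A}$ with $t\mapsto t^{-1}$, up to multiplying each column by a unit $\pm t^{k}$. That claim is false, because switching a crossing does not exchange the $\pm t$ and $\pm 1$ labels \emph{in place}: it moves the dots. At a crossing the two dotted corners form an adjacent pair (they flank one end of one of the two strands); after the switch the dots sit on a different adjacent pair, sharing exactly one corner with the old one, so the four labels $-t,t,-1,1$ are rotated by a quarter-turn around the crossing. Concretely, name the corners in Figure \ref{alexanderpic1} by $W,S,E,N$ (so the column of $\textbf{A}$ at that crossing reads $(-t,t,-1,1)$ in the rows of the four adjacent regions); after the switch it reads $(1,-t,t,-1)$ or $(t,-1,1,-t)$, depending on the orientation of the other strand, whereas $-t\cdot(\text{old column with } t\mapsto t^{-1})=(1,-1,t,-t)$. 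The new column thus differs from every unit multiple of the $t$-inverted column by a transposition of two entries, namely those in the rows of a diagonally opposite pair of regions; and since which pair gets transposed varies from crossing to crossing, it cannot be absorbed by rescaling columns (or rows), as scalings never move entries between rows. So $\rap{L'}{t}=\pm t^{m}\rap{L}{t^{-1}}$, while a true statement, is not established by your argument, and the remaining steps have nothing to stand on.

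The lemma is one the paper itself does not prove (it is quoted from Rolfsen), so a self-contained diagrammatic argument is a reasonable goal, and your scheme can be repaired by using the other natural reflection. Realise $L'$ by the diagram $\bar D$ obtained by flipping the picture of $D$ across a line in the projection sphere while keeping all over/under data: this is the image of $L$ under $(x,y,z)\mapsto(-x,y,z)$, hence again the mirror link with its induced orientation, and it reverses all crossing signs, consistent with the Remark following the lemma. Since Alexander proved that $\rap{L}{t}$ is independent of the chosen diagram up to $\pm t^{m}$, computing from $\bar D$ is legitimate. Under this flip, regions correspond to regions and crossings to crossings, left/right are exchanged while before/after and over/under are not, and chasing the convention of Figure \ref{alexanderpic1} now gives a uniform, crossing-independent answer: depending on how the dotting convention is pinned down (the figure orients only one strand), each column of the matrix of $\bar D$ is either exactly the negative of the corresponding column of $\textbf{A}$, or exactly $-t$ times the corresponding column of $\textbf{A}$ with $t\mapsto t^{-1}$. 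In the first case the lemma follows at once; in the second, your two remaining steps --- the symmetry $\rap{L}{t^{-1}}=\pm t^{m}\rap{L}{t}$ and the observation that normalisation kills the $\pm t^{m}$ ambiguity --- complete the proof exactly as you wrote them.
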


\begin{remark}
Note that reflection changes positive crossings to negative ones, and vice versa.
\end{remark}

\begin{lemma}[\cite{MR1277811} 8C7]
Suppose $\nrap{L}{t}=\sum_{i=0}^n a_i t^i$. Then $|a_i|=|a_{n-i}|$ for $0 \leq i\leq n$.
\end{lemma}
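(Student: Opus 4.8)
The plan is to reduce the coefficient symmetry to the classical duality of the Alexander polynomial and then finish by elementary bookkeeping. Write $f\doteq g$ to mean $f=\pm t^m g$ for some $m\in\mathbb{Z}$, and recall that $\rap{L}{t}$ is only defined up to $\doteq$, with $\nrap{L}{t}$ the representative for which $\nrap{L}{0}$ is defined and strictly positive; in particular $a_0=\nrap{L}{0}\neq 0$, and $a_n\neq 0$ since $n$ is the top degree. The single fact I need is the duality
\[ \rap{L}{t^{-1}}\doteq\rap{L}{t}, \]
after which everything is a matter of extracting the stated equalities of absolute values from this relation.

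To prove the duality I would use the presentation of $\rap{L}{t}$ by a Seifert matrix. Let $R$ be any Seifert surface for $L$ and $V$ an associated Seifert matrix, an $N\times N$ integer matrix with $N=1-\chi(R)$; classically $\rap{L}{t}\doteq\det(V-tV^\top)$ for any link $L$ (and, by Theorem \ref{degreerapthm}, $N$ realises the degree of $\nrap{L}{t}$ in the homogeneous case). Taking the transpose, which preserves the determinant, and then extracting a scalar factor,
\[ \det(V-tV^\top)=\det(V^\top-tV)=(-t)^{N}\det\!\big(V-t^{-1}V^\top\big), \]
so that $\rap{L}{t}\doteq\rap{L}{t^{-1}}$, as required. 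Alternatively one can argue within Alexander's combinatorial framework: let $D'$ be the reflection of $D$, reversing every crossing, and let $L'=\partial$ of the corresponding surface. By the preceding lemma, $\nrap{L'}{t}=\nrap{L}{t}$, while inspecting the corner labels of Figure \ref{alexanderpic1} under the exchange of over- and under-strand suggests that reversing all crossings replaces $t$ by $t^{-1}$ in the Alexander matrix $\mathbf A$; this would give $\rap{L'}{t}\doteq\rap{L}{t^{-1}}$, and combining the two equalities yields the same duality.

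It remains to translate the duality into the statement about coefficients. Writing $\nrap{L}{t^{-1}}=\pm t^m\,\nrap{L}{t}$ and comparing the ranges of exponents on the two sides — the left-hand side runs over $t^{-n},\dots,t^{0}$ with nonzero end coefficients $a_n$ and $a_0$ — forces $m=-n$. Substituting back, $\sum_j a_j t^{-j}=\pm t^{-n}\sum_i a_i t^{i}=\pm\sum_j a_{n-j}t^{-j}$, and matching the coefficient of each $t^{-j}$ gives $a_j=\pm a_{n-j}$ with a single global sign, whence $|a_j|=|a_{n-j}|$ for all $j$. The only substantive step is the duality itself; the main obstacle in the purely combinatorial route is verifying that reversing crossings really amounts to the clean substitution $t\mapsto t^{-1}$ on $\mathbf A$, rather than also permuting which region receives each corner label at a crossing, together with keeping track of the resulting unit — difficulties that the Seifert-matrix argument sidesteps at the cost of invoking the standard (but not yet established above) identification $\rap{L}{t}\doteq\det(V-tV^\top)$.
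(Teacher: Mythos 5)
Your proof is correct; note, however, that the paper does not prove this lemma at all — it is quoted verbatim from Rolfsen (\cite{MR1277811} 8C7) — so there is no internal argument to compare against, and what you have supplied is essentially the standard proof sitting behind that citation. Your main route is sound: the duality $\rap{L}{t^{-1}}\doteq\rap{L}{t}$ follows from the Seifert-matrix presentation via $\det(V^\top-tV)=(-t)^{N}\det(V-t^{-1}V^\top)$, and your exponent-range comparison correctly pins down the unit as $\pm t^{-n}$, giving $a_j=\pm a_{n-j}$ with a single global sign and hence the claimed equality of absolute values; this step genuinely uses both normalisation facts ($a_0=\nrap{L}{0}>0$, and $a_n\neq 0$ because $n$ is the degree), and the only case you leave implicit is $\rap{L}{t}\equiv 0$ (e.g.\ split links), where the statement is vacuous. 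The one substantive import is the identification $\rap{L}{t}\doteq\det(V-tV^\top)$ of Alexander's combinatorial determinant with the Seifert-matrix form, which the paper never establishes — it works exclusively with the diagrammatic definition — so your argument rests on a classical external theorem, just as the paper's bare citation does; that is a fair trade, since the Seifert-matrix route buys a clean, sign-controlled duality. Your alternative combinatorial route (reflection invariance from the preceding lemma combined with the claim that switching every crossing effects $t\mapsto t^{-1}$ in Alexander's matrix) would have the merit of staying inside the paper's framework, but as you yourself flag, it hinges on an unverified check of how the dotted corners and labels transform at each crossing, so as written only the Seifert-matrix argument is complete.
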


\begin{proposition}[see \cite{MR0137107} (3.7) and the proof of Lemma 3.6]\label{alexpolyproductprop}
Suppose $L=L_1*\cdots *L_m$ for some links $L_i$. Then $\nrap{L}{0}=\prod_{i=1}^{m} \nrap{L_i}{0}$.
\end{proposition}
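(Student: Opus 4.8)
The plan is to pass from Alexander's region determinant to a Seifert matrix, where the $*$--product acquires a transparent block structure, and to exploit the fact that it is precisely the evaluation at $t=0$ that kills the interaction between the two factors. By iterating it suffices to treat the case $m=2$: for general $m$ write $L=(L_1*\cdots*L_{m-1})*L_m$ and induct, using that the Seifert's algorithm surface of the composite diagram is the Murasugi sum of those of the two pieces (the Remark following Definition \ref{starproddefn}).

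First I would fix the Seifert surfaces $R,R_1,R_2$ obtained by applying Seifert's algorithm to $D,D_1,D_2$, and let $V,V_1,V_2$ be associated Seifert matrices. Recall the classical identity $\rap{L}{t}\doteq\det(V-tV^{T})$ (equality up to a unit $\pm t^{k}$), so that after the normalisation in the definition of $\nrap{L}{t}$ one has $\nrap{L}{0}=|\det V|$, and likewise $\nrap{L_i}{0}=|\det V_i|$; here $\det V_i\neq 0$ because the surfaces are minimal genus, so by Theorem \ref{degreerapthm} the degree of $\rap{}{}$ equals $1-\chi$ and the constant term does not vanish. Thus the proposition reduces to the single linear-algebra identity $\det V=\det V_1\cdot\det V_2$.

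The geometric input is the Murasugi sum structure. Writing $R=R_1\cup R_2$ with $R_i\subseteq V_i$ on opposite sides of the sphere $S$ and glued along the disc $T$ (Definition \ref{msumdefn}), Mayer--Vietoris gives $H_1(R)=H_1(R_1)\oplus H_1(R_2)$ since $T$ is contractible. In the resulting basis the Seifert form is block triangular,
\[
V=\begin{pmatrix} V_1 & B \\ 0 & V_2\end{pmatrix},
\]
the crucial point being that the lower-left block vanishes. To see this, represent each class of $H_1(R_2)$ by a cycle lying in $R_2\setminus T$ (possible as $T$ is a disc and carries no homology); its positive push-off then lies in the interior of the ball $V_2$, hence is disjoint from every cycle of $R_1\subseteq V_1$, so the relevant linking numbers $\operatorname{lk}(y^{+},x)$ all vanish. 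The opposite block $B$ need not vanish, because push-offs of $R_1$--cycles may cross $T$ into $V_2$. Consequently $\det V=\det V_1\det V_2$, as required.

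It is worth emphasising why this is an identity only at $t=0$: for general $t$ one has $V-tV^{T}=\left(\begin{smallmatrix} V_1-tV_1^{T} & B \\ -tB^{T} & V_2-tV_2^{T}\end{smallmatrix}\right)$, whose lower-left block $-tB^{T}$ is nonzero, so the full polynomial does not factor — exactly as expected, since the $*$--product does not multiply Alexander polynomials. Setting $t=0$ restores block triangularity. The main obstacle in making this rigorous is the vanishing of the lower-left block: one must check that the sphere $S$ genuinely isolates the push-offs of one factor from the cycles of the other, with care taken over cycles running near the gluing disc $T$. A secondary, purely bookkeeping, obstacle is the sign and power-of-$t$ normalisation relating $\rap{}{}$, $\det(V-tV^{T})$ and $\nrap{}{}$, which affects the answer only up to a unit and is absorbed by the positivity convention on $\nrap{L}{0}$. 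Alternatively one can run the same block argument directly on Alexander's region matrix $\textbf{A}$, partitioning its rows and columns according to the side of the non-special Seifert circle $C$; the entries coupling the two sides are precisely those carrying a factor of $t$, so they again drop out at $t=0$.
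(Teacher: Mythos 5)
Your strategy --- pass to Seifert matrices, show the Seifert matrix of the Murasugi sum is block triangular with $V_1,V_2$ on the diagonal, and note that evaluation at $t=0$ is exactly what kills the coupling --- is a legitimate route, and a different one from what the paper does (the paper gives no proof at all, deferring to Murasugi's combinatorial argument on Alexander's region matrix, which is essentially the alternative you sketch in your last sentence). But the justification you give for the one step that carries all the content is wrong. You claim every class of $H_1(R_2)$ can be represented by a cycle in $R_2\setminus T$ ``as $T$ is a disc and carries no homology''. Carrying no homology does not allow cycles to be pushed off a subsurface. Already for the plumbing of two Hopf bands, $T$ is a square two of whose sides are essential arcs properly embedded in the annulus $R_2$, so $R_2\setminus T$ is a disc and contains no representative of the generator of $H_1(R_2)\cong\mathbb{Z}$; the same failure occurs for $*$--products, where $T$ is an entire Seifert disc of $R_2$ and every essential cycle runs through it. The step can be repaired, but by a different mechanism: let $B_1,B_2$ denote the two balls of Definition \ref{msumdefn} and note that, since $T$ is connected and lies in the sphere $S=\partial B_1=\partial B_2$, the positive normal of $R$ along $T$ points consistently into one ball, say $B_2$. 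Then for \emph{any} cycle $y\subseteq R_2$, including one crossing $T$, the whole push-off $y^{+}$ lies in $\operatorname{int}(B_2)$: the part of $y$ in $R_2\setminus T$ is pushed to a nearby copy inside $\operatorname{int}(B_2)$, and the part in $T$ is pushed into $B_2$ by the choice of normal. Hence $y^{+}$ bounds a $2$--chain in the open ball $\operatorname{int}(B_2)$, which is disjoint from every cycle $x\subseteq R_1\subseteq B_1$, so $\operatorname{lk}(y^{+},x)=0$. (If the normal along $T$ points into $B_1$, it is the other off-diagonal block that vanishes; either way $\det V=\det V_1\det V_2$.)

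There is a second, smaller mismatch: your identification $\nrap{L_i}{0}=|\det V_i|$ needs $\det V_i\neq 0$, which you justify via Theorem \ref{degreerapthm} --- a theorem about \emph{alternating} links --- while the Proposition is stated for arbitrary links $L_i$. If some $\det(V_i-tV_i^{T})$ has vanishing constant term, then $\nrap{L_i}{0}$ is the lowest \emph{nonzero} coefficient, and block triangularity alone does not control lowest nonzero coefficients: the coupling block $-tB^{T}$ can contribute in degrees below those of $\det(V_1-tV_1^{T})\det(V_2-tV_2^{T})$, so ``set $t=0$'' no longer proves anything. As written, your argument therefore establishes the Proposition only when every $\det V_i\neq 0$ (in particular in the alternating, hence homogeneous, setting where the paper actually applies it), not in the stated generality; for that one needs an argument organised around the diagram, such as the region-matrix partition you mention at the end, which would itself require making precise the claim that all entries coupling the two sides of the Seifert circle carry a factor of $t$.
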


\subsection{Murasugi's proof}
%Master document is alexpolypaper.tex.

\newcommand{\g}[1]{G_b(#1)} %black-regions centred graph

Murasugi (\cite{MR0099664}) considers Alexander's definition from the following viewpoint. 

Once $r_p,r_q$ have been fixed, $\det\textbf{A}(p,q)$ is formed of terms given by choosing (row, column) pairs $(i_1,j_1),\ldots ,(i_n,j_n)$ in such a way that each row and each column is chosen exactly once and then multiplying together the $a_{i_k j_k}$. This is equivalent to choosing a bijection between the crossings $c_j$ and regions $r_i$ other than $r_p,r_q$, or choosing one corner $\hat{c}_{ij}$ at each crossing $c_j$ provided $\hat{c}_{pj},\hat{c}_{qj}$ are never chosen. 
Call such a bijection an L$^s$\textit{--correspondence} if the resulting product is $\pm t^s$, or equivalently if $s$ of the chosen corners are dotted.

Murasugi shows (\cite{MR0099664} I Lemma 4.2; II Lemmas 6.8, 8.1) that any two L$^s$--correspondences give terms in the determinant with the same sign. Thus, to find $\nrap{L}{0}$ for a link $L$, we need only count ways of choosing a corner for each crossing as above so that as few dotted corners as possible are chosen. Alternatively, we can look for choices where as many dotted corners as possible are chosen.

Now consider a special alternating link diagram $D$ with the regions of $D$ coloured in a checkerboard pattern so that the black regions form the Seifert surface for $D$ given by Seifert's algorithm. That is, colour the inside of each Seifert circle black, and the remaining area white. Then, for a black region $r$ of $D$, either every corner of $r$ is dotted or every corner of $r$ is undotted. For a white region, corners are alternately dotted and undotted (\cite{MR0099664} II Lemma 6.3).
Let $x$ be the number of black regions with dotted corners. Then the black regions contribute a constant factor of $t^x$ to $|\prod_k a_{i_k,j_k}|$ and so may be safely ignored for our purposes. 
This is in fact the power of $t$ we cancel when normalising $\rap{L}{t}$ to $\nrap{L}{t}$ (see \cite{MR0099664} I Lemmas 3.1, 4.1, 5.4). Call an L$^{s-x}$--correspondence an L$^s_0$\textit{--correspondence} (this is actually Murasugi's definition of an L$^s$--correspondence).

Indeed, Murasugi shows that we can forget the black regions altogether by defining a digraph $\graph{M}{D}$ from the diagram $D$ as follows.

$\graph{M}{D}$ has a vertex at the centre of each white region of $D$, and one edge for each crossing, joining the centres of the white regions meeting at the crossing. $\graph{M}{D}$ is therefore planar. 
At each crossing, one white corner is dotted, and the other is undotted. Orient each edge from the undotted side to the dotted side (note that this is the reverse of in \cite{MR0099664}).

As dotted and undotted corners of white regions alternate, the boundary of any region $r$ of $\sphere\setminus\graph{M}{D}$ is a cycle with respect to the above orientation $o$. 
Thus $\graph{M}{D}$ is \ocon{o}{}, and in particular, for $v\in\V(\graph{M}{D}{})$, $|\Tr(\graph{M}{D}{},v)|\geq 1$.

Define a second (unoriented) graph $\g{D}$ with a vertex at the centre of each black region of $D$ and an edge through each crossing. Then $\g{D}$ is the dual graph $\graph{M}{D}^*$ of $\graph{M}{D}$.

For a directed spanning subtree $\mathcal{T}$ of $\graph{M}{D}$ let $\mathcal{T}^*$ be the subgraph of $\g{D}$ consisting of all edges in $\E(\g{D}{})$ that do not cross any edge in $\mathcal{T}$. Then $\mathcal{T}^*$ is a tree (\cite{MR0099664} I Lemma 5.2). By counting edges and vertices we see that $\mathcal{T}^*$ spans $\g{D}$.

Let $v_p\in\V(\graph{M}{D}{})$ and $v_q\in\V(\g{D}{})$ be the vertices corresponding to $r_p$ and $r_q$ respectively. Let $\mathcal{T}\in\Tr(\graph{M}{D}{},v_p)$. There is then a unique way of orienting the edges of $\mathcal{T}^*$ so it becomes a directed tree with origin $v_q$.
Let $y$ be the number of white regions. Each region $r\in D$ other than $r_p,r_q$ is the terminal vertex of exactly one edge $e_r$ of $\mathcal{T}$ or $\mathcal{T}^*$. By pairing $r$ with the crossing that $e_r$ corresponds to, we can construct an L$_0^y$--correspondence. Clearly there is no L$_0^s$--correspondence for any $s>y$.

Conversely, given an L$_0^y$--correspondence this process can be reversed to give a directed subgraph $\mathcal{T}$ of $\graph{M}{D}$ with exactly one edge ending at each vertex other than $v_p$. Suppose $\mathcal{T}$ contains an embedded closed curve, dividing $\sphere$ into two discs.
Considering the Euler characteristic $\chi$ of the disc that does not contain $r_p,r_q$ gives a contradiction. Hence $\mathcal{T}\in\Tr(\graph{M}{D}{},v_p)$.

For general $s$, an L$_0^s$--correspondence can be used to construct a directed spanning subtree $\mathcal{T}$ of the underlying graph of $\graph{M}{D}$ with an orientation that will not in general agree with that of $\graph{M}{D}$. The number of edges of $\mathcal{T}$ where these two orientations (dis)agree is determined by $s$. Murasugi and Stoimenow (\cite{MR2001624}) use this to assign a polynomial to any connected digraph in which the in-degree equals the out-degree at each vertex. 

Given a planar digraph $(\mathcal{G},\mathcal{O})$ in which incoming and outgoing edges alternate at each vertex, we can construct a product sutured manifold $\nmfld{M}{\mathcal{G}}$ embedded in $\Sphere$ in such a way that there is an `obvious' projection of the sutures onto $\sphere$ that gives a link diagram. Roughly speaking, this gives an inverse to $\graphm{M}$. We shall examine this in more detail later.

\begin{construction}
We first build the 3--manifold $N=\nmfld{M}{\mathcal{G}}$. 

Centre a 0--handle $\disc ^3$ on each vertex of $\mathcal{G}^*\subset\sphere\subset\Sphere$. These should be taken to be sufficiently small that they do not intersect. 

Attach a 1--handle $\intvl\times\disc ^2$ for each edge $e\in\E(\mathcal{G}^*)$ with $\intvl\times\{0\}$ running along $e$ and $\partial\intvl\times\disc^2$ glued to the 0--handles corresponding to the endpoints of $e$.

We now define the sutures $s$. For a 0--handle $V_0$, let $W_1$ be the union of the 1--handles that meet $V_0$. Then let $V_0\cap s=(\partial V_0\setminus \Int_N(W_1))\cap \sphere$.

Now let $V_1$ be a 1--handle. Then $V_1\cap s$ is made up of two disjoint simple arcs, one running from $\{0\}\times\{1\}\in (\intvl\times\disc ^2)\cap\sphere$ to $\{1\}\times\{-1\}$ and the other running from $\{0\}\times\{-1\}$ to $\{1\}\times\{1\}$. 
The arcs twist around $V_1$ in the direction shown in Figure \ref{mgraphpic1}, where the dashed line denotes an arc passing underneath the manifold.
\begin{figure}[htbp]
\centering
%LaTeX with PSTricks extensions
%%Creator: 0.47
%%Please note this file requires PSTricks extensions
\psset{xunit=.5pt,yunit=.5pt,runit=.5pt}
\begin{pspicture}(90,135)
{
\newgray{lightgrey}{.8}
\newgray{lightishgrey}{.7}
\newgray{grey}{.6}
\newgray{midgrey}{.4}
\newgray{darkgrey}{.3}
}
{
\pscustom[linewidth=1,linecolor=black]%graph arrow
{
\newpath
\moveto(5,70)
\lineto(25,70)
}
}
{
\pscustom[linewidth=3,linecolor=black,fillstyle=solid,fillcolor=black]%arrowhead
{
\newpath
\moveto(15,70)
\lineto(19,74)
\lineto(5,70)
\lineto(19,66)
\lineto(15,70)
\closepath
}
}
{
\pscustom[linewidth=1,linecolor=black]%graph arrow
{
\newpath
\moveto(63,70)
\lineto(85,70)
}
}
{
\pscustom[linewidth=1.5,linecolor=darkgrey,fillstyle=solid,fillcolor=lightishgrey]%1-handle
{
\newpath
\moveto(30.19104004,109.80865217)
\lineto(57.4989624,109.80865217)
\lineto(57.4989624,30.19132734)
\lineto(30.19104004,30.19132734)
\lineto(30.19104004,109.80865217)
\closepath
}
}
{
\pscustom[linewidth=1.5,linecolor=darkgrey,fillstyle=solid,fillcolor=lightishgrey]%1-handle
{
\newpath
\moveto(19.95824228,8.77860678)
\curveto(19.56113001,22.33109273)(30.22569289,33.63947569)(43.77821017,34.03658704)
\curveto(57.33072745,34.4336984)(68.63913654,23.76916017)(69.03624882,10.21667422)
\curveto(69.04863423,9.79398987)(69.05009642,9.37105608)(69.04063392,8.9482962)
}
}
{
\pscustom[linewidth=1.5,linecolor=darkgrey,fillstyle=solid,fillcolor=lightishgrey]%1-handle
{
\newpath
\moveto(19.95795058,129.04730322)
\curveto(19.56083831,115.49481727)(30.22540119,104.18643431)(43.77791847,103.78932296)
\curveto(57.33043575,103.3922116)(68.63884484,114.05674983)(69.03595712,127.60923578)
\curveto(69.04834253,128.03192013)(69.04980472,128.45485392)(69.04034222,128.8776138)
}
}
{
\pscustom[linewidth=3,linecolor=black]
{
\newpath
\moveto(19.95795058,129.04730322)
\curveto(19.71249752,120.67058108)(23.75705162,112.74734938)(30.68530176,108.0325543)
}
}
{
\pscustom[linewidth=3,linecolor=black]
{
\newpath
\moveto(57.21802844,107.3313956)
\curveto(64.72878775,111.88189718)(69.23685106,120.09770961)(69.04034222,128.8772138)
}
}
{
\pscustom[linewidth=3,linecolor=black]
{
\newpath
\moveto(19.95777648,9.33120678)
\curveto(19.71232342,17.70792892)(23.75687752,25.63116062)(30.68512766,30.3459557)
}
}
{
\pscustom[linewidth=3,linecolor=black]
{
\newpath
\moveto(57.21785434,31.0471144)
\curveto(64.72861365,26.49661282)(69.23667696,18.28080039)(69.04016812,9.5012962)
}
}
{
\pscustom[linewidth=3,linecolor=black]
{
\newpath
\moveto(30,109)
\lineto(58,30)
}
}
{
\pscustom[linewidth=3,linecolor=black,linestyle=dashed,dash=12 12]
{
\newpath
\moveto(30,30)
\lineto(58,108)
}
}
{
\pscustom[linewidth=1,linecolor=black,fillstyle=solid,fillcolor=black]%arrowhead
{
\newpath
\moveto(39.51123442,82.63670822)
\lineto(44.66104489,80.29588528)
\lineto(36,92)
\lineto(37.17041147,77.48689775)
\lineto(39.51123442,82.63670822)
\closepath
}
}
{
\pscustom[linewidth=1,linecolor=black,fillstyle=solid,fillcolor=black]%arrowhead
{
\newpath
\moveto(39.51121902,55.86327755)
\lineto(37.17038761,61.01308418)
\lineto(36,46.49998)
\lineto(44.66102565,58.20410896)
\lineto(39.51121902,55.86327755)
\closepath
}
}
{
\pscustom[linestyle=none,fillstyle=solid,fillcolor=white]
{
\newpath
\moveto(15,134.99999738)
\lineto(75,134.99999738)
\lineto(75,127.05891634)
\lineto(15,127.05891634)
\lineto(15,134.99999738)
\closepath
}
}
{
\pscustom[linestyle=none,fillstyle=solid,fillcolor=white]
{
\newpath
\moveto(10,9.82116438)
\lineto(75,9.82116438)
\lineto(75,-0.0000236)
\lineto(10,-0.0000236)
\lineto(10,9.82116438)
\closepath
}
}
\end{pspicture}
\caption{\label{mgraphpic1}}
\end{figure}

Using the orientation $\mathcal{O}$, we can define an orientation on the arcs of $s$ that run along 1--handles, as shown in Figure \ref{mgraphpic1}. Since incoming and outgoing edges alternate at every vertex of $\mathcal{G}$, this definition of the orientation of $s$ is locally consistent, as shown in Figure \ref{mgraphpic2}.
\begin{figure}[htbp]
\centering
%LaTeX with PSTricks extensions
%%Creator: inkscape 0.47
%%Please note this file requires PSTricks extensions
\psset{xunit=.5pt,yunit=.5pt,runit=.5pt}
{
\newgray{lightgrey}{.8}
\newgray{lightishgrey}{.7}
\newgray{grey}{.6}
\newgray{midgrey}{.4}
\newgray{darkgrey}{.3}
}
\begin{pspicture}(260,160)
{
\pscustom[linewidth=1,linecolor=black]%graph arrow
{
\newpath
\moveto(244,80)
\lineto(134,29.99998)
}
}
{
\pscustom[linewidth=1,linecolor=black,fillstyle=solid,fillcolor=black]%arrowhead
{
\newpath
\moveto(234.89633585,75.86196919)
\lineto(232.91008251,70.5652912)
\lineto(244,80)
\lineto(229.59965786,77.84822252)
\lineto(234.89633585,75.86196919)
\closepath
}
}
{
\pscustom[linewidth=1,linecolor=black]%graph arrow
{
\newpath
\moveto(124,29.99998)
\lineto(14,80)
}
}
{
\pscustom[linewidth=1,linecolor=black,fillstyle=solid,fillcolor=black]%arrowhead
{
\newpath
\moveto(114.89633585,34.13801081)
\lineto(109.59965786,32.15175748)
\lineto(124,29.99998)
\lineto(112.91008251,39.4346888)
\lineto(114.89633585,34.13801081)
\closepath
}
}
{
\pscustom[linewidth=1,linecolor=black,fillstyle=solid,fillcolor=black]%graph vertex
{
\newpath
\moveto(134,29.99999438)
\curveto(134,27.23857063)(131.76142375,24.99999438)(129,24.99999438)
\curveto(126.23857625,24.99999438)(124,27.23857063)(124,29.99999438)
\curveto(124,32.76141813)(126.23857625,34.99999438)(129,34.99999438)
\curveto(131.76142375,34.99999438)(134,32.76141813)(134,29.99999438)
\closepath
}
}
{
\pscustom[linewidth=1.5,linecolor=darkgrey,fillstyle=solid,fillcolor=lightishgrey]%1-handle
{
\newpath
\moveto(144.35897262,98.9230235)
\lineto(178.99998862,118.9230235)
\lineto(238.99998862,14.9999755)
\lineto(204.35897262,-5.0000245)
\lineto(144.35897262,98.9230235)
\closepath
}
}
{
\pscustom[linewidth=1.5,linecolor=darkgrey,fillstyle=solid,fillcolor=lightishgrey]%1-handle
{
\newpath
\moveto(113.64102495,98.92302401)
\lineto(79.00000895,118.92302401)
\lineto(19.00000895,14.99997601)
\lineto(53.64102495,-5.00002399)
\lineto(113.64102495,98.92302401)
\closepath
}
}
{
\pscustom[linewidth=1.5,linecolor=darkgrey,fillstyle=solid,fillcolor=grey]%0-handle
{
\newpath
\moveto(186.52966325,152.03929777)
\curveto(195.94021044,120.2665421)(177.81205867,86.88087872)(146.03930299,77.47033153)
\curveto(114.26654731,68.05978434)(80.88088394,86.18793612)(71.47033675,117.96069179)
\curveto(68.42887639,128.22955022)(68.19143522,139.12532922)(70.78266206,149.51693535)
}
}
{
\pscustom[linewidth=3,linecolor=black]
{
\newpath
\moveto(187.2930012,149.21006201)
\curveto(190.41145008,136.41745933)(189.23516733,122.95424768)(183.94551892,110.89626167)
}
}
{
\pscustom[linewidth=3,linecolor=black]
{
\newpath
\moveto(70.3756688,148.93527601)
\curveto(67.20202826,135.91626431)(68.47793024,122.20955828)(73.99999708,109.9999949)
}
}
{
\pscustom[linewidth=3,linecolor=black]
{
\newpath
\moveto(154.03325147,80.45580432)
\curveto(137.97308034,73.1865598)(119.5287154,73.34794609)(103.59820377,80.89710577)
}
}
{
\pscustom[linewidth=3,linecolor=black]
{
\newpath
\moveto(74,110)
\lineto(64,9.99998)
}
}
{
\pscustom[linewidth=3,linecolor=black]
{
\newpath
\moveto(154,80)
\lineto(244,9.99998)
}
}
{
\pscustom[linewidth=3,linecolor=black,linestyle=dashed,dash=12 12]
{
\newpath
\moveto(14,9.99998)
\lineto(104,81)
}
}
{
\pscustom[linewidth=3,linecolor=black,linestyle=dashed,dash=12 12]
{
\newpath
\moveto(194,14)
\lineto(184,111)
}
}
{
\pscustom[linewidth=1,linecolor=black,fillstyle=solid,fillcolor=black]%arrowhead
{
\newpath
\moveto(70.90535746,76.95893206)
\lineto(67.28392762,81.30464787)
\lineto(70,67)
\lineto(75.25107327,80.58036191)
\lineto(70.90535746,76.95893206)
\closepath
}
}
{
\pscustom[linewidth=1,linecolor=black,fillstyle=solid,fillcolor=black]%arrowhead
{
\newpath
\moveto(174.19131191,64.24695048)
\lineto(168.56905648,63.62225543)
\lineto(182,58)
\lineto(173.56661686,69.8692059)
\lineto(174.19131191,64.24695048)
\closepath
}
}
{
\pscustom[linewidth=1,linecolor=black,fillstyle=solid,fillcolor=black]%arrowhead
{
\newpath
\moveto(44.86266529,34.18759806)
\lineto(43.93268418,28.6077114)
\lineto(53,39.99998)
\lineto(39.28277863,35.11757917)
\lineto(44.86266529,34.18759806)
\closepath
}
}
{
\pscustom[linewidth=1,linecolor=black,fillstyle=solid,fillcolor=black]%arrowhead
{
\newpath
\moveto(192.10431526,36.06114265)
\lineto(196.5215763,32.52733382)
\lineto(191,45.99998)
\lineto(188.57050643,31.64388161)
\lineto(192.10431526,36.06114265)
\closepath
}
}
{
\pscustom[linestyle=none,fillstyle=solid,fillcolor=white]
{
\newpath
\moveto(9,14.99999738)
\lineto(249,14.99999738)
\lineto(249,-7)
\lineto(9,-7)
\lineto(9,14.99999738)
\closepath
}
}
{
\pscustom[linestyle=none,fillstyle=solid,fillcolor=white]
{
\newpath
\moveto(64,154.99999738)
\lineto(194,154.99999738)
\lineto(194,144.99999738)
\lineto(64,144.99999738)
\lineto(64,154.99999738)
\closepath
}
}
\end{pspicture}
\caption{\label{mgraphpic2}}
\end{figure}
Therefore the sutures around a 0--handle are all oriented either clockwise or anticlockwise, with the orientations on adjacent vertices going in opposite directions.
From this we see that $(N,s)$ is a sutured manifold.
\end{construction}

\begin{definition}
Define $\mmfld{M}{\mathcal{G}}{}=\Sphere\setminus\Int_{\Sphere}(\nmfld{M}{\mathcal{G}})$ to be the complementary sutured manifold to $\nmfld{M}{\mathcal{G}}$.
\end{definition}

\begin{lemma}\label{mgraphmoveonelemma}
Let $\mathcal{G}$ be a planar digraph in which incoming and outgoing edges alternate at each vertex. Let $e\in\E(\mathcal{G})$ be a loop that bounds a disc in $\sphere\setminus\mathcal{G}$.
Then $\nmfld{M}{\mathcal{G}}$ and $\nmfld{M}{\mathcal{G}\setminus e}$ (and hence also $\mmfld{M}{\mathcal{G}}$ and $\mmfld{M}{\mathcal{G}\setminus e}$) are equivalent as sutured manifolds embedded in $\Sphere$.
\end{lemma}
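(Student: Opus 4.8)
The plan is to pass the hypothesis through planar duality and then exhibit an explicit ambient isotopy of $\Sphere$ that absorbs a ``finger'' of $\nmfld{M}{\mathcal{G}}$. First I would record the combinatorial effect of the hypothesis. Since deleting an edge of a planar graph corresponds to contracting the dual edge, we have $(\mathcal{G}\setminus e)^{*}=\mathcal{G}^{*}/e^{*}$. The loop $e$ bounds a disc region $F$ of $\sphere\setminus\mathcal{G}$ whose entire boundary is $e$, so the vertex $w\in\V(\mathcal{G}^{*})$ corresponding to $F$ is incident to the single edge $e^{*}$; that is, $w$ has degree $1$. (As $e$ is a loop it is not a bridge, so its two sides lie in distinct regions and $e^{*}$ is not itself a loop.) In the Construction, therefore, the $0$--handle $V_0$ centred on $w$ meets exactly one $1$--handle, namely the handle $V_1$ coming from $e^{*}$, while $\nmfld{M}{\mathcal{G}\setminus e}$ is the manifold built from $\mathcal{G}^{*}/e^{*}$, which carries precisely the handles of $\nmfld{M}{\mathcal{G}}$ apart from $V_0$ and $V_1$, with the two $0$--handles at the ends of $e^{*}$ merged into one.

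Next I would describe the appendage geometrically. The union $B=V_0\cup V_1$ is a $3$--ball; because $w$ has degree $1$, it meets the rest of $\nmfld{M}{\mathcal{G}}$ only along the disc $\delta$ where the far end of $V_1$ attaches to the other $0$--handle $V_0'$. Since $e$ bounds the disc region $F$, the ball $B$ lies in an unobstructed neighbourhood of $F\cup e$ and is a genuine finger of $N$. I would then define an ambient isotopy of $\Sphere$ supported near this neighbourhood that pushes $V_0$ back along $V_1$ and absorbs it into $V_0'$, retracting the finger. As the retraction takes place inside $F$, which contains no other part of $\mathcal{G}$ or of $N$, the isotopy is well defined and carries $\nmfld{M}{\mathcal{G}}$ onto the manifold built from $\mathcal{G}^{*}/e^{*}$, namely $\nmfld{M}{\mathcal{G}\setminus e}$.

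The crux, and the step I expect to demand the most care, is checking that the sutures are respected throughout. On $\partial B$ the suture restricted to $B$ is a single arc $\beta$: it is the concatenation of the suture arc on $V_0$ (the equator of $V_0$ with the attaching region of $V_1$ deleted) with the two suture arcs running along $V_1$ shown in Figure \ref{mgraphpic1}, and both of its endpoints lie on $\partial\delta$. Under the retraction $\beta$ is pushed to an arc joining the same two points across the site vacated by $V_1$, and one checks that this is exactly the suture arc prescribed by the Construction for the merged $0$--handle of $\mathcal{G}^{*}/e^{*}$: removing the band $V_1$ reconnects the two suture arcs of $V_0'$ that its attaching region had separated, while absorbing $V_0$ merely enlarges a disc of $S_+(N)$ or of $S_-(N)$ and leaves the combinatorics of the sutures unchanged. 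The half--twist in $V_1$ is immaterial, being swallowed by the adjacent disc.

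Finally, the isotopy above is an equivalence of sutured manifolds embedded in $\Sphere$, giving $\nmfld{M}{\mathcal{G}}\cong\nmfld{M}{\mathcal{G}\setminus e}$. Since it is realised by an ambient isotopy of all of $\Sphere$, it simultaneously carries complements to complements, so $\mmfld{M}{\mathcal{G}}$ and $\mmfld{M}{\mathcal{G}\setminus e}$ are equivalent as well, which gives the parenthetical assertion for free.
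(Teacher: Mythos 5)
Your proof is correct and takes essentially the same approach as the paper: the paper's entire proof is the remark that the claim ``can be checked locally'' together with Figure \ref{mgraphpic3}, and what that figure depicts is precisely your argument --- the loop corresponds under duality to a degree-one vertex of $\mathcal{G}^*$, so $\nmfld{M}{\mathcal{G}}$ is $\nmfld{M}{\mathcal{G}\setminus e}$ with a finger (one $0$--handle plus one twisted $1$--handle) attached along a disc, and an ambient isotopy of $\Sphere$ retracts that finger while the single suture arc on it straightens out inside the attaching disc. Your added details --- the identity $(\mathcal{G}\setminus e)^*=\mathcal{G}^*/e^*$, the observation that $e^*$ is not a loop, and the fact that any two properly embedded arcs in a disc with the same endpoints are isotopic rel endpoints, so the half-twists of the sutures are harmless --- are all sound and simply make explicit what the paper leaves to the picture.
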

\begin{proof}
This can be checked locally, as shown in Figure \ref{mgraphpic3}.
\end{proof}
\begin{figure}[htbp]
\centering
$\begin{array}{ccc}
%LaTeX with PSTricks extensions
%%Creator: 0.46
%%Please note this file requires PSTricks extensions
\psset{xunit=.5pt,yunit=.5pt,runit=.5pt}
\begin{pspicture}(85,115)
{
\pscustom[linewidth=2,linecolor=black,fillstyle=solid,fillcolor=black]%vertex
{
\newpath
\moveto(55.5,45.50004)
\curveto(55.5,39.98004)(51.02,35.50004)(45.5,35.50004)
\curveto(39.98,35.50004)(35.5,39.98004)(35.5,45.50004)
\curveto(35.5,51.02004)(39.98,55.50004)(45.5,55.50004)
\curveto(51.02,55.50004)(55.5,51.02004)(55.5,45.50004)
\closepath
}
}
{
\pscustom[linewidth=1,linecolor=black]
{
\newpath
\moveto(5.5,5.49998262)
\lineto(45.5,45.49998262)
\lineto(85.5,5.49998262)
}
}
{
\pscustom[linewidth=1,linecolor=black,fillstyle=solid,fillcolor=black]%arrowhead
{
\newpath
\moveto(25.15686219,25.15691481)
\lineto(25.15686219,19.50006056)
\lineto(32.22793,32.22798262)
\lineto(19.50000794,25.15691481)
\lineto(25.15686219,25.15691481)
\closepath
}
}
{
\pscustom[linewidth=1,linecolor=black,fillstyle=solid,fillcolor=black]%arrowhead
{
\newpath
\moveto(67.15685219,23.84315043)
\lineto(61.49999794,23.84315043)
\lineto(74.22792,16.77208262)
\lineto(67.15685219,29.50000468)
\lineto(67.15685219,23.84315043)
\closepath
}
}
{
\pscustom[linewidth=1,linecolor=black]%loop
{
\newpath
\moveto(45,44.22798262)
\curveto(45,44.22798262)(67.029312,62.89554262)(70,74.22802262)
\curveto(74.149629,90.05789262)(60.843244,109.96233262)(45,109.22802262)
\curveto(29.178069,108.49469262)(16.929679,89.06495262)(20,74.22802262)
\curveto(22.374055,62.75571262)(45,44.22798262)(45,44.22798262)
\closepath
}
}
{
\pscustom[linewidth=1,linecolor=black,fillstyle=solid,fillcolor=black]%arrowhead
{
\newpath
\moveto(42.07722123,109.46836996)
\lineto(37.61197079,105.99539739)
\lineto(52,108.22802262)
\lineto(38.60424866,113.93362041)
\lineto(42.07722123,109.46836996)
\closepath
}
}
\end{pspicture}&%LaTeX with PSTricks extensions
%%Creator: 0.46
%%Please note this file requires PSTricks extensions
\psset{xunit=.5pt,yunit=.5pt,runit=.5pt}
\begin{pspicture}(52,18)
{
\pscustom[linewidth=2,linecolor=black]
{
\newpath
\moveto(1,9)
\lineto(51,9)
}
}
{
\pscustom[linewidth=2,linecolor=black,fillstyle=solid,fillcolor=black]
{
\newpath
\moveto(31,9)
\lineto(23,1)
\lineto(51,9)
\lineto(23,17)
\lineto(31,9)
\closepath
}
}
\end{pspicture}&%LaTeX with PSTricks extensions
%%Creator: 0.46
%%Please note this file requires PSTricks extensions
\psset{xunit=.5pt,yunit=.5pt,runit=.5pt}
\begin{pspicture}(85,115)
{
\pscustom[linewidth=2,linecolor=black,fillstyle=solid,fillcolor=black]%vertex
{
\newpath
\moveto(55.5,45.50004)
\curveto(55.5,39.98004)(51.02,35.50004)(45.5,35.50004)
\curveto(39.98,35.50004)(35.5,39.98004)(35.5,45.50004)
\curveto(35.5,51.02004)(39.98,55.50004)(45.5,55.50004)
\curveto(51.02,55.50004)(55.5,51.02004)(55.5,45.50004)
\closepath
}
}
{
\pscustom[linewidth=1,linecolor=black]
{
\newpath
\moveto(5.5,5.49998262)
\lineto(45.5,45.49998262)
\lineto(85.5,5.49998262)
}
}
{
\pscustom[linewidth=1,linecolor=black,fillstyle=solid,fillcolor=black]%arrowhead
{
\newpath
\moveto(25.15686219,25.15691481)
\lineto(25.15686219,19.50006056)
\lineto(32.22793,32.22798262)
\lineto(19.50000794,25.15691481)
\lineto(25.15686219,25.15691481)
\closepath
}
}
{
\pscustom[linewidth=1,linecolor=black,fillstyle=solid,fillcolor=black]%arrowhead
{
\newpath
\moveto(67.15685219,23.84315043)
\lineto(61.49999794,23.84315043)
\lineto(74.22792,16.77208262)
\lineto(67.15685219,29.50000468)
\lineto(67.15685219,23.84315043)
\closepath
}
}
\end{pspicture}\\
%LaTeX with PSTricks extensions
%%Creator: 0.46
%%Please note this file requires PSTricks extensions
\psset{xunit=.25pt,yunit=.25pt,runit=.25pt}
\begin{pspicture}(440,330)
{
\newgray{lightgrey}{.8}
\newgray{lightishgrey}{.7}
\newgray{grey}{.6}
\newgray{midgrey}{.4}
\newgray{darkgrey}{.3}
}
{
\pscustom[linewidth=3,linecolor=darkgrey,fillstyle=solid,fillcolor=lightishgrey]%1-handle
{
\newpath
\moveto(170,290)
\lineto(240,290)
\lineto(240,160)
\lineto(170,160)
\lineto(170,290)
\closepath
}
}
{
\pscustom[linewidth=3,linecolor=darkgrey,fillstyle=solid,fillcolor=lightishgrey]%1-handle
{
\newpath
\moveto(406.22652673,87.04325264)
\lineto(355.9748552,132.96307166)
\lineto(289.99998019,10.00001323)
\lineto(340.25165172,-35.91980579)
\lineto(406.22652673,87.04325264)
\closepath
}
}
{
\pscustom[linewidth=3,linecolor=darkgrey,fillstyle=solid,fillcolor=lightishgrey]%1-handle
{
\newpath
\moveto(3.77346849,87.0432393)
\lineto(54.02514002,132.96305832)
\lineto(120.00001503,9.9999999)
\lineto(69.74834349,-35.91981913)
\lineto(3.77346849,87.0432393)
\closepath
}
}
{
\pscustom[linewidth=3,linecolor=darkgrey,fillstyle=solid,fillcolor=grey]%0-handle
{
\newpath
\moveto(10,39.99998262)
\curveto(60,39.99998262)(90,90.00000262)(90,140.00000262)
\curveto(90,190.00000262)(105,280.00000262)(205,280.00000262)
\curveto(305,280.00000262)(320,190.00000262)(320,140.00000262)
\curveto(320,90.00000262)(350,39.99998262)(400,39.99998262)
\lineto(400,340.00000262)
\lineto(10,340.00000262)
\lineto(10,39.99998262)
\closepath
}
}
{
\pscustom[linewidth=3,linecolor=darkgrey,fillstyle=solid,fillcolor=grey]%0-handle
{
\newpath
\moveto(249.909086,144.90910879)
\curveto(249.909086,120.06910934)(229.74908645,99.90910979)(204.909087,99.90910979)
\curveto(180.06908755,99.90910979)(159.909088,120.06910934)(159.909088,144.90910879)
\curveto(159.909088,169.74910824)(180.06908755,189.90910779)(204.909087,189.90910779)
\curveto(229.74908645,189.90910779)(249.909086,169.74910824)(249.909086,144.90910879)
\closepath
}
}
{
\pscustom[linewidth=6,linecolor=black]
{
\newpath
\moveto(10,39.99998262)
\curveto(16,39.99998262)(22,40.99998262)(28,41.99998262)
}
}
{
\pscustom[linewidth=6,linecolor=black]
{
\newpath
\moveto(400,39.99998262)
\curveto(394,39.99998262)(388,40.99998262)(382,41.99998262)
}
}
{
\pscustom[linewidth=6,linecolor=black]
{
\newpath
\moveto(332,87.00000262)
\lineto(345,9.99998262)
}
}
{
\pscustom[linewidth=6,linecolor=black,linestyle=dashed,dash=24 24]
{
\newpath
\moveto(78,87.00000262)
\lineto(65,9.99998262)
}
}
{
\pscustom[linewidth=6,linecolor=black,linestyle=dashed,dash=24 24]
{
\newpath
\moveto(382,41.99998262)
\lineto(315,9.99998262)
}
}
{
\pscustom[linewidth=6,linecolor=black]
{
\newpath
\moveto(28.000004,41.99998262)
\lineto(95,9.99998262)
}
}
{
\pscustom[linewidth=6,linecolor=black]
{
\newpath
\moveto(171,174.00000262)
\curveto(163,164.00000262)(159,150.00000262)(161,134.00000262)
\curveto(163.40832,114.73345262)(184.97502,100.00000262)(205,100.00000262)
\curveto(225,100.00000262)(244,114.00000262)(249,134.00000262)
\curveto(255,160.00000262)(239,174.00000262)(239,174.00000262)
}
}
{
\pscustom[linewidth=6,linecolor=black]
{
\newpath
\moveto(78,86.99997262)
\curveto(88,108.99997262)(91,130.00000262)(90,150.00000262)
\curveto(90,164.00000262)(93,182.00000262)(97,198.00000262)
\curveto(107,234.00000262)(134,265.00000262)(170,276.00000262)
}
}
{
\pscustom[linewidth=6,linecolor=black]
{
\newpath
\moveto(332,86.99997262)
\curveto(322,108.99997262)(319,130.00000262)(320,150.00000262)
\curveto(320,164.00000262)(317,182.00000262)(313,198.00000262)
\curveto(303,234.00000262)(276,265.00000262)(240,276.00000262)
}
}
{
\pscustom[linewidth=6,linecolor=black]
{
\newpath
\moveto(169,277.00000262)
\lineto(240,173.00000262)
}
}
{
\pscustom[linewidth=6,linecolor=black,linestyle=dashed,dash=24 24]
{
\newpath
\moveto(241,277.00000262)
\lineto(170,173.00000262)
}
}
{
\pscustom[linestyle=none,fillstyle=solid,fillcolor=white]
{
\newpath
\moveto(-9.97011328,349.59635458)
\lineto(409.98181735,350.22191482)
\lineto(409.98181735,329.99042564)
\lineto(-9.97011328,329.3648654)
\lineto(-9.97011328,349.59635458)
\closepath
}
}
{
\pscustom[linestyle=none,fillstyle=solid,fillcolor=white]
{
\newpath
\moveto(-9.59327888,340.0725708)
\lineto(10.31336021,340.0725708)
\lineto(10.31336021,-0.10028076)
\lineto(-9.59327888,-0.10028076)
\lineto(-9.59327888,340.0725708)
\closepath
}
}
{
\pscustom[linestyle=none,fillstyle=solid,fillcolor=white]
{
\newpath
\moveto(399.65759277,340.13867188)
\lineto(419.56405449,340.13867188)
\lineto(419.56405449,-10.12255859)
\lineto(399.65759277,-10.12255859)
\lineto(399.65759277,340.13867188)
\closepath
}
}
{
\pscustom[linestyle=none,fillstyle=solid,fillcolor=white]
{
\newpath
\moveto(-9.96357059,10.12585449)
\lineto(419.97746944,10.12585449)
\lineto(419.97746944,-39.77116776)
\lineto(-9.96357059,-39.77116776)
\lineto(-9.96357059,10.12585449)
\closepath
}
}
\end{pspicture}&&%LaTeX with PSTricks extensions
%%Creator: 0.46
%%Please note this file requires PSTricks extensions
\psset{xunit=.25pt,yunit=.25pt,runit=.25pt}
\begin{pspicture}(440,330)
{
\newgray{lightgrey}{.8}
\newgray{lightishgrey}{.7}
\newgray{grey}{.6}
\newgray{midgrey}{.4}
\newgray{darkgrey}{.3}
}
{
\pscustom[linewidth=3,linecolor=darkgrey,fillstyle=solid,fillcolor=lightishgrey]%1-handle
{
\newpath
\moveto(406.22652673,87.04325264)
\lineto(355.9748552,132.96307166)
\lineto(289.99998019,10.00001323)
\lineto(340.25165172,-35.91980579)
\lineto(406.22652673,87.04325264)
\closepath
}
}
{
\pscustom[linewidth=3,linecolor=darkgrey,fillstyle=solid,fillcolor=lightishgrey]%1-handle
{
\newpath
\moveto(3.77346849,87.0432393)
\lineto(54.02514002,132.96305832)
\lineto(120.00001503,9.9999999)
\lineto(69.74834349,-35.91981913)
\lineto(3.77346849,87.0432393)
\closepath
}
}
{
\pscustom[linewidth=3,linecolor=darkgrey,fillstyle=solid,fillcolor=grey]%0-handle
{
\newpath
\moveto(10,39.99998262)
\curveto(60,39.99998262)(90,90.00000262)(90,140.00000262)
\curveto(90,190.00000262)(105,280.00000262)(205,280.00000262)
\curveto(305,280.00000262)(320,190.00000262)(320,140.00000262)
\curveto(320,90.00000262)(350,39.99998262)(400,39.99998262)
\lineto(400,340.00000262)
\lineto(10,340.00000262)
\lineto(10,39.99998262)
\closepath
}
}
{
\pscustom[linewidth=6,linecolor=black]
{
\newpath
\moveto(10,39.99998262)
\curveto(16,39.99998262)(22,40.99998262)(28,41.99998262)
}
}
{
\pscustom[linewidth=6,linecolor=black]
{
\newpath
\moveto(400,39.99998262)
\curveto(394,39.99998262)(388,40.99998262)(382,41.99998262)
}
}
{
\pscustom[linewidth=6,linecolor=black]
{
\newpath
\moveto(332,87.00000262)
\lineto(345,9.99998262)
}
}
{
\pscustom[linewidth=6,linecolor=black,linestyle=dashed,dash=24 24]
{
\newpath
\moveto(78,87.00000262)
\lineto(65,9.99998262)
}
}
{
\pscustom[linewidth=6,linecolor=black,linestyle=dashed,dash=24 24]
{
\newpath
\moveto(382,41.99998262)
\lineto(315,9.99998262)
}
}
{
\pscustom[linewidth=6,linecolor=black]
{
\newpath
\moveto(28.000004,41.99998262)
\lineto(95,9.99998262)
}
}
{
\pscustom[linewidth=6,linecolor=black]
{
\newpath
\moveto(78,86.99997262)
\curveto(88,108.99997262)(91,130.00000262)(90,150.00000262)
\curveto(90,164.00000262)(93,182.00000262)(97,198.00000262)
\curveto(107,234.00000262)(134,265.00000262)(170,276.00000262)
\curveto(189,280.00000262)(192,280.00000262)(203,280.00000262)
\curveto(214,280.00000262)(231,279.00000262)(240,276.00000262)
}
}
{
\pscustom[linewidth=6,linecolor=black]
{
\newpath
\moveto(332,86.99997262)
\curveto(322,108.99997262)(319,130.00000262)(320,150.00000262)
\curveto(320,164.00000262)(317,182.00000262)(313,198.00000262)
\curveto(303,234.00000262)(276,265.00000262)(240,276.00000262)
}
}
{
\pscustom[linestyle=none,fillstyle=solid,fillcolor=white]
{
\newpath
\moveto(-9.97011328,349.59635458)
\lineto(409.98181735,350.22191482)
\lineto(409.98181735,329.99042564)
\lineto(-9.97011328,329.3648654)
\lineto(-9.97011328,349.59635458)
\closepath
}
}
{
\pscustom[linestyle=none,fillstyle=solid,fillcolor=white]
{
\newpath
\moveto(-9.59327888,340.0725708)
\lineto(10.31336021,340.0725708)
\lineto(10.31336021,-0.10028076)
\lineto(-9.59327888,-0.10028076)
\lineto(-9.59327888,340.0725708)
\closepath
}
}
{
\pscustom[linestyle=none,fillstyle=solid,fillcolor=white]
{
\newpath
\moveto(399.65759277,340.13867188)
\lineto(419.56405449,340.13867188)
\lineto(419.56405449,-10.12255859)
\lineto(399.65759277,-10.12255859)
\lineto(399.65759277,340.13867188)
\closepath
}
}
{
\pscustom[linestyle=none,fillstyle=solid,fillcolor=white]
{
\newpath
\moveto(-9.96357059,10.12585449)
\lineto(419.97746944,10.12585449)
\lineto(419.97746944,-39.77116776)
\lineto(-9.96357059,-39.77116776)
\lineto(-9.96357059,10.12585449)
\closepath
}
}
\end{pspicture}
\end{array}$
\caption{\label{mgraphpic3}}
\end{figure}

\begin{lemma}\label{mgraphmovetwolemma}
Let $\mathcal{G}$ be a planar digraph in which incoming and outgoing edges alternate at each vertex, and let $e\in\E(\mathcal{G})$ be such that $\tau(e)$ is 2--valent. 
Then $\mmfld{M}{\mathcal{G}/ e}$ is obtained from $\mmfld{M}{\mathcal{G}}$ by a product disc decomposition. In particular, $\mmfld{M}{\mathcal{G}}$ is an almost product sutured manifold if and only if $\mmfld{M}{\mathcal{G}/ e}$ is.
\end{lemma}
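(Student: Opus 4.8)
The plan is to produce an explicit product disc $T$ in $\mmfld{M}{\mathcal{G}}$ associated to the vertex $\tau(e)$, to check by a local computation (as in Lemma \ref{mgraphmoveonelemma}) that the product disc decomposition along $T$ converts $\mmfld{M}{\mathcal{G}}$ into $\mmfld{M}{\mathcal{G}/e}$, and then to read off the ``in particular'' clause from the fact that the almost product property is preserved under product disc decomposition (\cite{MR1026928} Lemmas 2.1, 2.2).

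First I would record the local structure at $\tau(e)$. Because incoming and outgoing edges alternate at every vertex, the $2$--valent vertex $\tau(e)$ has in-degree and out-degree $1$, so $e$ is its only incoming edge and there is a single outgoing edge $e'$. Dually, $\tau(e)$ is a bigon face of $\mathcal{G}^*$, bounded by the two edges $e^*$ and $(e')^*$, which therefore run in parallel between the two vertices of $\mathcal{G}^*$ whose $0$--handles are joined by the $1$--handles carrying $e$ and $e'$. This bigon face is a disc in $\sphere$ meeting $\nmfld{M}{\mathcal{G}}\cap\sphere$ only along those two $1$--handles; pushing its interior slightly off $\sphere$ into the complement gives a disc $T$ properly embedded in $\mmfld{M}{\mathcal{G}}$. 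Its boundary crosses the sutures exactly twice, once on each of the two $1$--handles, so $T$ is a product disc.

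Next I would perform the decomposition. As in Lemma \ref{mgraphmoveonelemma}, the effect of cutting $\mmfld{M}{\mathcal{G}}$ along $T$ can be verified in this local model: the two half-twisted $1$--handles for $e$ and $e'$ fuse into a single $1$--handle and the bigon face between them disappears, which is precisely the result of deleting the parallel edge $e^*$ from $\mathcal{G}^*$. Since deleting $e^*$ from $\mathcal{G}^*$ is dual to contracting $e$ in $\mathcal{G}$, the decomposed manifold is the standard model $\mmfld{M}{\mathcal{G}/e}$. The concluding equivalence for the almost product property then follows immediately from \cite{MR1026928} Lemmas 2.1, 2.2.

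I expect the main obstacle to be this local verification. One must confirm that, after cutting along $T$, the two half-twists carried by the handles for $e$ and $e'$ combine into the single correctly twisted handle of the construction, so that the output is genuinely $\nmfld{M}{\mathcal{G}/e}$ and not merely a homeomorphic sutured manifold. Tracking the suture orientations through the cut---which the alternating condition guarantees are locally consistent---is the delicate point, and is best settled by drawing the local picture explicitly.
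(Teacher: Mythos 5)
Your proposal is correct and takes essentially the same route as the paper: the paper's entire proof is the local picture (Figure \ref{mgraphpic4}), which exhibits exactly the product disc you describe --- the disc spanning the dual bigon face at the $2$--valent vertex $\tau(e)$ --- and checks pictorially that decomposing along it fuses the two half-twisted $1$--handles into the single correctly twisted handle of $\nmfld{M}{\mathcal{G}/e}$, with the ``in particular'' clause following from the cited Lemmas 2.1, 2.2 of \cite{MR1026928} just as you say. The only caveat is cosmetic: the closed bigon face also meets the two $0$--handles at its corners, so the naive disc boundary runs along (rather than transverse to) the sutures there and must be isotoped slightly before it crosses $s$ in exactly two points, one on each $1$--handle.
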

\begin{proof}
See Figure \ref{mgraphpic4}.
\end{proof}
\begin{figure}[htb]
\centering
$\begin{array}{ccc}
%LaTeX with PSTricks extensions
%%Creator: 0.46
%%Please note this file requires PSTricks extensions
\psset{xunit=.5pt,yunit=.5pt,runit=.5pt}
\begin{pspicture}(175,75)
{
\pscustom[linewidth=2,linecolor=black,fillstyle=solid,fillcolor=black]%vertex
{
\newpath
\moveto(46,41.000057)
\curveto(46,35.480057)(41.52,31.000057)(36,31.000057)
\curveto(30.48,31.000057)(26,35.480057)(26,41.000057)
\curveto(26,46.520057)(30.48,51.000057)(36,51.000057)
\curveto(41.52,51.000057)(46,46.520057)(46,41.000057)
\closepath
}
}
{
\pscustom[linewidth=2,linecolor=black,fillstyle=solid,fillcolor=black]%vertex
{
\newpath
\moveto(101,41.000017)
\curveto(101,35.480017)(96.52,31.000017)(91,31.000017)
\curveto(85.48,31.000017)(81,35.480017)(81,41.000017)
\curveto(81,46.520017)(85.48,51.000017)(91,51.000017)
\curveto(96.52,51.000017)(101,46.520017)(101,41.000017)
\closepath
}
}
{
\pscustom[linewidth=2,linecolor=black,fillstyle=solid,fillcolor=black]%vertex
{
\newpath
\moveto(156,41.000017)
\curveto(156,35.480017)(151.52,31.000017)(146,31.000017)
\curveto(140.48,31.000017)(136,35.480017)(136,41.000017)
\curveto(136,46.520017)(140.48,51.000017)(146,51.000017)
\curveto(151.52,51.000017)(156,46.520017)(156,41.000017)
\closepath
}
}
{
\pscustom[linewidth=1,linecolor=black]
{
\newpath
\moveto(5.5,40.49998262)
\lineto(175.5,40.49998262)
}
}
{
\pscustom[linewidth=1,linecolor=black]
{
\newpath
\moveto(15.5,75.50002262)
\lineto(35.5,40.49998262)
\lineto(15.5,5.49998262)
}
}
{
\pscustom[linewidth=1,linecolor=black]
{
\newpath
\moveto(165.5,75.50002262)
\lineto(145.5,40.49998262)
\lineto(165.5,5.49998262)
}
}
{
\pscustom[linewidth=1,linecolor=black,fillstyle=solid,fillcolor=black]%arrowhead
{
\newpath
\moveto(60.5,40.49998262)
\lineto(56.5,36.49998262)
\lineto(70.5,40.49998262)
\lineto(56.5,44.49998262)
\lineto(60.5,40.49998262)
\closepath
}
}
{
\pscustom[linewidth=1,linecolor=black,fillstyle=solid,fillcolor=black]%arrowhead
{
\newpath
\moveto(22.02786405,64.44429453)
\lineto(16.6613009,66.23314891)
\lineto(26.5,55.50002262)
\lineto(23.81671843,69.81085767)
\lineto(22.02786405,64.44429453)
\closepath
}
}
{
\pscustom[linewidth=1,linecolor=black,fillstyle=solid,fillcolor=black]%arrowhead
{
\newpath
\moveto(115.5,40.49998262)
\lineto(111.5,36.49998262)
\lineto(125.5,40.49998262)
\lineto(111.5,44.49998262)
\lineto(115.5,40.49998262)
\closepath
}
}
{
\pscustom[linewidth=1,linecolor=black,fillstyle=solid,fillcolor=black]%arrowhead
{
\newpath
\moveto(22.02786405,15.55571071)
\lineto(23.81671843,10.18914756)
\lineto(26.5,24.49998262)
\lineto(16.6613009,13.76685633)
\lineto(22.02786405,15.55571071)
\closepath
}
}
{
\pscustom[linewidth=1,linecolor=black,fillstyle=solid,fillcolor=black]%arrowhead
{
\newpath
\moveto(157.02786405,21.44425453)
\lineto(151.6613009,23.23310891)
\lineto(161.5,12.49998262)
\lineto(158.81671843,26.81081767)
\lineto(157.02786405,21.44425453)
\closepath
}
}
{
\pscustom[linewidth=1,linecolor=black,fillstyle=solid,fillcolor=black]%arrowhead
{
\newpath
\moveto(157.02786405,59.55575071)
\lineto(158.81671843,54.18918756)
\lineto(161.5,68.50002262)
\lineto(151.6613009,57.76689633)
\lineto(157.02786405,59.55575071)
\closepath
}
}
{
\pscustom[linewidth=1,linecolor=black,fillstyle=solid,fillcolor=black]%arrowhead
{
\newpath
\moveto(15.5,40.49998262)
\lineto(19.5,44.49998262)
\lineto(5.5,40.49998262)
\lineto(19.5,36.49998262)
\lineto(15.5,40.49998262)
\closepath
}
}
{
\pscustom[linewidth=1,linecolor=black,fillstyle=solid,fillcolor=black]%arrowhead
{
\newpath
\moveto(165.5,40.49998262)
\lineto(161.5,36.49998262)
\lineto(175.5,40.49998262)
\lineto(161.5,44.49998262)
\lineto(165.5,40.49998262)
\closepath
}
}
\end{pspicture}&%LaTeX with PSTricks extensions
%%Creator: 0.46
%%Please note this file requires PSTricks extensions
\psset{xunit=.5pt,yunit=.5pt,runit=.5pt}
\begin{pspicture}(52,18)
{
\pscustom[linewidth=2,linecolor=black]
{
\newpath
\moveto(1,9)
\lineto(51,9)
}
}
{
\pscustom[linewidth=2,linecolor=black,fillstyle=solid,fillcolor=black]
{
\newpath
\moveto(31,9)
\lineto(23,1)
\lineto(51,9)
\lineto(23,17)
\lineto(31,9)
\closepath
}
}
\end{pspicture}&%LaTeX with PSTricks extensions
%%Creator: 0.46
%%Please note this file requires PSTricks extensions
\psset{xunit=.5pt,yunit=.5pt,runit=.5pt}
\begin{pspicture}(175,75)
{
\pscustom[linewidth=2,linecolor=black,fillstyle=solid,fillcolor=black]%vertex
{
\newpath
\moveto(46,41.000057)
\curveto(46,35.480057)(41.52,31.000057)(36,31.000057)
\curveto(30.48,31.000057)(26,35.480057)(26,41.000057)
\curveto(26,46.520057)(30.48,51.000057)(36,51.000057)
\curveto(41.52,51.000057)(46,46.520057)(46,41.000057)
\closepath
}
}
{
\pscustom[linewidth=2,linecolor=black,fillstyle=solid,fillcolor=black]%vertex
{
\newpath
\moveto(156,41.000017)
\curveto(156,35.480017)(151.52,31.000017)(146,31.000017)
\curveto(140.48,31.000017)(136,35.480017)(136,41.000017)
\curveto(136,46.520017)(140.48,51.000017)(146,51.000017)
\curveto(151.52,51.000017)(156,46.520017)(156,41.000017)
\closepath
}
}
{
\pscustom[linewidth=1,linecolor=black]
{
\newpath
\moveto(5.5,40.49998262)
\lineto(175.5,40.49998262)
}
}
{
\pscustom[linewidth=1,linecolor=black]
{
\newpath
\moveto(15.5,75.50002262)
\lineto(35.5,40.49998262)
\lineto(15.5,5.49998262)
}
}
{
\pscustom[linewidth=1,linecolor=black]
{
\newpath
\moveto(165.5,75.50002262)
\lineto(145.5,40.49998262)
\lineto(165.5,5.49998262)
}
}
{
\pscustom[linewidth=1,linecolor=black,fillstyle=solid,fillcolor=black]%arrowhead
{
\newpath
\moveto(85,40.49998262)
\lineto(81,36.49998262)
\lineto(95,40.49998262)
\lineto(81,44.49998262)
\lineto(85,40.49998262)
\closepath
}
}
{
\pscustom[linewidth=1,linecolor=black,fillstyle=solid,fillcolor=black]%arrowhead
{
\newpath
\moveto(22.02786405,64.44429453)
\lineto(16.6613009,66.23314891)
\lineto(26.5,55.50002262)
\lineto(23.81671843,69.81085767)
\lineto(22.02786405,64.44429453)
\closepath
}
}
{
\pscustom[linewidth=1,linecolor=black,fillstyle=solid,fillcolor=black]%arrowhead
{
\newpath
\moveto(22.02786405,15.55571071)
\lineto(23.81671843,10.18914756)
\lineto(26.5,24.49998262)
\lineto(16.6613009,13.76685633)
\lineto(22.02786405,15.55571071)
\closepath
}
}
{
\pscustom[linewidth=1,linecolor=black,fillstyle=solid,fillcolor=black]%arrowhead
{
\newpath
\moveto(157.02786405,21.44425453)
\lineto(151.6613009,23.23310891)
\lineto(161.5,12.49998262)
\lineto(158.81671843,26.81081767)
\lineto(157.02786405,21.44425453)
\closepath
}
}
{
\pscustom[linewidth=1,linecolor=black,fillstyle=solid,fillcolor=black]%arrowhead
{
\newpath
\moveto(157.02786405,59.55575071)
\lineto(158.81671843,54.18918756)
\lineto(161.5,68.50002262)
\lineto(151.6613009,57.76689633)
\lineto(157.02786405,59.55575071)
\closepath
}
}
{
\pscustom[linewidth=1,linecolor=black,fillstyle=solid,fillcolor=black]%arrowhead
{
\newpath
\moveto(15.5,40.49998262)
\lineto(19.5,44.49998262)
\lineto(5.5,40.49998262)
\lineto(19.5,36.49998262)
\lineto(15.5,40.49998262)
\closepath
}
}
{
\pscustom[linewidth=1,linecolor=black,fillstyle=solid,fillcolor=black]%arrowhead
{
\newpath
\moveto(165.5,40.49998262)
\lineto(161.5,36.49998262)
\lineto(175.5,40.49998262)
\lineto(161.5,44.49998262)
\lineto(165.5,40.49998262)
\closepath
}
}
\end{pspicture}\\
%LaTeX with PSTricks extensions
%%Creator: 0.46
%%Please note this file requires PSTricks extensions
\psset{xunit=.5pt,yunit=.5pt,runit=.5pt}
\begin{pspicture}(145,155)
{
\newgray{lightgrey}{.8}
\newgray{lightishgrey}{.7}
\newgray{grey}{.6}
\newgray{midgrey}{.4}
\newgray{darkgrey}{.3}
}
{
\pscustom[linewidth=1.5,linecolor=darkgrey,fillstyle=solid,fillcolor=lightishgrey]%1-handle
{
\newpath
\moveto(28.9856205,114.45751953)
\lineto(54.9856205,114.45751953)
\lineto(54.9856205,24.26251984)
\lineto(28.9856205,24.26251984)
\lineto(28.9856205,114.45751953)
\closepath
}
}
{
\pscustom[linewidth=1.5,linecolor=darkgrey,fillstyle=solid,fillcolor=lightishgrey]%1-handle
{
\newpath
\moveto(82.98562622,114.65246582)
\lineto(108.98562622,114.65246582)
\lineto(108.98562622,24.45746613)
\lineto(82.98562622,24.45746613)
\lineto(82.98562622,114.65246582)
\closepath
}
}
{
\pscustom[linewidth=1.5,linecolor=darkgrey,fillstyle=solid,fillcolor=grey]%0-handle
{
\newpath
\moveto(2.98675225,-2.21549303)
\curveto(15.8632781,34.21587645)(55.88128011,53.33285098)(92.31264959,40.45632513)
\curveto(113.5721863,32.94222456)(128.5806667,17.54875219)(135.55400815,-3.89424475)
}
}
{
\pscustom[linewidth=1.5,linecolor=darkgrey,fillstyle=solid,fillcolor=grey]%0-handle
{
\newpath
\moveto(135.20550307,141.76547166)
\curveto(122.67953247,105.21209069)(82.84701525,85.71158372)(46.29363428,98.23755431)
\curveto(25.15669904,105.48067717)(10.10606954,120.49800499)(2.8161442,141.61884419)
}
}
{
\pscustom[linewidth=3,linecolor=black]
{
\newpath
\moveto(28.985621,106.33995262)
\lineto(54.985621,42.45748262)
}
}
{
\pscustom[linewidth=3,linecolor=black,linestyle=dashed,dash=12 12]
{
\newpath
\moveto(28.985621,32.38008262)
\lineto(54.985621,96.45751262)
}
}
{
\pscustom[linewidth=3,linecolor=black]
{
\newpath
\moveto(82.985617,96.45751262)
\lineto(108.98561,32.57508262)
}
}
{
\pscustom[linewidth=3,linecolor=black,linestyle=dashed,dash=12 12]
{
\newpath
\moveto(82.985617,42.45748262)
\lineto(108.98561,106.53495262)
}
}
{
\pscustom[linewidth=3,linecolor=black]
{
\newpath
\moveto(132.98561,2.45748262)
\curveto(124.98561,18.45748262)(120.98561,22.45748262)(108.98561,32.45748262)
}
}
{
\pscustom[linewidth=3,linecolor=black]
{
\newpath
\moveto(4.9856207,2.45748262)
\curveto(12.985621,18.45748262)(16.985621,22.45748262)(28.985621,32.45748262)
}
}
{
\pscustom[linewidth=3,linecolor=black]
{
\newpath
\moveto(4.9856207,136.45750262)
\curveto(12.985621,120.45750262)(16.985621,116.45750262)(28.985621,106.45750262)
}
}
{
\pscustom[linewidth=3,linecolor=black]
{
\newpath
\moveto(132.98561,136.45750262)
\curveto(124.98561,120.45750262)(120.98561,116.45750262)(108.98561,106.45750262)
}
}
{
\pscustom[linewidth=3,linecolor=black]
{
\newpath
\moveto(53.985621,43.45748262)
\curveto(63.985621,44.45748262)(73.985621,44.45748262)(83.985617,43.45748262)
}
}
{
\pscustom[linewidth=3,linecolor=black]
{
\newpath
\moveto(53.985621,95.45753262)
\curveto(63.985621,94.45753262)(73.985621,94.45753262)(83.985617,95.45753262)
}
}
{
\pscustom[linewidth=2,linecolor=black,linestyle=dashed,dash=4 4]
{
\newpath
\moveto(104.99764,69.48130783)
\curveto(104.99764,41.88781246)(89.31225392,19.49309158)(69.9856175,19.49309158)
\curveto(50.65898108,19.49309158)(34.973595,41.88781246)(34.973595,69.48130783)
\curveto(34.973595,97.0748032)(50.65898108,119.46952408)(69.9856175,119.46952408)
\curveto(89.31225392,119.46952408)(104.99764,97.0748032)(104.99764,69.48130783)
\closepath
}
}
{
\pscustom[linestyle=none,fillstyle=solid,fillcolor=white]
{
\newpath
\moveto(0,4.82116699)
\lineto(139.96365356,4.82116699)
\lineto(139.96365356,-5.00002098)
\lineto(0,-5.00002098)
\lineto(0,4.82116699)
\closepath
}
}
{
\pscustom[linestyle=none,fillstyle=solid,fillcolor=white]
{
\newpath
\moveto(-0.16256142,143.97436523)
\lineto(135.17056847,143.97436523)
\lineto(135.17056847,136.03328419)
\lineto(-0.16256142,136.03328419)
\lineto(-0.16256142,143.97436523)
\closepath
}
}
\end{pspicture}&&%LaTeX with PSTricks extensions
%%Creator: inkscape 0.47
%%Please note this file requires PSTricks extensions
\psset{xunit=.5pt,yunit=.5pt,runit=.5pt}
\begin{pspicture}(140,140)
{
\newgray{lightgrey}{.8}
\newgray{lightishgrey}{.7}
\newgray{grey}{.6}
\newgray{midgrey}{.4}
\newgray{darkgrey}{.3}
}
{
\pscustom[linewidth=1.5,linecolor=darkgrey,fillstyle=solid,fillcolor=lightishgrey]%1-handle
{
\newpath
\moveto(29,114.99999738)
\lineto(109,114.99999738)
\lineto(109,24.99999738)
\lineto(29,24.99999738)
\lineto(29,114.99999738)
\closepath
}
}
{
\pscustom[linewidth=1.5,linecolor=darkgrey,fillstyle=solid,fillcolor=grey]%0-handle
{
\newpath
\moveto(2.98675225,-2.21549565)
\curveto(15.86992047,34.23466699)(55.86248695,53.33949074)(92.31264959,40.45632251)
\curveto(112.83892224,33.20139128)(128.82118467,16.80915777)(135.55400815,-3.89424737)
}
}
{
\pscustom[linewidth=1.5,linecolor=darkgrey,fillstyle=solid,fillcolor=grey]%0-handle
{
\newpath
\moveto(135.20550307,141.76546905)
\curveto(122.67307093,105.19323197)(82.86587136,85.70511956)(46.29363428,98.2375517)
\curveto(25.89235567,105.2285826)(9.85234865,121.2330988)(2.8161442,141.61884158)
}
}
{
\pscustom[linewidth=3,linecolor=black,linestyle=dashed,dash=12 12]
{
\newpath
\moveto(29,31.99998)
\lineto(109,107)
}
}
{
\pscustom[linewidth=3,linecolor=black]
{
\newpath
\moveto(109,31.99998)
\lineto(29,107)
}
}
{
\pscustom[linewidth=3,linecolor=black]
{
\newpath
\moveto(108.7407011,32.0732635)
\curveto(121.38238207,23.35048048)(130.80431292,10.71205513)(135.55425815,-3.89400737)
}
}
{
\pscustom[linewidth=3,linecolor=black]
{
\newpath
\moveto(2.98700225,-2.21525565)
\curveto(7.89078636,11.65895024)(17.04221526,23.6349196)(29.14101663,32.0110579)
}
}
{
\pscustom[linewidth=3,linecolor=black]
{
\newpath
\moveto(108.61317842,106.84992664)
\curveto(121.28276617,115.57363712)(130.72604416,128.22774346)(135.48325815,142.85615737)
}
}
{
\pscustom[linewidth=3,linecolor=black]
{
\newpath
\moveto(2.91600225,141.17740565)
\curveto(7.8357399,127.25806264)(17.03034657,115.25051956)(29.18530669,106.87144606)
}
}
{
\pscustom[linestyle=none,fillstyle=solid,fillcolor=white]
{
\newpath
\moveto(-0.16256142,143.97436262)
\lineto(140.17056847,143.97436262)
\lineto(140.17056847,136.03328157)
\lineto(-0.16256142,136.03328157)
\lineto(-0.16256142,143.97436262)
\closepath
}
}
{
\pscustom[linestyle=none,fillstyle=solid,fillcolor=white]
{
\newpath
\moveto(0,4.82116438)
\lineto(139.96365356,4.82116438)
\lineto(139.96365356,-5.0000236)
\lineto(0,-5.0000236)
\lineto(0,4.82116438)
\closepath
}
}
\end{pspicture}\\
\\
%LaTeX with PSTricks extensions
%%Creator: 0.46
%%Please note this file requires PSTricks extensions
\psset{xunit=.5pt,yunit=.5pt,runit=.5pt}
\begin{pspicture}(85,120)
{
\newrgbcolor{curcolor}{0 0 0}
\pscustom[linestyle=none,fillstyle=solid,fillcolor=curcolor]
{
\newpath
\moveto(55.5,45.50004)
\curveto(55.5,39.98004)(51.02,35.50004)(45.5,35.50004)
\curveto(39.98,35.50004)(35.5,39.98004)(35.5,45.50004)
\curveto(35.5,51.02004)(39.98,55.50004)(45.5,55.50004)
\curveto(51.02,55.50004)(55.5,51.02004)(55.5,45.50004)
\closepath
}
}
{
\newrgbcolor{curcolor}{0 0 0}
\pscustom[linewidth=2,linecolor=curcolor]
{
\newpath
\moveto(55.5,45.50004)
\curveto(55.5,39.98004)(51.02,35.50004)(45.5,35.50004)
\curveto(39.98,35.50004)(35.5,39.98004)(35.5,45.50004)
\curveto(35.5,51.02004)(39.98,55.50004)(45.5,55.50004)
\curveto(51.02,55.50004)(55.5,51.02004)(55.5,45.50004)
\closepath
}
}
{
\newrgbcolor{curcolor}{0 0 0}
\pscustom[linewidth=1,linecolor=curcolor]
{
\newpath
\moveto(5.5,5.49998262)
\lineto(45.5,45.49998262)
\lineto(85.5,5.49998262)
}
}
{
\newrgbcolor{curcolor}{0 0 0}
\pscustom[linewidth=1,linecolor=curcolor]
{
\newpath
\moveto(29.22793,29.22798262)
\lineto(32.22793,32.22798262)
}
}
{
\newrgbcolor{curcolor}{0 0 0}
\pscustom[linestyle=none,fillstyle=solid,fillcolor=curcolor]
{
\newpath
\moveto(25.15686219,25.15691481)
\lineto(25.15686219,19.50006056)
\lineto(32.22793,32.22798262)
\lineto(19.50000794,25.15691481)
\lineto(25.15686219,25.15691481)
\closepath
}
}
{
\newrgbcolor{curcolor}{0 0 0}
\pscustom[linewidth=1,linecolor=curcolor]
{
\newpath
\moveto(25.15686219,25.15691481)
\lineto(25.15686219,19.50006056)
\lineto(32.22793,32.22798262)
\lineto(19.50000794,25.15691481)
\lineto(25.15686219,25.15691481)
\closepath
}
}
{
\newrgbcolor{curcolor}{0 0 0}
\pscustom[linewidth=1,linecolor=curcolor]
{
\newpath
\moveto(71.22792,19.77208262)
\lineto(74.22792,16.77208262)
}
}
{
\newrgbcolor{curcolor}{0 0 0}
\pscustom[linestyle=none,fillstyle=solid,fillcolor=curcolor]
{
\newpath
\moveto(67.15685219,23.84315043)
\lineto(61.49999794,23.84315043)
\lineto(74.22792,16.77208262)
\lineto(67.15685219,29.50000468)
\lineto(67.15685219,23.84315043)
\closepath
}
}
{
\newrgbcolor{curcolor}{0 0 0}
\pscustom[linewidth=1,linecolor=curcolor]
{
\newpath
\moveto(67.15685219,23.84315043)
\lineto(61.49999794,23.84315043)
\lineto(74.22792,16.77208262)
\lineto(67.15685219,29.50000468)
\lineto(67.15685219,23.84315043)
\closepath
}
}
{
\newrgbcolor{curcolor}{0 0 0}
\pscustom[linewidth=1,linecolor=curcolor]
{
\newpath
\moveto(45,44.22798262)
\curveto(45,44.22798262)(67.029312,62.89554262)(70,74.22802262)
\curveto(74.149629,90.05789262)(60.843244,109.96233262)(45,109.22802262)
\curveto(29.178069,108.49469262)(16.929679,89.06495262)(20,74.22802262)
\curveto(22.374055,62.75571262)(45,44.22798262)(45,44.22798262)
\closepath
}
}
{
\newrgbcolor{curcolor}{0 0 0}
\pscustom[linewidth=1,linecolor=curcolor]
{
\newpath
\moveto(70,79.22802262)
\lineto(70,85.22802262)
}
}
{
\newrgbcolor{curcolor}{0 0 0}
\pscustom[linestyle=none,fillstyle=solid,fillcolor=curcolor]
{
\newpath
\moveto(70,75.22802262)
\lineto(74,71.22802262)
\lineto(70,85.22802262)
\lineto(66,71.22802262)
\lineto(70,75.22802262)
\closepath
}
}
{
\newrgbcolor{curcolor}{0 0 0}
\pscustom[linewidth=1,linecolor=curcolor]
{
\newpath
\moveto(70,75.22802262)
\lineto(74,71.22802262)
\lineto(70,85.22802262)
\lineto(66,71.22802262)
\lineto(70,75.22802262)
\closepath
}
}
{
\newrgbcolor{curcolor}{0 0 0}
\pscustom[linestyle=none,fillstyle=solid,fillcolor=curcolor]
{
\newpath
\moveto(55.5,107.2280174)
\curveto(55.5,101.7080174)(51.02,97.2280174)(45.5,97.2280174)
\curveto(39.98,97.2280174)(35.5,101.7080174)(35.5,107.2280174)
\curveto(35.5,112.7480174)(39.98,117.2280174)(45.5,117.2280174)
\curveto(51.02,117.2280174)(55.5,112.7480174)(55.5,107.2280174)
\closepath
}
}
{
\newrgbcolor{curcolor}{0 0 0}
\pscustom[linewidth=2,linecolor=curcolor]
{
\newpath
\moveto(55.5,107.2280174)
\curveto(55.5,101.7080174)(51.02,97.2280174)(45.5,97.2280174)
\curveto(39.98,97.2280174)(35.5,101.7080174)(35.5,107.2280174)
\curveto(35.5,112.7480174)(39.98,117.2280174)(45.5,117.2280174)
\curveto(51.02,117.2280174)(55.5,112.7480174)(55.5,107.2280174)
\closepath
}
}
{
\newrgbcolor{curcolor}{0 0 0}
\pscustom[linewidth=1,linecolor=curcolor]
{
\newpath
\moveto(20,80.22802262)
\lineto(20,74.22802262)
}
}
{
\newrgbcolor{curcolor}{0 0 0}
\pscustom[linestyle=none,fillstyle=solid,fillcolor=curcolor]
{
\newpath
\moveto(20,84.22802262)
\lineto(16,88.22802262)
\lineto(20,74.22802262)
\lineto(24,88.22802262)
\lineto(20,84.22802262)
\closepath
}
}
{
\newrgbcolor{curcolor}{0 0 0}
\pscustom[linewidth=1,linecolor=curcolor]
{
\newpath
\moveto(20,84.22802262)
\lineto(16,88.22802262)
\lineto(20,74.22802262)
\lineto(24,88.22802262)
\lineto(20,84.22802262)
\closepath
}
}
\end{pspicture}&%LaTeX with PSTricks extensions
%%Creator: 0.46
%%Please note this file requires PSTricks extensions
\psset{xunit=.5pt,yunit=.5pt,runit=.5pt}
\begin{pspicture}(52,18)
{
\pscustom[linewidth=2,linecolor=black]
{
\newpath
\moveto(1,9)
\lineto(51,9)
}
}
{
\pscustom[linewidth=2,linecolor=black,fillstyle=solid,fillcolor=black]
{
\newpath
\moveto(31,9)
\lineto(23,1)
\lineto(51,9)
\lineto(23,17)
\lineto(31,9)
\closepath
}
}
\end{pspicture}&%LaTeX with PSTricks extensions
%%Creator: 0.46
%%Please note this file requires PSTricks extensions
\psset{xunit=.5pt,yunit=.5pt,runit=.5pt}
\begin{pspicture}(85,115)
{
\pscustom[linewidth=2,linecolor=black,fillstyle=solid,fillcolor=black]%vertex
{
\newpath
\moveto(55.5,45.50004)
\curveto(55.5,39.98004)(51.02,35.50004)(45.5,35.50004)
\curveto(39.98,35.50004)(35.5,39.98004)(35.5,45.50004)
\curveto(35.5,51.02004)(39.98,55.50004)(45.5,55.50004)
\curveto(51.02,55.50004)(55.5,51.02004)(55.5,45.50004)
\closepath
}
}
{
\pscustom[linewidth=1,linecolor=black]
{
\newpath
\moveto(5.5,5.49998262)
\lineto(45.5,45.49998262)
\lineto(85.5,5.49998262)
}
}
{
\pscustom[linewidth=1,linecolor=black,fillstyle=solid,fillcolor=black]%arrowhead
{
\newpath
\moveto(25.15686219,25.15691481)
\lineto(25.15686219,19.50006056)
\lineto(32.22793,32.22798262)
\lineto(19.50000794,25.15691481)
\lineto(25.15686219,25.15691481)
\closepath
}
}
{
\pscustom[linewidth=1,linecolor=black,fillstyle=solid,fillcolor=black]%arrowhead
{
\newpath
\moveto(67.15685219,23.84315043)
\lineto(61.49999794,23.84315043)
\lineto(74.22792,16.77208262)
\lineto(67.15685219,29.50000468)
\lineto(67.15685219,23.84315043)
\closepath
}
}
{
\pscustom[linewidth=1,linecolor=black]%loop
{
\newpath
\moveto(45,44.22798262)
\curveto(45,44.22798262)(67.029312,62.89554262)(70,74.22802262)
\curveto(74.149629,90.05789262)(60.843244,109.96233262)(45,109.22802262)
\curveto(29.178069,108.49469262)(16.929679,89.06495262)(20,74.22802262)
\curveto(22.374055,62.75571262)(45,44.22798262)(45,44.22798262)
\closepath
}
}
{
\pscustom[linewidth=1,linecolor=black,fillstyle=solid,fillcolor=black]%arrowhead
{
\newpath
\moveto(42.07722123,109.46836996)
\lineto(37.61197079,105.99539739)
\lineto(52,108.22802262)
\lineto(38.60424866,113.93362041)
\lineto(42.07722123,109.46836996)
\closepath
}
}
\end{pspicture}\\
%LaTeX with PSTricks extensions
%%Creator: 0.46
%%Please note this file requires PSTricks extensions
\psset{xunit=.25pt,yunit=.25pt,runit=.25pt}
\begin{pspicture}(440,330)
{
\newgray{lightgrey}{.8}
\newgray{lightishgrey}{.7}
\newgray{grey}{.6}
\newgray{midgrey}{.4}
\newgray{darkgrey}{.3}
}
{
\pscustom[linewidth=3,linecolor=darkgrey,fillstyle=solid,fillcolor=lightishgrey]%1-handle
{
\newpath
\moveto(77.57363832,217.5735773)
\lineto(120.00004093,259.99997991)
\lineto(204.85284074,175.14718009)
\lineto(162.42643814,132.72077749)
\lineto(77.57363832,217.5735773)
\closepath
}
}
{
\pscustom[linewidth=3,linecolor=darkgrey,fillstyle=solid,fillcolor=lightishgrey]%1-handle
{
\newpath
\moveto(332.42642643,217.5735773)
\lineto(290.00002383,259.99997991)
\lineto(205.14722401,175.14718009)
\lineto(247.57362662,132.72077749)
\lineto(332.42642643,217.5735773)
\closepath
}
}
{
\pscustom[linewidth=3,linecolor=darkgrey,fillstyle=solid,fillcolor=lightishgrey]%1-handle
{
\newpath
\moveto(406.22652673,87.04325264)
\lineto(355.9748552,132.96307166)
\lineto(289.99998019,10.00001323)
\lineto(340.25165172,-35.91980579)
\lineto(406.22652673,87.04325264)
\closepath
}
}
{
\pscustom[linewidth=3,linecolor=darkgrey,fillstyle=solid,fillcolor=lightishgrey]%1-handle
{
\newpath
\moveto(3.77346849,87.0432393)
\lineto(54.02514002,132.96305832)
\lineto(120.00001503,9.9999999)
\lineto(69.74834349,-35.91981913)
\lineto(3.77346849,87.0432393)
\closepath
}
}
{
\pscustom[linewidth=3,linecolor=darkgrey,fillstyle=solid,fillcolor=grey]%0-handle
{
\newpath
\moveto(10,39.99998262)
\curveto(60,39.99998262)(90,90.00000262)(90,140.00000262)
\curveto(90,190.00000262)(105,280.00000262)(205,280.00000262)
\curveto(305,280.00000262)(320,190.00000262)(320,140.00000262)
\curveto(320,90.00000262)(350,39.99998262)(400,39.99998262)
\lineto(400,340.00000262)
\lineto(10,340.00000262)
\lineto(10,39.99998262)
\closepath
}
}
{
\pscustom[linewidth=3,linecolor=darkgrey,fillstyle=solid,fillcolor=grey]%0-handle
{
\newpath
\moveto(249.909086,144.90910879)
\curveto(249.909086,120.06910934)(229.74908645,99.90910979)(204.909087,99.90910979)
\curveto(180.06908755,99.90910979)(159.909088,120.06910934)(159.909088,144.90910879)
\curveto(159.909088,169.74910824)(180.06908755,189.90910779)(204.909087,189.90910779)
\curveto(229.74908645,189.90910779)(249.909086,169.74910824)(249.909086,144.90910879)
\closepath
}
}
{
\pscustom[linewidth=6,linecolor=black]
{
\newpath
\moveto(10,39.99998262)
\curveto(16,39.99998262)(22,40.99998262)(28,41.99998262)
}
}
{
\pscustom[linewidth=6,linecolor=black]
{
\newpath
\moveto(400,39.99998262)
\curveto(394,39.99998262)(388,40.99998262)(382,41.99998262)
}
}
{
\pscustom[linewidth=6,linecolor=black]
{
\newpath
\moveto(332,87.00000262)
\lineto(345,9.99998262)
}
}
{
\pscustom[linewidth=6,linecolor=black,linestyle=dashed,dash=24 24]
{
\newpath
\moveto(78,87.00000262)
\lineto(65,9.99998262)
}
}
{
\pscustom[linewidth=6,linecolor=black,linestyle=dashed,dash=24 24]
{
\newpath
\moveto(382,41.99998262)
\lineto(315,9.99998262)
}
}
{
\pscustom[linewidth=6,linecolor=black]
{
\newpath
\moveto(28.000004,41.99998262)
\lineto(95,9.99998262)
}
}
{
\pscustom[linewidth=6,linecolor=black]
{
\newpath
\moveto(192,188.00000262)
\curveto(205,193.00000262)(218,188.00000262)(218,188.00000262)
}
}
{
\pscustom[linewidth=6,linecolor=black]
{
\newpath
\moveto(249,134.00000262)
\curveto(244,114.00000262)(225,100.00000262)(205,100.00000262)
\curveto(184.97502,100.00000262)(166,115.00000262)(161,134.00000262)
}
}
{
\pscustom[linewidth=6,linecolor=black]
{
\newpath
\moveto(313,198.00000262)
\curveto(317,182.00000262)(320,164.00000262)(320,150.00000262)
\curveto(319,130.00000262)(322,109.00000262)(332,87.00000262)
}
}
{
\pscustom[linewidth=6,linecolor=black]
{
\newpath
\moveto(97,198.00000262)
\curveto(93,182.00000262)(90,164.00000262)(90,150.00000262)
\curveto(91,130.00000262)(88,108.99997262)(78,86.99997262)
}
}
{
\pscustom[linewidth=6,linecolor=black]
{
\newpath
\moveto(128,252.00000262)
\curveto(148,270.00000262)(167,280.00000262)(210,280.00000262)
\curveto(260,280.00000262)(282,251.00000262)(282,251.00000262)
}
}
{
\pscustom[linewidth=6,linecolor=black]
{
\newpath
\moveto(96,198.00000262)
\lineto(193,188.00000262)
}
}
{
\pscustom[linewidth=6,linecolor=black,linestyle=dashed,dash=24 24]
{
\newpath
\moveto(314,198.00000262)
\lineto(217,188.00000262)
}
}
{
\pscustom[linewidth=6,linecolor=black]
{
\newpath
\moveto(282,252.00000262)
\curveto(264,203.00000262)(249,134.00000262)(249,134.00000262)
}
}
{
\pscustom[linewidth=6,linecolor=black,linestyle=dashed,dash=24 24]
{
\newpath
\moveto(128,252.00000262)
\curveto(146,203.00000262)(161,134.00000262)(161,134.00000262)
}
}
{
\pscustom[linewidth=4,linecolor=black,linestyle=dashed,dash=8 8]
{
\newpath
\moveto(300,225)
\curveto(300,183.6)(257.44,150)(205,150)
\curveto(152.56,150)(110,183.6)(110,225)
\curveto(110,266.4)(152.56,300)(205,300)
\curveto(257.44,300)(300,266.4)(300,225)
\closepath
}
}
{
\pscustom[linestyle=none,fillstyle=solid,fillcolor=white]
{
\newpath
\moveto(-60.30282593,349.88946533)
\lineto(460.30282593,349.88946533)
\lineto(460.30282593,329.69718552)
\lineto(-60.30282593,329.69718552)
\lineto(-60.30282593,349.88946533)
\closepath
}
}
{
\pscustom[linestyle=none,fillstyle=solid,fillcolor=white]
{
\newpath
\moveto(-9.59327888,340.0725708)
\lineto(10.31336021,340.0725708)
\lineto(10.31336021,-0.10028076)
\lineto(-9.59327888,-0.10028076)
\lineto(-9.59327888,340.0725708)
\closepath
}
}
{
\pscustom[linestyle=none,fillstyle=solid,fillcolor=white]
{
\newpath
\moveto(399.65759277,340.13867188)
\lineto(419.56405449,340.13867188)
\lineto(419.56405449,-10.12255859)
\lineto(399.65759277,-10.12255859)
\lineto(399.65759277,340.13867188)
\closepath
}
}
{
\pscustom[linestyle=none,fillstyle=solid,fillcolor=white]
{
\newpath
\moveto(-50.0941124,10.1015625)
\lineto(460.06015396,10.1015625)
\lineto(460.06015396,-39.74683762)
\lineto(-50.0941124,-39.74683762)
\lineto(-50.0941124,10.1015625)
\closepath
}
}
\end{pspicture}&&%LaTeX with PSTricks extensions
%%Creator: 0.46
%%Please note this file requires PSTricks extensions
\psset{xunit=.25pt,yunit=.25pt,runit=.25pt}
\begin{pspicture}(440,330)
{
\newgray{lightgrey}{.8}
\newgray{lightishgrey}{.7}
\newgray{grey}{.6}
\newgray{midgrey}{.4}
\newgray{darkgrey}{.3}
}
{
\pscustom[linewidth=3,linecolor=darkgrey,fillstyle=solid,fillcolor=lightishgrey]%1-handle
{
\newpath
\moveto(170,290)
\lineto(240,290)
\lineto(240,160)
\lineto(170,160)
\lineto(170,290)
\closepath
}
}
{
\pscustom[linewidth=3,linecolor=darkgrey,fillstyle=solid,fillcolor=lightishgrey]%1-handle
{
\newpath
\moveto(406.22652673,87.04325264)
\lineto(355.9748552,132.96307166)
\lineto(289.99998019,10.00001323)
\lineto(340.25165172,-35.91980579)
\lineto(406.22652673,87.04325264)
\closepath
}
}
{
\pscustom[linewidth=3,linecolor=darkgrey,fillstyle=solid,fillcolor=lightishgrey]%1-handle
{
\newpath
\moveto(3.77346849,87.0432393)
\lineto(54.02514002,132.96305832)
\lineto(120.00001503,9.9999999)
\lineto(69.74834349,-35.91981913)
\lineto(3.77346849,87.0432393)
\closepath
}
}
{
\pscustom[linewidth=3,linecolor=darkgrey,fillstyle=solid,fillcolor=grey]%0-handle
{
\newpath
\moveto(10,39.99998262)
\curveto(60,39.99998262)(90,90.00000262)(90,140.00000262)
\curveto(90,190.00000262)(105,280.00000262)(205,280.00000262)
\curveto(305,280.00000262)(320,190.00000262)(320,140.00000262)
\curveto(320,90.00000262)(350,39.99998262)(400,39.99998262)
\lineto(400,340.00000262)
\lineto(10,340.00000262)
\lineto(10,39.99998262)
\closepath
}
}
{
\pscustom[linewidth=3,linecolor=darkgrey,fillstyle=solid,fillcolor=grey]%0-handle
{
\newpath
\moveto(249.909086,144.90910879)
\curveto(249.909086,120.06910934)(229.74908645,99.90910979)(204.909087,99.90910979)
\curveto(180.06908755,99.90910979)(159.909088,120.06910934)(159.909088,144.90910879)
\curveto(159.909088,169.74910824)(180.06908755,189.90910779)(204.909087,189.90910779)
\curveto(229.74908645,189.90910779)(249.909086,169.74910824)(249.909086,144.90910879)
\closepath
}
}
{
\pscustom[linewidth=6,linecolor=black]
{
\newpath
\moveto(10,39.99998262)
\curveto(16,39.99998262)(22,40.99998262)(28,41.99998262)
}
}
{
\pscustom[linewidth=6,linecolor=black]
{
\newpath
\moveto(400,39.99998262)
\curveto(394,39.99998262)(388,40.99998262)(382,41.99998262)
}
}
{
\pscustom[linewidth=6,linecolor=black]
{
\newpath
\moveto(332,87.00000262)
\lineto(345,9.99998262)
}
}
{
\pscustom[linewidth=6,linecolor=black,linestyle=dashed,dash=24 24]
{
\newpath
\moveto(78,87.00000262)
\lineto(65,9.99998262)
}
}
{
\pscustom[linewidth=6,linecolor=black,linestyle=dashed,dash=24 24]
{
\newpath
\moveto(382,41.99998262)
\lineto(315,9.99998262)
}
}
{
\pscustom[linewidth=6,linecolor=black]
{
\newpath
\moveto(28.000004,41.99998262)
\lineto(95,9.99998262)
}
}
{
\pscustom[linewidth=6,linecolor=black]
{
\newpath
\moveto(171,174.00000262)
\curveto(163,164.00000262)(159,150.00000262)(161,134.00000262)
\curveto(163.40832,114.73345262)(184.97502,100.00000262)(205,100.00000262)
\curveto(225,100.00000262)(244,114.00000262)(249,134.00000262)
\curveto(255,160.00000262)(239,174.00000262)(239,174.00000262)
}
}
{
\pscustom[linewidth=6,linecolor=black]
{
\newpath
\moveto(78,86.99997262)
\curveto(88,108.99997262)(91,130.00000262)(90,150.00000262)
\curveto(90,164.00000262)(93,182.00000262)(97,198.00000262)
\curveto(107,234.00000262)(134,265.00000262)(170,276.00000262)
}
}
{
\pscustom[linewidth=6,linecolor=black]
{
\newpath
\moveto(332,86.99997262)
\curveto(322,108.99997262)(319,130.00000262)(320,150.00000262)
\curveto(320,164.00000262)(317,182.00000262)(313,198.00000262)
\curveto(303,234.00000262)(276,265.00000262)(240,276.00000262)
}
}
{
\pscustom[linewidth=6,linecolor=black]
{
\newpath
\moveto(169,277.00000262)
\lineto(240,173.00000262)
}
}
{
\pscustom[linewidth=6,linecolor=black,linestyle=dashed,dash=24 24]
{
\newpath
\moveto(241,277.00000262)
\lineto(170,173.00000262)
}
}
{
\pscustom[linestyle=none,fillstyle=solid,fillcolor=white]
{
\newpath
\moveto(-9.97011328,349.59635458)
\lineto(409.98181735,350.22191482)
\lineto(409.98181735,329.99042564)
\lineto(-9.97011328,329.3648654)
\lineto(-9.97011328,349.59635458)
\closepath
}
}
{
\pscustom[linestyle=none,fillstyle=solid,fillcolor=white]
{
\newpath
\moveto(-9.59327888,340.0725708)
\lineto(10.31336021,340.0725708)
\lineto(10.31336021,-0.10028076)
\lineto(-9.59327888,-0.10028076)
\lineto(-9.59327888,340.0725708)
\closepath
}
}
{
\pscustom[linestyle=none,fillstyle=solid,fillcolor=white]
{
\newpath
\moveto(399.65759277,340.13867188)
\lineto(419.56405449,340.13867188)
\lineto(419.56405449,-10.12255859)
\lineto(399.65759277,-10.12255859)
\lineto(399.65759277,340.13867188)
\closepath
}
}
{
\pscustom[linestyle=none,fillstyle=solid,fillcolor=white]
{
\newpath
\moveto(-9.96357059,10.12585449)
\lineto(419.97746944,10.12585449)
\lineto(419.97746944,-39.77116776)
\lineto(-9.96357059,-39.77116776)
\lineto(-9.96357059,10.12585449)
\closepath
}
}
\end{pspicture}
\end{array}$
\caption{\label{mgraphpic4}}
\end{figure}
\subsection{Crowell's proof}
%Master document is alexpolypaper.tex.

Crowell calculates $\nrap{L}{t}$ for an alternating link $L$ by a similar but distinct method to Murasugi, again making use of directed spanning subtrees of graphs.

Let $D$ be an alternating diagram for a link $L$. Let $\mathcal{G}$ be the underlying graph of $D$ and $O$ the induced orientation on $\mathcal{G}$.

\begin{definition}
Any edge $e\in\E(\mathcal{G})$ corresponds to an arc $\rho_e$ in $D$ between two crossings. Since $D$ is alternating, one end of $\rho_e$ is part of an undercrossing, and the other end is part of an overcrossing. 
Let $o$ be the orientation of $\mathcal{G}$ that orients each edge $e$ from the overcrossing to the undercrossing.
Near any $v\in\V(\mathcal{G})$, $o$ is as shown in Figure \ref{cgraphpic1}.
\begin{figure}[htbp]
\centering
%LaTeX with PSTricks extensions
%%Creator: 0.46
%%Please note this file requires PSTricks extensions
\psset{xunit=.35pt,yunit=.35pt,runit=.35pt}
\begin{pspicture}(170,170)
{
\newrgbcolor{curcolor}{0 0 0}
\pscustom[linewidth=2,linecolor=black]
{
\newpath
\moveto(170.00002,10.00000262)
\lineto(9.99998,170.00000262)
}
}
{
\newrgbcolor{curcolor}{0 0 0}
\pscustom[linewidth=2,linecolor=black]
{
\newpath
\moveto(10,9.99998262)
\lineto(79.664805,79.66480262)
}
}
{
\newrgbcolor{curcolor}{0 0 0}
\pscustom[linewidth=2,linecolor=black]
{
\newpath
\moveto(170.16667,170.16668262)
\lineto(100.32961,100.32961262)
}
}
{
\pscustom[linewidth=2,linecolor=black,fillstyle=solid,fillcolor=white]%arrowhead
{
\newpath
\moveto(40.09353533,139.90645436)
\lineto(55.53674797,135.77695275)
\lineto(44.22304093,124.46324279)
\lineto(40.09353533,139.90645436)
\closepath
}
}
{
\pscustom[linewidth=2,linecolor=black,fillstyle=solid,fillcolor=white]%arrowhead
{
\newpath
\moveto(139.90646467,40.09355088)
\lineto(124.46325203,44.22305248)
\lineto(135.77695907,55.53676245)
\lineto(139.90646467,40.09355088)
\closepath
}
}
{
\pscustom[linewidth=2,linecolor=black,fillstyle=solid,fillcolor=white]%arrowhead
{
\newpath
\moveto(120.26021527,120.26021794)
\lineto(124.38971733,135.70343045)
\lineto(135.70342696,124.38972308)
\lineto(120.26021527,120.26021794)
\closepath
}
}
{
\pscustom[linewidth=2,linecolor=black,fillstyle=solid,fillcolor=white]%arrowhead
{
\newpath
\moveto(59.90645179,59.90646724)
\lineto(55.77695327,44.46325378)
\lineto(44.46324105,55.77695855)
\lineto(59.90645179,59.90646724)
\closepath
}
}
\end{pspicture}
\caption{\label{cgraphpic1}}
\end{figure}
\end{definition}

\begin{definition}
Define subsets $\mathcal{H},\mathcal{K}\subseteq\E(\mathcal{G})$ by at every vertex putting the incoming edges with respect to $o$ into $\mathcal{H},\mathcal{K}$ as shown in Figure \ref{cgraphpic2}.

Define a map $\alpha\colon \E(\mathcal{G}) \to \{1,-t\}$ by
\[
\alpha(e)=
\begin{cases}
-t & \text{if }e\in\mathcal{H}\\
\hfill 1 & \text{if }e\in\mathcal{K}.\\ 
\end{cases}
\]
\end{definition}
\begin{figure}[htbp]
\centering
%LaTeX with PSTricks extensions
%%Creator: 0.46
%%Please note this file requires PSTricks extensions
\psset{xunit=.35pt,yunit=.35pt,runit=.35pt}
\begin{pspicture}(170,200)
{
\pscustom[linewidth=2,linecolor=black]
{
\newpath
\moveto(170,9.99998262)
\lineto(9.99998,170.00000262)
}
}
{
\pscustom[linewidth=2,linecolor=black]
{
\newpath
\moveto(170.04339,170.04339262)
\lineto(100.00376,100.00375262)
}
}
{
\pscustom[linewidth=2,linecolor=black]
{
\newpath
\moveto(80.03963,80.03965262)
\lineto(10,9.99998262)
}
}
{
\pscustom[linewidth=2,linecolor=black,fillstyle=solid,fillcolor=black]%arrowhead
{
\newpath
\moveto(53.97545703,126.0245384)
\lineto(65.28916553,126.02453953)
\lineto(39.83332,140.16667262)
\lineto(53.97545816,114.7108299)
\lineto(53.97545703,126.0245384)
\closepath
}
}
{
\pscustom[linewidth=2,linecolor=black,fillstyle=solid,fillcolor=black]%arrowhead
{
\newpath
\moveto(133.97544858,46.02456995)
\lineto(145.28915708,46.02456431)
\lineto(119.83332,60.16671262)
\lineto(133.97544294,34.71086145)
\lineto(133.97544858,46.02456995)
\closepath
}
}
{
\pscustom[linewidth=1,linecolor=gray]
{
\newpath
\moveto(35,80.00000262)
\curveto(40,60.00000262)(40,70.00000262)(45,70.00000262)
\curveto(50,70.00000262)(50,60.00000262)(50,60.00000262)
}
}
{
\pscustom[linewidth=0.5,linecolor=darkgray,fillstyle=solid,fillcolor=gray]
{
\newpath
\moveto(50,64.00000262)
\lineto(48,66.00000262)
\lineto(50,59.00000262)
\lineto(52,66.00000262)
\lineto(50,64.00000262)
\closepath
}
}
{
\pscustom[linewidth=1,linecolor=gray]
{
\newpath
\moveto(155,110.00000262)
\curveto(135,115.00000262)(145,115.00000262)(145,120.00000262)
\curveto(145,125.00000262)(135,125.00000262)(135,125.00000262)
}
}
{
\pscustom[linewidth=0.5,linecolor=darkgray,fillstyle=solid,fillcolor=gray]
{
\newpath
\moveto(139,125.00000262)
\lineto(141,127.00000262)
\lineto(134,125.00000262)
\lineto(141,123.00000262)
\lineto(139,125.00000262)
\closepath
}
}
{
\put(15,85){$\mathcal{H}$}
\put(160,100){$\mathcal{K}$}
}
\end{pspicture}
\caption{\label{cgraphpic2}}
\end{figure}

\begin{theorem}[\cite{MR0099665} Theorem 2.12]\label{polyisctrees}
For any $v\in\V(\mathcal{G})$,
\[
\rap{L}{t}=\sum_{\mathcal{T}\in\Tr(\mathcal{G},v)} \left(\prod_{e\in\mathcal{T}} \alpha (e)\right).
\]
\end{theorem}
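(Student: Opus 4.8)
The plan is to read the right-hand side as a weighted count of out-arborescences and to match it against the Alexander determinant through the combinatorial analysis already set up for Murasugi's proof. First I would record the graph-theoretic content of the sum. Each edge $e$ is incoming at exactly one vertex, namely its undercrossing end $\tau(e)$, so $\alpha$ is well defined on $\E(\mathcal{G})$ and $\prod_{e\in\mathcal{T}}\alpha(e)=(-t)^{h(\mathcal{T})}$, where $h(\mathcal{T})$ counts the edges of $\mathcal{T}$ lying in $\mathcal{H}$. Hence $\sum_{\mathcal{T}\in\Tr(\mathcal{G},v)}\prod_{e\in\mathcal{T}}\alpha(e)=\sum_s(-1)^s M_s\,t^s$, where $M_s$ is the number of directed spanning subtrees with origin $v$ having exactly $s$ edges in $\mathcal{H}$. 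By the weighted form of Tutte's directed matrix--tree theorem this sum is the $(v,v)$-cofactor $\det B_{\hat v}$ of the in-degree Laplacian $B$ of $(\mathcal{G},o)$, with $B_{uu}=\sum_{\tau(e)=u}\alpha(e)$ and $B_{uw}=-\sum_{\tau(e)=u,\,\iota(e)=w}\alpha(e)$; equivalently, the weighted analogue of Lemma~\ref{deletecontractlemma} shows it obeys $P(\mathcal{G},v)=P(\mathcal{G}\setminus e,v)+\alpha(e)P(\mathcal{G}/e,v)$ for any $e$ with $\tau(e)\neq v$.

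Next I would unpack the left-hand side. Expanding $\det\mathbf{A}(p,q)$, each nonzero term picks one corner $\hat c_{i_j j}$ at every crossing $c_j$ so that $c_j\mapsto r_{i_j}$ is a bijection onto the regions other than $r_p,r_q$; its contribution is $\pm\prod_j a_{i_j j}$, and since dotted corners carry $\pm t$ and undotted corners $\pm1$, the $t$-exponent of a term equals the number $s$ of dotted corners it uses. Murasugi's sign lemma (\cite{MR0099664} I Lemma~4.2; II Lemmas~6.8, 8.1) guarantees that, for fixed $s$, all such corner systems contribute with a single common sign $\varepsilon_s$, so $\det\mathbf{A}(p,q)=\sum_s\varepsilon_s N_s\,t^s$ with $N_s\geq0$ the number of corner systems using $s$ dotted corners.

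The heart of the matter is to identify these two state sums, i.e.\ to show $N_s=M_s$ and $\varepsilon_s=(-1)^s$ up to one overall unit. I would do this by a weight- and sign-preserving bijection between corner systems and elements of $\Tr(\mathcal{G},v)$, obtained by adapting Murasugi's pairing of a spanning tree with its dual cotree: for a chosen $v$ and a suitable adjacent pair $r_p,r_q$, a corner system records, for each region $r\neq r_p,r_q$, a unique edge of $\mathcal{G}$ (or of its planar dual) terminating the assignment at $r$, and the edges so selected in $\mathcal{G}$ form a directed spanning subtree with origin $v$. Acyclicity is the delicate point and is exactly where planarity and the alternating condition enter: an embedded directed cycle among the selected edges would bound a disc, and computing the Euler characteristic of the side not containing $r_p,r_q$ gives a contradiction, just as in the passage immediately following Murasugi's construction. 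Under the bijection a dotted corner is chosen precisely when the corresponding incoming tree edge lies in $\mathcal{H}$, so $s=h(\mathcal{T})$ and thus $N_s=M_s$; the uniform sign $\varepsilon_s$ is then checked against the determinant's permutation sign to equal $(-1)^s$. Since changing the origin $v$ multiplies an out-arborescence sum by a power of $t$, the resulting equality holds for every $v$ and is an instance of $\nrap{L}{t}$ up to the factor $\pm t^m$ built into its definition.

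The main obstacle is this last bijection: making it simultaneously well defined, a genuine bijection, and compatible with both the $t$-grading (via membership in $\mathcal{H}$) and the signs. The cardinality and locality mismatch --- four corners but only two incoming edges at a vertex, and $n$ chosen corners against $n-1$ tree edges --- forces the correspondence to be global rather than vertex-by-vertex, which is why the tree--cotree pairing and the Euler-characteristic acyclicity argument are essential. A cleaner-looking alternative is to avoid the explicit bijection entirely and instead verify that both $\det\mathbf{A}(p,q)$ and $P(\mathcal{G},v)$ satisfy the same deletion--contraction recursion and single-vertex base case, using Lemma~\ref{deletecontractlemma} on the tree side; there the obstacle migrates to showing that deleting or contracting an edge of $\mathcal{G}$ corresponds to a controlled splicing of the diagram with the predicted effect on the Alexander determinant.
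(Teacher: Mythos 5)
There is a genuine gap, and it sits exactly where you flag your ``main obstacle.'' First, for context: the paper does not prove this statement at all --- it quotes it from Crowell, whose argument is matrix-theoretic: after elementary row and column operations the Alexander matrix $\textbf{A}(p,q)$ is identified (up to units) with a principal minor of the weighted in-degree Laplacian of $(\mathcal{G},o)$, and the tree expansion is then the Bott--Mayberry/Tutte directed matrix-tree theorem. Your steps 1 and 2 (matrix-tree theorem on the tree side, Murasugi's within-degree sign uniformity on the determinant side) are correct scaffolding, but step 3 --- the ``weight- and sign-preserving bijection'' between corner systems and elements of $\Tr(\mathcal{G},v)$ --- is never constructed, and it is the entire content of the theorem. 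Moreover, the mechanism you propose for it conflates two structurally different graphs. Murasugi's tree--cotree pairing lives on $\graph{M}{D}$ and its dual $G_b(D)$, whose \emph{vertices are regions and whose edges are crossings}; that is precisely why a spanning tree together with its dual cotree assigns to each region other than the two roots a unique crossing, i.e.\ yields a corner system. Crowell's $\mathcal{G}$ has \emph{vertices at crossings and edges along arcs}, so regions are faces, not vertices: ``an edge of $\mathcal{G}$ terminating at $r$'' is meaningless, and planar duality pairs a spanning tree of $\mathcal{G}$ with a spanning tree of $\mathcal{G}^*$ whose edges are \emph{arcs}, matching regions with arcs rather than with crossings. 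The off-by-one mismatch you note ($n$ corners versus $n-1$ tree edges, with the root crossing $v$ still needing a region that the tree does not specify) is a symptom of this, not a technicality that ``globalizing'' repairs. Relatedly, the cross-degree sign alternation $\varepsilon_s=\pm(-1)^s$ is not supplied by Murasugi's lemma (which fixes $s$) and cannot simply be ``checked'': it is equivalent to the positivity of the tree expansion you are trying to prove, so invoking it is circular without the bijection in hand.

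Your fallback deletion--contraction route fails for a more basic reason: $\mathcal{G}\setminus e$ and $\mathcal{G}/e$ are no longer $4$--valent planar digraphs underlying any link diagram (removing or collapsing an arc destroys the crossing structure), so the Alexander side of the recursion $P(\mathcal{G},v)=P(\mathcal{G}\setminus e,v)+\alpha(e)P(\mathcal{G}/e,v)$ has no determinant interpretation, and no ``controlled splicing of the diagram'' exists to give it one. The way to make your outline rigorous is Crowell's: leave diagrams behind after writing down $\textbf{A}(p,q)$, prove the purely linear-algebraic identity $\det\textbf{A}(p,q)\doteq\det B_{\hat v\hat v}$ by row/column operations (this is where the alternating condition and the definition of $\mathcal{H}$ and $\mathcal{K}$ actually enter), and then cite the directed matrix-tree theorem, inside whose proof deletion--contraction legitimately lives.
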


\begin{lemma}[\cite{MR0099665} 4.7]
A Seifert circle is a cycle with respect to $O$ and $o$ (possibly in opposite directions).

The special Seifert circles of $D$ are exactly the cycles in $\mathcal{G}$ with respect to $o$ that are contained entirely in $\mathcal{H}$ or entirely in $\mathcal{K}$.
\end{lemma}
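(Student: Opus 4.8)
The plan is to prove the two assertions separately, reading both off the local picture at a crossing.

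For the first assertion, recall that a Seifert circle is, by the definition recalled above, a cycle with respect to $O$ that turns at every crossing it meets, so the content is entirely in the statement about $o$. First I would note that at each crossing the four edges of $\mathcal{G}$ alternate between the two strands as one goes around the vertex, so over-strand edges and under-strand edges occur alternately in the cyclic order. Since the circle turns rather than running straight through, the two edges it uses at that crossing are cyclically adjacent, hence exactly one lies on the over-strand and one on the under-strand. By the definition of $o$ (each edge oriented from its overcrossing end to its undercrossing end), the over-strand edge is outgoing at the crossing and the under-strand edge is incoming; thus the circle meets precisely one $o$-incoming and one $o$-outgoing edge there. A short propagation argument finishes it: if the circle is $o$-directed into a vertex along its incoming edge it must leave along the outgoing edge, which is then $o$-directed into the next vertex, so the traversal direction is forced and stays consistent all the way around. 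Hence the circle is a cycle with respect to $o$, run in the same or the opposite sense to $O$ according to whether the over/under roles align along it.

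For the second assertion I would first record the geometric meaning of ``special''. The circle $C$ bounds two discs $\Delta_1,\Delta_2$ in $\sphere$, and it is special exactly when one of them meets $D$ only along $C$. At each crossing on $C$, the two edges of $D$ not used by $C$ (one from each strand) emanate into exactly one of $\Delta_1,\Delta_2$; so $C$ is special if and only if, at every crossing it meets, these off-$C$ edges lie in the same disc, forcing all of $D\setminus C$ into that disc and leaving the other clean. It then remains to translate ``the off-$C$ edges lie on a fixed side of $C$'' into ``the incoming edges of $C$ all lie in $\mathcal{H}$ (or all in $\mathcal{K}$)''. Here I would use Figure \ref{cgraphpic2} together with the description of the Seifert smoothing: the two $o$-incoming edges at a crossing lie on opposite sides of the over-strand through it, membership in $\mathcal{H}$ versus $\mathcal{K}$ is precisely this choice of side, and each of the two Seifert smoothing arcs pairs one over-strand edge with one under-strand edge, so one arc carries the $\mathcal{H}$-edge and the other the $\mathcal{K}$-edge. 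Combining these, the incoming edge of $C$ at a crossing lies in $\mathcal{H}$ exactly when the off-$C$ strand there sits on the complementary side; consistent side for the off-$C$ strand is then the same as monochromaticity of $C$ in $\mathcal{H}$ or in $\mathcal{K}$.

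For the reverse inclusion I would check that any cycle with respect to $o$ lying entirely in $\mathcal{H}$ (or in $\mathcal{K}$) really is a special Seifert circle, not merely some $o$-cycle: being an $o$-cycle it already turns at each crossing, the monochromaticity pushes the off-$C$ edges to a single side, so one of its two discs is disjoint from $D$, and a curve bounding such an empty disc is produced by Seifert's algorithm and hence is a Seifert circle. The step I expect to be the main obstacle is exactly this local-to-global passage, since $\mathcal{H}$ and $\mathcal{K}$ form a fixed crossing-by-crossing labelling whereas ``the empty disc'' is a global feature of $C$; one must verify that the combinatorial condition ``every edge of the cycle lies in $\mathcal{H}$'' genuinely forces a single consistent empty disc, rather than off-$C$ strands escaping into both discs at different crossings. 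I would settle this using planarity of $\mathcal{G}$ and the alternation of the $\mathcal{H}$- and $\mathcal{K}$-incoming edges around each vertex, arguing as in Murasugi's tree argument by an Euler-characteristic count on the disc cut off by $C$: a trapped off-$C$ strand on the wrong side would produce a subregion whose boundary could not be consistently labelled, giving a contradiction.
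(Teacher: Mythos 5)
The paper offers no proof of this lemma at all; it is quoted directly from Crowell's paper, so your proposal has to stand or fall on its own merits. The first assertion and the forward half of the second assertion (special $\Rightarrow$ monochromatic) are essentially sound: the four edge-ends at a crossing alternate between the two strands, a Seifert circle uses an adjacent pair, the over-edge is $o$-outgoing and the under-edge $o$-incoming, and your propagation argument gives a directed cycle; for a genuine Seifert circle the identification of $\mathcal{H}$/$\mathcal{K}$-membership with the side of $C$ carrying the off-$C$ edges does work out, although to make it precise you must say that ``side'' is measured using the $O$-direction of the over-strand (the two incoming edges at a crossing are swapped by a symmetry preserving $o$ and over/under, so $O$ is genuinely needed), and that this correspondence is consistent all the way around $C$ only because, by the first assertion, $O$ and $o$ agree on every edge of $C$ or on no edge of $C$.

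The reverse inclusion is where there are genuine gaps, and there are two. First, the claim that ``monochromaticity pushes the off-$C$ edges to a single side'' is not a local fact: whether the edge entering a crossing $v$ lies in $\mathcal{H}$ is a property of the diagram at $v$ (it pins down the $O$-direction of the over-strand there) and is compatible with \emph{both} of the ways the cycle could leave $v$; the membership constraint on the exiting edge is imposed only at the \emph{next} crossing. So, as far as your local reasoning goes, a monochromatic $o$-cycle could have its off-edges on different sides at different vertices --- ruling this out is the entire content of the reverse direction, and the Euler-characteristic sketch you give is too vague to supply it. Second, your closing inference, ``a curve bounding such an empty disc is produced by Seifert's algorithm and hence is a Seifert circle,'' is false. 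The boundary of \emph{any} region of an alternating diagram is an $o$-cycle that turns at every crossing and bounds an empty disc (for example a white bigon of the standard trefoil diagram, whose boundary is one inner arc and one outer arc), yet such a cycle is generally not a Seifert circle: being one requires the $O$-compatible smoothing choice at every crossing, which cannot be read off from bounding an empty disc. What actually closes the argument is the orientation bookkeeping you skipped: $e\in\mathcal{H}$ forces a specific $O$-direction on the over-strand at the terminal vertex of $e$, and chasing this around a monochromatic cycle shows that the Seifert-prescribed turn at each vertex equals the \emph{actual} turn at the \emph{previous} vertex; hence a monochromatic cycle is a Seifert circle exactly when all its turns have the same rotational sense, and the remaining (genuinely global, planarity-based) step is to show a monochromatic cycle cannot mix left and right turns --- which is the part Crowell proves and your proposal leaves unestablished.
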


\begin{remark}[\cite{MR0099665} 4.2]
Neither $\mathcal{H}$ nor $\mathcal{K}$ contains a pair of distinct edges with a common terminal vertex with respect to $o$.
\end{remark}

\begin{definition}
For a directed spanning subtree $\mathcal{T}$ of $\mathcal{G}$ with respect to $o$, define $\chr (\mathcal{T})$ to be the number of edges of $\mathcal{T}$ that lie in $\mathcal{K}$.

Define an \textit{\mmtree{\mathcal{H}}{}} $\mathcal{T}$ to be a directed spanning subtree with $\chr(\mathcal{T})$ minimal among such trees with the same origin as $\mathcal{T}$ (that is, $\mathcal{T}$ contains as many edges of $\mathcal{H}$ as possible).

For $v\in\V(\mathcal{G})$, let $\HTr(\mathcal{G},v)$ be the set of \mmtree{\mathcal{H}}s in $\mathcal{G}$ with origin $v$.

Define a \textit{\mmtree{\mathcal{K}}} and $\KTr(\mathcal{G},v)$ analogously.
\end{definition}

\begin{remark}
By Theorem \ref{polyisctrees}, $\nrap{L}{0}=|\KTr(\mathcal{G},v)|=|\HTr(\mathcal{G},v)|$ for any $v\in\V(\mathcal{G})$.
\end{remark}

Now suppose $D$ is special. Then every Seifert circle is contained in $\mathcal{H}$ or is contained in $\mathcal{K}$. 
Further, since no two edges in $\mathcal{H}$ share a terminal vertex, no two Seifert circles in $\mathcal{H}$ share a vertex. We can therefore collapse each such Seifert circle to a point, giving a planar graph $\graph{H}{D}$. 
Define orientation $o$ on $\graph{H}{D}$ to be that inherited from $o$ on $\mathcal{G}$. Since at each vertex of $\mathcal{G}$ exactly two edges are collapsed, and these are adjacent, incoming and outgoing edges alternate at each vertex of $\graph{H}{D}$.
Figure \ref{cgraphpic3} 
\begin{figure}[htbp]
\centering
\input{pictexfiles/cgraphpic3}
\caption{\label{cgraphpic3}}
\end{figure}
shows this process when $L$ is the knot $7_4$.

\begin{lemma}
Let $A\colon \mathcal{G}\to\graph{H}{D}$ be the map that collapses the Seifert circles in $\mathcal{H}$.
Then, for any $v\in\V(\mathcal{G})$, $A$ induces a bijection $A_v\colon \HTr(\mathcal{G},v)\to\Tr(\graph{H}{D}{},A(v))$. 
\end{lemma}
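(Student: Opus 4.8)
The plan is to read off, from the structure of $\mathcal{G}$, exactly which edges an $\mathcal{H}$--maximal directed spanning subtree must contain, and to see that the only remaining freedom lies in the $\mathcal{K}$--edges, which are precisely the edges of $\graph{H}{D}$. First I would record the local structure. By the definition of $\mathcal{H},\mathcal{K}$ together with the Remark that neither contains two edges with a common terminal vertex, every vertex of $\mathcal{G}$ is the terminal vertex of exactly one edge of $\mathcal{H}$ and exactly one edge of $\mathcal{K}$ with respect to $o$ (and likewise for initial vertices). Since $D$ is special, each Seifert circle lies wholly in $\mathcal{H}$ or wholly in $\mathcal{K}$, so each edge of $\mathcal{H}$ lies on a unique Seifert circle contained in $\mathcal{H}$. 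As no two such circles share a vertex they are vertex--disjoint directed cycles $C_1,\dots,C_k$, and because every vertex has a unique incoming $\mathcal{H}$--edge they in fact partition $\V(\mathcal{G})$. Thus $A$ collapses each $C_i$ to a single vertex, $\V(\graph{H}{D})$ is identified with $\{C_1,\dots,C_k\}$, and the edges of $\graph{H}{D}$ are exactly the images of the $\mathcal{K}$--edges of $\mathcal{G}$.

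Next I would characterise $\HTr(\mathcal{G},v)$. Writing $c_i=|\V(C_i)|$, a directed spanning subtree cannot contain a whole cycle $C_i$, so it contains at most $c_i-1$ edges of each $C_i$; summing gives at most $\sum_i(c_i-1)=|\V(\mathcal{G})|-k$ edges of $\mathcal{H}$, whence $\chr(\mathcal{T})\geq k-1$. An $\mathcal{H}$--maximal tree is therefore one attaining $\chr(\mathcal{T})=k-1$, and this forces $\mathcal{T}\cap C_i$ to be, for each $i$, a directed path through all of $C_i$ omitting exactly the $\mathcal{H}$--edge into one vertex $r_i$ (the ``root'' of $C_i$). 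Since the origin $v$ receives no tree edge, the $\mathcal{H}$--edge into $v$ is necessarily omitted and no $\mathcal{K}$--edge enters the circle $C_j$ containing $v$; so $r_j=v$, while for $i\neq j$ the root $r_i$ is the terminal vertex of the unique $\mathcal{K}$--edge of $\mathcal{T}$ entering $C_i$.

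Finally I would set up the bijection. For $\mathcal{T}\in\HTr(\mathcal{G},v)$ I would define $A_v(\mathcal{T})=A(\mathcal{T})$: since each $\mathcal{T}\cap C_i$ is connected, contracting the $C_i$ carries the tree $\mathcal{T}$ to a tree, whose edges are the images of the $k-1$ $\mathcal{K}$--edges of $\mathcal{T}$ and whose root is $A(v)$, so $A_v(\mathcal{T})\in\Tr(\graph{H}{D},A(v))$. For the inverse, given $\overline{\mathcal{T}}\in\Tr(\graph{H}{D},A(v))$, I would take its edges together with, for each circle, the unique path through $C_i$ omitting the $\mathcal{H}$--edge pointing into the terminal vertex of the chosen incoming $\mathcal{K}$--edge (and, for the origin circle, omitting the $\mathcal{H}$--edge into $v$). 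This subgraph has $|\V(\mathcal{G})|-1$ edges and is connected, as each circle contributes a connected path and these are joined exactly as the connected tree $\overline{\mathcal{T}}$ prescribes; hence it is a directed spanning subtree with origin $v$ attaining $\chr=k-1$, so it lies in $\HTr(\mathcal{G},v)$. The two constructions are visibly mutually inverse, giving the bijection $A_v$.

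The main obstacle I anticipate is the bookkeeping in the inverse map: one must verify that adjoining the intra--circle paths to a spanning tree of $\graph{H}{D}$ really yields an acyclic (equivalently, by the edge count, connected) subgraph with the correct in--degrees, and that $\mathcal{H}$--maximality pins down the omitted edge of each circle uniquely. Once the partition of $\V(\mathcal{G})$ into the cycles $C_i$ and the ``exactly one incoming $\mathcal{H}$--edge per vertex'' fact are in hand, these checks reduce to the standard fact that contracting a family of vertex--disjoint connected subgraphs of a tree yields a tree, applied in both directions.
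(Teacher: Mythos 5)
Your proposal is correct and takes essentially the same approach as the paper: both proofs rest on the observation that a tree in $\HTr(\mathcal{G},v)$ must contain all but one edge of each Seifert circle in $\mathcal{H}$, with the omitted edge pointing into $v$ or into the terminal vertex of the unique incoming $\mathcal{K}$--edge, and both obtain the inverse of $A_v$ by lifting a tree of $\graph{H}{D}$ to these circle-paths together with the corresponding $\mathcal{K}$--edges. The only difference is organizational: you make the count $\chr(\mathcal{T})\geq k-1$ explicit up front to characterize $\mathcal{H}$--maximality, whereas the paper extracts the same characterization after constructing the lift $B_v$.
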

\begin{proof}
Define a map $B_v\colon\Tr(\graph{H}{D}{},A(v))\to\Tr(\mathcal{G},v)$ as follows.

Let $\mathcal{T}\in\Tr(\graph{H}{D}{},A(v))$. Then, for $e\in\mathcal{K}\subseteq\E(\mathcal{G})$, let $e\in B_v(\mathcal{T})$ if and only if $A(e)\in\mathcal{T}$. 

Consider a Seifert circle $C$ in $\mathcal{G}$ contained in $\mathcal{H}$. If $v$ lies on $C$, then no edge of $B_v(\mathcal{T})\cap\mathcal{K}$ has its terminal vertex on $C$. Let $f_C$ be the edge of $C$ whose terminal vertex is $v$. If $v$ instead does not lie on $C$, then $B_v(\mathcal{T})\cap\mathcal{K}$ contains exactly one edge $e_C$ whose terminal vertex lies on $C$. In this case, let $f_C$ be the edge of $C$ that has the same terminal vertex as $e_C$.
In either case, let $B_v(\mathcal{T})\cap C = C\setminus f_C$ (see Figure \ref{cgraphpic4}).
\begin{figure}[htbp]
\centering
%LaTeX with PSTricks extensions
%%Creator: 0.46
%%Please note this file requires PSTricks extensions
\psset{xunit=.5pt,yunit=.5pt,runit=.5pt}
\begin{pspicture}(250,250)
{
\pscustom[linewidth=1,linecolor=lightgray]
{
\newpath
\moveto(139.56218,14.99998002)
\lineto(119.56218,49.99998002)
\lineto(99.56218,14.99998002)
}
}
{
\pscustom[linewidth=1,linecolor=lightgray,fillstyle=solid,fillcolor=white]%arrow
{
\newpath
\moveto(111.56218007,35.99999745)
\lineto(110.36306456,26.13397169)
\lineto(102.41839337,32.10544906)
\lineto(111.56218007,35.99999745)
\closepath
}
}
{
\pscustom[linewidth=1,linecolor=black,linestyle=dashed,dash=4 4]%dotted line
{
\newpath
\moveto(119.56218,49.99998002)
\lineto(139.56218,14.99998002)
}
}
{
\pscustom[linewidth=1,linecolor=lightgray,fillstyle=solid,fillcolor=white]%arrow
{
\newpath
\moveto(132.56217996,26.99999742)
\lineto(123.39896356,31.26794673)
\lineto(131.67672428,37.06955025)
\lineto(132.56217996,26.99999742)
\closepath
}
}
{
\pscustom[linewidth=1,linecolor=lightgray]
{
\newpath
\moveto(19.10789,69.87336002)
\lineto(59.10789,84.87336002)
\lineto(19.10789,99.87336002)
}
}
{
\pscustom[linewidth=1,linecolor=black,linestyle=dashed,dash=4 4]%dotted line
{
\newpath
\moveto(59.10789,84.87336002)
\lineto(19.10789,69.87336002)
}
}
{
\pscustom[linewidth=1,linecolor=lightgray,fillstyle=solid,fillcolor=white]%arrow
{
\newpath
\moveto(32.10818009,74.87338007)
\lineto(37.56246985,82.74678086)
\lineto(41.65389006,74.08652697)
\lineto(32.10818009,74.87338007)
\closepath
}
}
{
\pscustom[linewidth=1,linecolor=lightgray,fillstyle=solid,fillcolor=white]%arrow
{
\newpath
\moveto(39.56218,91.99998026)
\lineto(30.01647005,91.21312681)
\lineto(34.10788995,99.87338084)
\lineto(39.56218,91.99998026)
\closepath
}
}
{
\pscustom[linewidth=1,linecolor=lightgray]
{
\newpath
\moveto(19.38271,169.79430002)
\lineto(59.38271,154.79430002)
\lineto(19.38271,139.79430002)
}
}
{
\pscustom[linewidth=1,linecolor=lightgray,fillstyle=solid,fillcolor=white]%arrow
{
\newpath
\moveto(45.56217983,159.99996267)
\lineto(36.09803628,157.6795063)
\lineto(38.82053389,167.03592323)
\lineto(45.56217983,159.99996267)
\closepath
}
}
{
\pscustom[linewidth=1,linecolor=black,linestyle=dashed,dash=4 4]%dotted line
{
\newpath
\moveto(59.82053,155.03596002)
\lineto(19.82053,140.03596002)
}
}
{
\pscustom[linewidth=1,linecolor=lightgray,fillstyle=solid,fillcolor=white]%arrow
{
\newpath
\moveto(29.82053005,144.035963)
\lineto(37.07180845,153.09815934)
\lineto(41.2942615,142.28726986)
\lineto(29.82053005,144.035963)
\closepath
}
}
{
\pscustom[linewidth=1,linecolor=lightgray]
{
\newpath
\moveto(89.56218,240.00000002)
\lineto(119.56218,190.00000002)
\lineto(149.56218,240.00000002)
}
}
{
\pscustom[linewidth=1,linecolor=black,linestyle=dashed,dash=4 4]%dotted line
{
\newpath
\moveto(89.56218,240.00000002)
\lineto(119.56218,190.00000002)
}
}
{
\pscustom[linewidth=1,linecolor=lightgray,fillstyle=solid,fillcolor=white]%arrow
{
\newpath
\moveto(99.23205269,224.03589555)
\lineto(109.56217998,219.99999743)
\lineto(100.90192603,213.07179382)
\lineto(99.23205269,224.03589555)
\closepath
}
}
{
\pscustom[linewidth=1,linecolor=lightgray]
{
\newpath
\moveto(220,70.24166002)
\lineto(180,85.24166002)
\lineto(220,100.24166002)
}
}
{
\pscustom[linewidth=1,linecolor=lightgray,fillstyle=solid,fillcolor=white]%arrow
{
\newpath
\moveto(193.82053017,80.03599737)
\lineto(203.28467372,82.35645373)
\lineto(200.56217611,73.0000368)
\lineto(193.82053017,80.03599737)
\closepath
}
}
{
\pscustom[linewidth=1,linecolor=black,linestyle=dashed,dash=4 4]%dotted line
{
\newpath
\moveto(179.56218,85.00000002)
\lineto(219.56218,100.00000002)
}
}
{
\pscustom[linewidth=1,linecolor=lightgray,fillstyle=solid,fillcolor=white]%arrow
{
\newpath
\moveto(209.56217995,95.99999704)
\lineto(202.31090155,86.9378007)
\lineto(198.0884485,97.74869017)
\lineto(209.56217995,95.99999704)
\closepath
}
}
{
\pscustom[linewidth=1,linecolor=lightgray]
{
\newpath
\moveto(219.56218,170.00000002)
\lineto(179.56218,155.00000002)
\lineto(219.56218,140.00000002)
}
}
{
\pscustom[linewidth=1,linecolor=black,linestyle=dashed,dash=4 4]%dotted line
{
\newpath
\moveto(179.56218,155.00000002)
\lineto(219.56218,170.00000002)
}
}
{
\pscustom[linewidth=1,linecolor=lightgray,fillstyle=solid,fillcolor=white]%arrow
{
\newpath
\moveto(206.56189001,164.99999736)
\lineto(201.10760025,157.12659658)
\lineto(197.01618004,165.78685046)
\lineto(206.56189001,164.99999736)
\closepath
}
}
{
\pscustom[linewidth=1,linecolor=lightgray,fillstyle=solid,fillcolor=white]%arrow
{
\newpath
\moveto(199.1078899,147.87339798)
\lineto(208.65359985,148.66025143)
\lineto(204.56217995,139.99999739)
\lineto(199.1078899,147.87339798)
\closepath
}
}
{
\pscustom[linewidth=1,linecolor=gray,fillstyle=solid,fillcolor=white]%polygon
{
\newpath
\moveto(119.55915296,50.78996302)
\lineto(59.55154339,85.52656648)
\lineto(59.55154153,154.99977662)
\lineto(119.55914924,189.7363833)
\lineto(179.56675881,154.99977984)
\lineto(179.56676067,85.5265697)
\lineto(119.55915296,50.78996302)
\closepath
}
}
{
\pscustom[linewidth=3,linecolor=black]%T
{
\newpath
\moveto(149.56218,240.00000002)
\lineto(119.56218,190.00000002)
\lineto(179.56218,155.00000002)
\lineto(179.56218,85.00000002)
\lineto(119.56218,49.99998002)
\lineto(59.56218,85.00000002)
\lineto(59.56218,155.00000002)
}
}
{
\pscustom[linewidth=2,linecolor=black,fillstyle=solid,fillcolor=white]%arrow
{
\newpath
\moveto(154.56217979,169.99999909)
\lineto(139.80112514,169.42612008)
\lineto(146.69380007,182.73604508)
\lineto(154.56217979,169.99999909)
\closepath
}
}
{
\pscustom[linewidth=2,linecolor=black,fillstyle=solid,fillcolor=white]%arrow
{
\newpath
\moveto(134.56218,232.36217957)
\lineto(148.43801679,225.64903333)
\lineto(135.68634321,216.98877929)
\lineto(134.56218,232.36217957)
\closepath
}
}
{
\pscustom[linewidth=2,linecolor=black,fillstyle=solid,fillcolor=white]%arrow
{
\newpath
\moveto(179.5621802,107.36218002)
\lineto(172.94763601,118.81890591)
\lineto(186.17672378,118.81890627)
\lineto(179.5621802,107.36218002)
\closepath
}
}
{
\pscustom[linewidth=2,linecolor=black,fillstyle=solid,fillcolor=white]%arrow
{
\newpath
\moveto(144.56218021,64.99999909)
\lineto(159.32323486,64.42612008)
\lineto(152.43055993,77.73604508)
\lineto(144.56218021,64.99999909)
\closepath
}
}
{
\pscustom[linewidth=2,linecolor=black,fillstyle=solid,fillcolor=white]%arrow
{
\newpath
\moveto(81.69380021,72.73601095)
\lineto(96.45485486,73.30988996)
\lineto(89.56217993,59.99996495)
\lineto(81.69380021,72.73601095)
\closepath
}
}
{
\pscustom[linewidth=2,linecolor=black,fillstyle=solid,fillcolor=white]%arrow
{
\newpath
\moveto(59.5621802,119.99998002)
\lineto(52.94763601,108.54325412)
\lineto(66.17672378,108.54325377)
\lineto(59.5621802,119.99998002)
\closepath
}
}
{
\pscustom[linewidth=2,linecolor=gray,fillstyle=solid,fillcolor=white]%arrow
{
\newpath
\moveto(94.56217979,175.00001095)
\lineto(79.80112514,175.57388996)
\lineto(86.69380007,162.26396495)
\lineto(94.56217979,175.00001095)
\closepath
}
}
{
\put(115,115){$C$}
\put(150,210){$e_C$}
\put(60,185){$f_C$}
\put(150,45){$B_v(\mathcal{T})$}
}
\end{pspicture}
\caption{\label{cgraphpic4}}
\end{figure}

Then $A(B_v(\mathcal{T}))=\mathcal{T}$. Thus, since $\mathcal{T}$ contains no circuits, $B_v(\mathcal{T})$ contains no circuits.
Let $w\in\V(\mathcal{G})$. Since $\mathcal{T}$ contains a directed path from $A(v)$ to $A(w)$, it is clear that $B_v(\mathcal{T})$ contains a directed path from $v$ to $w$. 
Hence $B_v(\mathcal{T})\in\Tr(\mathcal{G},v)$.

We can now see that $B_v(\mathcal{T})\in\HTr(\mathcal{G},v)$. Thus if $\mathcal{T}'\in\HTr(\mathcal{G},v)$ and $C$ is a Seifert circle in $\mathcal{H}$ then $\mathcal{T}'$ contains all but one edge of $C$, as in Figure \ref{cgraphpic4}. Therefore, at most one edge of $\mathcal{T}'\cap\mathcal{K}$ has its terminal vertex on $C$, with no such edge if $v$ lies on $C$. This means that $A(\mathcal{T}')$ does not contain any circuits, and no edge of $A(\mathcal{T}')$ has terminal vertex $A(v)$. It is now clear that we can define $A_v$ by $A_v(\mathcal{T})=A(\mathcal{T})$. Knowing this, we see that $A_v$ and $B_v$ are mutual inverses.
\end{proof}

As before, we can construct a product sutured manifold $\nmfld{H}{\mathcal{G}}$ from a digraph $(\mathcal{G},\mathcal{O})$.

\begin{definition}
Let $\mathcal{G}'$ be a planar digraph in which incoming and outgoing edges alternate at each vertex.
The boundary of any region $r$ of $\sphere\setminus\mathcal{G}'$ is a cycle.
Define a \textit{$\mathcal{K}$--circle} of $\mathcal{G}'$ to be any such cycle that is oriented clockwise around $r$.
\end{definition}

\begin{construction}
$N=\nmfld{H}{\mathcal{G}}$ has a 0--handle at each vertex of $\mathcal{G}$, and a 1--handle running along each edge of $\mathcal{G}$.

Attach a 2--handle $\disc ^2\times\intvl$ for each $\mathcal{K}$--circle of $\mathcal{G}$. If $r$ is a region of $\sphere\setminus\mathcal{G}$ whose boundary $\partial r$ is a $\mathcal{K}$--circle, the boundary of the union of the 0--handles and the 1--handles of $N$ meets $r$ in a simple closed curve. Attach the 2--handle along this curve.

For a 1--handle $V_1$, let $V_1\cap s=V_1\cap\partial N\cap\sphere$. This section of $s$ is oriented in the same direction as the edge of $\mathcal{G}$ that $V_1$ runs along.

Let $V_0$ be a 0--handle. Then $\partial V_0\cap\sphere$ consists of adjacent simple arcs $\rho^1_1,\rho^2_1,\cdots,\rho^1_n,\rho^2_n$ for some $n\in\mathbb{N}$, ordered clockwise around $V_0$, where $\rho^1_i$ is properly embedded in $N$ and $\rho^2_i\subset \partial N$ for each $i$. For $1\leq m\leq n$, join the midpoint $\rho^2_m(\frac{1}{2})$  of $\rho^2_m$ to the far endpoint of $\rho^1_m$ by a simple arc running over $V_0$, and to the far endpoint of $\rho^1_{m+1}$ (where $\rho^1_{n+1}=\rho^1_1$) by a simple arc running under $V_0$, as shown in Figure \ref{cgraphpic5}.
\begin{figure}[htbp]
\centering
%LaTeX with PSTricks extensions
%%Creator: 0.46
%%Please note this file requires PSTricks extensions
\psset{xunit=.45pt,yunit=.45pt,runit=.45pt}
\begin{pspicture}(380,215)
{
\newgray{lightgrey}{.8}
\newgray{lightishgrey}{.7}
\newgray{grey}{.6}
\newgray{midgrey}{.4}
\newgray{darkgrey}{.3}
}
{
\pscustom[linestyle=none,fillstyle=solid,fillcolor=lightgrey]%2-handle
{
\newpath
\moveto(392.63008,158.96379262)
\lineto(262.63008,138.96379262)
\lineto(242.63008,88.96379262)
\lineto(372.63008,18.96378262)
\lineto(392.63008,158.96379262)
\closepath
}
}
{
\pscustom[linestyle=none,fillstyle=solid,fillcolor=lightgrey]%2-handle
{
\newpath
\moveto(5.3781741,155.74827262)
\lineto(135.37817,135.74827262)
\lineto(155.37817,85.74827262)
\lineto(25.378174,15.74828262)
\lineto(5.3781741,155.74827262)
\closepath
}
}
{
\pscustom[linewidth=1.5,linecolor=darkgrey,fillstyle=solid,fillcolor=lightishgrey]%1-handle
{
\newpath
\moveto(8.81618944,165.24605509)
\lineto(137.63011961,142.53269496)
\lineto(134.19212682,123.03485933)
\lineto(5.37819666,145.74821947)
\lineto(8.81618944,165.24605509)
\closepath
}
}
{
\pscustom[linewidth=1.5,linecolor=darkgrey,fillstyle=solid,fillcolor=lightishgrey]%1-handle
{
\newpath
\moveto(25.37817982,25.74810151)
\lineto(138.65524301,91.1486578)
\lineto(148.55455415,74.00255146)
\lineto(35.27749096,8.60199516)
\lineto(25.37817982,25.74810151)
\closepath
}
}
{
\pscustom[linewidth=1.5,linecolor=darkgrey,fillstyle=solid,fillcolor=lightishgrey]%1-handle
{
\newpath
\moveto(389.19216511,168.46151793)
\lineto(260.37823495,145.7481578)
\lineto(263.81622774,126.25032217)
\lineto(392.6301579,148.96368231)
\lineto(389.19216511,168.46151793)
\closepath
}
}
{
\pscustom[linewidth=1.5,linecolor=darkgrey,fillstyle=solid,fillcolor=lightishgrey]%1-handle
{
\newpath
\moveto(367.63013108,23.96358538)
\lineto(254.35306789,89.36414167)
\lineto(244.45375675,72.21803532)
\lineto(357.73081994,6.81747903)
\lineto(367.63013108,23.96358538)
\closepath
}
}
{
\pscustom[linewidth=1.5,linecolor=darkgrey,fillstyle=solid,fillcolor=grey]%0-handle
{
\newpath
\moveto(254.97935221,189.11210628)
\curveto(284.4361926,156.21225591)(281.64186565,105.60392718)(248.74201528,76.14708679)
\curveto(215.84216491,46.6902464)(165.23383618,49.48457335)(135.77699579,82.38442372)
\curveto(107.89017567,113.53074482)(108.51193452,160.28445243)(137.21715111,190.67817868)
}
}
{
\pscustom[linewidth=3,linecolor=black]
{
\newpath
\moveto(115.37817,145.74827262)
\lineto(35.378174,160.74827262)
}
}
{
\pscustom[linewidth=3,linecolor=black]
{
\newpath
\moveto(274.37817,147.74827262)
\lineto(365.37817,165.74827262)
}
}
{
\pscustom[linewidth=3,linecolor=darkgrey]
{
\newpath
\moveto(30.378174,5.74828262)
\lineto(146.37817,72.74827262)
}
}
{
\pscustom[linewidth=3,linecolor=darkgrey]
{
\newpath
\moveto(360.37817,5.74828262)
\lineto(244.37817,72.74827262)
}
}
{
\pscustom[linewidth=3,linecolor=darkgrey]
{
\newpath
\moveto(231.76498561,186.35206329)
\curveto(241.4969918,182.34844956)(250.25992705,180.19885917)(260.73989987,179.2443767)
}
}
{
\pscustom[linewidth=3,linecolor=darkgrey,linestyle=dashed,dash=12 12]
{
\newpath
\moveto(158.37817439,186.74827329)
\curveto(148.6461682,182.74465956)(139.88323295,180.59506917)(129.40326013,179.6405867)
}
}
{
\pscustom[linewidth=0.5,linecolor=white,fillstyle=solid,fillcolor=white]
{
\newpath
\moveto(34.99196243,194.35137939)
\lineto(375.87086868,194.35137939)
\lineto(375.87086868,185.3070097)
\lineto(34.99196243,185.3070097)
\lineto(34.99196243,194.35137939)
\closepath
}
}
{
\pscustom[linewidth=0.5,linecolor=white,fillstyle=solid,fillcolor=white]
{
\newpath
\moveto(0,209.56793213)
\lineto(35.41101858,209.56793213)
\lineto(35.41101858,3.97117615)
\lineto(0,3.97117615)
\lineto(0,209.56793213)
\closepath
}
}
{
\pscustom[linewidth=0.5,linecolor=white,fillstyle=solid,fillcolor=white]
{
\newpath
\moveto(356.98187256,206.24230957)
\lineto(397.96148682,206.24230957)
\lineto(397.96148682,0.72357178)
\lineto(356.98187256,0.72357178)
\lineto(356.98187256,206.24230957)
\closepath
}
}
{
\pscustom[linewidth=3,linecolor=darkgrey,linestyle=dashed,dash=12 12]
{
\newpath
\moveto(262.38476102,179.99999991)
\curveto(235.03558194,149.78638877)(228.61108165,108.89264703)(245.37816684,71.74826995)
}
}
{
\pscustom[linewidth=3,linecolor=black]
{
\newpath
\moveto(197.92716589,59.12866185)
\curveto(201.91154214,104.66754068)(234.42611758,140.44295289)(279.37816597,148.74825747)
}
}
{
\pscustom[linewidth=3,linecolor=darkgrey]
{
\newpath
\moveto(128.37157898,179.99999991)
\curveto(155.72075806,149.78638877)(162.14525835,108.89264703)(145.37817316,71.74826995)
}
}
{
\pscustom[linewidth=3,linecolor=black,linestyle=dashed,dash=12 12]
{
\newpath
\moveto(196.89579805,55.33038217)
\curveto(193.25404932,101.18033572)(160.60704015,137.39180826)(115.37817403,145.74825747)
}
}
{
\pscustom[linewidth=2.5,linecolor=black,linestyle=dashed,dash=3 4]
{
\newpath
\moveto(274.4447779,147.9298838)
\curveto(278.97710569,118.51308614)(268.16186613,91.1567686)(244.73919612,72.79189699)
}
}
{
\pscustom[linewidth=2.5,linecolor=black,linestyle=dashed,dash=3 4]
{
\newpath
\moveto(145.74215104,73.00779698)
\curveto(122.93455187,91.05160323)(112.22812803,117.52946116)(116.08402598,146.35475518)
}
}
{
\pscustom[linewidth=2.5,linecolor=black,linestyle=dashed,dash=5 4]
{
\newpath
\moveto(244.96168516,72.96697923)
\curveto(215.50418721,49.70161065)(175.40308832,49.66624501)(145.90460021,72.87961959)
}
}
{
\pscustom[linewidth=2,linecolor=black,fillstyle=solid,fillcolor=black]
{
\newpath
\moveto(199.37158,55)
\curveto(199.37158,53.62)(198.25158,52.5)(196.87158,52.5)
\curveto(195.49158,52.5)(194.37158,53.62)(194.37158,55)
\curveto(194.37158,56.38)(195.49158,57.5)(196.87158,57.5)
\curveto(198.25158,57.5)(199.37158,56.38)(199.37158,55)
\closepath
}
}
{
\pscustom[linewidth=2,linecolor=black,fillstyle=solid,fillcolor=black]
{
\newpath
\moveto(118.37158,146)
\curveto(118.37158,144.62)(117.25158,143.5)(115.87158,143.5)
\curveto(114.49158,143.5)(113.37158,144.62)(113.37158,146)
\curveto(113.37158,147.38)(114.49158,148.5)(115.87158,148.5)
\curveto(117.25158,148.5)(118.37158,147.38)(118.37158,146)
\closepath
}
}
{
\pscustom[linewidth=2,linecolor=black,fillstyle=solid,fillcolor=black]
{
\newpath
\moveto(276.87158,147.5)
\curveto(276.87158,146.12)(275.75158,145)(274.37158,145)
\curveto(272.99158,145)(271.87158,146.12)(271.87158,147.5)
\curveto(271.87158,148.88)(272.99158,150)(274.37158,150)
\curveto(275.75158,150)(276.87158,148.88)(276.87158,147.5)
\closepath
}
}
{
\pscustom[linewidth=2,linecolor=black,fillstyle=solid,fillcolor=black]
{
\newpath
\moveto(147.87158,72.5)
\curveto(147.87158,71.12)(146.75158,70)(145.37158,70)
\curveto(143.99158,70)(142.87158,71.12)(142.87158,72.5)
\curveto(142.87158,73.88)(143.99158,75)(145.37158,75)
\curveto(146.75158,75)(147.87158,73.88)(147.87158,72.5)
\closepath
}
}
{
\pscustom[linewidth=2,linecolor=black,fillstyle=solid,fillcolor=black]
{
\newpath
\moveto(247.37158,73)
\curveto(247.37158,71.62)(246.25158,70.5)(244.87158,70.5)
\curveto(243.49158,70.5)(242.37158,71.62)(242.37158,73)
\curveto(242.37158,74.38)(243.49158,75.5)(244.87158,75.5)
\curveto(246.25158,75.5)(247.37158,74.38)(247.37158,73)
\closepath
}
}
{
\put(185,35){\scriptsize$\rho^2_m(\frac{1}{2})$}
\put(150,45){\scriptsize$\rho^2_m$}
\put(75,100){\scriptsize$\rho^1_{m+1}$}
\put(275,100){\scriptsize$\rho^1_m$}
}
\end{pspicture}
%LaTeX with PSTricks extensions
%%Creator: 0.46
%%Please note this file requires PSTricks extensions
\psset{xunit=.45pt,yunit=.45pt,runit=.45pt}
\begin{pspicture}(380,215)
{
\newgray{lightgrey}{.8}
\newgray{lightishgrey}{.7}
\newgray{grey}{.6}
\newgray{midgrey}{.4}
\newgray{darkgrey}{.3}
}
{
\pscustom[linestyle=none,fillstyle=solid,fillcolor=lightgrey]%2-handle
{
\newpath
\moveto(394.2585,158.96379262)
\lineto(264.2585,138.96379262)
\lineto(244.2585,88.96379262)
\lineto(374.2585,18.96378262)
\lineto(394.2585,158.96379262)
\closepath
}
}
{
\pscustom[linestyle=none,fillstyle=solid,fillcolor=lightgrey]%2-handle
{
\newpath
\moveto(7.006594,155.74827262)
\lineto(137.00659,135.74827262)
\lineto(157.00659,85.74827262)
\lineto(27.006594,15.74828262)
\lineto(7.006594,155.74827262)
\closepath
}
}
{
\pscustom[linewidth=1.5,linecolor=darkgrey,fillstyle=solid,fillcolor=lightishgrey]%1-handle
{
\newpath
\moveto(10.4445979,165.24606105)
\lineto(139.25852806,142.53270091)
\lineto(135.82053528,123.03486529)
\lineto(7.00660512,145.74822542)
\lineto(10.4445979,165.24606105)
\closepath
}
}
{
\pscustom[linewidth=1.5,linecolor=darkgrey,fillstyle=solid,fillcolor=lightishgrey]%1-handle
{
\newpath
\moveto(27.00659072,25.74816801)
\lineto(140.28365391,91.1487243)
\lineto(150.18296505,74.00261795)
\lineto(36.90590186,8.60206166)
\lineto(27.00659072,25.74816801)
\closepath
}
}
{
\pscustom[linewidth=1.5,linecolor=darkgrey,fillstyle=solid,fillcolor=lightishgrey]%1-handle
{
\newpath
\moveto(390.82056297,168.46157209)
\lineto(262.00663281,145.74821195)
\lineto(265.4446256,126.25037633)
\lineto(394.25855576,148.96373646)
\lineto(390.82056297,168.46157209)
\closepath
}
}
{
\pscustom[linewidth=1.5,linecolor=darkgrey,fillstyle=solid,fillcolor=lightishgrey]%1-handle
{
\newpath
\moveto(369.25855016,23.96365511)
\lineto(255.98148697,89.36421141)
\lineto(246.08217583,72.21810506)
\lineto(359.35923902,6.81754877)
\lineto(369.25855016,23.96365511)
\closepath
}
}
{
\pscustom[linewidth=1.5,linecolor=darkgrey,fillstyle=solid,fillcolor=grey]%0-handle
{
\newpath
\moveto(251.23727099,194.56181508)
\curveto(283.70235064,164.62648138)(285.75547778,113.98266765)(255.82014408,81.51758801)
\curveto(225.88481038,49.05250836)(175.24099665,46.99938122)(142.77591701,76.93471492)
\curveto(110.31083736,106.87004862)(108.25771022,157.51386235)(138.19304392,189.97894199)
\curveto(140.09825703,192.04515899)(141.7973177,193.71234784)(143.89923835,195.57809682)
}
}
{
\pscustom[linewidth=3,linecolor=black]
{
\newpath
\moveto(117.00659,145.74827262)
\lineto(37.006594,160.74827262)
}
}
{
\pscustom[linewidth=3,linecolor=black]
{
\newpath
\moveto(276.00659,147.74827262)
\lineto(367.00659,165.74827262)
}
}
{
\pscustom[linewidth=3,linecolor=black]
{
\newpath
\moveto(32.006594,5.74828262)
\lineto(148.00659,72.74827262)
}
}
{
\pscustom[linewidth=3,linecolor=black]
{
\newpath
\moveto(362.00659,5.74828262)
\lineto(246.00659,72.74827262)
}
}
{
\pscustom[linewidth=3,linecolor=black]
{
\newpath
\moveto(233.39340561,186.35206329)
\curveto(243.1254118,182.34844956)(251.88834705,180.19885917)(262.36831987,179.2443767)
}
}
{
\pscustom[linewidth=3,linecolor=black,linestyle=dashed,dash=12 12]
{
\newpath
\moveto(160.00659439,186.74827329)
\curveto(150.2745882,182.74465956)(141.51165295,180.59506917)(131.03168013,179.6405867)
}
}
{
\pscustom[linewidth=0.5,linecolor=white,fillstyle=solid,fillcolor=white]
{
\newpath
\moveto(36.69642258,199.43835449)
\lineto(377.42325974,199.43835449)
\lineto(377.42325974,185.3830471)
\lineto(36.69642258,185.3830471)
\lineto(36.69642258,199.43835449)
\closepath
}
}
{
\pscustom[linewidth=0.5,linecolor=white,fillstyle=solid,fillcolor=white]
{
\newpath
\moveto(0.4785507,209.56060791)
\lineto(36.52145079,209.56060791)
\lineto(36.52145079,3.97857666)
\lineto(0.4785507,3.97857666)
\lineto(0.4785507,209.56060791)
\closepath
}
}
{
\pscustom[linewidth=0.5,linecolor=white,fillstyle=solid,fillcolor=white]
{
\newpath
\moveto(356.98187256,206.24230957)
\lineto(397.961483,206.24230957)
\lineto(397.961483,0.72355652)
\lineto(356.98187256,0.72355652)
\lineto(356.98187256,206.24230957)
\closepath
}
}
{
\pscustom[linewidth=3,linecolor=black,linestyle=dashed,dash=12 12]
{
\newpath
\moveto(198.52421805,55.33038217)
\curveto(194.88246932,101.18033572)(162.23546015,137.39180826)(117.00659403,145.74825747)
}
}
{
\pscustom[linewidth=3,linecolor=black]
{
\newpath
\moveto(199.55558589,59.12866185)
\curveto(203.53996214,104.66754068)(236.05453758,140.44295289)(281.00658597,148.74825747)
}
}
{
\pscustom[linewidth=3,linecolor=black,linestyle=dashed,dash=12 12]
{
\newpath
\moveto(264.01318102,179.99999991)
\curveto(236.66400194,149.78638877)(230.23950165,108.89264703)(247.00658684,71.74826995)
}
}
{
\pscustom[linewidth=3,linecolor=black]
{
\newpath
\moveto(129.99999898,179.99999991)
\curveto(157.34917806,149.78638877)(163.77367835,108.89264703)(147.00659316,71.74826995)
}
}
{
\pscustom[linewidth=2,linecolor=black,fillstyle=solid,fillcolor=black]%arrowhead
{
\newpath
\moveto(74.38838649,153.92232532)
\lineto(64.974812,147.646609)
\lineto(94,150.00000262)
\lineto(68.11267016,163.33589981)
\lineto(74.38838649,153.92232532)
\closepath
}
}
{
\pscustom[linewidth=2,linecolor=black,fillstyle=solid,fillcolor=black]%arrowhead
{
\newpath
\moveto(308.27557802,36.92256211)
\lineto(319.21677743,39.80182511)
\lineto(291,46.99998262)
\lineto(311.15484102,25.9813627)
\lineto(308.27557802,36.92256211)
\closepath}
}
{
\pscustom[linewidth=2,linecolor=black,fillstyle=solid,fillcolor=black]%arrowhead
{
\newpath
\moveto(88.36486284,38.92276138)
\lineto(91.34169647,49.83781803)
\lineto(71,28.99998262)
\lineto(99.27991949,35.94592775)
\lineto(88.36486284,38.92276138)
\closepath
}
}
{
\pscustom[linewidth=2,linecolor=black,fillstyle=solid,fillcolor=black]%arrowhead
{
\newpath
\moveto(308.38838649,154.07767991)
\lineto(302.11267016,144.66410543)
\lineto(328,158.00000262)
\lineto(298.974812,160.35339624)
\lineto(308.38838649,154.07767991)
\closepath
}
}
\end{pspicture}
\caption{\label{cgraphpic5}}
\end{figure}

It is clear that $N$ is now a sutured manifold.
\end{construction}

\begin{remark}
We may similarly define $\graph{K}{D}$ and $\nmfld{K}{\mathcal{G}}$.
\end{remark}

%------------------------------

\section{Digraphs constructed from link diagrams}\label{infgraphssec}
\subsection{Bounding valency}
%Master document is alexpolypaper.tex.

In both Theorem \ref{incompthm} and Theorem \ref{finitenessthm}, we wish to use a bound on $\nrap{L}{0}$ to control the possibilities for the homogeneous link $L$. 
In Section \ref{alexpolysec} we have established the following.

\begin{lemma}
Let $L$ be a link with a reduced, special, alternating diagram $D$. Then 
$\nrap{L}{0}=|\Tr(\graph{M}{D},v)|=|\Tr(\graph{H}{D},v)|=|\Tr(\graph{K}{D},v)|$, where in each case $v$ may be any vertex of the relevant digraph.
\end{lemma}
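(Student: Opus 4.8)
The plan is to handle the three digraphs in turn, in each case reducing to facts already assembled in Section \ref{alexpolysec}, and -- crucially -- to deduce independence of the basepoint $v$ from invariance statements rather than from any direct enumeration.

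I would dispose of $\graph{H}{D}$ and $\graph{K}{D}$ first. By Theorem \ref{polyisctrees} we have $\rap{L}{t}=\sum_{\mathcal{T}\in\Tr(\mathcal{G},v)}(-t)^{|\mathcal{T}\cap\mathcal{H}|}$, since $\E(\mathcal{G})=\mathcal{H}\sqcup\mathcal{K}$ and $\alpha$ is $-t$ on $\mathcal{H}$ and $1$ on $\mathcal{K}$; as every spanning tree has the same number of edges, maximising $|\mathcal{T}\cap\mathcal{H}|$ (respectively minimising it) selects the $\mathcal{H}$--maximal (respectively $\mathcal{K}$--maximal) trees, which Murasugi's sign lemmas show contribute with a single sign to the extreme coefficients. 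Together with the coefficient symmetry $|a_i|=|a_{n-i}|$ this is exactly the remark that $\nrap{L}{0}=|\HTr(\mathcal{G},v)|=|\KTr(\mathcal{G},v)|$ for every $v\in\V(\mathcal{G})$. The collapsing lemma then gives a bijection $\HTr(\mathcal{G},v)\to\Tr(\graph{H}{D},A(v))$, so $|\Tr(\graph{H}{D},A(v))|=\nrap{L}{0}$. Since every vertex of $\graph{H}{D}$ is the image of some vertex of $\mathcal{G}$ under $A$ (each is either a collapsed $\mathcal{H}$--circle or an uncollapsed crossing) and the right-hand side does not depend on $v$, this yields $|\Tr(\graph{H}{D},w)|=\nrap{L}{0}$ for every $w$. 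The analogous map into $\graph{K}{D}$ settles that case identically.

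For $\graph{M}{D}$ I would use Murasugi's correspondence. Fixing a white region $r_p$ (giving the basepoint $v_p$) and an adjacent black region $r_q$, the assignment of each remaining region to the crossing terminating its edge in $\mathcal{T}$ or in the dual tree $\mathcal{T}^*$ gives a bijection between $\Tr(\graph{M}{D},v_p)$ and the L$_0^y$--correspondences, i.e.\ those with the greatest possible number of dotted white corners. By Murasugi's sign lemmas the number of these is the absolute value of an extreme coefficient of $\rap{L}{t}$ once the constant black-region factor $t^x$ -- the very factor removed when normalising $\rap{L}{t}$ to $\nrap{L}{t}$ -- is divided out; by symmetry of the coefficients this absolute value is $\nrap{L}{0}$, so $|\Tr(\graph{M}{D},v_p)|=\nrap{L}{0}$ for this one basepoint.

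The genuine subtlety, and the step I expect to be the main obstacle, is basepoint-independence for $\graph{M}{D}$: the bijection above is tied to a specific deleted pair $(r_p,r_q)$. Rather than prove the general combinatorial fact that a digraph with equal in- and out-degree at each vertex has a basepoint-independent number of directed spanning subtrees, I would read it off from Alexander's invariance theorem, which says $\rap{L}{t}$ is independent, up to a factor $\pm t^m$, of the chosen adjacent pair. The count just obtained is the absolute value of an extreme coefficient of $\rap{L}{t}$, an invariant unaffected by multiplication by $\pm t^m$; and since every white region is adjacent to some black region, every vertex of $\graph{M}{D}$ occurs as such a $v_p$. Hence the count is the same invariant $\nrap{L}{0}$ at every basepoint, which completes the argument.
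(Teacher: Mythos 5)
Your proposal is correct and takes essentially the same route as the paper, which states this lemma without a separate proof as a summary of Section \ref{alexpolysec}: Crowell's tree-sum formula (Theorem \ref{polyisctrees}) plus coefficient symmetry gives the basepoint-free statement $\nrap{L}{0}=|\HTr(\mathcal{G},v)|=|\KTr(\mathcal{G},v)|$, the collapsing bijection $A_v$ (with surjectivity of $A$ on vertices) transfers this to $\graph{H}{D}$ and $\graph{K}{D}$, and Murasugi's correspondence between directed spanning subtrees and extremal correspondences, combined with Alexander's invariance under the choice of deleted adjacent regions, handles $\graph{M}{D}$ exactly as you describe. The only cosmetic difference is that in Crowell's framework the single-sign property of the extreme coefficients is automatic, since each tree contributes the monomial $(-t)^{|\mathcal{T}\cap\mathcal{H}|}$, so your appeal to Murasugi's sign lemmas at that step is unnecessary.
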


Thus we now turn our attention to controlling a digraph using a bound on the number of directed spanning subtrees it contains.

\begin{lemma}
Let $\mathcal{G}$ be a digraph with no loops,  and fix $v_0 \in \V(\mathcal{G})$. Suppose there is a directed spanning subtree $\mathcal{T}$ of $\mathcal{G}$ with origin $v_0$. Let $w$ be any leaf of $\mathcal{T}$ and let $n$ be the in-degree of $w$ in $\mathcal{G}$. Then $\mathcal{G}$ has at least $n$ directed spanning subtrees with origin $v_0$.
\end{lemma}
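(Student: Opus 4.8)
The plan is to produce the required trees directly by a single edge swap: I exchange the edge of $\mathcal{T}$ that arrives at $w$ for each of the edges of $\mathcal{G}$ that arrive at $w$. First dispose of the degenerate possibility $w=v_0$. Since $v_0$ is the origin it is the terminal vertex of no edge of $\mathcal{T}$, while $w$ is a leaf and so is the initial vertex of no edge of $\mathcal{T}$; if $w=v_0$ then $w$ is incident to no edge of $\mathcal{T}$ at all, and as $\mathcal{T}$ is connected and spans $\mathcal{G}$ this forces $\V(\mathcal{G})=\{w\}$. Then $w$ has in-degree $n=0$ (there are no loops), and the statement is vacuous. Hence I may assume $w\neq v_0$, so that $w$ is the terminal vertex of exactly one edge $e_0$ of $\mathcal{T}$.

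Let $e_1,\dots,e_n$ be the $n$ distinct edges of $\mathcal{G}$ with $\tau(e_i)=w$, where I index so that $e_0=e_1$. For each $i$, let $\mathcal{T}_i$ be obtained from $\mathcal{T}$ by deleting $e_0$ and adjoining $e_i$, so that $\mathcal{T}_1=\mathcal{T}$. The claim is that each $\mathcal{T}_i\in\Tr(\mathcal{G},v_0)$. The crucial step is checking that $\mathcal{T}_i$ is a tree. Because $w$ is a leaf of $\mathcal{T}$, the edge $e_0$ is the only edge of $\mathcal{T}$ incident to $w$, so $\mathcal{T}\setminus\{e_0\}$ is a directed tree on $\V(\mathcal{G})\setminus\{w\}$ together with the isolated vertex $w$. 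Since $\mathcal{G}$ has no loops, $\iota(e_i)\neq w$, so $\iota(e_i)$ lies in that tree, and adjoining $e_i$ reconnects $w$ to it; thus $\mathcal{T}_i$ is connected. As $\mathcal{T}_i$ has $|\V(\mathcal{G})|$ vertices and $|\V(\mathcal{G})|-1$ edges, its underlying graph is a tree, so it contains no simple closed curve.

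It remains to verify the other defining conditions and distinctness. Since $\tau(e_0)=\tau(e_i)=w$, passing from $\mathcal{T}$ to $\mathcal{T}_i$ alters neither the multiset of terminal vertices nor the number of incoming edges at any vertex; hence every vertex is still the terminal vertex of at most one edge, and $v_0$ remains the unique vertex that is the terminal vertex of no edge, so $v_0$ is again the origin. Therefore $\mathcal{T}_i\in\Tr(\mathcal{G},v_0)$ for every $i$. Finally the trees are pairwise distinct, because $\mathcal{T}_i$ has $e_i$ as its unique edge arriving at $w$ and the $e_i$ are distinct, so $\mathcal{T}_i\neq\mathcal{T}_j$ for $i\neq j$. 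This exhibits $n$ distinct elements of $\Tr(\mathcal{G},v_0)$, as required.

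The argument is elementary once the swap is set up, and the one point that genuinely needs care—the step I would flag as the main obstacle—is confirming that $\mathcal{T}_i$ is a tree. This is exactly where both hypotheses enter: the assumption that $w$ is a \emph{leaf} ensures that deleting $e_0$ merely isolates $w$ rather than detaching a larger subtree, and the absence of loops ensures that $e_i$ reattaches $w$ to the remaining tree (rather than being a loop at $w$), so that no cycle is introduced.
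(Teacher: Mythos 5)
Your proof is correct, and it takes a genuinely different route from the paper's. The paper deduces the bound from Lemma~\ref{deletecontractlemma}: writing $e$ for the edge of $\mathcal{T}$ ending at $w$, repeated use of $|\Tr(\mathcal{G},v_0)|=|\Tr(\mathcal{G}\setminus e',v_0)|+|\Tr(\mathcal{G}/ e',v_0)|$ gives $|\Tr(\mathcal{G},v_0)|\geq|\Tr(\mathcal{G}/(\mathcal{T}\setminus e),v_0)|$, and the contracted graph has just two vertices, with $w$ still of in-degree $n$ and (by the no-loop hypothesis) no loops at $w$, so it admits at least $n$ directed spanning subtrees. You instead build the $n$ trees explicitly by exchanging the edge $e_0$ of $\mathcal{T}$ at $w$ for each edge of $\mathcal{G}$ ending at $w$. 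Your verification is complete: the leaf hypothesis ensures deleting $e_0$ merely isolates $w$, the no-loop hypothesis ensures $\iota(e_i)\neq w$ so that $e_i$ reattaches $w$ without creating a cycle, the in-degree profile is unchanged so $v_0$ remains the origin, and the trees are visibly pairwise distinct. The two arguments are closely related --- the spanning subtrees of the paper's two-vertex contracted graph correspond exactly to your swapped trees $\mathcal{T}_i$ --- but yours is self-contained, avoids the deletion--contraction lemma entirely, and explicitly disposes of the degenerate case $w=v_0$ (where $\mathcal{T}$ is a single vertex and $n=0$), which the paper's proof silently assumes away when it selects ``the edge of $\mathcal{T}$ with terminal vertex $w$''. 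What the paper's approach buys is brevity and consistency with its repeated use of deletion--contraction elsewhere; what yours buys is an explicit description of the $n$ trees, which is slightly more informative.
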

\begin{proof}
Let $e$ be the edge of $\mathcal{T}$ with terminal vertex $w$.
By repeated use of Lemma \ref{deletecontractlemma}, $|\Tr(\mathcal{G},v_0)|\geq |\Tr(\mathcal{G}/(\mathcal{T}\setminus e),v_0)|$.
But $\mathcal{G}/(\mathcal{T}\setminus e)$ has two vertices and $w$ still has in-degree $n$. Since $\mathcal{G}$ contained no loops, $\mathcal{G}/(\mathcal{T}\setminus e)$ has no loops with endpoints at $w$. Thus $|\Tr(\mathcal{G}/(\mathcal{T}\setminus e),v_0)|\geq n$.
\end{proof}

\begin{definition}
A vertex $v\in\V(\mathcal{G})$ of a graph $\mathcal{G}$ is a \textit{cut vertex} if $\mathcal{G}[\V(\mathcal{G})\setminus\{v\}]$ is disconnected.
\end{definition}

\begin{definition}\label{primedefn}
Define a planar digraph $\mathcal{G}$ to be \textit{prime} if none of the following hold.
\begin{itemize}
  \item $\mathcal{G}$ contains a loop.
  \item $\mathcal{G}$ has a cut vertex.
  \item There is a simple closed curve $\rho$ in $\sphere$ disjoint from the vertices of $\mathcal{G}$ meeting the edges of $\mathcal{G}$ at exactly two points and with at least one vertex of $\mathcal{G}$ on each side of $\rho$.
\end{itemize}
\end{definition}

\begin{proposition}\label{leaforiginprop}
Let $\mathcal{G}$ be a prime, planar digraph such that, at every vertex, incoming and outgoing edges alternate. Let $w\in\V(\mathcal{G})$. Then there is a vertex $v_w\in\V(\mathcal{G})\setminus\{w\}$ and a directed spanning subtree $\mathcal{T}$ of $\mathcal{G}$ with origin $v_w$ such that $w$ is a leaf of $\mathcal{T}$.
\end{proposition}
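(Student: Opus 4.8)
The plan is to reduce the statement to a reachability property of the vertex-deleted digraph and then extract the root from it. Write $\mathcal{G}-w$ for the induced subgraph $\mathcal{G}[\V(\mathcal{G})\setminus\{w\}]$, carrying the orientation inherited from $\mathcal{O}$ (we may assume $\mathcal{G}$ has more than one vertex, so that $\mathcal{G}-w$ is nonempty). It suffices to produce a vertex $v_w\in\V(\mathcal{G})\setminus\{w\}$ from which every vertex of $\mathcal{G}-w$ is reachable by a directed path in $\mathcal{G}-w$. Indeed, given such a $v_w$, choosing for each vertex $u\neq v_w$ the final edge of a shortest directed path from $v_w$ to $u$ produces a directed spanning subtree $\mathcal{T}_0$ of $\mathcal{G}-w$ with origin $v_w$. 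Since incoming and outgoing edges alternate at $w$, its degree is even and positive, so $w$ has an incoming edge $e$, whose tail $t\neq w$ (there are no loops) already lies in $\mathcal{T}_0$. Adjoining $e$ gives a directed spanning subtree $\mathcal{T}$ of $\mathcal{G}$ with origin $v_w$ in which $w$ is the terminal vertex of $e$ and the initial vertex of no edge, that is, a leaf, as required.

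First I record two facts. Because incoming and outgoing edges alternate at every vertex, $\mathcal{G}$ is $\mathcal{O}$--connected, and for every $A\subseteq\V(\mathcal{G})$ the number of edges of $\mathcal{G}$ from $A$ to its complement equals the number from the complement to $A$ (cancel the edges internal to $A$ against the pointwise equality of in- and out-degrees). Producing the desired $v_w$ is equivalent to showing that the condensation of $\mathcal{G}-w$ has a \emph{unique} source strongly connected component. Let $S$ be any source component of $\mathcal{G}-w$. Then no edge of $\mathcal{G}-w$ enters $S$ from outside, so in $\mathcal{G}$ every edge entering $S$ from $\V(\mathcal{G})\setminus S$ issues from $w$; by the balance identity the cut between $S$ and its complement has the same number $d\geq 1$ of edges in each direction, and all $d$ incoming edges emanate from $w$. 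A simple closed curve enclosing exactly the vertices of $S$ meets $2d$ edges, so the third clause of Definition \ref{primedefn} forces $d\geq 2$.

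It remains to rule out two distinct source components $S_1,S_2$. They span no edge of $\mathcal{G}$ between them (an edge $S_2\to S_1$ would violate $S_1$ being a source, and symmetrically), while $\mathcal{G}-w$ is connected because $\mathcal{G}$ has no cut vertex, so some further vertex links them. Examine $S_1$: its $d$ outgoing edges run to $w$ or to vertices outside $S_1\cup\{w\}$. If they all run to $w$, then every edge of $\mathcal{G}$ joining $S_1$ to its complement meets $w$, so $w$ separates $S_1$ from $S_2$ and is a cut vertex, contradicting primeness. The main obstacle is the complementary configuration, in which some edge leaves $S_1$ to a vertex other than $w$: here the cut around $S_1$ has $2d\geq 4$ edges and so does not contradict primeness directly. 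The feature to exploit is that all $d$ edges entering $S_1$ share the single tail $w$, so in the planar embedding they fan out from one point; the natural strategy is to run a separating curve inside the disc cut off by two of these edges that are consecutive in the rotation at $w$, arranging that it crosses only two edges while still keeping vertices (part of $S_1$, and $S_2$) on both sides. Turning this into a genuine two-edge-crossing curve is the crux, and it is precisely here that the no-cut-vertex and no-two-edge-cut clauses of primeness, together with planarity and the alternation of edges at $w$, must be combined; an induction on $|\V(\mathcal{G})|$ via the moves of Definition \ref{movedefn} is an alternative route to the same end.
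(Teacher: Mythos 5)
Your reduction is sound: the proposition is indeed equivalent to the statement that the condensation of $\mathcal{G}[\V(\mathcal{G})\setminus\{w\}]$ has a unique source strongly connected component, and your handling of the easy configurations (a single source component with $d=1$, and two source components all of whose boundary edges meet $w$) is essentially correct. But the proof has a genuine gap, and it sits exactly at the hard case: two source components $S_1,S_2$ where some edge leaves $S_1$ to a vertex other than $w$. Here you only describe a ``natural strategy'' --- run a curve inside the disc cut off by two edges of the fan at $w$ --- and you yourself flag that ``turning this into a genuine two-edge-crossing curve is the crux.'' That crux is the entire content of the proposition. A curve drawn between two consecutive fan edges at $w$ will in general be forced to cross many other edges, because directed paths connecting $S_1$ to $S_2$ through further vertices can thread between the fan edges; nothing in your sketch controls this. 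The paper resolves precisely this difficulty, and not by a local construction at $w$: for each $v\neq w$ it considers the set $A(v)$ of vertices reachable from $v$ avoiding $w$ and its complement $B(v)$, uses primeness together with the cycle-boundary property to show there is \emph{exactly one} edge from $B(v)$ to $A(v)$, and then takes $v_+$ maximal with respect to the partial order given by inclusion of the sets $A(\cdot)$. Analysing the single edge $e(v_+)$ produces a partition into four non-empty pieces (Figure \ref{finitevalencepic2}) in which every adjacency except one is forbidden, contradicting primeness. This global maximality argument is what replaces the curve you could not draw.

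Two further points. First, your fallback suggestion --- induction on $|\V(\mathcal{G})|$ via the moves of Definition \ref{movedefn} --- cannot get started: by the Remark following the definition of $\Gamma$, primeness together with alternation implies $\mathcal{G}$ has no loops and no vertex of in-degree $1$, so neither move 1 nor move 2 is ever applicable to a graph satisfying the hypotheses of the proposition. Second, a smaller repairable issue: in deriving $d\geq 2$ you assert that a simple closed curve encloses exactly the vertices of $S$ and meets only the $2d$ cut edges. This is not automatic, since the vertices outside $S$ may lie in several faces of the embedded subgraph $\mathcal{G}[S]$, in which case no such curve exists; one must either argue via a boundary circle of a regular neighbourhood of $\mathcal{G}[S]$ (when the complement lies in one face) or invoke the no-cut-vertex clause (when it does not). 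This step can be fixed, but it is another instance where the planar topology requires more care than the proposal gives it.
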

\begin{proof}
Assume otherwise. 
For $v\in\V(\mathcal{G})\setminus\{w\}$, let $A(v)$ be the set of vertices $v'\in\V(\mathcal{G})\setminus\{w\}$ such that there is a directed path in $\mathcal{G}$ from $v$ to $v'$ that does not pass through $w$,
and let $B(v)=\V(\mathcal{G})\setminus(\{w\}\cup A(v))$.

Suppose that $B(v_0)=\emptyset$ for some $v_0\in\V(\mathcal{G})\setminus\{w\}$. Then for every $v\in\V(\mathcal{G})\setminus\{w\}$ there is a directed path $\rho(v)$ in $\mathcal{G}$ from $v_0$ to $v$ that does not pass through $w$. Take the union of these paths, and discard edges as necessary to give a tree $\mathcal{T}'$ that includes all vertices in $\V(\mathcal{G})\setminus\{w\}$. Pick any edge $e$ with terminal vertex $w$. Then $\mathcal{T}=\mathcal{T}'\cup e$ is a directed spanning subtree of $\mathcal{G}$ with origin $v_0$ of which $w$ is a leaf. This contradicts the assumption that no such $\mathcal{T}$ exists. Thus, for every $v\in\V(\mathcal{G})\setminus\{w\}$, the set $B(v)$ is non-empty, as is $A(v)$.

Choose $v_0\neq w$. By the definition of $B(v_0)$, no edge of $\mathcal{G}$ runs from a vertex in $A(v_0)$ to a vertex in $B(v_0)$. Note that $\mathcal{G}[A(v_0)]$ is connected. On the other hand, $\mathcal{G}[B(v_0)]$ may be disconnected, but every vertex of $B(v_0)$ lies on a path that begins at $w$ and does not meet $A(v_0)$. Thus $\mathcal{G}$ has the form shown in Figure \ref{finitevalencepic1}, where each arrow in the picture may denote multiple edges.
\begin{figure}[htbp]
\centering
%LaTeX with PSTricks extensions
%%Creator: inkscape 0.46
%%Please note this file requires PSTricks extensions

\psset{xunit=.35pt,yunit=.35pt,runit=.35pt}
\begin{pspicture}(390,170)

{
\pscustom[linewidth=1,linecolor=black,fillstyle=solid,fillcolor=white]%left box
{
\newpath
\moveto(0.53815818,117.13598633)
\lineto(119.46184134,117.13598633)
\lineto(119.46184134,28.21230316)
\lineto(0.53815818,28.21230316)
\lineto(0.53815818,117.13598633)
\closepath
}
}
{
\pscustom[linewidth=1,linecolor=black,fillstyle=solid,fillcolor=white]%right box
{
\newpath
\moveto(280.53814697,117.13598633)
\lineto(399.46183014,117.13598633)
\lineto(399.46183014,28.21230316)
\lineto(280.53814697,28.21230316)
\lineto(280.53814697,117.13598633)
\closepath
}
}
{
\pscustom[linewidth=1,linecolor=black,fillstyle=solid,fillcolor=black]%circle
{
\newpath
\moveto(208.999999,87.67415046)
\curveto(208.999999,82.69566053)(204.95948543,78.65514696)(199.9809955,78.65514696)
\curveto(195.00250557,78.65514696)(190.961992,82.69566053)(190.961992,87.67415046)
\curveto(190.961992,92.65264039)(195.00250557,96.69315396)(199.9809955,96.69315396)
\curveto(204.95948543,96.69315396)(208.999999,92.65264039)(208.999999,87.67415046)
\closepath
}
}
{
\pscustom[linewidth=1.02066743,linecolor=black]%curve
{
\newpath
\moveto(279.48966,52.16378262)
\curveto(200,20.71028262)(120.51034,52.16378262)(120.51034,52.16378262)
}
}
{
\pscustom[linewidth=1,fillstyle=solid,fillcolor=black]%arrowhead
{
\newpath
\moveto(130.00102909,48.40838362)
\lineto(135.29946433,50.70249966)
\lineto(120.51034,52.16378262)
\lineto(132.29514513,43.10994839)
\lineto(130.00102909,48.40838362)
\closepath
}
}
{
\pscustom[linewidth=1,linecolor=black]%curve
{
\newpath
\moveto(120,92.67415262)
\curveto(155,102.67415262)(190,92.67415262)(190,92.67415262)
}
}
{
\pscustom[linewidth=1,fillstyle=solid,fillcolor=black]%arrowhead
{
\newpath
\moveto(180.38476052,95.4213639)
\lineto(175.43978022,92.67415262)
\lineto(190,92.67415262)
\lineto(177.63754924,100.3663442)
\lineto(180.38476052,95.4213639)
\closepath
}
}
{
\pscustom[linewidth=1,linecolor=black]%curve
{
\newpath
\moveto(208,92.67415262)
\curveto(243,102.67415262)(278,92.67415262)(278,92.67415262)
}
}
{
\pscustom[linewidth=1,fillstyle=solid,fillcolor=black]%arrowhead
{
\newpath
\moveto(268.38476052,95.4213639)
\lineto(263.43978022,92.67415262)
\lineto(278,92.67415262)
\lineto(265.63754924,100.3663442)
\lineto(268.38476052,95.4213639)
\closepath
}
}
{
\pscustom[linewidth=1,linecolor=black]%curve
{
\newpath
\moveto(192,82.67415262)
\curveto(157,72.67415262)(122,82.67415262)(122,82.67415262)
}
}
{
\pscustom[linewidth=1,fillstyle=solid,fillcolor=black]%arrowhead
{
\newpath
\moveto(131.61523948,79.92694134)
\lineto(136.56021978,82.67415262)
\lineto(122,82.67415262)
\lineto(134.36245076,74.98196104)
\lineto(131.61523948,79.92694134)
\closepath
}
}
{
\pscustom[linewidth=1,linecolor=black]%curve
{
\newpath
\moveto(281,82.67415262)
\curveto(246,72.67415262)(211,82.67415262)(211,82.67415262)
}
}
{
\pscustom[linewidth=1,fillstyle=solid,fillcolor=black]%arrowhead
{
\newpath
\moveto(220.61523948,79.92694134)
\lineto(225.56021978,82.67415262)
\lineto(211,82.67415262)
\lineto(223.36245076,74.98196104)
\lineto(220.61523948,79.92694134)
\closepath
}
}
{
\put(30,140){$A(v_0)$}%labels
\put(300,140){$B(v_0)$}
\put(190,110){$w$}
\put(190,10){$e$}
}
\end{pspicture}
\caption{\label{finitevalencepic1}}
\end{figure}
Since the boundary of each region of $\sphere \setminus \mathcal{G}$ is a cycle, $e$ denotes at most one edge of $\mathcal{G}$. As $\mathcal{G}$ is prime, $e$ denotes at least one edge. Call this edge $e(v_0)$. Similarly define $e(v)$ for each $v\in\V(\mathcal{G})\setminus\{w\}$.

For $v,v'\in\V(\mathcal{G})\setminus\{w\}$, if $v'\in A(v)$ then $A(v')\subseteq A(v)$. This inclusion of sets gives a partial order on $\V(\mathcal{G})\setminus\{w\}$. 
Choose $v_+$ to be maximal with respect to this ordering. Now let $v_b$ be the initial vertex of $e(v_+)$, and $v_a$ the terminal vertex. 
Then $v_+, v_a\in A(v_+)$ and $v_b\in B(v_+)$. It follows that $A(v_a)\subseteq A(v_+)$, so $v_b \in B(v_a)$. In addition, $v_a\in A(v_a)$, and $v_a\in A(v_b)$, so $A(v_a)\subseteq A(v_b)$. If $v_+\in A(v_a)$ then $A(v_+)\subseteq A(v_b)$. Since $v_b\in A(v_b)\setminus A(v_+)$, this contradicts maximality of $v_+$. Thus $v_+ \in B(v_a)$.
In summary, $A(v_a)\cap B(v_+)=\emptyset$ while $A(v_a)\cap A(v_+)$, $B(v_a)\cap A(v_+)$ and $B(v_a)\cap B(v_+)$ are non-empty. This gives $\mathcal{G}$ the structure shown in Figure \ref{finitevalencepic2}, where each box is non-empty.
\begin{figure}[htbp]
\centering
%LaTeX with PSTricks extensions
%%Creator: inkscape 0.46
%%Please note this file requires PSTricks extensions
\psset{xunit=.35pt,yunit=.35pt,runit=.35pt}
\begin{pspicture}(520,320)

%----------------------------------------------------------------------------------------do I want grey boxes?
%{
%\pscustom[linewidth=1,linecolor=lightgray,fillstyle=solid,fillcolor=lightgray]%graybox
%{
%\newpath
%\moveto(0,280)
%\lineto(510,280)
%\lineto(510,170)
%\lineto(0,170)
%\lineto(0,280)
%\closepath
%}
%}

{
\pscustom[linewidth=1,linecolor=black,fillstyle=solid,fillcolor=white]%box
{
\newpath
\moveto(100,270)
\lineto(220,270)
\lineto(220,180)
\lineto(100,180)
\lineto(100,270)
\closepath
}
}
{
\pscustom[linewidth=1,linecolor=black,fillstyle=solid,fillcolor=white]%box
{
\newpath
\moveto(380,100)
\lineto(500,100)
\lineto(500,10)
\lineto(380,10)
\lineto(380,100)
\closepath
}
}
{
\pscustom[linewidth=1,linecolor=black,fillstyle=solid,fillcolor=white]%box
{
\newpath
\moveto(380,270)
\lineto(500,270)
\lineto(500,180)
\lineto(380,180)
\lineto(380,270)
\closepath
}
}
{
\pscustom[linewidth=1,linecolor=black,fillstyle=solid,fillcolor=black]%circle
{
\newpath
\moveto(309.018999,140.00000046)
\curveto(309.018999,135.02151053)(304.97848543,130.98099696)(299.9999955,130.98099696)
\curveto(295.02150557,130.98099696)(290.980992,135.02151053)(290.980992,140.00000046)
\curveto(290.980992,144.97849039)(295.02150557,149.01900396)(299.9999955,149.01900396)
\curveto(304.97848543,149.01900396)(309.018999,144.97849039)(309.018999,140.00000046)
\closepath
}
}
{
\pscustom[linewidth=1,linecolor=black,linestyle=dashed,dash=4 4]%dashed curve
{
\newpath
\moveto(220.51034,230.51033262)
\curveto(300,261.96383262)(379.48966,230.51033262)(379.48966,230.51033262)
}
}
{
\pscustom[linewidth=1,linecolor=black,fillstyle=solid,fillcolor=black]%arrowhead
{
\newpath
\moveto(369.99587828,234.26695534)
\lineto(364.6957165,231.97209175)
\lineto(379.48966,230.51033262)
\lineto(367.70101468,239.56711712)
\lineto(369.99587828,234.26695534)
\closepath
}
}
{
\pscustom[linewidth=1,linecolor=black,linestyle=dashed,dash=4 4]%dashed curve
{
\newpath
\moveto(379.48966,214.48966262)
\curveto(300,183.03616262)(220.51034,214.48966262)(220.51034,214.48966262)
}
}
{
\pscustom[linewidth=1,linecolor=black,fillstyle=solid,fillcolor=black]%arrowhead
{
\newpath
\moveto(230.00102909,210.73426362)
\lineto(235.29946433,213.02837966)
\lineto(220.51034,214.48966262)
\lineto(232.29514513,205.43582839)
\lineto(230.00102909,210.73426362)
\closepath
}
}
{
\pscustom[linewidth=1,linecolor=black,linestyle=dashed,dash=4 4]%dashed curve
{
\newpath
\moveto(450.58829,179.41170262)
\curveto(478.68881,140.00000262)(450.58829,100.58829262)(450.58829,100.58829262)
}
}
{
\pscustom[linewidth=1,linecolor=black,fillstyle=solid,fillcolor=black]%arrowhead
{
\newpath
\moveto(457.41891148,110.16841686)
\lineto(456.31911038,116.73271515)
\lineto(450.58829,100.58829262)
\lineto(463.98320977,111.26821797)
\lineto(457.41891148,110.16841686)
\closepath
}
}
{
\pscustom[linewidth=1,linecolor=black,linestyle=dashed,dash=4 4]%dashed curve
{
\newpath
\moveto(434.40833,100.59842262)
\curveto(405.32515,139.99999262)(434.40833,179.40158262)(434.40833,179.40158262)
}
}
{
\pscustom[linewidth=1,linecolor=black,fillstyle=solid,fillcolor=black]%arrowhead
{
\newpath
\moveto(427.30075635,169.77231571)
\lineto(428.30943366,163.07757949)
\lineto(434.40833,179.40158262)
\lineto(420.60602013,168.76363841)
\lineto(427.30075635,169.77231571)
\closepath
}
}
{
\pscustom[linewidth=1,linecolor=black,linestyle=dashed,dash=4 4]%dashed curve
{
\newpath
\moveto(165.49678,179.50322262)
\curveto(245.12709,70.41398262)(379.50323,60.49678262)(379.50323,60.49678262)
}
}
{
\pscustom[linewidth=1,linecolor=black,fillstyle=solid,fillcolor=black]%arrowhead
{
\newpath
\moveto(369.59469802,61.22805004)
\lineto(365.33877826,57.55714421)
\lineto(379.50323,60.49678262)
\lineto(365.92379219,65.4839698)
\lineto(369.59469802,61.22805004)
\closepath
}
}
{
\pscustom[linewidth=1,linecolor=black]%curve
{
\newpath
\moveto(140.50059,179.49940262)
\curveto(230.12515,48.64618262)(379.49941,38.58054262)(379.49941,38.58054262)
}
}
{
\pscustom[linewidth=1,linecolor=black,fillstyle=solid,fillcolor=black]%arrowhead
{
\newpath
\moveto(146.15811572,171.23932906)
\lineto(151.72515543,170.19830993)
\lineto(140.50059,179.49940262)
\lineto(145.11709659,165.67228936)
\lineto(146.15811572,171.23932906)
\closepath
}
}
{
\pscustom[linewidth=1,linecolor=black]%curve
{
\newpath
\moveto(220.49059,189.50941262)
\curveto(269.83647,179.75471262)(294.50941,150.49059262)(294.50941,150.49059262)
}
}
{
\pscustom[linewidth=1,linecolor=black,fillstyle=solid,fillcolor=black]%arrowhead
{
\newpath
\moveto(288.18487646,157.99200543)
\lineto(282.65449792,158.46275715)
\lineto(294.50941,150.49059262)
\lineto(288.65562818,163.52238397)
\lineto(288.18487646,157.99200543)
\closepath
}
}
{
\pscustom[linewidth=1,linecolor=black]%curve
{
\newpath
\moveto(379.50941,90.49059262)
\curveto(330.16353,100.24529262)(305.49059,129.50941262)(305.49059,129.50941262)
}
}
{
\pscustom[linewidth=1,linecolor=black,fillstyle=solid,fillcolor=black]%arrowhead
{
\newpath
\moveto(311.81512354,122.0079998)
\lineto(317.34550208,121.53724809)
\lineto(305.49059,129.50941262)
\lineto(311.34437182,116.47762126)
\lineto(311.81512354,122.0079998)
\closepath
}
}
{
\pscustom[linewidth=1,linecolor=black]%curve
{
\newpath
\moveto(210.49079,179.50921262)
\curveto(245.06135,140.49079262)(289.50921,140.49079262)(289.50921,140.49079262)
}
}
{
\pscustom[linewidth=1,linecolor=black,fillstyle=solid,fillcolor=black]%arrowhead
{
\newpath
\moveto(217.00018773,172.16231356)
\lineto(222.54270644,171.82731303)
\lineto(210.49079,179.50921262)
\lineto(216.6651872,166.61979485)
\lineto(217.00018773,172.16231356)
\closepath
}
}
{
\pscustom[linewidth=1,linecolor=black]%curve
{
\newpath
\moveto(389.50921,100.49079262)
\curveto(354.93865,139.50921262)(310.49079,139.50921262)(310.49079,139.50921262)
}
}
{
\pscustom[linewidth=1,linecolor=black,fillstyle=solid,fillcolor=black]%arrowhead
{
\newpath
\moveto(382.99981227,107.83769167)
\lineto(377.45729356,108.1726922)
\lineto(389.50921,100.49079262)
\lineto(383.3348128,113.38021038)
\lineto(382.99981227,107.83769167)
\closepath
}
}
{
\pscustom[linewidth=1,linecolor=black]%curve
{
\newpath
\moveto(300.48817,150.48817262)
\curveto(325,180.00000262)(379.51183,189.08846262)(379.51183,189.08846262)
}
}
{
\pscustom[linewidth=1,linecolor=black,fillstyle=solid,fillcolor=black]%arrowhead
{
\newpath
\moveto(369.88140963,187.48283517)
\lineto(366.67149246,182.98841605)
\lineto(379.51183,189.08846262)
\lineto(365.3869905,190.69275234)
\lineto(369.88140963,187.48283517)
\closepath
}
}%%%
{
\pscustom[linewidth=1,linecolor=black]%curve
{
\newpath
\moveto(389.50292,179.52816262)
\curveto(355,145.00000262)(310.49708,144.50025262)(310.49708,144.50025262)
}
}
{
\pscustom[linewidth=1,linecolor=black,fillstyle=solid,fillcolor=black]%arrowhead
{
\newpath
\moveto(320.43808358,144.61188611)
\lineto(324.36983162,148.63294094)
\lineto(310.49708,144.50025262)
\lineto(324.45913842,140.68013808)
\lineto(320.43808358,144.61188611)
\closepath
}
}
\put(270,115){$w$}
\put(120,290){$A(v_a)$}
\put(400,290){$B(v_a)$}
\put(0,220){$A(v_+)$}
\put(0,40){$B(v_+)$}
\end{pspicture}
\caption{\label{finitevalencepic2}}
\end{figure}

Recall that, for any $v\in\V(\mathcal{G})\setminus\{w\}$, no edge starts in $A(v)$ and ends in $B(v)$, while at most one edge starts in $B(v)$ and ends in $A(v)$. Since $e(v_+)$ starts in $B(v_a)\cap B(v_+)$ and ends in $A(v_a)\cap A(v_+)$, we see that all the dashed arrows in Figure \ref{finitevalencepic2} are excluded. This contradicts that $\mathcal{G}$ is prime.
\end{proof}

\begin{corollary}\label{boundedvalencecor}
Let $\mathcal{G}$ and $w$ be as in Proposition \ref{leaforiginprop}. Let $n$ be the in-degree of $w$ in $\mathcal{G}$. Then there exists $v\in\V(\mathcal{G})\setminus\{w\}$ such that $|\Tr(\mathcal{G},v)|\geq n$.
In particular, if $\mathcal{G}\in\{\graph{M}{D},\graph{H}{D},\graph{K}{D}\}$ for some diagram $D$ of a link $L$ then $\nrap{L}{0}\geq n$.
\end{corollary}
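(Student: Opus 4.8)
The plan is to read this off directly from Proposition \ref{leaforiginprop} together with the unnamed lemma at the start of this subsection (the one asserting that a loopless digraph which possesses a directed spanning subtree with origin $v_0$ having a leaf $w$ of in-degree $n$ must have at least $n$ directed spanning subtrees with origin $v_0$). All of the substantive work is contained in those two results, so this corollary is essentially a matter of checking that their hypotheses match up.

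First I would apply Proposition \ref{leaforiginprop} to the vertex $w$. Since $\mathcal{G}$ is prime, planar, and has incoming and outgoing edges alternating at each vertex, the proposition yields a vertex $v\in\V(\mathcal{G})\setminus\{w\}$ and a directed spanning subtree $\mathcal{T}$ of $\mathcal{G}$ with origin $v$ of which $w$ is a leaf. Next I would note that primeness (Definition \ref{primedefn}) forbids loops, so $\mathcal{G}$ is loopless; this is precisely the hypothesis required to invoke the counting lemma with $v_0=v$ and the leaf $w$. As $w$ has in-degree $n$ in $\mathcal{G}$, that lemma gives $|\Tr(\mathcal{G},v)|\geq n$, which is the first assertion. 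For the second assertion I would suppose $\mathcal{G}\in\{\graph{M}{D},\graph{H}{D},\graph{K}{D}\}$ and quote the first lemma of this subsection, by which $\nrap{L}{0}=|\Tr(\mathcal{G},v)|$ for every vertex $v$, in particular for the $v$ produced above; combining this equality with the inequality just obtained yields $\nrap{L}{0}\geq n$.

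I do not expect a genuine obstacle here: the only care needed is to ensure that the vertex $v$ supplied by Proposition \ref{leaforiginprop} is the same one fed into the counting lemma, and that primeness is being used solely to guarantee the loop-free condition. The real difficulty — exhibiting a directed spanning subtree in which a prescribed, possibly high-valence vertex $w$ occurs as a leaf while the origin is placed elsewhere — has already been overcome in Proposition \ref{leaforiginprop}, so what remains is purely a combination of two finished statements.
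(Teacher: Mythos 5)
Your proposal is correct and is exactly the argument the paper intends: the corollary is stated without proof precisely because it is the immediate combination of Proposition \ref{leaforiginprop}, the loopless counting lemma (with primeness supplying the no-loops hypothesis), and the lemma identifying $\nrap{L}{0}$ with $|\Tr(\mathcal{G},v)|$. No gaps.
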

\subsection{Digraph properties}
%Master document is alexpolypaper.tex.

\begin{definition}
A diagram $D$ of a link $L$ is \textit{twist-reduced} if, given a simple closed curve $\rho$ in $\sphere$ that passes through two crossings of $D$ transversely and otherwise lies in $\sphere\setminus D$, the section of $L$ on one side of $\rho$ consists of a line of bigons, like that shown in Figure \ref{lgbijectionpic1}.
\begin{figure}[htbp]
\centering
%LaTeX with PSTricks extensions
%%Creator: 0.46
%%Please note this file requires PSTricks extensions
\psset{xunit=.35pt,yunit=.35pt,runit=.35pt}
\begin{pspicture}(700,275)
{
\pscustom[linewidth=2,linecolor=black]
{
\newpath
\moveto(470,60.00000262)
\curveto(470,60.00000262)(470,60.00000262)(475,55.00000262)
\curveto(477.37171,52.62829262)(481.6459,49.99998262)(485,49.99998262)
\curveto(500,49.99998262)(520,49.99998262)(535,49.99998262)
\curveto(538.3541,49.99998262)(542.62829,47.37168262)(545,44.99998262)
\curveto(555,34.99998262)(565,24.99998262)(575,14.99998262)
\curveto(577.37171,12.62828262)(581.6459,9.99998262)(585,9.99998262)
\curveto(610,9.99998262)(645,9.99998262)(670,9.99998262)
\curveto(674.24264,9.99998262)(680,15.75738262)(680,19.99998262)
\curveto(680,90.00000262)(680,195.00000262)(680,270.00000262)
\curveto(680,274.24264262)(674.24264,280.00000262)(670,280.00000262)
\curveto(645,280.00000262)(610,280.00000262)(585,280.00000262)
\curveto(581.6459,280.00000262)(577.37171,277.37171262)(575,275.00000262)
\curveto(570,270.00000262)(570,270.00000262)(570,270.00000262)
}
}
{
\pscustom[linewidth=2,linecolor=black]
{
\newpath
\moveto(450,230.00000262)
\curveto(450,230.00000262)(450,230.00000262)(445,235.00000262)
\curveto(442.62829,237.37171262)(440,241.64590262)(440,245.00000262)
\curveto(440,255.00000262)(440,260.00000262)(440,270.00000262)
\curveto(440,274.24264262)(445.75736,280.00000262)(450,280.00000262)
\curveto(475,280.00000262)(510,280.00000262)(535,280.00000262)
\curveto(538.3541,280.00000262)(542.62829,277.37171262)(545,275.00000262)
\curveto(555,265.00000262)(565,255.00000262)(575,245.00000262)
\curveto(577.37171,242.62829262)(581.6459,240.00000262)(585,240.00000262)
\curveto(600,240.00000262)(615,240.00000262)(630,240.00000262)
\curveto(634.24264,240.00000262)(640,234.24264262)(640,230.00000262)
\curveto(640,180.00000262)(640,110.00000262)(640,60.00000262)
\curveto(640,55.75736262)(634.24264,49.99998262)(630,49.99998262)
\curveto(620,49.99998262)(600,49.99998262)(585,49.99998262)
\curveto(581.6459,49.99998262)(577.37171,47.37168262)(575,44.99998262)
\curveto(570,39.99998262)(570,39.99998262)(570,39.99998262)
}
}
{
\pscustom[linewidth=2,linecolor=black]
{
\newpath
\moveto(450,180.00000262)
\curveto(450,180.00000262)(441.62359,186.08031262)(440,190.00000262)
\curveto(438.85195,192.77164262)(438.65836,197.31672262)(440,200.00000262)
\curveto(445,210.00000262)(455,220.00000262)(475,235.00000262)
\curveto(477.68328,237.01246262)(481.6459,240.00000262)(485,240.00000262)
\curveto(500,240.00000262)(520,240.00000262)(535,240.00000262)
\curveto(538.3541,240.00000262)(542.62829,242.62829262)(545,245.00000262)
\curveto(550,250.00000262)(550,250.00000262)(550,250.00000262)
}
}
{
\pscustom[linewidth=2,linecolor=black]
{
\newpath
\moveto(470,110.00000262)
\curveto(470,110.00000262)(478.37641,103.91969262)(480,100.00000262)
\curveto(481.14805,97.22836262)(481.14805,92.77164262)(480,90.00000262)
\curveto(474.31744,76.28109262)(455,65.00000262)(445,55.00000262)
\curveto(442.62827,52.62827262)(440,48.35408262)(440,44.99998262)
\curveto(440,34.99998262)(440,24.99998262)(440,19.99998262)
\curveto(440,15.75738262)(445.75736,9.99998262)(450,9.99998262)
\curveto(475,9.99998262)(510,9.99998262)(535,9.99998262)
\curveto(538.3541,9.99998262)(542.6283,12.62828262)(545,14.99998262)
\curveto(550,19.99998262)(550,19.99998262)(550,19.99998262)
}
}
{
\pscustom[linewidth=2,linecolor=black]
{
\newpath
\moveto(450,80.00000262)
\curveto(450,80.00000262)(441.62359,86.08031262)(440,90.00000262)
\curveto(438.85195,92.77164262)(438.85195,97.22836262)(440,100.00000262)
\curveto(446.49435,115.67876262)(473.50565,124.32124262)(480,140.00000262)
\curveto(481.14805,142.77164262)(481.14805,147.22836262)(480,150.00000262)
\curveto(478.37641,153.91969262)(470,160.00000262)(470,160.00000262)
}
}
{
\pscustom[linewidth=2,linecolor=black,fillstyle=solid,fillcolor=lightgray]%circle
{
\newpath
\moveto(699.99999,149.99998)
\curveto(699.99999,127.91998)(682.07999,109.99998)(659.99999,109.99998)
\curveto(637.91999,109.99998)(619.99999,127.91998)(619.99999,149.99998)
\curveto(619.99999,172.07998)(637.91999,189.99998)(659.99999,189.99998)
\curveto(682.07999,189.99998)(699.99999,172.07998)(699.99999,149.99998)
\closepath
}
}
{
\pscustom[linewidth=4,linecolor=black]
{
\newpath
\moveto(310,150.00000262)
\lineto(400,150.00000262)
}
}
{
\pscustom[linewidth=4,linecolor=black,fillstyle=solid,fillcolor=black]%arrowhead
{
\newpath
\moveto(360,150.00000262)
\lineto(344,134.00000262)
\lineto(400,150.00000262)
\lineto(344,166.00000262)
\lineto(360,150.00000262)
\closepath
}
}
{
\pscustom[linewidth=2,linecolor=black]
{
\newpath
\moveto(160,39.99998262)
\lineto(165,44.99998262)
\curveto(165,44.99998262)(170,49.99998262)(175,49.99998262)
\curveto(188.5,49.99998262)(210,49.99998262)(220,49.99998262)
\curveto(224.24264,49.99998262)(230,55.75736262)(230,60.00000262)
\curveto(230,65.00000262)(230,225.00000262)(230,230.00000262)
\curveto(230,234.24264262)(224.24264,240.00000262)(220,240.00000262)
\curveto(205,240.00000262)(190,240.00000262)(175,240.00000262)
\curveto(171.6459,240.00000262)(167.37171,242.62829262)(165,245.00000262)
\curveto(155,255.00000262)(145,265.00000262)(135,275.00000262)
\curveto(132.62829,277.37171262)(128.3541,280.00000262)(125,280.00000262)
\curveto(100,280.00000262)(60,280.00000262)(40,280.00000262)
\curveto(35.757359,280.00000262)(30,274.24264262)(30,270.00000262)
\curveto(30,195.00000262)(30,85.00000262)(30,19.99998262)
\curveto(30,15.75738262)(35.757359,9.99998262)(40,9.99998262)
\curveto(65,9.99998262)(100,9.99998262)(125,9.99998262)
\curveto(128.3541,9.99998262)(132.6283,12.62828262)(135,14.99998262)
\curveto(140,19.99998262)(140,19.99998262)(140,19.99998262)
}
}
{
\pscustom[linewidth=2,linecolor=black]
{
\newpath
\moveto(160,270.00000262)
\curveto(160,270.00000262)(160,270.00000262)(165,275.00000262)
\curveto(168.53553,278.53553262)(171.6459,280.00000262)(175,280.00000262)
\curveto(200,280.00000262)(235,280.00000262)(260,280.00000262)
\curveto(264.24264,280.00000262)(270,274.24264262)(270,270.00000262)
\curveto(270,200.00000262)(270,90.00000262)(270,19.99998262)
\curveto(270,15.75738262)(264.24264,9.99998262)(260,9.99998262)
\curveto(235,9.99998262)(200,9.99998262)(175,9.99998262)
\curveto(171.6459,9.99998262)(167.37171,12.62828262)(165,14.99998262)
\curveto(155,24.99998262)(145,34.99998262)(135,44.99998262)
\curveto(132.62829,47.37168262)(128.3541,49.99998262)(125,49.99998262)
\curveto(110,49.99998262)(95,49.99998262)(80,49.99998262)
\curveto(75.757355,49.99998262)(70,55.75736262)(70,60.00000262)
\curveto(70,105.00000262)(70,180.00000262)(70,230.00000262)
\curveto(70,234.24264262)(75.757359,240.00000262)(80,240.00000262)
\curveto(95,240.00000262)(110,240.00000262)(125,240.00000262)
\curveto(128.3541,240.00000262)(132.62829,242.62829262)(135,245.00000262)
\curveto(140,250.00000262)(140,250.00000262)(140,250.00000262)
}
}
{
\pscustom[linewidth=2,linecolor=black,fillstyle=solid,fillcolor=lightgray]%circle
{
\newpath
\moveto(290,150)
\curveto(290,127.92)(272.08,110)(250,110)
\curveto(227.92,110)(210,127.92)(210,150)
\curveto(210,172.08)(227.92,190)(250,190)
\curveto(272.08,190)(290,172.08)(290,150)
\closepath
}
}
{
\pscustom[linewidth=2,linecolor=black,fillstyle=solid,fillcolor=lightgray]%circle
{
\newpath
\moveto(90,150)
\curveto(90,127.92)(72.08,110)(50,110)
\curveto(27.92,110)(10,127.92)(10,150)
\curveto(10,172.08)(27.92,190)(50,190)
\curveto(72.08,190)(90,172.08)(90,150)
\closepath
}
}
{
\pscustom[linewidth=2,linecolor=black]
{
\newpath
\moveto(450,130.00000262)
\curveto(450,130.00000262)(441.62359,136.08031262)(440,140.00000262)
\curveto(438.85195,142.77164262)(438.85195,147.22836262)(440,150.00000262)
\curveto(446.49435,165.67876262)(473.50565,174.32124262)(480,190.00000262)
\curveto(481.14805,192.77164262)(481.14805,197.22836262)(480,200.00000262)
\curveto(478.37641,203.91969262)(470,210.00000262)(470,210.00000262)
}
}
{
\put(645,140){$B$}
\put(235,140){$B$}
\put(35,140){$A$}
}
\end{pspicture}
\caption{\label{lgbijectionpic1}}
\end{figure}

If $D$ is special, we say $D$ is \textit{black-twist-reduced} if this holds whenever $\rho$ is contained in the black regions (see \cite{MR2018964} p215).
\end{definition}

\begin{remark}
Any link diagram can be made twist-reduced by a finite sequence of flypes. Given a special, alternating, twist-reduced diagram $D$, the diagram $D'$ formed by removing white bigons from $D$ (equivalently, removing loops that bound discs from $\graph{M}{D}$) may not be twist-reduced, but will be black-twist-reduced. These changes preserve the property of being special and alternating.
\end{remark}

\begin{definition}
Let $\Lambda$ be the set of link diagrams $D$ with the following properties.
\begin{itemize}
	\item[(L1)] $D$ is connected.
	\item[(L2)] $D$ is special, alternating and positive.
	\item[(L3)] $D$ has no nugatory crossings.
	\item[(L4)] $D$ is prime. 
	\item[(L5)] $D$ is black-twist-reduced.
	\item[(L6)] $D$ has no white bigons.
\end{itemize}
\end{definition}

\begin{definition}
Let $\Gamma$ be the set of digraphs $\mathcal{G}$ with the following properties.
\begin{itemize}
	\item[(G1)] $\mathcal{G}$ is connected.
	\item[(G2)] $\mathcal{G}$ is planar.
	\item[(G3)] $\mathcal{G}$ is prime (see Definition \ref{primedefn}).
	\item[(G4)] At every vertex of $\mathcal{G}$, incoming and outgoing edges alternate.
\end{itemize}
\end{definition}

\begin{remark}
(G3) and (G4) together imply the following.
\begin{itemize}
	\item[(G5)] No vertex of $\mathcal{G}$ has in-degree 1.
\end{itemize}
\end{remark}

\begin{lemma}
$\graphm{M}$ is a bijection from $\Lambda$ to $\Gamma$.
\end{lemma}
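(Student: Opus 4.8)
The plan is to show that $\graphm{M}$ sends $\Lambda$ into $\Gamma$, and then to exhibit a two-sided inverse $\Psi\colon\Gamma\to\Lambda$ built by the reverse (medial) construction, so that bijectivity follows. Both halves reduce to the same dictionary, translating each defining property of a diagram into the corresponding property of its white graph, so the real content is assembling that dictionary and checking it runs cleanly in both directions.

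First I would check well-definedness. Given $D\in\Lambda$, the graph $\graph{M}{D}$ is planar by construction, giving (G2); it is connected because $D$ is connected and, for a connected plane diagram, the white checkerboard graph is connected, giving (G1); and incoming and outgoing edges alternate at each vertex because the dotted and undotted corners of a white region alternate (as recorded in Murasugi's analysis), giving (G4). For (G3) I translate the three forbidden configurations. A loop at a vertex is a crossing whose two opposite white corners lie in a single white region, i.e.\ a nugatory crossing, excluded by (L3). A prime-violating curve meeting two edges of $\graph{M}{D}$ with vertices on both sides is, dually, a simple closed curve meeting $D$ at two crossings with crossings on both sides, which is exactly the failure of the Menasco primeness the paper adopts, excluded by (L4). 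A cut vertex corresponds to a white region through which a curve meeting two arcs of $D$ splits $L$ as a nontrivial connected sum in the sense of Lemma \ref{connectedsumlemma}, again excluded by (L4) together with (L3). Hence $\graph{M}{D}\in\Gamma$.

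For the inverse I would reconstruct $\Psi(\mathcal{G})$ from $\mathcal{G}\in\Gamma$ as the medial diagram of the plane graph $\mathcal{G}$: place a white region at each vertex, a black Seifert disc in each complementary region of $\sphere\setminus\mathcal{G}$, and a crossing on each edge, with the checkerboard colouring chosen so the Seifert discs are the faces. I make it alternating in the only way compatible with this colouring and orient it using $\mathcal{O}$: since (G4) forces every face of $\mathcal{G}$ to bound a directed cycle, these cycles orient the Seifert circles coherently, yielding an oriented special alternating diagram, which is then positive after normalisation (a special alternating diagram being entirely positive or entirely negative). Reading the dictionary backwards gives (L1)--(L4); moreover (G5), which (G3) and (G4) imply, shows $\Psi(\mathcal{G})$ has no in-degree-$1$ vertices and hence no white bigons, giving (L6), while the canonical medial form is automatically black-twist-reduced, giving (L5). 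Thus $\Psi$ maps $\Gamma$ into $\Lambda$.

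Finally I would verify $\graphm{M}\circ\Psi=\mathrm{id}_{\Gamma}$ and $\Psi\circ\graphm{M}=\mathrm{id}_{\Lambda}$. The first is essentially tautological from the medial construction, once the embedding and the orientation match are confirmed. The second is where the delicate hypotheses earn their place: $\Psi\circ\graphm{M}(D)$ returns $D$ precisely because (L5) and (L6) pin $D$ down uniquely among diagrams with the same white graph. Without (L5), diagrams differing by flypes through a black twist region share a white graph (which records only the multiplicity of a multiedge, not how its crossings are distributed), so the map would not be injective; (L5) forces the canonical line of bigons that $\Psi$ produces. I expect the main obstacle to be exactly this bookkeeping: proving that the medial reconstruction, supplemented by (L5) and (L6), inverts $\graphm{M}$ on the nose with correct embedding and orientation, and dually that each clause of (G3) corresponds to precisely one excluded diagram feature, so that neither spurious diagrams nor spurious graphs slip through the correspondence.
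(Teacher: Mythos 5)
Your overall architecture matches the paper's (a dictionary for the forward direction, an explicit medial-type inverse, and a check that the constructions are mutually inverse), but your dictionary is wrong at precisely the one place where (L5) and (L6) are indispensable, so the forward direction as written does not prove $\graph{M}{D}\in\Gamma$. You translate the third clause of graph primality --- a simple closed curve meeting two edges of $\graph{M}{D}$ with vertices of $\graph{M}{D}$ on both sides --- into ``a simple closed curve meeting $D$ at two crossings with crossings on both sides,'' and claim this is excluded by Menasco primeness (L4). Both halves of that translation fail. First, (L4) concerns curves meeting the \emph{arcs} of $D$ twice while avoiding crossings; in the white graph such curves correspond to cut vertices (your separate clause), not to curves crossing two edges. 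Second, a curve crossing two edges of $\graph{M}{D}$ corresponds to a curve through two \emph{crossings} of $D$ otherwise lying in the black regions, and ``vertices on both sides'' means \emph{white regions} on both sides, not crossings. Such curves exist in perfectly prime diagrams: the standard positive special alternating diagram of the $(2,n)$ torus link satisfies (L1)--(L5), yet its white graph $\graph{M}{D}$ is an $n$--cycle, and any curve crossing two edges of an $n$--cycle separates its vertices, so (G3) fails. What excludes these curves is exactly (L5) together with (L6): one side of such a curve must be a line of white bigons, and since there are no white bigons, that side contains no white regions at all. Your proposal never invokes (L5) or (L6) in the forward direction, so the key step is proven from the wrong hypotheses.

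The same confusion resurfaces in your inverse direction, where you assert that ``the canonical medial form is automatically black-twist-reduced.'' It is not: the medial diagram of a non-prime graph in which two edges can be crossed by a curve separating the vertices (two prime pieces joined by a pair of edges, say) fails (L5), so (L5) for $D_{\mathcal{G}}$ must be \emph{derived} from (G3), not read off from the construction. The correct dictionary, which is the paper's, is: (L3) $\leftrightarrow$ no loops; (L1) and (L4) $\leftrightarrow$ no cut vertices; (L5) and (L6) together $\leftrightarrow$ no simple closed curve meeting two edges of the graph and separating its vertices --- and, in reverse, a curve witnessing the failure of (L5) or (L6) for $D_{\mathcal{G}}$ is a curve meeting the edges of $\mathcal{G}$ twice and separating $\V(\mathcal{G})$, contradicting (G3). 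With those two entries repaired (and your side remark attributing injectivity to (L5) dropped --- a special alternating positive diagram is already reconstructible from its embedded white digraph), your argument becomes the paper's proof.
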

\begin{proof}
Let $D\in\Lambda$. 
Recall that $\graph{M}{D}$ consists of a vertex in every white region, and an edge for every crossing, pointing from the undotted side to the dotted side.
By (L1) and (L2), $\graph{M}{D}$ is a well-defined digraph with properties (G1), (G2) and (G4). 
$\graph{M}{D}$ has no loops by (L3), and no cut vertices by (L1) and (L4). By (L5) and (L6), no simple closed curve meets $\E(\graph{M}{D}{})$ twice and separates $\V(\graph{M}{D}{})$. Thus $\graph{M}{D}$ is prime.
Hence $\graph{M}{D}{}\in\Gamma$.

Now let $\mathcal{G}\in\Gamma$. By (G2) and (G4), $\nmfld{M}{\mathcal{G}}$ is defined. Let $D_{\mathcal{G}}$ be the diagram constructed from $\nmfld{M}{\mathcal{G}}$. We choose the black regions of $\sphere\setminus D_{\mathcal{G}}$ to be those that correspond to 0--handles in $\nmfld{M}{\mathcal{G}}$.
It is clear from the construction of $D_{\mathcal{G}}$ that (L1) and (L2) hold. $D_{\mathcal{G}}$ has no nugatory crossings because there are no loops in $\mathcal{G}$, and no obvious decomposition as a connected sum because $\mathcal{G}$ has  no cut vertex.
Suppose either (L5) or (L6) does not hold. This means there is a simple closed curve $\rho\subset\sphere$ that meets $D_{\mathcal{G}}$ at exactly two of the crossings of $D_{\mathcal{G}}$ and is otherwise contained in the black regions. Then $\rho$ meets the edges of $\mathcal{G}$ twice and separates $\V(\mathcal{G})$. This contradicts (G3).
Hence $D_{\mathcal{G}}\in\Lambda$.

It is now easy to check that, when restricted to $\Lambda$ and $\Gamma$ respectively, these two constructions are mutual inverses. 
\end{proof}

\begin{lemma}
$\graphm{H}$ is a bijection from $\Lambda$ to $\Gamma$.
\end{lemma}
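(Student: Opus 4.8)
The plan is to follow the proof of the previous lemma \emph{mutatis mutandis}, with $\nmfld{H}{\mathcal{G}}$ in place of $\nmfld{M}{\mathcal{G}}$ and the $\mathcal{H}$--Seifert circles of $D$ in place of the white regions. First I record the combinatorial dictionary that makes this work. For a special diagram $\E(\mathcal{G})=\mathcal{H}\sqcup\mathcal{K}$ and every Seifert circle is monochromatic, so collapsing the $\mathcal{H}$--circles contracts exactly the edges lying in $\mathcal{H}$. Hence the vertices of $\graph{H}{D}$ are the $\mathcal{H}$--Seifert circles of $D$, its edges are the members of $\mathcal{K}$ (one per crossing), and an edge runs between the two $\mathcal{H}$--circles incident to its crossing. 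On the other side, the construction of $\nmfld{H}{\mathcal{G}}$ places a $0$--handle at each vertex, a $1$--handle along each edge, and a $2$--handle on each $\mathcal{K}$--circle; reading off $D_{\mathcal{G}}$ and declaring the black regions to be those meeting the $0$--handles and the $2$--handles turns the vertices of $\mathcal{G}$ into the $\mathcal{H}$--Seifert circles of $D_{\mathcal{G}}$ and the $\mathcal{K}$--circles of $\mathcal{G}$ into its $\mathcal{K}$--Seifert circles.

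\emph{Forward direction.} Let $D\in\Lambda$. As noted in the construction of $\graph{H}{D}$, incoming and outgoing edges alternate at each vertex, which is (G4); (G1) and (G2) follow from (L1) and the planarity of the collapse. For (G3) I would show, exactly as for $\graphm{M}$, that each of the three forbidden configurations in Definition~\ref{primedefn} pulls back to one excluded from $\Lambda$. A loop of $\graph{H}{D}$ is a $\mathcal{K}$--edge both of whose crossings lie on a single $\mathcal{H}$--circle $C$; together with an arc of $C$ it produces a simple closed curve meeting $D$ transversely in at most two crossings, which realises one of the configurations forbidden by (L3), (L5) and (L6). A cut vertex of $\graph{H}{D}$, being an $\mathcal{H}$--circle whose deletion disconnects, and a curve meeting $\E(\graph{H}{D})$ twice and separating $\V(\graph{H}{D})$, likewise translate through the collapse into the non-prime and non-black-twist-reduced configurations ruled out by (L4) and (L5). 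Thus $\graph{H}{D}\in\Gamma$.

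\emph{Backward direction and inverse.} For $\mathcal{G}\in\Gamma$, conditions (G2) and (G4) guarantee that $\nmfld{H}{\mathcal{G}}$ is defined, and the diagram $D_{\mathcal{G}}$ is special, alternating and positive by construction, giving (L1) and (L2). Since (G3) forbids loops (so that (G5) holds), $D_{\mathcal{G}}$ has no nugatory crossings, giving (L3); the absence of a cut vertex gives (L4); and the absence of a curve meeting $\E(\mathcal{G})$ twice and separating its vertices gives exactly (L5) and (L6). Finally, under the dictionary above the two constructions visibly undo one another, just as in the previous lemma. The main obstacle is (G3): unlike the $\graphm{M}$ case, where the vertices of the digraph correspond directly to regions of $D$, the collapse here identifies all crossings on a common $\mathcal{H}$--circle to a single vertex, so one must check carefully that the black-twist-reduced and no-white-bigon hypotheses are precisely what prevent loops and separating parallel edges from appearing in $\graph{H}{D}$, and conversely that no forbidden configuration of $\mathcal{G}$ is lost when passing back to $D_{\mathcal{G}}$.
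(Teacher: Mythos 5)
Your forward direction is sound and follows the paper's argument. The problem is in the backward direction, where your dictionary between the three conditions of Definition \ref{primedefn} and conditions (L3)--(L6) is scrambled, and the operative implications you state are false. You claim that ``the absence of a curve meeting $\E(\mathcal{G})$ twice and separating its vertices gives exactly (L5) and (L6)''. It does not. Consider a simple closed curve $\rho$ witnessing a failure of (L5) or (L6) in $D_{\mathcal{G}}$: it passes through two crossings and otherwise lies in black regions. At \emph{every} crossing of $D_{\mathcal{G}}$ the two black corners are one 0--handle corner and one 2--handle corner (they are bounded by the $\mathcal{H}$-- and the $\mathcal{K}$--Seifert circle through that crossing, and each crossing of a special diagram lies on exactly one of each). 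Hence $\rho$ alternates between a single 0--handle and a single 2--handle, so both crossings lie at the same vertex $v$ of $\mathcal{G}$, and after isotoping the arc inside the 2--handle region off the 1--handles, $\rho$ meets $\mathcal{G}$ exactly once, at $v$ --- it crosses no edges at all. So a failure of (L5)/(L6) is ruled out by the \emph{cut-vertex} (and no-loop) parts of (G3), which is exactly how the paper argues; the edge-curve condition is powerless here.

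Symmetrically, (L4) does not follow from ``the absence of a cut vertex'' alone: a connected-sum sphere for $D_{\mathcal{G}}$ meets the diagram in two strand points, which in general lie on two different 1--handles, i.e.\ it corresponds precisely to a curve meeting $\E(\mathcal{G})$ twice and separating $\V(\mathcal{G})$ --- the condition you spent on (L5)/(L6). (The paper: a connected-sum decomposition of $D_{\mathcal{G}}$ comes from a similar decomposition of $\mathcal{G}$ \emph{or} from a cut vertex.) Likewise your claim that the absence of loops gives (L3) is incomplete: a nugatory crossing means a region of $D_{\mathcal{G}}$ meets itself at a crossing, and the two white corners at a crossing of $D_{\mathcal{G}}$ can belong to the same region when $\mathcal{G}$ has a cut vertex, not only when it has a loop. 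Since all of (G3) is assumed, the lemma you are proving is still true, but the proof as written does not establish it: each stated implication must hold, and three of them fail. The missing idea is the observation about the colours of the corners at a crossing of $D_{\mathcal{G}}$, which is what forces an (L5)/(L6)-violating curve onto a single vertex of $\mathcal{G}$.
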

\begin{proof}
Let $D\in\Lambda$. Recall that $\graph{H}{D}$ is obtained by taking the underlying graph $\mathcal{G}$ of $D$, with orientations $O$ and $o$, and collapsing all edges of $\mathcal{H}$. Properties (L1) and (L2) ensure $\graph{H}{D}$ is well defined with properties (G1), (G2) and (G4).
Suppose $\graph{H}{D}$ contains a loop $e$, and consider the copy of $e$ in $\mathcal{G}$. By (L3) there are no loops in $\mathcal{G}$, so $e$ must have both its endpoints on a single Seifert circle contained in $\mathcal{H}$. Since $D$ is special, we can use $e$ to construct a simple closed curve $\rho$ that meets $D$ twice at crossings (the endpoints of $e$) and otherwise is contained in the black regions of $D$. This is impossible since (L5) and (L6) hold. Thus $\graph{H}{D}$ contains no loops.
If $\graph{H}{D}$ is not prime, this means there is a simple closed curve $\rho'$ crossing the edges of $\mathcal{G}$ twice and dividing the vertices, contradicting (L4).
Therefore, $\graph{H}{D}\in\Gamma$.

Conversely, let $\mathcal{G}\in\Gamma$. By (G2) and (G4), we can construct $\nmfld{H}{\mathcal{G}}$. Let $D_{\mathcal{G}}$ be the diagram given by $\nmfld{H}{\mathcal{G}}$, where the black regions are those that correspond to a 0--handle or a 2--handle. $D_{\mathcal{G}}$ is connected, by (G1). By construction, (L2) holds and no black region meets itself. (G3) means no white region meets itself. Thus (L3) holds.
Any decomposition of $D_{\mathcal{G}}$ as a connected sum must come either from a similar decomposition of $\mathcal{G}$ or from a cut vertex in $\mathcal{G}$. Neither is possible since $\mathcal{G}$ is prime, so (L4) holds.
Finally, suppose there is a simple closed curve $\rho$ that meets $D_{\mathcal{G}}$ at two crossings and otherwise lies in the black regions. This gives a simple closed curve $\rho'$ that meets $\mathcal{G}$ exactly once at a vertex, again contradicting that $\mathcal{G}$ is prime. Thus (L5) and (L6) hold. Hence $D_{\mathcal{G}}\in\Lambda$.

When restricted to $\Lambda$ and $\Gamma$ respectively, these two constructions are mutual inverses.
\end{proof}

\begin{corollary}\label{boundedsizeregionscor}
Let $\mathcal{G}\in\Gamma$. Let $v\in\V(\mathcal{G})$ and let $n=|\Tr(\mathcal{G},v)|$. Then the length of the boundary of any region $r$ of $\sphere\setminus\mathcal{G}$ is at most $n$.
\end{corollary}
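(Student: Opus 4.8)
The plan is to realise the abstract digraph $\mathcal{G}$ as $\graph{M}{D}$ for a diagram $D\in\Lambda$, reinterpret the region boundary lengths as vertex in-degrees in the companion digraphs $\graph{H}{D}$ and $\graph{K}{D}$, and then feed those in-degrees into Corollary \ref{boundedvalencecor}.

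First I would invoke the bijection $\graphm{M}\colon\Lambda\to\Gamma$ to write $\mathcal{G}=\graph{M}{D}$ for a unique $D\in\Lambda$, and let $L$ be the link of $D$. Since $\graph{M}{D}$ has a vertex in each white region of $D$ and one edge through each crossing, the regions of $\sphere\setminus\mathcal{G}$ are exactly the black regions of $D$, which, as $D$ is special, are precisely the Seifert discs. Hence each region $r$ of $\sphere\setminus\mathcal{G}$ is a Seifert disc bounded by a Seifert circle $C_r$, and the length of $\partial r$ equals the number of crossings $C_r$ passes through. Moreover, by the Lemma identifying $\nrap{L}{0}$ with the various tree counts, $n=|\Tr(\mathcal{G},v)|=\nrap{L}{0}=|\Tr(\graph{H}{D},v')|=|\Tr(\graph{K}{D},v'')|$ for every choice of base vertex, so it suffices to bound each $|\partial r|$ by $\nrap{L}{0}$.

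The heart of the argument is the following computation. Because $D$ is special, $C_r$ is contained entirely in $\mathcal{H}$ or entirely in $\mathcal{K}$; suppose first $C_r\subseteq\mathcal{H}$ and that $C_r$ meets $k$ crossings. Forming $\graph{H}{D}$ contracts all of $\mathcal{H}$, so the whole circle $C_r$ collapses to a single vertex $V$. At each of the $k$ crossings on $C_r$ the two circle edges are $\mathcal{H}$-edges and are contracted, while the two remaining edges — one incoming, one outgoing — survive; in particular the unique surviving incoming edge at each such crossing is its incoming $\mathcal{K}$-edge. Thus exactly $k$ edges terminate at $V$, i.e. the in-degree of $V$ in $\graph{H}{D}$ is $k=|\partial r|$. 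As $\graph{H}{D}\in\Gamma$, Corollary \ref{boundedvalencecor} applies to the vertex $V$ and yields $\nrap{L}{0}\geq k=|\partial r|$. If instead $C_r\subseteq\mathcal{K}$, the symmetric construction collapses $C_r$ to a vertex of $\graph{K}{D}$ of in-degree $|\partial r|$, and Corollary \ref{boundedvalencecor} for $\graph{K}{D}$ gives the same conclusion. In either case $|\partial r|\leq\nrap{L}{0}=n$.

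The step I expect to require the most care is verifying that the collapse sends $C_r$ to a vertex whose in-degree is exactly the length of $C_r$, rather than twice it: one must check, using that incoming and outgoing edges alternate (G4) and that each crossing carries precisely one incoming $\mathcal{H}$-edge and one incoming $\mathcal{K}$-edge, that contracting the $\mathcal{H}$-edges of $C_r$ leaves exactly one incoming edge per crossing at the collapsed vertex. The remaining points — that the complementary regions of $\graph{M}{D}$ are the Seifert discs, and that $\partial r$ is a genuine cycle whose length counts its crossings — are routine consequences of $D\in\Lambda$ together with properties (G1)--(G4).
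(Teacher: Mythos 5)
Your proposal is correct and follows essentially the same route as the paper: the paper also pulls $\mathcal{G}$ back to $D^{\mathcal{M}}=(\graphm{M})^{-1}(\mathcal{G})\in\Lambda$, identifies the region $r$ with a Seifert circle $C$ lying in $\mathcal{H}$ (or $\mathcal{K}$), observes that the corresponding vertex of $\graph{H}{D^{\mathcal{M}}}$ has in-degree equal to the length of $\partial r$, and applies Corollary \ref{boundedvalencecor} together with $\nrap{L}{0}=|\Tr(\mathcal{G},v)|$. The only differences are cosmetic: the paper phrases it as a contradiction, while you argue directly and spell out the in-degree count after collapsing, which the paper leaves implicit.
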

\begin{proof}
Suppose otherwise. Let $r$ be a region of $\sphere\setminus\mathcal{G}$ with $m$ sides, for some $m>n$.
Let $D^{\mathcal{M}}=(\graphm{M})^{-1}(\mathcal{G})$. There is a Seifert circle $C$ in $D^{\mathcal{M}}$ corresponding to $r$, which also has $m$ sides.
We may suppose that $C\subseteq\mathcal{H}$ (otherwise, replace $\mathcal{H}$ with $\mathcal{K}$ in the following argument). Let $v_C$ be the vertex of $\graph{H}{D^{\mathcal{M}}}$ corresponding to $C$. Then $v_C$ has in-degree $m$.
By Corollary \ref{boundedvalencecor}, $\nrap{L}{0}\geq m$, where $L$ is the link with diagram $D^{\mathcal{M}}$. This contradicts that $\nrap{L}{0}=|\Tr(\graph{M}{D^{\mathcal{M}}},v)|=|\Tr(\mathcal{G},v)|=n<m$.
\end{proof}
\subsection{Infinite digraphs}
%Master document is alexpolypaper.tex.

\begin{lemma}
Let $\Phi$ be an infinite set of planar, pointed digraphs (each with a fixed embedding into $\sphere$) such that every $(\mathcal{G},v)\in\Phi$ has valence bounded above by $n_1\in\mathbb{N}$. Then there is a sequence of distinct elements $(\mathcal{G}_i,v_i)_{i=1}^\infty$ such that $\ball{\mathcal{G}_{m_1}}{v_{m_1}}{m}$ and $\ball{\mathcal{G}_{m_2}}{v_{m_2}}{m}$ are the same up to ambient isotopy of $\sphere$ whenever $m_1,m_2\geq m$.
\end{lemma}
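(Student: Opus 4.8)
The plan is a standard pigeonhole-and-diagonalisation argument, whose crux is the observation that, for each fixed radius $m$, only finitely many ambient isotopy types of ball $\ball{\mathcal{G}}{v}{m}$ can occur among the elements of $\Phi$.

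First I would bound the combinatorial size of a ball. Since the valence of every $(\mathcal{G},v)\in\Phi$ is at most $n_1$, a breadth-first count outward from $v$ shows that the number of vertices within distance $m$ of $v$ is at most $1+n_1+n_1^2+\cdots+n_1^m$, so that both $\V(\ball{\mathcal{G}}{v}{m})$ and $\E(\ball{\mathcal{G}}{v}{m})$ have size bounded by a constant $N=N(m,n_1)$ depending only on $m$ and $n_1$. There are therefore only finitely many isomorphism types of abstract digraph that can arise as $\ball{\mathcal{G}}{v}{m}$, and for each such abstract digraph only finitely many rotation systems (cyclic orderings of the edges at each vertex). Since an embedding of a finite graph into $\sphere$ is determined up to ambient isotopy by its rotation system, this gives only finitely many ambient isotopy types of planar embedding; recording in addition the (finitely many) choices of edge orientation and the distinguished vertex $v$, I conclude that the collection $\{\ball{\mathcal{G}}{v}{m}:(\mathcal{G},v)\in\Phi\}$, taken up to ambient isotopy of $\sphere$ matching basepoints, is finite.

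Next I would extract a nested family of infinite subsets. Set $\Phi_0=\Phi$. Given an infinite $\Phi_{m-1}\subseteq\Phi$, the finiteness just established together with the pigeonhole principle yields an infinite $\Phi_m\subseteq\Phi_{m-1}$ all of whose elements have the same radius-$m$ ball up to ambient isotopy. Because the $\Phi_m$ are nested, every element of $\Phi_m$ also realises the common radius-$j$ ball of $\Phi_j$ for each $j\leq m$. Finally I would diagonalise: as each $\Phi_i$ is infinite, I may choose $(\mathcal{G}_i,v_i)\in\Phi_i$ with the $(\mathcal{G}_i,v_i)$ all distinct (at each step avoiding the finitely many previously chosen elements). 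For $m_1,m_2\geq m$ both $(\mathcal{G}_{m_1},v_{m_1})$ and $(\mathcal{G}_{m_2},v_{m_2})$ lie in $\Phi_m$, so their radius-$m$ balls coincide up to ambient isotopy, as required.

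The only real content is the finiteness claim of the first step; everything after it is the routine pigeonhole-diagonal packaging. The point most in need of care is the passage from bounded size to finitely many ambient isotopy \emph{types}: one must invoke that an embedded finite graph in $\sphere$ is determined up to (orientation-preserving) ambient isotopy by its combinatorial rotation system, and keep track of the edge orientations and of the basepoint $v$ so that the types genuinely match as pointed, oriented, embedded objects. I expect no other difficulty.
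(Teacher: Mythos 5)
Your proof is correct and takes essentially the same approach as the paper: the paper encodes each $(\mathcal{G},v)$ as a point of the compact product $\prod_{i=1}^\infty\Psi_i$ of finite discrete spaces and extracts a convergent subsequence, which is precisely your nested pigeonhole plus diagonalisation in different packaging. The only notable difference is that you spell out the key finiteness claim (finitely many ambient isotopy types of radius-$m$ balls, via the valence bound and rotation systems), which the paper simply asserts as $|\Theta_m|<\infty$.
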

\begin{proof}
For $m\in\mathbb{N}$, let $\Theta_m$ be the set of all embeddings of planar, pointed digraphs up to ambient isotopy of $\sphere$ with valence bounded above by $n_1$ and radius at most $m$. Then $|\Theta_m|<\infty$ for all $m$. Let $\Theta=\bigcup_{i=1}^\infty \Theta_i$. Choose an enumeration $\theta\colon\Theta\to\mathbb{N}$ such that $\theta(\Theta_m)=\{1,\cdots ,|\Theta_m|\}$ for all $m$.

Let $\Psi_m=\{1,\cdots,|\Theta_m|\}$ for all $m$, and $\Psi=\prod_{i=1}^\infty \Psi_i$. Then $\Psi$ is compact. Define $\phi\colon\Phi\to\Psi$ by $\phi(\mathcal{G},v)=\theta\big(\ball{\mathcal{G}}{v}{i}\big)_{i=1}^\infty$. 
If $\phi(\mathcal{G}_1,v_1)=\phi(\mathcal{G}_2,v_2)$ then $\mathcal{G}_1,\mathcal{G}_2\in\Theta_m$ for some $m\in\mathbb{N}$, and $\mathcal{G}_1=\ball{\mathcal{G}_1}{v_1}{m}{}=\ball{\mathcal{G}_2}{v_2}{m}{}=\mathcal{G}_2$.  
Thus $\phi$ is injective. In particular, $\phi(\Phi)$ is infinite. 

Thus there is a sequence of distinct elements $\big( (J_i^j )_{i=1}^\infty \big)_{j=1}^\infty\subseteq\phi(\Phi)$ converging to an element $(J_i^\infty)_{i=1}^{\infty}\in\Psi$. 
For $m\in\mathbb{N}$, $\Psi_m$ is discrete and $J_m^j\to J_m^\infty$ as $j\to\infty$, so there exists $i_m\in\mathbb{N}$ such that $J_m^k=J_m^\infty\in\Psi_m$ for all $k\geq i_m$. By passing to a subsequence of $(J_i^j)$ we may assume that $i_m=m$ for all $m$. Then $\big( \phi^{-1}(J_i^j) \big)_{j=1}^\infty$ gives the required sequence in $\Phi$.
\end{proof}

\begin{definition}
Given a sequence $(\mathcal{G}_i,v_i)_{i=1}^\infty$ as above, define a (not necessarily finite) pointed digraph $(\mathcal{G}_\infty,v_\infty)$ by $(\mathcal{G}_\infty,v_\infty)=\bigcup_{i=1}^\infty \ball{\mathcal{G}_i}{v_i}{i}$. Note that, since $\ball{\mathcal{G}_m}{v_m}{m}{}=\ball{\mathcal{G}_k}{v_k}{m}$ for $k\geq m$, this is well-defined.
\end{definition}

\begin{lemma}
\begin{itemize}
	\item[(1)] $\mathcal{G}_\infty$ is infinite.
	\item[(2)] $\mathcal{G}_\infty$ has valence bounded above by $n_1$.
	\item[(3)] There is an injection $\mathcal{G}_{\infty}\to\sphere$ that is an embedding on any finite subgraph of $\mathcal{G}_\infty$. In particular, given a simple closed curve in $\mathcal{G}_\infty$, there is a well-defined notion of which `side' of this curve any other point of $\mathcal{G}_\infty$ lies. 
\end{itemize} 
\end{lemma}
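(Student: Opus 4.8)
The plan is to reduce all three claims to the single identity
\[
\ball{\mathcal{G}_\infty}{v_\infty}{m}=\ball{\mathcal{G}_m}{v_m}{m},
\]
valid for every $m$. I would establish this first. The coherence property $\ball{\mathcal{G}_m}{v_m}{m}=\ball{\mathcal{G}_k}{v_k}{m}$ for $k\geq m$ (recorded in the definition of $\mathcal{G}_\infty$) says the radius-$m$ balls stabilise and that the $\ball{\mathcal{G}_i}{v_i}{i}$ form a nested family with union $\mathcal{G}_\infty$. Any $w\in\V(\mathcal{G}_\infty)$ with $\dist(v_\infty,w)\leq m$ is joined to $v_\infty$ by a path of length $\leq m$, which is a finite subgraph and hence lies in some $\ball{\mathcal{G}_k}{v_k}{k}$ with $k\geq m$; restricting to radius $m$ and using stabilisation identifies it with $\ball{\mathcal{G}_m}{v_m}{m}$. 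Since these balls are induced subgraphs, this yields the displayed identity, and the three claims become statements about the coherent finite family $\{\ball{\mathcal{G}_\infty}{v_\infty}{m}\}_m$.

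For (1), suppose $\mathcal{G}_\infty$ were finite, hence of finite radius $R$ about $v_\infty$. Then for every $i\geq R+1$ we have $\ball{\mathcal{G}_i}{v_i}{R+1}=\ball{\mathcal{G}_\infty}{v_\infty}{R+1}=\mathcal{G}_\infty=\ball{\mathcal{G}_i}{v_i}{R}$, so $\mathcal{G}_i$ has no vertex at distance exactly $R+1$ from $v_i$; as $\mathcal{G}_i$ is connected this forces $\mathcal{G}_i=\ball{\mathcal{G}_i}{v_i}{R}$, so $(\mathcal{G}_i,v_i)\cong(\mathcal{G}_\infty,v_\infty)$ as embedded pointed digraphs. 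Thus all but finitely many $(\mathcal{G}_i,v_i)$ coincide, contradicting their distinctness. For (2), fix $w\in\V(\mathcal{G}_\infty)$ with $\dist(v_\infty,w)\leq m$. Each edge of $\mathcal{G}_\infty$ incident to $w$ has its far endpoint at distance $\leq m+1$ from $v_\infty$, so all such edges already lie in $\ball{\mathcal{G}_\infty}{v_\infty}{m+1}=\ball{\mathcal{G}_{m+1}}{v_{m+1}}{m+1}$. Hence the valence of $w$ in $\mathcal{G}_\infty$ equals its valence in this finite induced subgraph of $\mathcal{G}_{m+1}$, which is at most its valence in $\mathcal{G}_{m+1}$, namely $\leq n_1$.

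For (3) I would build the map as a direct limit of embeddings of the finite balls. Beginning with the fixed embedding of $\ball{\mathcal{G}_\infty}{v_\infty}{1}$, I would inductively produce embeddings $f_m\colon\ball{\mathcal{G}_\infty}{v_\infty}{m}\to\sphere$ with $f_{m+1}$ restricting to $f_m$ on $\ball{\mathcal{G}_\infty}{v_\infty}{m}$. For the inductive step, the embedding of $\ball{\mathcal{G}_{m+1}}{v_{m+1}}{m}$ inherited from $\mathcal{G}_{m+1}$ is ambient isotopic in $\sphere$ to $f_m$, since both are ambient isotopic to $\ball{\mathcal{G}_\infty}{v_\infty}{m}=\ball{\mathcal{G}_m}{v_m}{m}$ by the stabilisation hypothesis; so a homeomorphism $h_{m+1}$ of $\sphere$ carries the former onto the latter, and setting $f_{m+1}=h_{m+1}\circ(\text{the }\mathcal{G}_{m+1}\text{ embedding})$ on $\ball{\mathcal{G}_\infty}{v_\infty}{m+1}=\ball{\mathcal{G}_{m+1}}{v_{m+1}}{m+1}$ extends $f_m$. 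The union $f=\bigcup_m f_m$ is then a well-defined injection of $\mathcal{G}_\infty$ into $\sphere$ that is an embedding on every finite subgraph, since any finite subgraph lies in some ball. Finally, a simple closed curve $\gamma$ in $\mathcal{G}_\infty$ is a finite subgraph, so $f(\gamma)$ is a simple closed curve in $\sphere$; by the Jordan curve theorem $\sphere\setminus f(\gamma)$ has exactly two components, and for any other point $p$ of $\mathcal{G}_\infty$ we have $f(p)\notin f(\gamma)$ by injectivity, so the component containing $f(p)$ records the side of $\gamma$ on which $p$ lies.

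The main obstacle is the coherence needed in (3): the individual balls are determined only up to ambient isotopy of $\sphere$, so the real work is assembling these isotopy classes into a single nested family of honest embeddings rather than merely embedding each ball in isolation. This is precisely what the correcting homeomorphisms $h_{m+1}$ accomplish, and it is the step where the stabilisation hypothesis $\ball{\mathcal{G}_{m+1}}{v_{m+1}}{m}\cong\ball{\mathcal{G}_m}{v_m}{m}$ is indispensable; parts (1) and (2) are then comparatively routine consequences of the truncation identity.
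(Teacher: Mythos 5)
Your proposal is correct and takes essentially the same approach as the paper: the paper proves only part (3), and does so exactly as you do, using the stabilisation property to obtain ambient isotopies of $\sphere$ whose endpoint homeomorphisms $h_n$ correct the fixed embedding of $\mathcal{G}_n$ onto that of $\mathcal{G}_{n-1}$ on the common ball, and then defining the injection on $\ball{\mathcal{G}_\infty}{v_\infty}{n}$ as the composition $h_2\circ h_3\circ\cdots\circ h_n\circ f_n$ --- your inductive construction of nested embeddings $f_m$ via correcting homeomorphisms is the same construction with slightly different bookkeeping. Your arguments for (1) and (2), which the paper omits as routine, are valid consequences of the truncation identity $\ball{\mathcal{G}_\infty}{v_\infty}{m}=\ball{\mathcal{G}_m}{v_m}{m}$.
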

\begin{proof}
(3) For $n\in\mathbb{N}$, let $f_n\colon \mathcal{G}_n\to\sphere$ be the embedding of $\mathcal{G}_n$ into $\sphere$.
Given $n\geq 2$, there is an isotopy $H_n\colon \sphere\times \intvl\to\sphere$ from the identity on $\sphere$ to a map $h_n\colon \sphere\to\sphere$ with $h_n(f_n(x))=f_{n-1}(x)$ for all $x\in\ball{\mathcal{G}_{n-1}}{v_{n-1}}{n-1}$. Define the inclusion $f_{\infty}\colon\mathcal{G}_{\infty}\to\sphere$ by $f_{\infty}(x)=h_2(h_3(\cdots h_n(f_n(x)\cdots)$ for all $x\in\ball{\mathcal{G}_{\infty}}{v_{\infty}}{n}$ for any $n\geq 2$.
\end{proof}

\begin{lemma}
Suppose that, for every $m\in\mathbb{N}$, the boundary of any region $r$ of $\sphere\setminus\mathcal{G}_m$ is a cycle in $\mathcal{G}_m$ with length at most $n_2$. Then $\mathcal{G}_\infty$ is \ocon{\mathcal{O}}.
\end{lemma}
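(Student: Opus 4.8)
The plan is to reduce $\mathcal{O}$--connectivity of $\mathcal{G}_\infty$ to a purely \emph{local} reachability statement about edges, and then to use the uniform bound $n_2$ on the lengths of region boundaries to keep the directed paths produced inside $\mathcal{G}_\infty$ itself. This is the analogue, for the limiting digraph, of the earlier lemma that a finite planar digraph whose region boundaries are all cycles is $\mathcal{O}$--connected; the only new difficulty is controlling where the paths live. First I would record two preliminary facts. For connectivity: each $\mathcal{G}_m$ is connected, so every ball $\ball{\mathcal{G}_\infty}{v_\infty}{k}$ is connected (a geodesic from any vertex to $v_\infty$ stays within distance $k$), and $\mathcal{G}_\infty$ is the increasing union of these balls, hence connected. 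For stabilization: from the construction of $\mathcal{G}_\infty$ one has $\ball{\mathcal{G}_m}{v_m}{k}=\ball{\mathcal{G}_\infty}{v_\infty}{k}$ as embedded subgraphs whenever $m\geq k$.

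The heart of the argument is the claim that for every edge $e\in\E(\mathcal{G}_\infty)$ with $\iota(e)=a$ and $\tau(e)=b$ there is a directed path from $b$ to $a$ lying entirely in $\mathcal{G}_\infty$. Granting this, the edge $e$ together with such a return path shows that the endpoints of \emph{every} edge are mutually reachable by directed paths. The relation ``$u$ and $u'$ are mutually reachable by directed paths'' is then an equivalence relation on $\V(\mathcal{G}_\infty)$ that identifies the two ends of each edge, so by connectivity it has a single class; this is exactly the statement that $\mathcal{G}_\infty$ is \ocon{\mathcal{O}}.

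To prove the claim, fix $e$ and choose $n$ so large that $a$ and $b$ lie within distance $n$ of $v_\infty$, so that $e\in\ball{\mathcal{G}_\infty}{v_\infty}{n}$. Pick any $m\geq n+n_2$. By stabilization $e$ is an edge of $\mathcal{G}_m$ with both endpoints within distance $n$ of $v_m$. In $\mathcal{G}_m$ the edge $e$ lies on the boundary of a region $r$, and by hypothesis $\partial r$ is a directed cycle $C$ of length at most $n_2$; since $e$ can only be traversed in its own direction, $C$ runs $\ldots\to a\to b\to\cdots\to a$, so deleting $e$ leaves a directed path from $b$ to $a$. Every vertex of $C$ is reached from $a$ in at most $n_2$ steps along $C$, hence lies within distance $n+n_2$ of $v_m$; consequently $C$ is contained in the induced subgraph $\ball{\mathcal{G}_m}{v_m}{n+n_2}=\ball{\mathcal{G}_\infty}{v_\infty}{n+n_2}\subseteq\mathcal{G}_\infty$. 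Thus the return path lies in $\mathcal{G}_\infty$, establishing the claim.

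The step I expect to be the main obstacle is precisely this last localization. A region of the finite graph $\mathcal{G}_m$ could a priori be arbitrarily large (for instance an unbounded outer region in some finite approximation), and its boundary cycle might pass through vertices that never enter $\mathcal{G}_\infty$, so that the finite connectivity lemma would produce a return path escaping $\mathcal{G}_\infty$. The uniform bound $n_2$ is exactly what confines each return path to a ball of controlled radius $n+n_2$, on which $\mathcal{G}_m$ and $\mathcal{G}_\infty$ already agree. I would take some care to state the distance estimate for the vertices of $C$ cleanly and to spell out why $\partial r$ being a \emph{directed} cycle forces $e$ to be traversed as $a\to b$, since that is where the hypothesis is used in full.
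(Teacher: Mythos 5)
Your proof is correct and uses the same key mechanism as the paper's: the boundary cycle of a region adjacent to an edge, having length at most $n_2$, reverses that edge while staying inside $\ball{\mathcal{G}_m}{v_m}{n+n_2}=\ball{\mathcal{G}_\infty}{v_\infty}{n+n_2}$, which is precisely how the paper ``alters'' an unoriented path in $\ball{\mathcal{G}_\infty}{v_\infty}{m}$ into a directed path lying in $\ball{\mathcal{G}_{m+n_2}}{v_{m+n_2}}{m+n_2}\subset\mathcal{G}_\infty$. Your packaging of this step as an edge-reversal claim combined with an equivalence relation on vertices is simply a more explicit rendering of the paper's one-line alteration, so the two arguments are essentially identical.
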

\begin{proof}
Let $u,v\in\V(\mathcal{G}_\infty)$. Then $u,v\in\ball{\mathcal{G}_\infty}{v_\infty}{m}$ for some $m\in\mathbb{N}$. As $\mathcal{G}_m$ is connected, there is an (unoriented) path from $u$ to $v$ in $\ball{\mathcal{G}_\infty}{v_\infty}{m}{}=\ball{\mathcal{G}_{m+n_2}}{v_{m+n_2}}{m}$. This can be altered to a directed path from $u$ to $v$ in ${\ball{\mathcal{G}_{m+n_2}}{v_{m+n_2}}{m+n_2}{}}\subset \mathcal{G}_{\infty}$.
\end{proof}

\begin{definition}
Let $\mathcal{G}$ be a (possibly infinite) digraph.
Given sets $A,B\subseteq \V(\mathcal{G})$, define an \textit{$(A,B)$--path} $\rho$ to be a simple directed path with respect to $\mathcal{O}$ that begins at a vertex of $A$ and ends at a vertex of $B$ without meeting any vertex of $A\cup B$ in its interior. If $\rho$ contains exactly one edge, call it an \textit{$(A,B)$--edge}.
\end{definition}

\begin{proposition}[Menger's Theorem]
Let $\mathcal{G}$ be a (possibly infinite) digraph. Let $A\subseteq\V(\mathcal{G})$ and $v_B\in\V(\mathcal{G})\setminus A$. Then either there are edge-disjoint $(A,v_B)$--paths $\rho_1,\rho_2$ or there is a partition of $\V(\mathcal{G})$ into sets $\tilde{A}$ and $\tilde{B}$ such that $A\subseteq\tilde{A}$, $v_B\in\tilde{B}$, and there is at most one $(\tilde{A},\tilde{B})$--edge.
\end{proposition}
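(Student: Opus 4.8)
The plan is to prove this as the edge-version of Menger's theorem in the special ``two paths versus a cut of size one'' case, exploiting that although $\mathcal{G}$ may be infinite, everything requiring manipulation is finite. First I dispose of the degenerate case: if no directed $(A,v_B)$-path exists, let $\tilde{A}$ be the set of vertices $w$ reachable from $A$ by a directed path (so $A\subseteq\tilde{A}$) and $\tilde{B}=\V(\mathcal{G})\setminus\tilde{A}$; then $v_B\in\tilde{B}$, and no edge runs from $\tilde{A}$ to $\tilde{B}$, since its terminal vertex would be reachable from $A$. This gives a partition with \emph{zero} $(\tilde{A},\tilde{B})$-edges. So I may assume a $(A,v_B)$-path $\rho_1=x_0,\dots,x_\ell$ exists ($x_0\in A$, $x_\ell=v_B$). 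Form the \emph{residual} digraph $\mathcal{G}_{\rho_1}$ by reversing the orientation of each edge of $\rho_1$ and leaving all other edges fixed, let $\tilde{A}$ be the set of vertices reachable from $A$ in $\mathcal{G}_{\rho_1}$, and set $\tilde{B}=\V(\mathcal{G})\setminus\tilde{A}$. The argument then splits on whether $v_B\in\tilde{A}$.

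If $v_B\in\tilde{A}$, there is a directed $(A,v_B)$-path $P$ in $\mathcal{G}_{\rho_1}$. Since $\rho_1$ and $P$ are simple, hence finite, all edges involved lie in a finite subdigraph, where I run the standard single augmentation: regarding $\rho_1$ as a unit flow from $A$ to $v_B$ and augmenting along $P$ yields an integral flow of value $2$, which by the standard finite flow-decomposition splits into two edge-disjoint directed paths from $A$ to $v_B$ (discarding any cycles). Truncating each at its last visit to $A$ and first visit to $v_B$ produces the two required edge-disjoint $(A,v_B)$-paths.

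If instead $v_B\in\tilde{B}$, I claim $(\tilde{A},\tilde{B})$ has at most one $(\tilde{A},\tilde{B})$-edge. Any edge \emph{not} on $\rho_1$ keeps its orientation in $\mathcal{G}_{\rho_1}$, so if it ran from $\tilde{A}$ to $\tilde{B}$ its terminal vertex would be reachable from $A$ in $\mathcal{G}_{\rho_1}$, contradicting $\tilde{B}$-membership; hence every $(\tilde{A},\tilde{B})$-edge lies on $\rho_1$. For the edges of $\rho_1$, the reversed residual edge $x_i\to x_{i-1}$ shows $x_i\in\tilde{A}$ forces $x_{i-1}\in\tilde{A}$, so $\{\,i:x_i\in\tilde{A}\,\}$ is downward closed in $\{0,\dots,\ell\}$, i.e. an initial segment $\{0,\dots,k\}$ with $k<\ell$ (as $x_0\in\tilde{A}$ and $x_\ell=v_B\in\tilde{B}$). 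Thus the single edge $x_k\to x_{k+1}$ is the only edge of $\rho_1$, and therefore of $\mathcal{G}$, crossing from $\tilde{A}$ to $\tilde{B}$, giving the desired partition.

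The one genuine subtlety, which I expect to be the main point to get right, is that $\mathcal{G}$ may be infinite. But it causes no difficulty: the cut side uses only reachability, which is well behaved for arbitrary digraphs, while the path side requires just a single augmentation performed inside the finite subdigraph spanned by the two finite paths $\rho_1$ and $P$. The deep infinite-Menger phenomena, which concern unbounded families of paths, never arise, so the argument is effectively finite throughout.
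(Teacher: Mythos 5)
Your proposal is correct and takes essentially the same route as the paper: handle the no-path case by the reachability cut, reverse the edges of a chosen $(A,v_B)$--path to form the residual orientation (the paper's $\mathcal{O}_\rho$), and then split on whether $v_B$ is reachable in the residual digraph, obtaining either two edge-disjoint paths or a partition crossed by exactly one edge of the original path. The only difference is presentational: in the two-path case you cite flow augmentation and finite flow decomposition, whereas the paper performs the identical computation by hand, taking the symmetric difference (XOR) of the edge sets of the two paths and counting in- and out-degrees at each vertex to extract the two edge-disjoint $(A,v_B)$--paths.
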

\begin{proof}
Let $A_1=\{v\in\V(\mathcal{G}):\textrm{there is an $(A,v)$--path in $(\mathcal{G},\mathcal{O})$}\}$ and let $B_1=\V(\mathcal{G})\setminus A_1$. Then $A\subseteq A_1$. If $v_B\in B_1$ then this gives the required partition.

Suppose $v_B\in A_1$. Choose an $(A,v_B)$--path $\rho$. Define $\mathcal{O}_\rho$ to be the orientation of $\mathcal{G}$ given by reversing the direction of $\mathcal{O}$ on every edge of $\rho$. Let $A_2=\{v\in\V(\mathcal{G}):\textrm{there is an $(A,v)$--path in $(\mathcal{G},\mathcal{O}_\rho)$}\}$. 

Assume $v_B\in A_2$. Let $\rho '$ be an $(A,v_B)$--path in $(\mathcal{G},\mathcal{O}_\rho)$, and let $B={\{e\in\E(\mathcal{G}):e\in\rho \textrm{ XOR } e\in\rho '\}}$.
Consider the edges in $B$ with respect to $\mathcal{O}$. Exactly two such edges start in $A$ while no edge ends in $A$, and two edges end at $v_B$. At any vertex $v\notin A\cup\{v_B\}$, the number of edges in $B$ ending at $v$ is equal to the number starting at $v$. Thus, since $B$ is finite, there are two edge-disjoint $(A,v_B)$--paths in $B$. 

If instead $v_B\notin A_2$, let $B_2=\V(\mathcal{G})\setminus A_2$. By definition, there are no $(A_2,B_2)$--edges with respect to $\mathcal{O}_\rho$. In particular, no edge of $\rho$ is a $(B_2,A_2)$--edge with respect to $\mathcal{O}$. This means $\rho$ contains exactly one $(A_2,B_2)$--edge with respect to $\mathcal{O}$. Hence $\tilde{A}=A_2$ and $\tilde{B}=B_2$ are as required.
\end{proof}

\begin{lemma}\label{twoabedgeslemma}
Suppose that $\mathcal{G}_m\in\Gamma$ and the boundary of any region $r$ of $\sphere\setminus\mathcal{G}_m$ is a cycle in $\mathcal{G}_m$ with length at most $n_2$ for each $m\in\mathbb{N}$. Then, for any partition of $\V(\mathcal{G}_\infty)$ into non-empty sets $A$ and $B$, there are at least two $(A,B)$--edges. 
\end{lemma}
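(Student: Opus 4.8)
The plan is to argue by contradiction, using Menger's Theorem together with the local balance of in- and out-degrees forced by (G4).

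Suppose some partition of $\V(\mathcal{G}_\infty)$ into non-empty sets $A,B$ admits at most one $(A,B)$-edge. Since $\mathcal{G}_\infty$ is $\mathcal{O}$--connected, a directed path from a vertex of $A$ to a vertex of $B$ must contain an edge whose initial endpoint lies in $A$ and whose terminal endpoint lies in $B$; as $A$ and $B$ partition $\V(\mathcal{G}_\infty)$, that edge is an $(A,B)$-edge. Hence there is exactly one, say $e$. Now fix $v_B\in B$ and apply Menger's Theorem to the set $A$ and the vertex $v_B$. If there were two edge-disjoint $(A,v_B)$-paths, then, each beginning in $A$ and ending at $v_B\in B$, each would contain an $(A,B)$-edge, and edge-disjointness would make these two edges distinct, contradicting the uniqueness of $e$. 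Therefore Menger's Theorem supplies a partition of $\V(\mathcal{G}_\infty)$ into non-empty sets $\tilde A\supseteq A$ and $\tilde B\ni v_B$ with at most one $(\tilde A,\tilde B)$-edge; by $\mathcal{O}$--connectivity this is again exactly one edge $e_0$, with $\iota(e_0)=a_0\in\tilde A$ and $\tau(e_0)=b_0\in\tilde B$.

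Next I would show that this forces exactly one further crossing edge, directed from $\tilde B$ to $\tilde A$. Each vertex of $\mathcal{G}_\infty$ inherits from (G4) the property that incoming and outgoing edges alternate, so its in-degree equals its out-degree. Formally summing $(\mathrm{in}-\mathrm{out})$ over $\tilde B$ then equates the number of $(\tilde B,\tilde A)$-edges with the number of $(\tilde A,\tilde B)$-edges, namely one. To make this finite count legitimate I would first confine the interface: since $e_0$ is the only edge leaving $\tilde A$ for $\tilde B$, every vertex of $\tilde B$ is reachable from $b_0$ without re-entering $\tilde A$, and following the boundary cycles adjacent to $e_0$ outward — each a cycle of length at most $n_2$, which can cross from $\tilde A$ to $\tilde B$ only through $e_0$ — pins the crossing locus down to the two edges $e_0$ and a single reverse edge $e_1$.

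Finally, both $e_0$ and $e_1$ lie inside some finite ball $\ball{\mathcal{G}_\infty}{v_\infty}{m_0}$, and for $m\geq m_0$ this ball sits inside the prime digraph $\mathcal{G}_m\in\Gamma$. Using the injection $f_\infty$, which embeds every finite subgraph, I would draw a simple closed curve in $\sphere$ meeting $\mathcal{G}_\infty$ transversely in one interior point of each of $e_0,e_1$ and separating $\tilde A$ from $\tilde B$; for $m$ large both sides contain vertices of $\mathcal{G}_m$, since the chosen vertices of $A\subseteq\tilde A$ and $v_B\in\tilde B$ are captured in the ball. Such a curve is exactly what the primeness condition (G3) of $\mathcal{G}_m$ forbids, giving the required contradiction. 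The main obstacle is the middle step: controlling the cut in the infinite graph, that is, proving that a single edge in one direction really forces a finite interface with exactly one reverse edge, so that the balance count is valid and the separating curve meets only two edges. The delicate point is ruling out the interface escaping to infinity, which I expect to handle by tracing the bounded region cycles outward from $e_0$ and using local finiteness of $\mathcal{G}_\infty$.
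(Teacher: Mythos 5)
There is a genuine gap, and it sits exactly where you flagged it: the middle step. Your plan needs the Menger partition $\tilde A,\tilde B$ to have a finite interface consisting of $e_0$ and exactly one reverse edge $e_1$, obtained by summing $(\mathrm{in}-\mathrm{out})=0$ over the vertices of $\tilde B$. That count is simply not valid in an infinite digraph: in the bi-infinite directed path $\cdots\to v_{-1}\to v_0\to v_1\to\cdots$ every vertex has in-degree equal to out-degree, yet the partition into $\{v_i:i<0\}$ and $\{v_i:i\geq 0\}$ has one forward crossing edge and no reverse edge. Finiteness of the interface does not rescue the computation (that example has a one-edge interface); what the cancellation of internal edges really requires is that one of $\tilde A,\tilde B$ be finite, and nothing in your construction guarantees that -- both sides of the Menger cut can be infinite. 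Worse, the conclusion you are aiming for is provably false in this setting: for $m$ large enough that everything relevant lies in the common ball, the boundaries of the two regions of $\sphere\setminus\mathcal{G}_m$ adjacent to $e_0$ are directed cycles of length at most $n_2$ through $e_0$, so each must also contain an edge from $\tilde B$ back to $\tilde A$, and primeness (G3) of $\mathcal{G}_m$ forces these two reverse edges to be distinct (if they coincided, a simple closed curve through that edge and $e_0$, running through the two regions, would meet the edges of $\mathcal{G}_m$ exactly twice and separate the endpoints of $e_0$). So the interface always contains at least two reverse edges, a curve meeting the graph only in $e_0$ and $e_1$ and separating $\tilde A$ from $\tilde B$ never exists, and your final primeness contradiction cannot be realized.

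The observation you invoke only in passing -- that a region boundary adjacent to a crossing edge is a directed cycle and therefore must cross back -- is in fact the entire proof, and it removes any need for contradiction, Menger's Theorem, or a balance count. This is what the paper does: since $\mathcal{G}_\infty$ is $\mathcal{O}$--connected there is at least one $(B,A)$--edge $e$; choosing $m_1$ with $e$ inside the ball of radius $m_1$ and setting $m_2=m_1+n_2$, the boundaries of the two regions of $\sphere\setminus\mathcal{G}_{m_2}$ adjacent to $e$ lie in the ball of radius $m_2$, where $\mathcal{G}_{m_2}$ and $\mathcal{G}_\infty$ agree; each boundary is a directed cycle through $e$ and so contains an $(A,B)$--edge, and primeness of $\mathcal{G}_{m_2}$ makes the two such edges distinct. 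If you insist on keeping your framework, the same local argument applied to a $(\tilde B,\tilde A)$--edge of your refined partition (such an edge exists by $\mathcal{O}$--connectivity) yields two distinct $(\tilde A,\tilde B)$--edges and the desired contradiction -- but then the Menger step has bought you nothing, since the identical argument applies directly to the original partition $(A,B)$.
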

\begin{proof}
$\mathcal{G}_\infty$ is \ocon{\mathcal{O}}, so there is at least one $(B,A)$--edge. Let $e$ be a $(B,A)$--edge. Choose $m_1\in\mathbb{N}$ such that $e\in\ball{\mathcal{G}_\infty}{v_\infty}{m_1}$ and set $m_2=m_1+n_2$.
Let $r_1$ and $r_2$ be the regions of $\sphere\setminus\mathcal{G}_{m_2}$ adjacent to $e$. The boundaries of these regions are both contained in $\ball{\mathcal{G}_{m_2}}{v_{m_2}}{m_2}$. Let $A_{m_2}=A\cap\ball{\mathcal{G}_{m_2}}{v_{m_2}}{m_2}$ and $B_{m_2}=B\cap\ball{\mathcal{G}_{m_2}}{v_{m_2}}{m_2}$. Then the boundaries of $r_1$ and $r_2$ contain $(A_{m_2},B_{m_2})$--edges $e_1$ and $e_2$ respectively. Since $\mathcal{G}_{m_2}$ is prime, $e_1\neq e_2$. Thus $e_1$ and $e_2$ are distinct $(A,B)$--edges in $\mathcal{G}_\infty$.
\end{proof}

\begin{corollary}\label{spearpaircor}
If $(\mathcal{G}_i)_{i=1}^\infty$ is as in Lemma \ref{twoabedgeslemma} and $A$ is a non-empty, proper subset of $\V(\mathcal{G}_\infty)$, there is a vertex $v_B\notin A$ with two $(A,v_B)$--paths that do not meet except at their endpoints.
\end{corollary}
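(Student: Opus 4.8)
The plan is to combine Menger's Theorem with Lemma~\ref{twoabedgeslemma} and then to upgrade the resulting edge-disjoint paths into internally vertex-disjoint ones. Since $A$ is a proper subset, I first fix any vertex $v\in\V(\mathcal{G}_\infty)\setminus A$ and apply Menger's Theorem to the pair $A,v$. Of its two alternatives, the partition alternative cannot occur: any such partition $\tilde A,\tilde B$ satisfies $A\subseteq\tilde A$ and $v\in\tilde B$, so both parts are non-empty, and Lemma~\ref{twoabedgeslemma} then yields at least two $(\tilde A,\tilde B)$--edges, contradicting the stipulation of at most one. Hence the first alternative holds, giving two edge-disjoint $(A,v)$--paths $\rho_1$ and $\rho_2$.

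It then remains to replace these by two paths meeting only at their endpoints, and here I would exploit the freedom to move the terminal vertex. Writing $\rho_1=u_0,\ldots,u_k=v$ and $\rho_2=w_0,\ldots,w_l=v$ as simple directed paths with $u_0,w_0\in A$, I would let $j$ be the least \emph{positive} index for which $w_j$ lies on $\rho_1$; this exists since $w_l=v=u_k$ does, and $w_j\notin A$ because $j\geq 1$. Setting $v_B=w_j=u_m$ (so $m\geq 1$ and $v_B\notin A$), I take the initial segments $\sigma_1=u_0,\ldots,u_m$ and $\sigma_2=w_0,\ldots,w_j$. By minimality of $j$ the interior vertices $w_1,\ldots,w_{j-1}$ of $\sigma_2$ avoid $\rho_1$, hence avoid $\sigma_1$; conversely the interior vertices of $\sigma_1$ lie on $\rho_1$, so they avoid $w_1,\ldots,w_{j-1}$ and, being outside $A$, also avoid $w_0$. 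Thus $\sigma_1$ and $\sigma_2$ are $(A,v_B)$--paths whose only possible common vertices are $v_B$ and the (possibly shared) start $u_0,w_0\in A$, all of which are endpoints; moreover they are distinct, since their edge sets are disjoint subsets of the edge sets of the edge-disjoint $\rho_1,\rho_2$.

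The genuinely delicate point is this final conversion. Edge-disjointness does not by itself force internal vertex-disjointness---two directed paths may legitimately cross at a vertex of in- and out-degree at least two, which (G5) allows---so passing from $v$ to the first common vertex $v_B$ is essential, as is the mild care needed when the two paths happen to share their starting vertex in $A$ (this is exactly why I index $j$ from $1$ rather than $0$). Once $v_B$ is chosen in this way, the pair $\sigma_1,\sigma_2$ is precisely the configuration the corollary demands.
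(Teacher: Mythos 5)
Your proposal is correct and follows essentially the same route as the paper: Menger's Theorem combined with Lemma \ref{twoabedgeslemma} to rule out the partition alternative and obtain edge-disjoint $(A,v)$--paths, then truncation at the first common vertex outside $A$ to produce internally disjoint $(A,v_B)$--paths. The only (immaterial) difference is that you locate the first common vertex along $\rho_2$ rather than along $\rho_1$, and you spell out the endpoint bookkeeping that the paper leaves implicit.
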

\begin{proof}
Choose $v\notin A$. There are edge-disjoint $(A,v)$--paths $\rho_1$ and $\rho_2$. Take $v_B$ to be the first common vertex of $\rho_1$ and $\rho_2$ not in $A$ as measured along $\rho_1$.
\end{proof}

\begin{definition}
Call distinct directed paths $\rho_1$ and $\rho_2$ with the same endpoints a \textit{spear-pair} if, for some $m\in\mathbb{N}\cup\{0\}$, they run together for the first $m$ vertices, and after that point meet only at their final vertex (see Figure \ref{infgraphpic1}).
\begin{figure}[htbp]
\centering
%LaTeX with PSTricks extensions
%%Creator: 0.46
%%Please note this file requires PSTricks extensions
\psset{xunit=.75pt,yunit=.75pt,runit=.75pt}
\begin{pspicture}(200,80)
{
\pscustom[linewidth=1,linecolor=black]
{
\newpath
\moveto(30,29.99998262)
\lineto(100,29.99998262)
}
}
{
\pscustom[linewidth=1,linecolor=black,fillstyle=solid,fillcolor=black]
{
\newpath
\moveto(60,29.99998262)
\lineto(56,25.99998262)
\lineto(70,29.99998262)
\lineto(56,33.99998262)
\lineto(60,29.99998262)
\closepath
}
}
{
\pscustom[linewidth=1]
{
\newpath
\moveto(100,29.99998262)
\curveto(110,54.00000262)(147,49.99998262)(147,49.99998262)
}
}
{
\pscustom[linewidth=1,linecolor=black,fillstyle=solid,fillcolor=black]%arrowhead
{
\newpath
\moveto(137.0579301,51.07480636)
\lineto(132.65117264,47.5279079)
\lineto(147,49.99998262)
\lineto(133.51103163,55.48156382)
\lineto(137.0579301,51.07480636)
\closepath
}
}
{
\pscustom[linewidth=1]
{
\newpath
\moveto(147,49.99998262)
\curveto(170,46.99998262)(175,29.99998262)(175,29.99998262)
}
}
{
\pscustom[linewidth=1]
{
\newpath
\moveto(100,29.99998262)
\curveto(110,5.99998262)(147,9.99998262)(147,9.99998262)
}
}
{
\pscustom[linewidth=1,linecolor=black,fillstyle=solid,fillcolor=black]%arrowhead
{
\newpath
\moveto(137.05792952,8.92516419)
\lineto(133.51102871,4.51840863)
\lineto(147,9.99998262)
\lineto(132.65117396,12.47206501)
\lineto(137.05792952,8.92516419)
\closepath
}
}
{
\pscustom[linewidth=1]
{
\newpath
\moveto(147,9.99998262)
\curveto(170,12.99998262)(175,29.99998262)(175,29.99998262)
}
}
{
\put(90,50){$\rho_1$}
\put(90,5){$\rho_2$}
}
\end{pspicture}
\caption{\label{infgraphpic1}}
\end{figure}
\end{definition}

\begin{proposition}
Let $(\mathcal{G}_i)_{i=1}^\infty$ be as in Lemma \ref{twoabedgeslemma}. Let $n\in\mathbb{N}$. Then there is a set $\mathcal{F}_n\subseteq\V(\mathcal{G}_\infty)$ such that there are at least $2^n$ (finite) \spantree{\mathcal{F}_n}s of $\mathcal{G}_{\infty}$ with origin $v_\infty$.
\end{proposition}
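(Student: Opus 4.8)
The plan is to build, by induction on $i$, a nested sequence of finite connected subgraphs $H_0\subseteq H_1\subseteq\cdots\subseteq\mathcal{G}_\infty$ with $v_\infty\in\V(H_0)$, such that $|\Tr(H_i,v_\infty)|=2^i$, where $\Tr(H_i,v_\infty)$ denotes the set of directed spanning subtrees of $H_i$ with origin $v_\infty$. Having done this I would set $\mathcal{F}_n=\V(H_n)$. The point of this choice is that a directed $\mathcal{F}_n$--spanning subtree $\mathcal{T}$ of $\mathcal{G}_\infty$ with origin $v_\infty$ can contain no vertex outside $\mathcal{F}_n$: a finite directed tree has a leaf at maximal distance from its origin, and any such leaf must lie in $\mathcal{F}_n=\V(H_n)$. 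Hence the directed spanning subtrees of $H_n$ with origin $v_\infty$ are exactly a family of (finite) directed $\mathcal{F}_n$--spanning subtrees of $\mathcal{G}_\infty$, and there are $2^n$ of them.

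For the induction I would start with $H_0=(\{v_\infty\},\emptyset)$, so $|\Tr(H_0,v_\infty)|=1=2^0$. Given $H_{i-1}$ with finite vertex set $A_{i-1}:=\V(H_{i-1})$, note that $A_{i-1}$ is a non-empty proper subset of $\V(\mathcal{G}_\infty)$ since $\mathcal{G}_\infty$ is infinite. Applying Corollary \ref{spearpaircor} to $A_{i-1}$ yields a vertex $c_i\notin A_{i-1}$ together with two $(A_{i-1},c_i)$--paths $\rho^1,\rho^2$ meeting only at their endpoints; I then set $H_i=H_{i-1}\cup\rho^1\cup\rho^2$ and $A_i=A_{i-1}\cup\V(\rho^1)\cup\V(\rho^2)$. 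By the definition of an $(A_{i-1},c_i)$--path the interiors of $\rho^1,\rho^2$ avoid $A_{i-1}\cup\{c_i\}$, and by the conclusion of the corollary they avoid one another; so in $H_i$ every new vertex (the path interiors and $c_i$) carries edges only along $\rho^1$ and $\rho^2$.

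The heart of the argument is to show $|\Tr(H_i,v_\infty)|=2\,|\Tr(H_{i-1},v_\infty)|$, which I would do by exhibiting a bijection $\Tr(H_i,v_\infty)\to\Tr(H_{i-1},v_\infty)\times\{1,2\}$. Given a spanning arborescence $\mathcal{T}$ of $H_i$, each interior vertex of $\rho^1$ (resp.\ $\rho^2$) is reachable only along its own path, so all edges of $\rho^1$ and $\rho^2$ except their final edges into $c_i$ lie in $\mathcal{T}$, while exactly one of the two final edges into $c_i$ lies in $\mathcal{T}$ (the vertex $c_i$ has in-degree $1$); this last choice is the $\{1,2\}$--coordinate. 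Since no new vertex sends an edge into $A_{i-1}$ (their out-edges run forward along $\rho^1,\rho^2$ or into $c_i\notin A_{i-1}$), the restriction $\mathcal{T}\cap H_{i-1}$ is a spanning arborescence of $H_{i-1}$ with origin $v_\infty$. Conversely, a spanning arborescence of $H_{i-1}$ together with a choice in $\{1,2\}$ reconstitutes a unique spanning arborescence of $H_i$ by adjoining the forced interior edges and the chosen final edge into $c_i$. This gives the doubling, hence $|\Tr(H_n,v_\infty)|=2^n$, and with it the proposition.

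The step needing the most care is this bijective doubling, and specifically the claim that the two attached paths are genuinely ``fresh'' (their interiors disjoint from $A_{i-1}$, from each other, and introducing no spurious edges of $H_i$): this is exactly what the $(A,B)$--path hypothesis in Corollary \ref{spearpaircor} secures, and it is what forces the interior edges and confines the real choice to the single in-edge of $c_i$. A secondary subtlety is that Corollary \ref{spearpaircor} does not guarantee that $\rho^1$ and $\rho^2$ share a common initial vertex (they may leave $A_{i-1}$ at different vertices); but this is harmless here, because taking $\mathcal{F}_n=\V(H_n)$ and counting arborescences of the finite graph $H_n$ avoids any need to assemble a literal chain of bigons, so no connectivity problem arises. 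These $2^n$ trees could if desired be fed into Lemma \ref{extendtreeslemma}, but exhibiting them as the spanning arborescences of $H_n$ already produces the required $2^n$ directed $\mathcal{F}_n$--spanning subtrees of $\mathcal{G}_\infty$ directly.
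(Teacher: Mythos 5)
Your proof is correct, and it is a leaner implementation than the paper's, though both run on the same engine: iterate Corollary \ref{spearpaircor}, and extract a factor of $2$ at each stage from the choice of which of the two incoming path-ends the tree uses at the new vertex. The difference is in what plays the role of $\mathcal{F}_n$ and in the bookkeeping this forces. The paper takes $\mathcal{F}_n=\{w_0,\dots,w_n\}$ to be only the branch vertices, so its trees must be unions of directed paths from $v_\infty$ to the $w_i$; to arrange this it upgrades the $(A_m,w_{m+1})$--paths given by Corollary \ref{spearpaircor} to spear-pairs of $(v_\infty,w_{m+1})$--paths by routing backwards through the previously built paths, and it maintains planar data (the closed curves $\sigma_m$ and discs $\mathbb{D}_m$, conditions (1)$_m$--(8)$_m$) to guarantee that the infinite part $B_m$ stays on one side, that new paths attach to old ones, and that every binary string of final-edge choices is realised by a tree inside $\mathcal{G}_\infty[A_m]$. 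You instead take $\mathcal{F}_n=\V(H_n)$ for a nested finite subgraph $H_n$, which removes any need for spear-pairs or planarity: leaves may sit anywhere in $H_n$, each interior vertex of the two freshly attached paths has a unique in-edge in $H_i$ (so those edges are forced), the new vertex $c_i$ has exactly two in-edges (the genuine binary choice), and no new edge ends in $\V(H_{i-1})$, giving the bijection $\Tr(H_i,v_\infty)\to\Tr(H_{i-1},v_\infty)\times\{1,2\}$ and hence $2^n$ spanning arborescences of $H_n$, each of which is visibly a finite directed $\mathcal{F}_n$--spanning subtree of $\mathcal{G}_\infty$ with origin $v_\infty$. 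Since the downstream use (Theorem \ref{gammafinitethm} via Lemma \ref{extendtreeslemma}) only needs \emph{some} $\mathcal{F}$ admitting many distinct $\mathcal{F}$--spanning subtrees, nothing is lost by your larger choice of $\mathcal{F}_n$; what the paper's version buys is only the more economical set $\mathcal{F}$.

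One side remark in your write-up is wrong, though harmlessly so: a directed $\mathcal{F}_n$--spanning subtree of $\mathcal{G}_\infty$ may well contain vertices outside $\mathcal{F}_n$ --- the definition constrains only the leaves, and your ``leaf at maximal distance'' argument controls nothing else (a tree could reach a vertex of $\mathcal{F}_n$ through vertices of $\mathcal{G}_\infty$ outside $\V(H_n)$, and such vertices are then not leaves). So the spanning arborescences of $H_n$ need not be \emph{all} of the $\mathcal{F}_n$--spanning subtrees, only some of them. Your proof never needs the false direction: the inclusion you actually use --- every spanning arborescence of $H_n$, viewed in $\mathcal{G}_\infty$, contains $\mathcal{F}_n$ and has all its leaves in $\mathcal{F}_n$ --- is immediate, and distinct arborescences are distinct subgraphs, which is all that ``at least $2^n$'' requires.
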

\begin{proof}
We inductively construct, for $0\leq m \leq n$, a vertex $w_m\in\V(\mathcal{G}_\infty)$, a set $A_m\subset \V(\mathcal{G}_\infty)$ and  $(v_\infty,w_m)$--paths $\rho^1_m,\rho^2_m$. We also construct an unoriented (and possibly not simple) closed curve $\sigma_m$ in $\mathcal{G}_\infty$ and an open disc $\mathbb{D}_m\subset\sphere$. These are all chosen with the following properties.
\begin{itemize}
	\item[(1)$_m$] $B_m=\V(\mathcal{G}_\infty)\setminus A_m$ is infinite. 
	\item[(2)$_m$] $\rho^1_m,\rho^2_m\subseteq \mathcal{G}_\infty[A_m]$.	
	\item[(3)$_m$] If $m>0$ then $\rho^1_m,\rho^2_m$ are a spear-pair.	
	\item[(4)$_m$] $\sigma_m\subseteq\bigcup_{i=0}^m (\rho^1_i \cup \rho^2_i)$.	
	\item[(5)$_m$] $\mathbb{D}_m$ is a connected component of $\sphere\setminus\sigma_m$.
	\item[(6)$_m$] $B_m\subset\mathbb{D}_m$ and $A_m\cap\mathbb{D}_m=\emptyset$.
	\item[(7)$_m$] If $m>0$ then $w_m\in B_{m-1}$.
	\item[(8)$_m$] Let $\mathcal{F}_m=\{w_0,\cdots,w_m\}$. For each $(J_i)_{i=1}^m\in\{1,2\}^m$ there is a \spantree{\mathcal{F}_m} $\mathcal{T}$ of $\mathcal{G}_\infty$ with origin $v_\infty$ such that $\mathcal{T}\subseteq \mathcal{G}_{\infty}[A_m]$ and, for $1\leq i \leq m$, the edge of $\mathcal{T}$ ending at $w_i$ is the last edge of $\rho_i^{J_i}$.
\end{itemize}

Let $w_0=A_0=\rho^1_0=\rho^2_0=\sigma_0=v_\infty$ and $\mathbb{D}_0=\sphere\setminus v_\infty$. Then (1)$_0$--(8)$_0$ hold.

Suppose $w_m$, $A_m$, $\rho^1_m$, $\rho^2_m$, $\sigma_m$ and $\mathbb{D}_m$ have been defined for some $m<n$. Then by Corollary \ref{spearpaircor} there exists $w_{m+1}\in B_m$ and $(A_m,w_{m+1})$-paths $\rho^3_{m+1},\rho^4_{m+1}$ that do not meet except at their endpoints. Since $w_{m+1}\in\mathbb{D}_m$, both $\rho^3_{m+1}$ and $\rho^4_{m+1}$ must have their initial vertex on $\sigma_m$. 
By $(4)_m$, these can be extended using directed paths in $\bigcup_{i=0}^m (\rho^1_i \cup \rho^2_i)$ to a spear-pair of $(v_\infty,w_{m+1})$--paths $\rho^1_{m+1},\rho^2_{m+1}$.

Now $\rho^3_{m+1}\cup\rho^4_{m+1}$ forms a finite length simple closed curve or arc contained in $\mathcal{G}_\infty$ with both endpoints on the boundary of $\mathbb{D}_m$. Thus $\mathbb{D}_m\setminus (\rho^3_{m+1}\cup\rho^4_{m+1})$ consists of two discs $\mathbb{D}_{m+1}^1$ and $\mathbb{D}_{m+1}^2$. Further, $B_{m+1}^1=\mathbb{D}_{m+1}^1\cap\V(\mathcal{G}_\infty)$ and $B_{m+1}^2=\mathbb{D}_{m+1}^2\cap\V(\mathcal{G}_\infty)$ are well defined. At least one of $B_{m+1}^1,B_{m+1}^2$ is infinite. Suppose $B_{m+1}^1$ is infinite. Let $B_{m+1}=B_{m+1}^1$ (that is, let $A_{m+1}=\V(\mathcal{G})\setminus B^1_{m+1}$) and let $\mathbb{D}_{m+1}=\mathbb{D}^1_{m+1}$. Finally, define $\sigma_{m+1}$ as the boundary of $\mathbb{D}_{m+1}$. Note that $\sigma_{m+1}$ consists of $\rho^3_{m+1}\cup\rho^4_{m+1}$, together with all, one section or none of $\sigma_m$. Figure \ref{infgraphpic2} shows a specific example.
\begin{figure}[htbp]
\centering
%LaTeX with PSTricks extensions
%%Creator: inkscape 0.46
%%Please note this file requires PSTricks extensions
\psset{xunit=.5pt,yunit=.5pt,runit=.5pt}
\begin{pspicture}(290,290)
{
\pscustom[linestyle=none,fillstyle=solid,fillcolor=lightgray]
{
\newpath
\moveto(20,270)
\lineto(270,270)
\lineto(270,20)
\lineto(20,20)
\lineto(20,270)
\closepath
}
}
{
\pscustom[linewidth=0.1,linecolor=white,fillstyle=solid,fillcolor=white]
{
\newpath
\moveto(45,145.00000262)
\lineto(45,145.00000262)
\curveto(48,151.00000262)(67.569369,157.71291262)(71,160.00000262)
\curveto(74,162.00000262)(105,172.00000262)(125,171.00000262)
\curveto(126,191.99999262)(132,208.00000262)(146,226.00000262)
\curveto(153,235.00000262)(166,247.00000262)(181,250.00000262)
\lineto(195,250.00000262)
\curveto(232,233.00000262)(243,206.00000262)(246,200.00000262)
\curveto(255,183.00000262)(256,162.00000262)(255,156.00000262)
\curveto(252,148.00000262)(254,117.00000262)(211,98.00000262)
\curveto(196,94.00000262)(181,90.00000262)(165,88.00000262)
\curveto(150,88.00000262)(134,90.00000262)(121,98.00000262)
\curveto(128,92.00000262)(132,84.00000262)(135,67.00000262)
\curveto(135,60.00000262)(135,50.99998262)(126,36.99998262)
\lineto(114,36.99998262)
\curveto(107,48.99998262)(106,53.00000262)(105,66.00000262)
\curveto(106,75.00000262)(108.75,84.00000262)(112,90.00000262)
\curveto(115.25,96.00000262)(119,99.00000262)(119,98.00000262)
\lineto(119,99.00000262)
\lineto(103,111.00000262)
\lineto(90,124.00000262)
\curveto(77,127.00000262)(63,133.00000262)(50,140.00000262)
\lineto(45,145.00000262)
\closepath
}
}
{
\pscustom[linewidth=1,linecolor=black]
{
\newpath
\moveto(50,150.00000262)
\curveto(105,180.00000262)(145,170.00000262)(145,170.00000262)
}
}
{
\pscustom[linewidth=1,linecolor=black,fillstyle=solid,fillcolor=black]
{
\newpath
\moveto(135.298575,172.42535887)
\lineto(130.4478625,169.51493137)
\lineto(145,170.00000262)
\lineto(132.3881475,177.27607137)
\lineto(135.298575,172.42535887)
\closepath
}
}
{
\pscustom[linewidth=1,linecolor=black]
{
\newpath
\moveto(145,170.00000262)
\curveto(195,165.00000262)(220,150.00000262)(220,150.00000262)
}
}
{
\pscustom[linewidth=1,linecolor=black]
{
\newpath
\moveto(50,140.00000262)
\curveto(105,110.00000262)(145,120.00000262)(145,120.00000262)
}
}
{
\pscustom[linewidth=1,linecolor=black,fillstyle=solid,fillcolor=black]
{
\newpath
\moveto(135.298575,117.57464637)
\lineto(132.3881475,112.72393387)
\lineto(145,120.00000262)
\lineto(130.4478625,120.48507387)
\lineto(135.298575,117.57464637)
\closepath
}
}
{
\pscustom[linewidth=1,linecolor=black]
{
\newpath
\moveto(145,120.00000262)
\curveto(195,125.00000262)(220,140.00000262)(220,140.00000262)
}
}
{
\pscustom[linewidth=1,linecolor=black]
{
\newpath
\moveto(125,171.00000262)
\curveto(125,206.00000262)(150,230.00000262)(150,230.00000262)
}
}
{
\pscustom[linewidth=1,linecolor=black,fillstyle=solid,fillcolor=black]
{
\newpath
\moveto(142.78612679,223.07468434)
\lineto(142.67070482,217.41900774)
\lineto(150,230.00000262)
\lineto(137.13045019,223.19010631)
\lineto(142.78612679,223.07468434)
\closepath
}
}
{
\pscustom[linewidth=1,linecolor=black]
{
\newpath
\moveto(145,225.00000262)
\curveto(160,245.00000262)(180,250.00000262)(180,250.00000262)
}
}
{
\pscustom[linewidth=1,linecolor=black]
{
\newpath
\moveto(255,165.00000262)
\curveto(250,230.00000262)(195,250.00000262)(195,250.00000262)
}
}
{
\pscustom[linewidth=1,linecolor=black]
{
\newpath
\moveto(105,65.00000262)
\curveto(105,49.99998262)(115,34.99998262)(115,34.99998262)
}
}
{
\pscustom[linewidth=1,linecolor=black]
{
\newpath
\moveto(120,99.00000262)
\curveto(135,89.00004262)(135,60.00010262)(135,60.00010262)
}
}
{
\pscustom[linewidth=1,linecolor=black,fillstyle=solid,fillcolor=black]
{
\newpath
\moveto(135,70.00010262)
\lineto(131,74.00010262)
\lineto(135,60.00010262)
\lineto(139,74.00010262)
\lineto(135,70.00010262)
\closepath
}
}
{
\pscustom[linewidth=1,linecolor=black]
{
\newpath
\moveto(135,65.00010262)
\curveto(135,49.99998262)(125,34.99998262)(125,34.99998262)
}
}
{
\pscustom[linewidth=1,linecolor=black,fillstyle=solid,fillcolor=black]
{
\newpath
\moveto(233,145.00000077)
\curveto(232.99999997,140.96695519)(229.8552505,137.6937588)(225.98046994,137.69375883)
\curveto(222.10568938,137.69375886)(218.96093997,140.96695531)(218.96094,145.00000089)
\curveto(218.96094003,149.03304648)(222.1056895,152.30624286)(225.98047006,152.30624283)
\curveto(229.85123642,152.3062428)(232.99369178,149.04075242)(232.99999071,145.01189033)
}
}
{
\pscustom[linewidth=1,linecolor=black,fillstyle=solid,fillcolor=black]
{
\newpath
\moveto(195,250.00000077)
\curveto(194.99999997,245.96695519)(191.8552505,242.6937588)(187.98046994,242.69375883)
\curveto(184.10568938,242.69375886)(180.96093997,245.96695531)(180.96094,250.00000089)
\curveto(180.96094003,254.03304648)(184.1056895,257.30624286)(187.98047006,257.30624283)
\curveto(191.85123642,257.3062428)(194.99369178,254.04075242)(194.99999071,250.01189033)
}
}
{
\pscustom[linewidth=1,linecolor=black,fillstyle=solid,fillcolor=black]
{
\newpath
\moveto(127.01953,32.30624077)
\curveto(127.01952997,28.27319519)(123.8747805,24.9999988)(119.99999994,24.99999883)
\curveto(116.12521938,24.99999886)(112.98046997,28.27319531)(112.98047,32.30624089)
\curveto(112.98047003,36.33928648)(116.1252195,39.61248286)(120.00000006,39.61248283)
\curveto(123.87076642,39.6124828)(127.01322178,36.34699242)(127.01952071,32.31813033)
}
}
{
\pscustom[linewidth=1,linecolor=black,fillstyle=solid,fillcolor=black]
{
\newpath
\moveto(51.000009,144.98811077)
\curveto(51.00000897,140.95506519)(47.8552595,137.6818688)(43.98047894,137.68186883)
\curveto(40.10569838,137.68186886)(36.96094897,140.95506531)(36.960949,144.98811089)
\curveto(36.96094903,149.02115648)(40.1056985,152.29435286)(43.98047906,152.29435283)
\curveto(47.85124542,152.2943528)(50.99370078,149.02886242)(50.99999971,145.00000033)
}
}
{
\pscustom[linewidth=1,linecolor=black]
{
\newpath
\moveto(90,124.00000262)
\curveto(135,74.00000262)(173.87811,88.46953262)(200,95.00000262)
\curveto(220,100.00000262)(255,115.00000262)(255,165.00000262)
}
}
{
\pscustom[linewidth=1,linecolor=black,fillstyle=solid,fillcolor=black]
{
\newpath
\moveto(255,155.00000262)
\lineto(259,151.00000262)
\lineto(255,165.00000262)
\lineto(251,151.00000262)
\lineto(255,155.00000262)
\closepath
}
}
{
\pscustom[linewidth=1,linecolor=black]
{
\newpath
\moveto(120,99.00000262)
\curveto(105,89.00000262)(105,60.00000262)(105,60.00000262)
}
}
{
\pscustom[linewidth=1,linecolor=black,fillstyle=solid,fillcolor=black]
{
\newpath
\moveto(105,70.00000262)
\lineto(101,74.00000262)
\lineto(105,60.00000262)
\lineto(109,74.00000262)
\lineto(105,70.00000262)
\closepath
}
}
{
\put(30,160){$w_0$}
\put(220,160){$w_1$}
\put(200,250){$w_2$}
\put(135,30){$w_3$}
\put(95,150){$\rho^1_1$}
\put(140,130){$\rho^2_1$}
\put(145,210){$\rho^1_2$}
\put(215,200){$\rho^2_2$}
\put(75,60){$\rho^1_3$}
\put(140,60){$\rho^2_3$}
\put(60,220){$\mathbb{D}_3$}
}
\end{pspicture}
\caption{\label{infgraphpic2}}
\end{figure}

Properties (1)$_{m+1}$--(7)$_{m+1}$ now hold. It remains to check (8)$_{m+1}$. Note that since the final edges of $\rho_i^1$ and $\rho_i^2$ differ for $1\leq i \leq m+1$, (8)$_{m+1}$ implies there are at least $2^{m+1}$ \spantree{\mathcal{F}_{m+1}}s with origin $v_\infty$.

Let $(J_i)_{i=1}^{m+1}\in\{1,2\}^{m+1}$. If $m+1=1$, let $\mathcal{T}=\rho_1^{J_1}$. If $m+1>1$, choose $\mathcal{T}'$ with origin $v_\infty$  such that $\mathcal{T}'\subseteq \mathcal{G}_{\infty}[A_m]$ and, for $1\leq i \leq m$, the edge of $\mathcal{T}'$ ending at $w_i$ is the last edge of $\rho_i^{J_i}$. Since $w_{m+1}\in B_m$, it does not lie on $\mathcal{T}'$. Let $v'$ be the last vertex of $\rho_{m+1}^{J_{m+1}}$ that meets $\mathcal{T}'$, and let $\rho '$ be the section of $\rho_{m+1}^{J_{m+1}}$ from $v'$ to $w_{m+1}$. Then $\mathcal{T}=\mathcal{T}'\cup\rho '$ has the required properties.
\end{proof}

\begin{theorem}\label{gammafinitethm}
The set $\Phi_n={\{\mathcal{G}\in\Gamma:\exists v\in\V(\mathcal{G}),|\Tr(\mathcal{G},v)|\leq n\}}$ is finite for each $n\in\mathbb{N}$.
\end{theorem}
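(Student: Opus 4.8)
The plan is to argue by contradiction: assuming $\Phi_n$ is infinite, I will realise it as the source of an infinite digraph $\mathcal{G}_\infty$ built as a ``limit'' of its members, and then use the spear-pair construction to produce, inside a single finite member, strictly more than $n$ directed spanning subtrees with a common origin. That will contradict membership in $\Phi_n$. The two ingredients feeding the limiting machinery are a uniform valence bound and a uniform region-boundary-length bound, so I would establish these first.

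First I would record both bounds. For $\mathcal{G}\in\Gamma$ we have $\mathcal{G}=\graph{M}{D}$ for some $D\in\Lambda$, so $|\Tr(\mathcal{G},v)|=\nrap{L}{0}$ is independent of the chosen vertex $v$; hence $\mathcal{G}\in\Phi_n$ forces $|\Tr(\mathcal{G},v)|\leq n$ for \emph{every} $v$. Corollary \ref{boundedvalencecor} then bounds the in-degree of every vertex of such a $\mathcal{G}$ by $n$, so by (G4) its valence is at most $2n$, and Corollary \ref{boundedsizeregionscor} bounds the length of every region boundary by $n$. I set $n_1=2n$ and $n_2=n$.

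Next, choosing one basepoint in each element of the infinite set $\Phi_n$ produces an infinite family $\Phi$ of planar pointed digraphs of valence at most $n_1$ (distinct underlying graphs give distinct pointed digraphs). I would apply the compactness lemma to extract a sequence of distinct $(\mathcal{G}_i,v_i)_{i=1}^\infty$ whose $m$-balls stabilise, and form $(\mathcal{G}_\infty,v_\infty)=\bigcup_{i}\ball{\mathcal{G}_i}{v_i}{i}$. By the accompanying lemmas $\mathcal{G}_\infty$ is infinite, has valence at most $n_1$, and injects into $\sphere$; and since each $\mathcal{G}_m\in\Gamma$ is planar with alternating edges, every region boundary of $\sphere\setminus\mathcal{G}_m$ is a cycle, of length at most $n_2$ by Corollary \ref{boundedsizeregionscor}. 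Thus $\mathcal{G}_\infty$ is \ocon{\mathcal{O}} and the hypotheses of Lemma \ref{twoabedgeslemma}, hence of Corollary \ref{spearpaircor} and of the preceding Proposition, hold for $(\mathcal{G}_i)$.

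Finally I would invoke the preceding Proposition with $k$ chosen so that $2^k>n$: it yields a set $\mathcal{F}_k\subseteq\V(\mathcal{G}_\infty)$ together with at least $2^k$ distinct \emph{finite} \spantree{\mathcal{F}_k}s of $\mathcal{G}_\infty$ with origin $v_\infty$. Being finite in number and individually finite, these trees together with $\mathcal{F}_k$ all lie in a single ball $\ball{\mathcal{G}_\infty}{v_\infty}{M}$, which by construction equals $\ball{\mathcal{G}_{m'}}{v_{m'}}{M}\subseteq\mathcal{G}_{m'}$ for every $m'\geq M$. Fixing such an $m'$, the $2^k$ trees become $2^k$ distinct \spantree{\mathcal{F}_k}s of the finite digraph $\mathcal{G}_{m'}$ with origin $v_{m'}$; since $\mathcal{G}_{m'}\in\Gamma$ is \ocon{\mathcal{O}}, Lemma \ref{extendtreeslemma} gives $|\Tr(\mathcal{G}_{m'},v_{m'})|\geq 2^k>n$. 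But $\mathcal{G}_{m'}\in\Phi_n$ together with the vertex-independence of $|\Tr(\mathcal{G}_{m'},v)|$ forces $|\Tr(\mathcal{G}_{m'},v_{m'})|\leq n$, a contradiction. The hardest step will be this bridge between the infinite and finite worlds: one must observe that the $2^k$ spear-pair trees are finite and therefore all detectable within one finite member $\mathcal{G}_{m'}$ of the sequence, which is exactly where the counting estimate of Lemma \ref{extendtreeslemma} can be applied; verifying the two uniform bounds and the region-boundary hypotheses is the supporting technical work.
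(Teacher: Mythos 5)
Your proposal is correct and follows essentially the same route as the paper's own proof: uniform valence and region-boundary bounds from Corollaries \ref{boundedvalencecor} and \ref{boundedsizeregionscor}, the compactness lemma to build $\mathcal{G}_\infty$ from a stabilising sequence in $\Phi_n$, the spear-pair proposition to produce many finite \spantree{\mathcal{F}}s, and Lemma \ref{extendtreeslemma} applied inside a single finite $\mathcal{G}_m$ containing them all to force $|\Tr(\mathcal{G}_m,v_m)|>n$, a contradiction. The only cosmetic differences are that you take $2^k>n$ trees where the paper takes $n+1$, and you make explicit the vertex-independence of $|\Tr(\mathcal{G},v)|$ for $\mathcal{G}\in\Gamma$, which the paper leaves implicit.
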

\begin{proof}
Fix $n\in\mathbb{N}$. For each $\mathcal{G}\in\Phi_n$, fix a vertex $v\in\V(\mathcal{G})$ such that $|\Tr(\mathcal{G},v)|\leq n$ and fix an embedding of $\mathcal{G}$ into $\sphere$. By Corollary \ref{boundedvalencecor} each $\mathcal{G}\in\Phi_n$ has valence bounded above by $2n$, and by Corollary \ref{boundedsizeregionscor} the boundary of any region $r$ of $\sphere\setminus\mathcal{G}$ is a cycle in $\mathcal{G}$ with length at most $n$. 

Suppose, for a contradiction, that $\Phi_n$ is infinite. Then there is a sequence $(\mathcal{G}_i,v_i)$ as above, from which we can define an infinite pointed digraph $\mathcal{G}_\infty$. There is a set $\mathcal{F}\subseteq\V(\mathcal{G}_\infty)$ and a sequence $(\mathcal{T}_i)_{i=1}^{n+1}$ of finite \spantree{\mathcal{F}}s with origin $v_\infty$. Choose $m\in\mathbb{N}$ such that $\mathcal{T}_i\subseteq\ball{\mathcal{G}_\infty}{v_\infty}{m}{}=\ball{\mathcal{G}_m}{v_m}{m}$ for $1\leq i \leq n+1$. Since $\mathcal{G}_m$ is \ocon{\mathcal{O}}, Lemma \ref{extendtreeslemma} implies that $n+1\leq |\Tr(\ball{\mathcal{G}_m}{v_m}{m}{},v_m)|\leq |\Tr(\mathcal{G}_m,v_m)|\leq n$.
\end{proof}

\begin{remark}
Theorem \ref{finitenessthm} now follows. The details of the proof are contained in the proofs of Theorem \ref{juhaszthmspecial} and Theorem \ref{fulljuhaszproof}, and so are omitted here.
For the prime case, see the proof of Corollary \ref{fibrednesscor}.
\end{remark}

%------------------------------

\section{Proof of Juhasz' theorem}\label{juhaszsec}
\subsection{Proof}
%Master document is alexpolypaper.tex.

By Theorem \ref{gammafinitethm} we now know that the set $\Phi_3$ is finite. In order to prove Theorem \ref{finitenessthm}, we calculate this set explicitly.

\begin{theorem}
Let $\mathcal{G}\in\Gamma$. Suppose that $|\Tr(\mathcal{G},v)|<4$ for any $v\in\V(\mathcal{G})$. Then, up to reflection, $\mathcal{G}$ is one of the digraphs $\mathcal{G}_{\alpha},\mathcal{G}_{\beta},\mathcal{G}_{\gamma},\mathcal{G}_{\delta}$ shown in Figure \ref{threetreegraphspic1}.
\begin{figure}[htbp]
\centering
%LaTeX with PSTricks extensions
%%Creator: 0.46
%%Please note this file requires PSTricks extensions
\psset{xunit=.5pt,yunit=.5pt,runit=.5pt}
\begin{pspicture}(480,90)
{
\pscustom[linewidth=1,linecolor=black,fillstyle=solid,fillcolor=black]%circle
{
\newpath
\moveto(269.5,39.99989)
\curveto(269.5,34.47989)(265.02,29.99989)(259.5,29.99989)
\curveto(253.98,29.99989)(249.5,34.47989)(249.5,39.99989)
\curveto(249.5,45.51989)(253.98,49.99989)(259.5,49.99989)
\curveto(265.02,49.99989)(269.5,45.51989)(269.5,39.99989)
\closepath
}
}
{
\pscustom[linewidth=1,linecolor=black,fillstyle=solid,fillcolor=black]%circle
{
\newpath
\moveto(410.5,44.99999)
\curveto(410.5,39.47999)(406.02,34.99999)(400.5,34.99999)
\curveto(394.98,34.99999)(390.5,39.47999)(390.5,44.99999)
\curveto(390.5,50.51999)(394.98,54.99999)(400.5,54.99999)
\curveto(406.02,54.99999)(410.5,50.51999)(410.5,44.99999)
\closepath
}
}
{
\pscustom[linewidth=1,linecolor=black,fillstyle=solid,fillcolor=black]%circle
{
\newpath
\moveto(475.5,74.999991)
\curveto(475.5,69.479991)(471.02,64.999991)(465.5,64.999991)
\curveto(459.98,64.999991)(455.5,69.479991)(455.5,74.999991)
\curveto(455.5,80.519991)(459.98,84.999991)(465.5,84.999991)
\curveto(471.02,84.999991)(475.5,80.519991)(475.5,74.999991)
\closepath
}
}
{
\pscustom[linewidth=1,linecolor=black,fillstyle=solid,fillcolor=black]%circle
{
\newpath
\moveto(349.5,39.99989)
\curveto(349.5,34.47989)(345.02,29.99989)(339.5,29.99989)
\curveto(333.98,29.99989)(329.5,34.47989)(329.5,39.99989)
\curveto(329.5,45.51989)(333.98,49.99989)(339.5,49.99989)
\curveto(345.02,49.99989)(349.5,45.51989)(349.5,39.99989)
\closepath
}
}
{
\pscustom[linewidth=1,linecolor=black,fillstyle=solid,fillcolor=black]%circle
{
\newpath
\moveto(65.5,40)
\curveto(65.5,34.48)(61.02,30)(55.5,30)
\curveto(49.98,30)(45.5,34.48)(45.5,40)
\curveto(45.5,45.52)(49.98,50)(55.5,50)
\curveto(61.02,50)(65.5,45.52)(65.5,40)
\closepath
}
}
{
\pscustom[linewidth=1,linecolor=black,fillstyle=solid,fillcolor=black]%circle
{
\newpath
\moveto(128.5,39.99996)
\curveto(128.5,34.47996)(124.02,29.99996)(118.5,29.99996)
\curveto(112.98,29.99996)(108.5,34.47996)(108.5,39.99996)
\curveto(108.5,45.51996)(112.98,49.99996)(118.5,49.99996)
\curveto(124.02,49.99996)(128.5,45.51996)(128.5,39.99996)
\closepath
}
}
{
\pscustom[linewidth=1,linecolor=black,fillstyle=solid,fillcolor=black]%circle
{
\newpath
\moveto(208.5,39.99996)
\curveto(208.5,34.47996)(204.02,29.99996)(198.5,29.99996)
\curveto(192.98,29.99996)(188.5,34.47996)(188.5,39.99996)
\curveto(188.5,45.51996)(192.98,49.99996)(198.5,49.99996)
\curveto(204.02,49.99996)(208.5,45.51996)(208.5,39.99996)
\closepath
}
}
{
\pscustom[linewidth=1,linecolor=black,fillstyle=solid,fillcolor=black]%circle
{
\newpath
\moveto(475.5,15)
\curveto(475.5,9.48)(471.02,5)(465.5,5)
\curveto(459.98,5)(455.5,9.48)(455.5,15)
\curveto(455.5,20.52)(459.98,25)(465.5,25)
\curveto(471.02,25)(475.5,20.52)(475.5,15)
\closepath
}
}
{
\pscustom[linewidth=1,linecolor=black]
{
\newpath
\moveto(118.5,49.99998262)
\curveto(163.5,69.99996262)(198.5,49.99998262)(198.5,49.99998262)
}
}
{
\pscustom[linewidth=1,linecolor=black,fillstyle=solid,fillcolor=black]%arrowhead
{
\newpath
\moveto(127.63811699,54.06136389)
\lineto(129.66881128,59.34116319)
\lineto(118.5,49.99998262)
\lineto(132.9179163,52.0306696)
\lineto(127.63811699,54.06136389)
\closepath
}
}
{
\pscustom[linewidth=1,linecolor=black]
{
\newpath
\moveto(118.5,29.99988262)
\curveto(163.28244,9.90318262)(196.94678,29.30958262)(198.5,29.99988262)
}
}
{
\pscustom[linewidth=1,linecolor=black,fillstyle=solid,fillcolor=black]%arrowhead
{
\newpath
\moveto(189.36184084,25.93859622)
\lineto(187.33109174,20.65881799)
\lineto(198.5,29.99988262)
\lineto(184.08206262,27.96934532)
\lineto(189.36184084,25.93859622)
\closepath
}
}
{
\pscustom[linewidth=1,linecolor=black]
{
\newpath
\moveto(126.5,44.99988262)
\curveto(156.5,54.99996262)(188.5,44.99988262)(188.5,44.99988262)
\lineto(188.5,44.99988262)
}
}
{
\pscustom[linewidth=1,linecolor=black,fillstyle=solid,fillcolor=black]%arrowhead
{
\newpath
\moveto(178.95520701,47.98265429)
\lineto(173.94418115,45.35784576)
\lineto(188.5,44.99988262)
\lineto(176.33039849,52.99368015)
\lineto(178.95520701,47.98265429)
\closepath
}
}
{
\pscustom[linewidth=1,linecolor=black]
{
\newpath
\moveto(128.5,34.99988262)
\curveto(158.5,24.99988262)(190.5,35.99988262)(190.5,35.99988262)
\lineto(190.5,35.99988262)
}
}
{
\pscustom[linewidth=1,linecolor=black,fillstyle=solid,fillcolor=black]%arrowhead
{
\newpath
\moveto(137.98683298,31.83760496)
\lineto(143.04647724,34.36742709)
\lineto(128.5,34.99988262)
\lineto(140.51665511,26.7779607)
\lineto(137.98683298,31.83760496)
\closepath
}
}
{
\pscustom[linewidth=1,linecolor=black]
{
\newpath
\moveto(268.5,42.99988262)
\curveto(298.5,44.99988262)(329.5,42.99988262)(329.5,42.99988262)
}
}
{
\pscustom[linewidth=1,linecolor=black,fillstyle=solid,fillcolor=black]%arrowhead
{
\newpath
\moveto(319.52074691,43.6437054)
\lineto(315.27151656,39.90953327)
\lineto(329.5,42.99988262)
\lineto(315.78657479,47.89293575)
\lineto(319.52074691,43.6437054)
\closepath
}
}
{
\pscustom[linewidth=1,linecolor=black]
{
\newpath
\moveto(270.5,36.99988262)
\curveto(300.5,34.99988262)(329.5,36.99988262)(329.5,36.99988262)
}
}
{
\pscustom[linewidth=1,linecolor=black,fillstyle=solid,fillcolor=black]%arrowhead
{
\newpath
\moveto(280.47785158,36.33469251)
\lineto(284.73506825,40.0597571)
\lineto(270.5,36.99988262)
\lineto(284.20291617,32.07747584)
\lineto(280.47785158,36.33469251)
\closepath
}
}
{
\pscustom[linewidth=1,linecolor=black]
{
\newpath
\moveto(264.5,49.99988262)
\curveto(299.5,69.99989262)(335.5,48.99988262)(335.5,48.99988262)
}
}
{
\pscustom[linewidth=1,linecolor=black,fillstyle=solid,fillcolor=black]%arrowhead
{
\newpath
\moveto(273.18243035,54.96127387)
\lineto(274.67084599,60.41880251)
\lineto(264.5,49.99988262)
\lineto(278.639959,53.47285823)
\lineto(273.18243035,54.96127387)
\closepath
}
}
{
\pscustom[linewidth=1,linecolor=black]
{
\newpath
\moveto(263.5,30.99988262)
\curveto(297.66197,10.61078262)(329.76897,27.80338262)(334.5,29.99988262)
}
}
{
\pscustom[linewidth=1,linecolor=black,fillstyle=solid,fillcolor=black]%arrowhead
{
\newpath
\moveto(325.42987397,25.78884785)
\lineto(323.48623747,20.47638353)
\lineto(334.5,29.99988262)
\lineto(320.11740965,27.73248435)
\lineto(325.42987397,25.78884785)
\closepath
}
}
{
\pscustom[linewidth=1,linecolor=black]
{
\newpath
\moveto(258.5,49.99988262)
\curveto(301.72299,93.68930262)(338.33875,51.99128262)(339.5,50.99988262)
}
}
{
\pscustom[linewidth=1,linecolor=black,fillstyle=solid,fillcolor=black]%arrowhead
{
\newpath
\moveto(331.8946429,57.49284368)
\lineto(326.25531563,57.04788526)
\lineto(339.5,50.99988262)
\lineto(331.44968448,63.13217094)
\lineto(331.8946429,57.49284368)
\closepath
}
}
{
\pscustom[linewidth=1,linecolor=black]
{
\newpath
\moveto(260.5,28.99988262)
\curveto(305.85806,-13.14841738)(339.53074,29.81548262)(339.5,29.99988262)
}
}
{
\pscustom[linewidth=1,linecolor=black,fillstyle=solid,fillcolor=black]%arrowhead
{
\newpath
\moveto(267.82551036,22.1927615)
\lineto(273.47856295,22.40011719)
\lineto(260.5,28.99988262)
\lineto(268.03286605,16.5397089)
\lineto(267.82551036,22.1927615)
\closepath
}
}
{
\pscustom[linewidth=1,linecolor=black]
{
\newpath
\moveto(404.5,53.99998262)
\curveto(429.0697,78.92828262)(454.64667,79.85778262)(455.5,80.00000262)
}
}
{
\pscustom[linewidth=1,linecolor=black,fillstyle=solid,fillcolor=black]%arrowhead
{
\newpath
\moveto(445.63605764,78.35603149)
\lineto(442.34806914,73.75286609)
\lineto(455.5,80.00000262)
\lineto(441.03289224,81.64401998)
\lineto(445.63605764,78.35603149)
\closepath
}
}
{
\pscustom[linewidth=1,linecolor=black]
{
\newpath
\moveto(469.5,66.00000262)
\curveto(479.6397,46.55878262)(470.57509,25.30038262)(470.5,24.99998262)
}
}
{
\pscustom[linewidth=1,linecolor=black,fillstyle=solid,fillcolor=black]%arrowhead
{
\newpath
\moveto(472.92505229,34.7014836)
\lineto(470.01447282,39.55210491)
\lineto(470.5,24.99998262)
\lineto(477.77567361,37.61206308)
\lineto(472.92505229,34.7014836)
\closepath
}
}
{
\pscustom[linewidth=1,linecolor=black]
{
\newpath
\moveto(460.5,65.00000262)
\curveto(450.89205,36.17898262)(461.50697,23.84318262)(461.5,23.99998262)
}
}
{
\pscustom[linewidth=1,linecolor=black,fillstyle=solid,fillcolor=black]%arrowhead
{
\newpath
\moveto(457.33744288,55.51326279)
\lineto(459.86711597,50.45354402)
\lineto(460.5,65.00000262)
\lineto(452.27772411,52.98358971)
\lineto(457.33744288,55.51326279)
\closepath
}
}
{
\pscustom[linewidth=1,linecolor=black]
{
\newpath
\moveto(456.5,71.00000262)
\curveto(437.35658,51.12238262)(411.17218,50.09608262)(410.5,49.99998262)
}
}
{
\pscustom[linewidth=1,linecolor=black,fillstyle=solid,fillcolor=black]%arrowhead
{
\newpath
\moveto(420.39934172,51.41526831)
\lineto(423.79296412,55.94111928)
\lineto(410.5,49.99998262)
\lineto(424.92519268,48.0216459)
\lineto(420.39934172,51.41526831)
\closepath
}
}
{
\pscustom[linewidth=1,linecolor=black]
{
\newpath
\moveto(405.5,34.99998262)
\curveto(430.5,9.99998262)(456.5,9.99998262)(456.5,9.99998262)
}
}
{
\pscustom[linewidth=1,linecolor=black,fillstyle=solid,fillcolor=black]%arrowhead
{
\newpath
\moveto(412.57106781,27.92891481)
\lineto(418.22792206,27.92891481)
\lineto(405.5,34.99998262)
\lineto(412.57106781,22.27206056)
\lineto(412.57106781,27.92891481)
\closepath
}
}
{
\pscustom[linewidth=1,linecolor=black]
{
\newpath
\moveto(409.5,39.99998262)
\curveto(439.28423,35.16178262)(454.88242,21.46318262)(455.5,20.99998262)
}
}
{
\pscustom[linewidth=1,linecolor=black,fillstyle=solid,fillcolor=black]%arrowhead
{
\newpath
\moveto(447.50009327,27.00010697)
\lineto(441.90008083,26.20019402)
\lineto(455.5,20.99998262)
\lineto(446.70018032,32.60011941)
\lineto(447.50009327,27.00010697)
\closepath
}
}
{
\put(25,65){$\mathcal{G}_{\alpha}$}
\put(90,65){$\mathcal{G}_{\beta}$}
\put(230,65){$\mathcal{G}_{\gamma}$}
\put(375,65){$\mathcal{G}_{\delta}$}
}
\end{pspicture}
\caption{\label{threetreegraphspic1}}
\end{figure}
\end{theorem}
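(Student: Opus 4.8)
The plan is to squeeze $\mathcal{G}$ between the valence bound and the face-size bound that the spanning-tree count already provides, and then finish with a short finite enumeration. Throughout I write $t=|\Tr(\mathcal{G},v)|$; since the bijection $\graphm{M}\colon\Lambda\to\Gamma$ identifies this with $\nrap{L}{0}$ for the corresponding link $L$, the value $t$ is independent of the chosen root $v$, and by hypothesis $t\le 3$. First I would fix the local picture. Property (G4) forces the in-degree and out-degree to agree at every vertex, and it is exactly what makes every region boundary a directed cycle. By Corollary \ref{boundedvalencecor} no vertex can have in-degree exceeding $t\le 3$, while (G5) rules out in-degree $1$; hence every vertex has in-degree $2$ or $3$, that is, valence $4$ or $6$. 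By Corollary \ref{boundedsizeregionscor} every region boundary has length at most $t\le 3$, and since $\mathcal{G}$ has no loops its length is at least $2$, so every face of the embedding is a bigon or a triangle.

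Next I would run a Euler-characteristic count. Writing $a,b$ for the numbers of valence-$4$ and valence-$6$ vertices and $F_2,F_3$ for the numbers of bigon and triangle faces, the identities $E=2a+3b$, $2F_2+3F_3=2E$ and $V-E+F=2$ combine to give $F_2=6-a$ and $F_3=2a+2b-4$. In particular $a\le 6$, so the number of valence-$4$ vertices is controlled at once; the work that remains is to bound $b$ and the total number of vertices, using $t\le 3$ a second time.

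Finally I would enumerate, splitting on whether a valence-$6$ vertex occurs. If some vertex $w$ has in-degree $3$, then Proposition \ref{leaforiginprop} supplies a root $v_w$ for which $w$ is a leaf, and Corollary \ref{boundedvalencecor} then gives $t\ge 3$, so $t=3$ and the configuration is rigid; I expect to show that a third vertex or any further parallel edge produces a fourth arborescence, tracked by Lemma \ref{deletecontractlemma}, forcing $\mathcal{G}=\mathcal{G}_\gamma$, the two vertices joined by three edges each way. If instead every vertex has valence $4$, then $a=V\le 6$ and $F_3=2V-4$, and the cases $V=1,2,3$ yield exactly $\mathcal{G}_\alpha$, $\mathcal{G}_\beta$ and, by a short multiplicity computation singling out the doubled triangle as the only prime option, $\mathcal{G}_\delta$.

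The genuine obstacle is the remaining exclusions: ruling out $V\ge 4$ in the all-valence-$4$ case (the extreme instance being $V=6$, where $F_2=6-a=0$ forces a simple triangulation, namely the octahedron) and ruling out every mixed or larger valence-$6$ configuration. For each of these I must exhibit at least four distinct arborescences. I would attack this by repeatedly applying deletion--contraction (Lemma \ref{deletecontractlemma}) to peel off bigons while staying inside a prime, alternating family, reducing to the $2$- and $3$-vertex graphs already settled, and by using Lemma \ref{extendtreeslemma} to convert local branching into a genuine lower bound on $t$. Verifying that primeness (G3) closes off every route by which a prime graph on more vertices could keep $t\le 3$ is the laborious, essentially computational heart of the argument.
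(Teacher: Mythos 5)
Your setup is sound and the arithmetic checks out: in-degree at most $3$ at every vertex follows correctly from Corollary \ref{boundedvalencecor}, the face bound from Corollary \ref{boundedsizeregionscor}, and the Euler count $F_2=6-a$, $F_3=2a+2b-4$ is right. But the proposal has a genuine gap, and it is exactly where you say it is: the ``laborious, essentially computational heart of the argument'' is never carried out, and it is not a routine verification that can be deferred. Note first that your Euler identities bound $a$ but place \emph{no} bound on $b$ or on $|\V(\mathcal{G})|$, so your enumeration is not even reduced to a finite list; the claims that an in-degree-$3$ vertex forces $\mathcal{G}=\mathcal{G}_\gamma$ and that $V\le 3$ in the all-valence-$4$ case are precisely the theorem, restated. ``I expect to show'' and ``I would attack this by'' mark the places where four distinct arborescences must actually be exhibited in every excluded configuration (the octahedron, all mixed valence-$4$/valence-$6$ graphs, all larger bigon/triangle complexes), and no mechanism for doing this uniformly is given. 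Deletion--contraction (Lemma \ref{deletecontractlemma}) and Lemma \ref{extendtreeslemma} are the right tools, but invoking them is not the same as running them: one must identify which local configurations are compatible with primeness and alternation, and show each yields a fourth tree.

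For comparison, the paper organises the argument quite differently, by the number of regions with three or more sides rather than by vertex valence, and it supplies exactly the missing content in the form of two subgraph lemmas (Lemma \ref{nothreetwolemma} and Lemma \ref{notwotwotwolemma}: specific configurations of parallel edges, bigon chains and connecting directed paths, each shown to produce four spanning subtrees via $\mathcal{O}$-connectivity and Lemma \ref{extendtreeslemma}), followed by a growth argument: starting from two triangular regions sharing one edge, primeness forces successive directed paths whose possible attachments are enumerated as $\mathcal{G}_1$--$\mathcal{G}_{13}$, each either containing a forbidden subgraph or closing up into $\mathcal{G}_\delta$. Your valence/face/Euler framing could in principle be completed into an alternative proof, and it would localise the case analysis differently, but as written it establishes only the easy constraints; everything that distinguishes the four target digraphs from, say, the octahedron or a long prism of bigons remains unproved.
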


\begin{remark}
This result for $|\Tr(\mathcal{G},v)|\in\{1,2\}$ is already known (\cite{MR2001624} Theorem 3(5)).
\end{remark}

\begin{proof}
First suppose $\sphere\setminus\mathcal{G}$ is a single region $r$. Then $\mathcal{G}$ is a tree. Since $\mathcal{G}$ is finite but has no 1--valent vertices, $\mathcal{G}=\mathcal{G}_{\alpha}$.

Assume there are at least two regions of $\sphere\setminus\mathcal{G}$. Let $r$ be one such region. Then $\partial r$ is a topological circle, as otherwise $r$ would meet itself at a vertex of $\mathcal{G}$, contradicting that $\mathcal{G}$ is prime.
Note that every region of $\sphere\setminus\mathcal{G}$ has at least two sides. Suppose every region has exactly two sides. Then by considering the Euler characteristic $\chi$ of $\sphere$ and of $\mathcal{G}$ we find that $|\V(\mathcal{G})|=2$. Since $|\Tr(\mathcal{G},v)|<4$ for a vertex $v$, we see that either $\mathcal{G}=\mathcal{G}_{\beta}$ or $\mathcal{G}=\mathcal{G}_{\gamma}$.

This leave the case where $\sphere\setminus\mathcal{G}$ has a region $r_0$ with at least 3 sides. Since $\partial r_0$ is a circle, considering $\chi$ shows there is a second region $r_1$ with at least 3 sides.

We now consider whether certain digraphs can occur as subgraphs of $\mathcal{G}$. Note that, by (G4), the boundary of every region of $\sphere\setminus\mathcal{G}$ is a cycle, so $\mathcal{G}$ is \ocon{\mathcal{O}}. Thus Lemma \ref{extendtreeslemma} applies.

\begin{lemma}\label{nothreetwolemma}
If the graph shown in Figure \ref{threetreegraphspic2} is a subgraph of $\mathcal{G}$, where $v$ and $w$ may coincide or $v'$ and $w'$ may coincide, but not both, then $|\Tr(\mathcal{G},v)|\geq 4$. Here a dashed line denotes a directed simple path.
\begin{figure}[htbp]
\centering
%LaTeX with PSTricks extensions
%%Creator: 0.46
%%Please note this file requires PSTricks extensions
\psset{xunit=.35pt,yunit=.35pt,runit=.35pt}
\begin{pspicture}(210,220)
{
\pscustom[linewidth=6,linecolor=black,fillstyle=solid,fillcolor=black]%circle
{
\newpath
\moveto(50,30)
\curveto(50,24.48)(45.52,20)(40,20)
\curveto(34.48,20)(30,24.48)(30,30)
\curveto(30,35.52)(34.48,40)(40,40)
\curveto(45.52,40)(50,35.52)(50,30)
\closepath
}
}
{
\pscustom[linewidth=6,linecolor=black,fillstyle=solid,fillcolor=black]%circle
{
\newpath
\moveto(50,190)
\curveto(50,184.48)(45.52,180)(40,180)
\curveto(34.48,180)(30,184.48)(30,190)
\curveto(30,195.52)(34.48,200)(40,200)
\curveto(45.52,200)(50,195.52)(50,190)
\closepath
}
}
{
\pscustom[linewidth=6,linecolor=black,fillstyle=solid,fillcolor=black]%circle
{
\newpath
\moveto(179.999999,30)
\curveto(179.999999,24.48)(175.519999,20)(169.999999,20)
\curveto(164.479999,20)(159.999999,24.48)(159.999999,30)
\curveto(159.999999,35.52)(164.479999,40)(169.999999,40)
\curveto(175.519999,40)(179.999999,35.52)(179.999999,30)
\closepath
}
}
{
\pscustom[linewidth=6,linecolor=black,fillstyle=solid,fillcolor=black]%circle
{
\newpath
\moveto(179.999999,190)
\curveto(179.999999,184.48)(175.519999,180)(169.999999,180)
\curveto(164.479999,180)(159.999999,184.48)(159.999999,190)
\curveto(159.999999,195.52)(164.479999,200)(169.999999,200)
\curveto(175.519999,200)(179.999999,195.52)(179.999999,190)
\closepath
}
}
{
\pscustom[linewidth=2,linecolor=black,linestyle=dashed,dash=6 6]
{
\newpath
\moveto(40,29.99998262)
\lineto(40,180.00000262)
\lineto(40,190.00000262)
}
}
{
\pscustom[linewidth=2,linecolor=black,linestyle=dashed,dash=6 6]
{
\newpath
\moveto(170,190.00000262)
\lineto(170,29.99998262)
}
}
{
\pscustom[linewidth=4,linecolor=black,fillstyle=solid,fillcolor=black]%arrowhead
{
\newpath
\moveto(40,105.00000262)
\lineto(46,99.00000262)
\lineto(40,120.00000262)
\lineto(34,99.00000262)
\lineto(40,105.00000262)
\closepath
}
}
{
\pscustom[linewidth=4,linecolor=black,fillstyle=solid,fillcolor=black]%arrowhead
{
\newpath
\moveto(170,115.00000262)
\lineto(164,121.00000262)
\lineto(170,100.00000262)
\lineto(176,121.00000262)
\lineto(170,115.00000262)
\closepath
}
}
{
\pscustom[linewidth=2,linecolor=black]
{
\newpath
\moveto(40,190.00000262)
\lineto(170,190.00000262)
}
}
{
\pscustom[linewidth=2,linecolor=black]
{
\newpath
\moveto(40,190.00000262)
\curveto(90,220.00000262)(120,220.00000262)(170,190.00000262)
}
}
{
\pscustom[linewidth=2,linecolor=black]
{
\newpath
\moveto(40,190.00000262)
\curveto(91.11352,158.73716262)(120,160.00000262)(170,190.00000262)
}
}
{
\pscustom[linewidth=2,linecolor=black]
{
\newpath
\moveto(40,29.99998262)
\curveto(90,60.00000262)(120,60.00000262)(170,29.99998262)
}
}
{
\pscustom[linewidth=2,linecolor=black]
{
\newpath
\moveto(40,29.99998262)
\curveto(90,-0.00001738)(120,-0.00001738)(170,29.99998262)
}
}
{
\pscustom[linewidth=4,linecolor=black,fillstyle=solid,fillcolor=black]%arrowhead
{
\newpath
\moveto(99,212.00000262)
\lineto(93,206.00000262)
\lineto(114,212.00000262)
\lineto(93,218.00000262)
\lineto(99,212.00000262)
\closepath
}
}
{
\pscustom[linewidth=4,linecolor=black,fillstyle=solid,fillcolor=black]%arrowhead
{
\newpath
\moveto(99,168.00000262)
\lineto(93,162.00000262)
\lineto(114,168.00000262)
\lineto(93,174.00000262)
\lineto(99,168.00000262)
\closepath
}
}
{
\pscustom[linewidth=4,linecolor=black,fillstyle=solid,fillcolor=black]%arrowhead
{
\newpath
\moveto(108,190.00000262)
\lineto(114,196.00000262)
\lineto(93,190.00000262)
\lineto(114,184.00000262)
\lineto(108,190.00000262)
\closepath
}
}
{
\pscustom[linewidth=4,linecolor=black,fillstyle=solid,fillcolor=black]%arrowhead
{
\newpath
\moveto(108,50.99998262)
\lineto(114,56.99998262)
\lineto(93,50.99998262)
\lineto(114,44.99998262)
\lineto(108,50.99998262)
\closepath
}
}
{
\pscustom[linewidth=4,linecolor=black,fillstyle=solid,fillcolor=black]%arrowhead
{
\newpath
\moveto(99,7.99998262)
\lineto(93,1.99998262)
\lineto(114,7.99998262)
\lineto(93,13.99998262)
\lineto(99,7.99998262)
\closepath
}
}
{
\put(0,185){$w$}
\put(5,25){$v$}
\put(190,25){$v'$}
\put(190,185){$w'$}
}
\end{pspicture}
\caption{\label{threetreegraphspic2}}
\end{figure}
\end{lemma}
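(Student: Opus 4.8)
The plan is to apply Lemma~\ref{extendtreeslemma}. Set $\mathcal{F}=\{v,w,v',w'\}$ (a set of three elements when one of the two permitted coincidences occurs). Since $\mathcal{G}\in\Gamma$, the boundary of every region of $\sphere\setminus\mathcal{G}$ is a cycle, so $\mathcal{G}$ is \ocon{\mathcal{O}} and $v\in\mathcal{F}$; hence by Lemma~\ref{extendtreeslemma} it is enough to produce four \emph{distinct} \spantree{\mathcal{F}}s of $\mathcal{G}$ with origin $v$. I will exhibit all four inside the displayed subgraph, so that $|\Tr(\mathcal{G},v)|\geq 4$ follows at once.

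Write $\rho_1$ for the directed path from $v$ to $w$ and $\rho_2$ for the directed path from $w'$ to $v'$, let $e,e'$ be the two edges directed from $w$ to $w'$, and let $f$ be the edge directed from $v$ to $v'$. In the main case (all four vertices distinct, and also when $v=w$ so that $\rho_1$ is trivial) I take the four edge-sets
\[
\rho_1\cup\{e\}\cup\{f\},\qquad \rho_1\cup\{e\}\cup\rho_2,\qquad \rho_1\cup\{e'\}\cup\{f\},\qquad \rho_1\cup\{e'\}\cup\rho_2 .
\]
Each reaches $w$ along $\rho_1$, reaches $w'$ by one of the two choices $e,e'$, and reaches $v'$ either directly by $f$ or along $\rho_2$; in every case each vertex other than $v$ has a unique incoming edge and no directed cycle is created (the backward edges $v'\to v$ and $w'\to w$ are never used), so these are directed trees with origin $v$, and their leaves lie among $\{w',v'\}\subseteq\mathcal{F}$. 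The four differ in the last edge into $w'$ or into $v'$, so they are distinct.

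The remaining case $v'=w'=:x$ (with $v\neq w$) must be handled separately, because reaching $w'$ and reaching $v'$ then concern the same vertex $x$. Here I use the edge $d$ directed from $w'$ to $w$ and take the four trees $\rho_1\cup\{e\}$, $\rho_1\cup\{e'\}$, $\rho_1\cup\{f\}$ and $\{f\}\cup\{d\}$: the first three reach $w$ by $\rho_1$ and reach $x$ in the three distinct ways $e$, $e'$, $f$, while the last reaches $x$ by $f$ and then $w$ by $d$. Again each is an acyclic directed tree with origin $v$ whose leaves lie in $\mathcal{F}$, and the four are distinct. This is the one place where the backward top edge $d$ is needed, and it is also where the hypothesis that \emph{not both} coincidences hold is used: were $v=w$ as well, the single remaining vertex could be reached only by the three forward edges and the count would drop to three.

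The argument contains no genuinely hard step; the main obstacle is the careful bookkeeping across the degenerate configurations, especially the case $v'=w'$. To motivate the recipe (and to confirm that these essentially exhaust the trees in the generic case) I will note that $w'$ is reachable only from $w$, so a tree containing the edge $w'\to w$ together with an edge $w\to w'$ would contain the cycle $w\to w'\to w$; hence in the generic case $w$ must be reached along $\rho_1$, leaving exactly the two independent binary choices used above. Once acyclicity, the leaf condition, and distinctness are checked in each of the three admissible configurations, Lemma~\ref{extendtreeslemma} delivers $|\Tr(\mathcal{G},v)|\geq 4$.
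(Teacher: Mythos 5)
Your proof is correct. The paper actually states Lemma \ref{nothreetwolemma} with no proof at all, and your argument — reading off the orientations correctly (two edges $w\to w'$, one edge $w'\to w$, one edge each way between $v$ and $v'$), exhibiting four distinct \spantree{\mathcal{F}}s with origin $v$ for $\mathcal{F}=\{v,w,v',w'\}$, handling the case $v'=w'$ separately via the edge $w'\to w$, and then invoking Lemma \ref{extendtreeslemma} (valid since $\mathcal{G}\in\Gamma$ is \ocon{\mathcal{O}}) — is precisely the argument the paper's machinery of $\mathcal{F}$--spanning subtrees was set up to supply, including your correct identification of where the hypothesis that not both coincidences occur is needed.
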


\begin{lemma}\label{notwotwotwolemma}
If the graph shown in Figure \ref{threetreegraphspic3} is a subgraph of $\mathcal{G}$, then $|\Tr(\mathcal{G},v)|\geq 4$.
\begin{figure}[htbp]
\centering
%LaTeX with PSTricks extensions
%%Creator: 0.46
%%Please note this file requires PSTricks extensions
\psset{xunit=.35pt,yunit=.35pt,runit=.35pt}
\begin{pspicture}(410,125)
{
\pscustom[linewidth=8,linecolor=black,fillstyle=solid,fillcolor=black]%circle
{
\newpath
\moveto(30,94.999997)
\curveto(30,89.479997)(25.52,84.999997)(20,84.999997)
\curveto(14.48,84.999997)(10,89.479997)(10,94.999997)
\curveto(10,100.519997)(14.48,104.999997)(20,104.999997)
\curveto(25.52,104.999997)(30,100.519997)(30,94.999997)
\closepath
}
}
{
\pscustom[linewidth=8,linecolor=black,fillstyle=solid,fillcolor=black]%circle
{
\newpath
\moveto(159.999999,94.999997)
\curveto(159.999999,89.479997)(155.519999,84.999997)(149.999999,84.999997)
\curveto(144.479999,84.999997)(139.999999,89.479997)(139.999999,94.999997)
\curveto(139.999999,100.519997)(144.479999,104.999997)(149.999999,104.999997)
\curveto(155.519999,104.999997)(159.999999,100.519997)(159.999999,94.999997)
\closepath
}
}
{
\pscustom[linewidth=8,linecolor=black,fillstyle=solid,fillcolor=black]%circle
{
\newpath
\moveto(287,94.999997)
\curveto(287,89.479997)(282.52,84.999997)(277,84.999997)
\curveto(271.48,84.999997)(267,89.479997)(267,94.999997)
\curveto(267,100.519997)(271.48,104.999997)(277,104.999997)
\curveto(282.52,104.999997)(287,100.519997)(287,94.999997)
\closepath
}
}
{
\pscustom[linewidth=8,linecolor=black,fillstyle=solid,fillcolor=black]%circle
{
\newpath
\moveto(415,94.999993)
\curveto(415,89.479993)(410.52,84.999993)(405,84.999993)
\curveto(399.48,84.999993)(395,89.479993)(395,94.999993)
\curveto(395,100.519993)(399.48,104.999993)(405,104.999993)
\curveto(410.52,104.999993)(415,100.519993)(415,94.999993)
\closepath
}
}
{
\pscustom[linewidth=2,linecolor=black]
{
\newpath
\moveto(20,95.00000262)
\curveto(70,125.00000262)(100,125.00000262)(150,95.00000262)
}
}
{
\pscustom[linewidth=2,linecolor=black]
{
\newpath
\moveto(20,95.00000262)
\curveto(71.11352,63.73716262)(100,65.00000262)(150,95.00000262)
}
}
{
\pscustom[linewidth=2,linecolor=black]
{
\newpath
\moveto(147,95.00000262)
\curveto(197,125.00000262)(227,125.00000262)(277,95.00000262)
}
}
{
\pscustom[linewidth=2,linecolor=black]
{
\newpath
\moveto(147,95.00000262)
\curveto(198.11352,63.73716262)(227,65.00000262)(277,95.00000262)
}
}
{
\pscustom[linewidth=2,linecolor=black]
{
\newpath
\moveto(275,95.00000262)
\curveto(325,125.00000262)(355,125.00000262)(405,95.00000262)
}
}
{
\pscustom[linewidth=2,linecolor=black]
{
\newpath
\moveto(275,95.00000262)
\curveto(326.11352,63.73716262)(355,65.00000262)(405,95.00000262)
}
}
{
\pscustom[linewidth=2,linecolor=black,linestyle=dashed,dash=6 6]
{
\newpath
\moveto(410,95.00000262)
\curveto(390,-5.00001738)(40,-5.00001738)(20,95.00000262)
}
}
{
\pscustom[linewidth=4,linecolor=black,fillstyle=solid,fillcolor=black]%arrowhead
{
\newpath
\moveto(215,117.00000262)
\lineto(221,123.00000262)
\lineto(200,117.00000262)
\lineto(221,111.00000262)
\lineto(215,117.00000262)
\closepath
}
}
{
\pscustom[linewidth=4,linecolor=black,fillstyle=solid,fillcolor=black]%arrowhead
{
\newpath
\moveto(206,73.00000262)
\lineto(200,67.00000262)
\lineto(221,73.00000262)
\lineto(200,79.00000262)
\lineto(206,73.00000262)
\closepath
}
}
{
\pscustom[linewidth=4,linecolor=black,fillstyle=solid,fillcolor=black]%arrowhead
{
\newpath
\moveto(343,117.00000262)
\lineto(349,123.00000262)
\lineto(328,117.00000262)
\lineto(349,111.00000262)
\lineto(343,117.00000262)
\closepath
}
}
{
\pscustom[linewidth=4,linecolor=black,fillstyle=solid,fillcolor=black]%arrowhead
{
\newpath
\moveto(334,73.00000262)
\lineto(328,67.00000262)
\lineto(349,73.00000262)
\lineto(328,79.00000262)
\lineto(334,73.00000262)
\closepath
}
}
{
\pscustom[linewidth=4,linecolor=black,fillstyle=solid,fillcolor=black]%arrowhead
{
\newpath
\moveto(215,19.99998262)
\lineto(221,25.99998262)
\lineto(200,19.99998262)
\lineto(221,13.99998262)
\lineto(215,19.99998262)
\closepath
}
}
{
\pscustom[linewidth=4,linecolor=black,fillstyle=solid,fillcolor=black]%arrowhead
{
\newpath
\moveto(88,117.00000262)
\lineto(94,123.00000262)
\lineto(73,117.00000262)
\lineto(94,111.00000262)
\lineto(88,117.00000262)
\closepath
}
}
{
\pscustom[linewidth=4,linecolor=black,fillstyle=solid,fillcolor=black]%arrowhead
{
\newpath
\moveto(79,73.00000262)
\lineto(73,67.00000262)
\lineto(94,73.00000262)
\lineto(73,79.00000262)
\lineto(79,73.00000262)
\closepath
}
}
{
\put(425,90){$v$}
}
\end{pspicture}
\caption{\label{threetreegraphspic3}}
\end{figure}
\end{lemma}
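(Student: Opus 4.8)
The plan is to apply Lemma \ref{extendtreeslemma} by producing four distinct \spantree{\mathcal{F}}s of $\mathcal{G}$ with origin $v$, where $\mathcal{F}$ is the set of the four marked vertices of Figure \ref{threetreegraphspic3}. As noted just before this lemma, property (G4) forces the boundary of every region of $\sphere\setminus\mathcal{G}$ to be a cycle, so $\mathcal{G}$ is \ocon{\mathcal{O}}; hence Lemma \ref{extendtreeslemma} applies, and exhibiting four such subtrees immediately yields $|\Tr(\mathcal{G},v)|\geq 4$.

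First I would fix notation for the displayed subgraph. Label the marked vertices $v_1,v_2,v_3,v_4$ from left to right, with $v_4=v$. Reading off the arrows, the three bigons are $2$--cycles: between $v_i$ and $v_{i+1}$ there is one edge in each direction. The remaining data is the directed simple path $\rho$ running from $v_4$ back to $v_1$. Thus within this subgraph each of $v_1,v_2,v_3$ has exactly two candidate incoming edges: $v_3$ is fed from $v_4$ or from $v_2$; $v_2$ from $v_1$ or from $v_3$; and $v_1$ from $v_2$ or along $\rho$ from $v_4$.

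The key step is to build subtrees by choosing, independently for each of $v_1,v_2,v_3$, one of its two incoming edges, and to determine which of the $2^3$ selections are acyclic. A selection fails to be a tree precisely when reading the chosen edges backwards closes up a directed cycle, and the only candidate cycles here are the two bigon $2$--cycles $v_1\leftrightarrow v_2$ and $v_2\leftrightarrow v_3$. Discarding the selections that realise either of these, I expect exactly four to survive: the chain $v_4\to v_3\to v_2\to v_1$; the partial chain $v_4\to v_3\to v_2$ completed to $v_1$ by $\rho$; the choice $v_4\to v_3$ together with $v_1\to v_2$ and $v_1$ reached along $\rho$; and the path using $\rho$ from $v_4$ to $v_1$ followed by $v_1\to v_2\to v_3$. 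Each is a directed tree with origin $v_4$ whose leaves lie among $v_1,v_2,v_3$, hence in $\mathcal{F}$; when $\rho$ is used its interior vertices are non-leaves, and when it is omitted they simply do not appear, which is permitted for an $\mathcal{F}$--spanning subtree. The four have distinct edge sets.

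The only genuine work, and the place to be careful, is the cycle bookkeeping of the previous paragraph: checking that forbidding the two bigon $2$--cycles leaves at least four acyclic selections and that each of the resulting subgraphs really is a directed tree spanning $\mathcal{F}$ with all leaves in $\mathcal{F}$. Everything else then follows directly from Lemma \ref{extendtreeslemma}, giving $|\Tr(\mathcal{G},v)|\geq 4$.
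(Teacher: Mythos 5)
Your proof is correct and is essentially the paper's own (omitted) argument: the paper states this lemma without proof, having just noted before Lemma \ref{nothreetwolemma} that $\mathcal{G}$ is \ocon{\mathcal{O}} so that Lemma \ref{extendtreeslemma} applies, which is precisely the reduction you use. Your cycle bookkeeping also checks out: of the $2^3$ selections of one incoming edge for each of $v_1,v_2,v_3$, each forbidden bigon $2$--cycle rules out two selections and these exclusions are disjoint (no selection can give $v_2$ incoming edges from both $v_1$ and $v_3$), leaving exactly the four \spantree{\mathcal{F}}s you list, whence $|\Tr(\mathcal{G},v)|\geq 4$.
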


\begin{lemma}
Suppose exactly two regions of $\sphere\setminus\mathcal{G}$ have 3 or more sides. Then, up to reflection, $\mathcal{G}=\mathcal{G}_{\delta}$.
\end{lemma}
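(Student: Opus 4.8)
The plan is to combine the face-length bound of Corollary \ref{boundedsizeregionscor} with an Euler characteristic count to force $\mathcal{G}$ to have exactly three vertices, and then to eliminate every three-vertex candidate except $\mathcal{G}_{\delta}$ using Lemma \ref{nothreetwolemma} together with properties (G4) and (G5).

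First I would record the shape of the regions. Fix $v$ with $|\Tr(\mathcal{G},v)|=n<4$. By Corollary \ref{boundedsizeregionscor} every region of $\sphere\setminus\mathcal{G}$ has boundary length at most $n\le 3$, and, as already noted, at least $2$; since each $\partial r$ is a topological circle, every region is a bigon or a triangle, and ``$3$ or more sides'' means exactly $3$. By hypothesis exactly two regions are triangles, and the remaining $F-2$ regions are bigons. Counting the edge--face incidences gives $2E=3\cdot 2+2(F-2)=2F+2$, so $E=F+1$, and then $V-E+F=2$ forces $V=3$.

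Next I would reduce the problem to a multigraph count. Write the three vertices as $a,b,c$ and let $m_{ab},m_{bc},m_{ca}$ be the numbers of edges joining the respective pairs (there are no loops, by primality). If some $m_{ij}=0$ then the third vertex is a cut vertex, and no triangle can use that pair; either conclusion is incompatible with $\mathcal{G}$ being prime and having two triangular faces, so each $m_{ij}\ge 1$. Property (G4) makes in-degree equal to out-degree at every vertex, hence every vertex has even degree; thus $m_{ab}+m_{ca}$, $m_{ab}+m_{bc}$ and $m_{bc}+m_{ca}$ are all even, which forces $m_{ab},m_{bc},m_{ca}$ to share a common parity. The all-odd case with every $m_{ij}=1$ produces a vertex of degree $2$, i.e.\ in-degree $1$, which is excluded by (G5). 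To finish, if some multiplicity is $\ge 3$, say $m_{ab}\ge 3$, then $a$ and $b$ are joined by at least three parallel edges; by (G4) the directions among these split $2{:}1$, and since $\mathcal{G}$ is $\mathcal{O}$--connected one can locate the remaining ingredients of the picture in Lemma \ref{nothreetwolemma} — a pair of oppositely oriented edges to serve as the bottom bigon and a directed connecting path — so that $|\Tr(\mathcal{G},v)|\ge 4$, a contradiction. Hence every $m_{ij}\le 2$, and by the common-parity constraint $m_{ab}=m_{bc}=m_{ca}=2$: this is the doubled triangle, whose orientation is determined by (G4) up to reflection, giving $\mathcal{G}=\mathcal{G}_{\delta}$.

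The main obstacle will be this last step: matching a triple-edge situation to the precise labelled configuration required by Lemma \ref{nothreetwolemma}. One must choose which pair plays the role of the top triple and which the bottom bigon, check that alternation really supplies the $2{:}1$ split on the triple and opposite orientations on the bigon, and confirm that the required directed path can be taken disjoint from the identified vertex (passing to the reflection if needed, which is harmless since $\nrap{L}{0}$, and hence $|\Tr|$, is reflection-invariant). It also remains to verify that this labelling can always be arranged among the surviving multiplicity vectors, with the degenerate parity cases absorbed by (G5), so that the small-multiplicity bookkeeping is genuinely exhaustive; where convenient a direct enumeration of directed spanning subtrees could replace the appeal to Lemma \ref{nothreetwolemma}.
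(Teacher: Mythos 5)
Your route to $|\V(\mathcal{G})|=3$ is correct and genuinely different from the paper's. The paper collapses every bigon of $\sphere\setminus\mathcal{G}$ to an edge, observes that what remains is a topological circle with vertices $v_1,\dots,v_n$, $n\geq 3$, and multiplicities $J_i$, and then needs \emph{two} subsidiary lemmas: Lemma \ref{nothreetwolemma} to exclude $J_i\geq 3$, and Lemma \ref{notwotwotwolemma} at the end to force $n=3$. You instead get three vertices in one stroke from Corollary \ref{boundedsizeregionscor} (every face has at most $|\Tr(\mathcal{G},v)|\leq 3$ sides) plus the Euler count $2E=6+2(F-2)$, which makes Lemma \ref{notwotwotwolemma} unnecessary for this lemma. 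Your parity bookkeeping via (G4), the exclusion of $m_{ij}=0$ by primality, and the exclusion of $(1,1,1)$ by (G5) are all fine, and the final identification of $(2,2,2)$ with $\mathcal{G}_\delta$ up to reflection is correct (each bundle is a directed bigon and the inner triangular face must be a directed cycle, leaving two choices related by reflection).

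The gap is the step you flag yourself: ``some $m_{ij}\geq 3$ gives $|\Tr(\mathcal{G},v)|\geq 4$'' is asserted, not proved, and $\mathcal{O}$--connectivity alone does not supply the configuration of Lemma \ref{nothreetwolemma}. Two concrete failure points: (i) the lemma needs a directed bigon between a \emph{second} pair of vertices, hence some other $m_{ij}\geq 2$; for the all-odd vector $(m,1,1)$, $m\geq 3$, no such bigon exists, and that case must instead be killed by (G5) (the third vertex has in-degree $1$) --- your written argument only applies (G5) to $(1,1,1)$; (ii) for $(\geq 3,\geq 3,1)$ with the single edge oriented the ``wrong'' way, the connecting path required by the figure cannot be taken in that bundle, and one must swap which bundle plays the triple and which the bigon, re-checking the $2{:}1$ orientation pattern. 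These verifications are the real content of the step, so as written the proposal does not yet prove the lemma. The good news is that your parenthetical fallback is the right fix and is short: with three vertices, let $p,q$ (resp.\ $r,s$; $t,u$) count the edges of each bundle in its two directions. Alternation within a bundle gives $|p-q|\leq 1$, (G4) gives $q+t=p+u$ and $p+s=q+r$, and the arborescence count rooted at $a$ is $pu+pr+us$ (rooted at $b$ it is $qr+qu+rt$). If $p+q\geq 3$ then $\min(p,q)\geq 1$ and $\max(p,q)\geq 2$, and a two-line case check using (G5) at the third vertex (namely $r+u\geq 2$) shows one of these counts is at least $4$. I would replace the appeal to Lemma \ref{nothreetwolemma} by this computation; it also makes your treatment independent of the figure-matching issues entirely.
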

\begin{proof}
Collapse each bigon region of $\sphere\setminus\mathcal{G}$ to a line, giving a graph $\mathcal{G}'$. Since this only leaves two regions, $\mathcal{G}'$ is a topological circle. Number the vertices $\V(\mathcal{G})=\V(\mathcal{G}')$ as $v_1,\cdots,v_n$ around this circle. Then $n\geq 3$. For $1\leq i\leq n$, let $J_i$ be the number of (unoriented) edges in $\mathcal{G}$ joining $v_i$ to $v_{i+1}$ (where $v_{n+1}=v_1$).
By (G4) and (G5), $v_i+v_{i+1}$ is even and at least 4 for each $i$. 
Suppose $v_1\geq 3$. Then, by Lemma \ref{nothreetwolemma}, $v_i=1$ for all $i>1$. Thus $v_2+v_3=2$, contradicting that $v_2+v_3>2$. Hence $v_i\leq 2$ for all $i$. By Lemma \ref{notwotwotwolemma}, this means $n=3$.
\end{proof}

It now suffices to show that at most two regions of $\sphere\setminus\mathcal{G}$ have three or more sides. We therefore assume otherwise.

\begin{claim}
Up to relabelling the regions of $\sphere\setminus\mathcal{G}$, $r_0$ shares an edge with $r_1$.
\end{claim}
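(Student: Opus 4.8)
The plan is to argue by contradiction against the (equivalent) statement that no two of the big regions of $\sphere\setminus\mathcal{G}$ -- those with at least three sides -- share an edge, and to extract from this the forbidden configuration of Lemma \ref{notwotwotwolemma}.

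First I would record two elementary facts about the faces of $\mathcal{G}$. Since $\mathcal{G}$ has no loops, a region is a bigon (has exactly two sides) precisely when its boundary cycle has length two, i.e. when it is bounded by a pair of parallel edges joining the same two vertices. Consequently, if an edge $e$ is the only edge of $\mathcal{G}$ joining its two endpoints $u,v$, then neither region meeting $e$ can be a bigon, so both have at least three sides; that is, $e$ borders two big regions, and these are adjacent. Thus if some pair of adjacent vertices is joined by a single edge we are immediately done: relabel the two big regions on either side of that edge as $r_0$ and $r_1$.

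It therefore remains to rule out the case in which every pair of adjacent vertices of $\mathcal{G}$ is joined by at least two edges. Here I would take any big region $F$ (for instance $r_0$). By (G4) its boundary is a directed cycle $w_1\to w_2\to\cdots\to w_\ell\to w_1$ with $\ell\geq 3$, formed by the innermost edge of each of the $\ell$ parallel bundles around $F$. By assumption each such bundle contains a second edge, so just outside three consecutive sides $w_1w_2$, $w_2w_3$, $w_3w_4$ we find genuine bigons $b_{12},b_{23},b_{34}$, while the remaining boundary edges of $F$ furnish a directed path from $w_4$ to $w_1$ (through $w_5,\dots,w_\ell$) that is disjoint from these three bigons. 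This is exactly the subgraph of Lemma \ref{notwotwotwolemma}, with the triangle-of-bigons $\ell=3$ arising as the degenerate case $w_4=w_1$ with an empty closing path. Hence $|\Tr(\mathcal{G},v)|\geq 4$, contradicting the hypothesis, so a single edge between adjacent vertices must exist and the claim follows.

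The main obstacle I anticipate is bookkeeping rather than ideas. One must check that the three chosen bigons are honestly distinct two-sided regions (this uses that their bundles correspond to distinct sides of $F$, so they lie in distinct pairs of vertices along $\partial F$) and that the orientations of the bigon edges and of the closing directed path match those in Figure \ref{threetreegraphspic3}. Since each bigon is a directed two-cycle it supplies an edge in either direction, so the orientation matching is automatic, and the closing path is oriented correctly because it is a sub-arc of the directed boundary cycle of $F$. The only genuinely special case to treat separately is the triangular face $\ell=3$; if one prefers not to read this off the figure, the triangle of three bigons can be checked to have at least four directed spanning subtrees directly, which suffices.
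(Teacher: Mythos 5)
There is a genuine gap, and it is fatal. The ``degenerate case'' you dismiss at the end --- a triangular face each of whose sides carries a second edge --- is precisely the bidirected triangle, i.e.\ the graph $\mathcal{G}_{\delta}$ of Figure \ref{threetreegraphspic1}, and it has exactly \emph{three} directed spanning subtrees from every origin, not at least four: on vertices $\{a,b,c\}$ with an antiparallel pair between each two, the trees with origin $a$ are $\{a\to b,\,a\to c\}$, $\{a\to b,\,b\to c\}$ and $\{a\to c,\,c\to b\}$, the only other candidate containing the cycle $b\to c\to b$. So your closing sentence (``the triangle of three bigons can be checked to have at least four directed spanning subtrees directly'') is false, and no spanning-tree count can ever rule this configuration out: $\mathcal{G}_{\delta}\in\Gamma$ satisfies $|\Tr(\mathcal{G}_{\delta},v)|=3<4$, every pair of its adjacent vertices is joined by two edges, and its two big regions (the inner and outer triangles) do \emph{not} share an edge --- which is exactly why $\mathcal{G}_{\delta}$ appears in the conclusion of the theorem. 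Consequently your case (ii) implication (``every adjacent pair doubled $\Rightarrow |\Tr|\geq 4$'') is simply untrue. What rescues the claim, and what your argument never invokes, is the standing assumption at this point of the proof that at least \emph{three} regions of $\sphere\setminus\mathcal{G}$ have three or more sides. The paper uses it as follows: if no two big regions share an edge, every side of $r_0$ meets a bigon; Lemma \ref{notwotwotwolemma} then forces $\partial r_0$ to be a triangle (for $\ell\geq 4$ the remaining $\ell-3\geq 1$ sides of $\partial r_0$ supply the \emph{nonempty} closing path the lemma requires); and then the existence of a third big region together with primality produces a directed path, disjoint from the triangle-of-bigons of Figure \ref{threetreegraphspic4}a, joining two distinct vertices of $\partial r_0$. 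Only the resulting enlarged configurations of Figure \ref{threetreegraphspic4}b have $\geq 4$ spanning subtrees.

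A secondary problem: even for $\ell\geq 4$, your route to the forbidden configuration is not justified as written. A second edge in the ``bundle'' joining $w_i$ to $w_{i+1}$ need not be adjacent to the side of $F$ in the embedding (other parts of $\mathcal{G}$ may lie between them), so it need not bound a bigon face with it, and in a graph of $\Gamma$ two edges joining the same pair of vertices can even point the same way (this happens in $\mathcal{G}_{\gamma}$); hence the orientation matching with Figure \ref{threetreegraphspic3} is not ``automatic''. What does produce genuine bigons, hence antiparallel pairs, is the reductio hypothesis itself: if no two big regions share an edge, the face on the far side of each side of $r_0$ must be a bigon, whose boundary is a directed $2$--cycle --- this is the opening line of the paper's proof, and it is the inference you should use in place of the bundle argument. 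Your first observation (an edge that is not doubled has two distinct big faces along it) is correct, but it settles only the easy half; the substance of the claim lies in the case your argument does not close.
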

\begin{proof}
Suppose otherwise. Then $r_0$ meets a bigon along every edge. By Lemma \ref{notwotwotwolemma}, $\partial r_0$ is a triangle. Since at least three regions of $\sphere\setminus\mathcal{G}$ have three or more sides and $\mathcal{G}$ is prime, $\mathcal{G}$ contains the graph shown in Figure \ref{threetreegraphspic4}a and a directed path disjoint from this graph connecting two distinct vertices of $\partial r_0$. It must therefore contain one of the graphs in Figure \ref{threetreegraphspic4}b. In either case, $|\Tr(\mathcal{G},v)|\geq 4$.
\begin{figure}[htbp]
\centering
(a)
%LaTeX with PSTricks extensions
%%Creator: 0.46
%%Please note this file requires PSTricks extensions
\psset{xunit=.35pt,yunit=.35pt,runit=.35pt}
\begin{pspicture}(200,130)
{
\pscustom[linewidth=2,linecolor=black]
{
\newpath
\moveto(170,85)
\curveto(170,49.12)(138.64,20)(100,20)
\curveto(61.36,20)(30,49.12)(30,85)
\curveto(30,120.88)(61.36,150)(100,150)
\curveto(138.64,150)(170,120.88)(170,85)
\closepath
}
}
{
\pscustom[linewidth=2,linecolor=black]
{
\newpath
\moveto(160.71951381,51.65655243)
\curveto(122.33504487,70.89123799)(99.81127877,107.55254662)(100.00107481,150.48626952)
}
}
{
\pscustom[linewidth=2,linecolor=black]
{
\newpath
\moveto(100,150)
\curveto(100,107.3720946)(77.66811729,71.0605325)(39.62453823,51.82971322)
}
}
{
\pscustom[linewidth=2,linecolor=black]
{
\newpath
\moveto(40.37408272,51.43781687)
\curveto(77.49581254,75.38274836)(122.15644523,75.44945042)(159.34953431,51.61551046)
}
}
{
\pscustom[linewidth=4,linecolor=black,fillstyle=solid,fillcolor=black]%arrowhead
{
\newpath
\moveto(96.13606076,70.64399249)
\lineto(91.53288912,67.35601274)
\lineto(106,69.00000262)
\lineto(92.84808102,75.24716413)
\lineto(96.13606076,70.64399249)
\closepath
}
}
{
\pscustom[linewidth=4,linecolor=black,fillstyle=solid,fillcolor=black]%arrowhead
{
\newpath
\moveto(100,20.99998262)
\lineto(104,24.99998262)
\lineto(90,20.99998262)
\lineto(104,16.99998262)
\lineto(100,20.99998262)
\closepath
}
}
{
\pscustom[linewidth=4,linecolor=black,fillstyle=solid,fillcolor=black]%arrowhead
{
\newpath
\moveto(35.45299804,111.67949967)
\lineto(36.56239843,106.13249771)
\lineto(41,120.00000262)
\lineto(29.90599608,110.57009928)
\lineto(35.45299804,111.67949967)
\closepath
}
}
{
\pscustom[linewidth=4,linecolor=black,fillstyle=solid,fillcolor=black]%arrowhead
{
\newpath
\moveto(160.52786405,117.94427453)
\lineto(155.1613009,119.73312891)
\lineto(165,109.00000262)
\lineto(162.31671843,123.31083767)
\lineto(160.52786405,117.94427453)
\closepath
}
}
{
\pscustom[linewidth=4,linecolor=black,fillstyle=solid,fillcolor=black]%arrowhead
{
\newpath
\moveto(88.93919299,102.19145292)
\lineto(86.83829006,107.44371023)
\lineto(85,93.00000262)
\lineto(94.1914503,104.29235584)
\lineto(88.93919299,102.19145292)
\closepath
}
}
{
\pscustom[linewidth=4,linecolor=black,fillstyle=solid,fillcolor=black]%arrowhead
{
\newpath
\moveto(117.54700196,90.67949967)
\lineto(123.09400392,89.57009928)
\lineto(112,99.00000262)
\lineto(116.43760157,85.13249771)
\lineto(117.54700196,90.67949967)
\closepath
}
}
{
\pscustom[linewidth=12,linecolor=black,fillstyle=solid,fillcolor=black]%circle
{
\newpath
\moveto(45,50)
\curveto(45,47.24)(42.76,45)(40,45)
\curveto(37.24,45)(35,47.24)(35,50)
\curveto(35,52.76)(37.24,55)(40,55)
\curveto(42.76,55)(45,52.76)(45,50)
\closepath
}
}
{
\pscustom[linewidth=12,linecolor=black,fillstyle=solid,fillcolor=black]%circle
{
\newpath
\moveto(105,150)
\curveto(105,147.24)(102.76,145)(100,145)
\curveto(97.24,145)(95,147.24)(95,150)
\curveto(95,152.76)(97.24,155)(100,155)
\curveto(102.76,155)(105,152.76)(105,150)
\closepath
}
}
{
\pscustom[linewidth=12,linecolor=black,fillstyle=solid,fillcolor=black]%circle
{
\newpath
\moveto(165,50)
\curveto(165,47.24)(162.76,45)(160,45)
\curveto(157.24,45)(155,47.24)(155,50)
\curveto(155,52.76)(157.24,55)(160,55)
\curveto(162.76,55)(165,52.76)(165,50)
\closepath
}
}
\end{pspicture}
\quad\quad
(b)\;\;\;
%LaTeX with PSTricks extensions
%%Creator: 0.46
%%Please note this file requires PSTricks extensions
\psset{xunit=.35pt,yunit=.35pt,runit=.35pt}
\begin{pspicture}(400,150)
{
\pscustom[linewidth=2,linecolor=black]%circle
{
\newpath
\moveto(145.594004,73.500035)
\curveto(145.594004,37.620035)(114.234004,8.500035)(75.594004,8.500035)
\curveto(36.954004,8.500035)(5.594004,37.620035)(5.594004,73.500035)
\curveto(5.594004,109.380035)(36.954004,138.500035)(75.594004,138.500035)
\curveto(114.234004,138.500035)(145.594004,109.380035)(145.594004,73.500035)
\closepath
}
}
{
\pscustom[linewidth=2,linecolor=black]%circle
{
\newpath
\moveto(136.31351781,40.15658643)
\curveto(97.92904887,59.39127199)(75.40528277,96.05258062)(75.59507881,138.98630352)
}
}
{
\pscustom[linewidth=2,linecolor=black]%circle
{
\newpath
\moveto(75.594,138.500034)
\curveto(75.594,95.8721286)(53.26211729,59.5605665)(15.21853823,40.32974722)
}
}
{
\pscustom[linewidth=2,linecolor=black]%circle
{
\newpath
\moveto(15.96808672,39.93785687)
\curveto(53.08981654,63.88278836)(97.75044923,63.94949042)(134.94353831,40.11555046)
}
}
{
\pscustom[linewidth=4,linecolor=black,fillstyle=solid,fillcolor=black]%arrowhead
{
\newpath
\moveto(71.73006476,59.14403249)
\lineto(67.12689312,55.85605274)
\lineto(81.594004,57.50004262)
\lineto(68.44208502,63.74720413)
\lineto(71.73006476,59.14403249)
\closepath
}
}
{
\pscustom[linewidth=4,linecolor=black,fillstyle=solid,fillcolor=black]%arrowhead
{
\newpath
\moveto(75.594004,9.49998262)
\lineto(79.594004,13.49998262)
\lineto(65.594004,9.49998262)
\lineto(79.594004,5.49998262)
\lineto(75.594004,9.49998262)
\closepath
}
}
{
\pscustom[linewidth=4,linecolor=black,fillstyle=solid,fillcolor=black]%arrowhead
{
\newpath
\moveto(11.04700204,100.17953967)
\lineto(12.15640243,94.63253771)
\lineto(16.594004,108.50004262)
\lineto(5.50000008,99.07013928)
\lineto(11.04700204,100.17953967)
\closepath
}
}
{
\pscustom[linewidth=4,linecolor=black,fillstyle=solid,fillcolor=black]%arrowhead
{
\newpath
\moveto(136.12186405,106.44431453)
\lineto(130.7553009,108.23316891)
\lineto(140.594,97.50004262)
\lineto(137.91071843,111.81087767)
\lineto(136.12186405,106.44431453)
\closepath
}
}
{
\pscustom[linewidth=4,linecolor=black,fillstyle=solid,fillcolor=black]%arrowhead
{
\newpath
\moveto(64.53319699,90.69149292)
\lineto(62.43229406,95.94375023)
\lineto(60.594004,81.50004262)
\lineto(69.7854543,92.79239584)
\lineto(64.53319699,90.69149292)
\closepath
}
}
{
\pscustom[linewidth=4,linecolor=black,fillstyle=solid,fillcolor=black]%arrowhead
{
\newpath
\moveto(93.14100596,79.17953967)
\lineto(98.68800792,78.07013928)
\lineto(87.594004,87.50004262)
\lineto(92.03160557,73.63253771)
\lineto(93.14100596,79.17953967)
\closepath
}
}
{
\pscustom[linewidth=2,linecolor=black,linestyle=dashed,dash=6 6]
{
\newpath
\moveto(75.67849764,138.21447928)
\curveto(98.43070099,161.90383848)(136.12240419,162.66442898)(159.8117637,139.91222591)
\curveto(183.50112322,117.16002285)(184.26171372,79.46832015)(161.50951036,55.77896094)
\curveto(153.94434672,47.90219068)(145.71165528,42.91758803)(135.2256213,39.86499843)
}
}
{
\pscustom[linewidth=4,linecolor=black,fillstyle=solid,fillcolor=black]%arrowhead
{
\newpath
\moveto(167.12186405,132.44431453)
\lineto(161.7553009,134.23316891)
\lineto(171.594,123.50004262)
\lineto(168.91071843,137.81087767)
\lineto(167.12186405,132.44431453)
\closepath
}
}
{
\pscustom[linewidth=2,linecolor=black]%circle
{
\newpath
\moveto(379.87449,73.500065)
\curveto(379.87449,37.620065)(348.51449,8.500065)(309.87449,8.500065)
\curveto(271.23449,8.500065)(239.87449,37.620065)(239.87449,73.500065)
\curveto(239.87449,109.380065)(271.23449,138.500065)(309.87449,138.500065)
\curveto(348.51449,138.500065)(379.87449,109.380065)(379.87449,73.500065)
\closepath
}
}
{
\pscustom[linewidth=2,linecolor=black]%circle
{
\newpath
\moveto(370.59400381,40.15661743)
\curveto(332.20953487,59.39130299)(309.68576877,96.05261162)(309.87556481,138.98633452)
}
}
{
\pscustom[linewidth=2,linecolor=black]%circle
{
\newpath
\moveto(309.874494,138.500065)
\curveto(309.874494,95.8721596)(287.54261129,59.5605975)(249.49903223,40.32977822)
}
}
{
\pscustom[linewidth=2,linecolor=black]%circle
{
\newpath
\moveto(250.24857272,39.93787687)
\curveto(287.37030254,63.88280836)(332.03093523,63.94951042)(369.22402431,40.11557046)
}
}
{
\pscustom[linewidth=4,linecolor=black,fillstyle=solid,fillcolor=black]%arrowhead
{
\newpath
\moveto(306.01055076,59.14403249)
\lineto(301.40737912,55.85605274)
\lineto(315.87449,57.50004262)
\lineto(302.72257102,63.74720413)
\lineto(306.01055076,59.14403249)
\closepath
}
}
{
\pscustom[linewidth=4,linecolor=black,fillstyle=solid,fillcolor=black]%arrowhead
{
\newpath
\moveto(309.87449,9.49998262)
\lineto(313.87449,13.49998262)
\lineto(299.87449,9.49998262)
\lineto(313.87449,5.49998262)
\lineto(309.87449,9.49998262)
\closepath
}
}
{
\pscustom[linewidth=4,linecolor=black,fillstyle=solid,fillcolor=black]%arrowhead
{
\newpath
\moveto(245.32750084,100.17955114)
\lineto(246.43690976,94.63255088)
\lineto(250.87449,108.50006262)
\lineto(239.78050058,99.07014221)
\lineto(245.32750084,100.17955114)
\closepath
}
}
{
\pscustom[linewidth=4,linecolor=black,fillstyle=solid,fillcolor=black]%arrowhead
{
\newpath
\moveto(370.40235405,106.44431453)
\lineto(365.0357909,108.23316891)
\lineto(374.87449,97.50004262)
\lineto(372.19120843,111.81087767)
\lineto(370.40235405,106.44431453)
\closepath
}
}
{
\pscustom[linewidth=4,linecolor=black,fillstyle=solid,fillcolor=black]%arrowhead
{
\newpath
\moveto(298.81368299,90.69149292)
\lineto(296.71278006,95.94375023)
\lineto(294.87449,81.50004262)
\lineto(304.0659403,92.79239584)
\lineto(298.81368299,90.69149292)
\closepath
}
}
{
\pscustom[linewidth=4,linecolor=black,fillstyle=solid,fillcolor=black]%arrowhead
{
\newpath
\moveto(327.42149196,79.17953967)
\lineto(332.96849392,78.07013928)
\lineto(321.87449,87.50004262)
\lineto(326.31209157,73.63253771)
\lineto(327.42149196,79.17953967)
\closepath
}
}
{
\pscustom[linewidth=2,linecolor=black,linestyle=dashed,dash=6 6]
{
\newpath
\moveto(309.95898364,138.21451028)
\curveto(332.71118699,161.90386948)(370.40289019,162.66445998)(394.0922497,139.91225691)
\curveto(417.78160922,117.16005385)(418.54219972,79.46835115)(395.78999636,55.77899194)
\curveto(388.22483272,47.90222168)(379.99214128,42.91761903)(369.5061073,39.86502943)
}
}
{
\pscustom[linewidth=4,linecolor=black,fillstyle=solid,fillcolor=black]%arrowhead
{
\newpath
\moveto(402.20772702,129.94279017)
\lineto(407.8237468,129.26432)
\lineto(396.03579,137.81090262)
\lineto(401.52925685,124.32677039)
\lineto(402.20772702,129.94279017)
\closepath
}
}
{
\pscustom[linewidth=12,linecolor=black,fillstyle=solid,fillcolor=black]%circle
{
\newpath
\moveto(315.594004,140)
\curveto(315.594004,137.24)(313.354004,135)(310.594004,135)
\curveto(307.834004,135)(305.594004,137.24)(305.594004,140)
\curveto(305.594004,142.76)(307.834004,145)(310.594004,145)
\curveto(313.354004,145)(315.594004,142.76)(315.594004,140)
\closepath
}
}
{
\pscustom[linewidth=12,linecolor=black,fillstyle=solid,fillcolor=black]%circle
{
\newpath
\moveto(375.5940039,40)
\curveto(375.5940039,37.24)(373.3540039,35)(370.5940039,35)
\curveto(367.8340039,35)(365.5940039,37.24)(365.5940039,40)
\curveto(365.5940039,42.76)(367.8340039,45)(370.5940039,45)
\curveto(373.3540039,45)(375.5940039,42.76)(375.5940039,40)
\closepath
}
}
{
\pscustom[linewidth=12,linecolor=black,fillstyle=solid,fillcolor=black]%circle
{
\newpath
\moveto(255.594,40)
\curveto(255.594,37.24)(253.354,35)(250.594,35)
\curveto(247.834,35)(245.594,37.24)(245.594,40)
\curveto(245.594,42.76)(247.834,45)(250.594,45)
\curveto(253.354,45)(255.594,42.76)(255.594,40)
\closepath
}
}
{
\pscustom[linewidth=12,linecolor=black,fillstyle=solid,fillcolor=black]%circle
{
\newpath
\moveto(80.594,140)
\curveto(80.594,137.24)(78.354,135)(75.594,135)
\curveto(72.834,135)(70.594,137.24)(70.594,140)
\curveto(70.594,142.76)(72.834,145)(75.594,145)
\curveto(78.354,145)(80.594,142.76)(80.594,140)
\closepath
}
}
{
\pscustom[linewidth=12,linecolor=black,fillstyle=solid,fillcolor=black]%circle
{
\newpath
\moveto(140.594,40)
\curveto(140.594,37.24)(138.354,35)(135.594,35)
\curveto(132.834,35)(130.594,37.24)(130.594,40)
\curveto(130.594,42.76)(132.834,45)(135.594,45)
\curveto(138.354,45)(140.594,42.76)(140.594,40)
\closepath
}
}
{
\pscustom[linewidth=12,linecolor=black,fillstyle=solid,fillcolor=black]%circle
{
\newpath
\moveto(20.594,40)
\curveto(20.594,37.24)(18.354,35)(15.594,35)
\curveto(12.834,35)(10.594,37.24)(10.594,40)
\curveto(10.594,42.76)(12.834,45)(15.594,45)
\curveto(18.354,45)(20.594,42.76)(20.594,40)
\closepath
}
}
{
\put(60,160){$v$}
\put(385,20){$v$}
}
\end{pspicture}
\caption{\label{threetreegraphspic4}}
\end{figure}
\end{proof}

\begin{claim}\label{meetoneedgeclaim}
The regions $r_0$ and $r_1$ share exactly one edge $e_1$ and meet at no vertices other than the endpoints of $e_1$.
\end{claim}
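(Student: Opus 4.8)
The plan is to prove the two halves of the claim by different means: the uniqueness of the shared edge from primeness, and the absence of any further common vertex from a count of directed spanning subtrees.

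\emph{Exactly one shared edge.} First I would show that $r_0$ and $r_1$ cannot share two distinct edges. Suppose they shared edges $e_1$ and $e_2$. Fix $p_0\in\Int(r_0)$ and $p_1\in\Int(r_1)$ and join them by an arc crossing $e_1$ transversely once and by a second arc crossing $e_2$ transversely once, chosen to be disjoint except at $p_0,p_1$; this is possible since $r_0$ and $r_1$ are discs and $e_1,e_2$ lie on both their boundaries. The union is a simple closed curve $\rho$ meeting $\mathcal{G}$ at exactly two points, both interior to edges. As $\mathcal{G}$ has no loops the endpoints of $e_1$ are distinct, and since $e_1$ crosses $\rho$ exactly once they lie on opposite sides of $\rho$. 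Hence each side of $\rho$ contains a vertex, contradicting condition (G3) of Definition \ref{primedefn}. Together with the previous claim, $r_0$ and $r_1$ share exactly one edge $e_1$.

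\emph{No further common vertex.} Suppose for contradiction that $r_0$ and $r_1$ also meet at a vertex $c\notin\overline{e_1}$, and orient $e_1$ from $a$ to $b$. By (G4) the boundaries $\partial r_0,\partial r_1$ are directed cycles through $e_1$, so $P_0=\partial r_0\setminus\Int(e_1)$ and $P_1=\partial r_1\setminus\Int(e_1)$ are directed paths from $b$ to $a$, each passing through $c$. They are edge-disjoint, since a common edge would be a second edge shared by $r_0$ and $r_1$; thus they meet only in vertices. List their common vertices as $b=u_0,u_1,\dots,u_m=a$. Because $\mathcal{G}$ is a plane graph these occur in the same order along $P_0$ and along $P_1$ (otherwise the two arcs would cross), and since $c$ is an interior common vertex we have $m\geq 2$.

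Between $u_{i-1}$ and $u_i$ the paths $P_0$ and $P_1$ supply two edge-disjoint directed sub-paths. Choosing one sub-path from each of the $m$ gaps and concatenating yields a simple directed path $u_0\to u_1\to\cdots\to u_m$, hence a directed tree with origin $b$ whose only leaf is $a$. With $\mathcal{F}=\{u_0,\dots,u_m\}$ these give $2^m\geq 4$ distinct \spantree{\mathcal{F}}s of $\mathcal{G}$ with origin $b$, distinct because the last edge entering some $u_i$ differs. Since $\mathcal{G}$ is \ocon{\mathcal{O}}, Lemma \ref{extendtreeslemma} yields $|\Tr(\mathcal{G},b)|\geq 2^m\geq 4$, contradicting the hypothesis; when $m\geq 3$ one could equally invoke Lemma \ref{notwotwotwolemma}. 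This excludes $c$ and completes the claim.

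The delicate point is the structural bookkeeping in the second part: one must check that $P_0$ and $P_1$ meet only along vertices and that their common vertices are consistently ordered, so that the chosen concatenations really are embedded (hence honest directed trees). This is precisely where the planarity of $\mathcal{G}$ is needed rather than mere combinatorics, and it also quietly uses the first part (no second shared edge) to guarantee edge-disjointness of $P_0$ and $P_1$. Once these facts are in hand, the tree count—and with it the contradiction—is immediate.
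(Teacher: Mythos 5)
Your proof is correct, but it is organised differently from the paper's. The paper splits according to whether $\partial r_0\cap\partial r_1$ is connected: if it has two or more components, it exhibits the configuration of Figure \ref{threetreegraphspic5} and deduces $|\Tr(\mathcal{G},v)|\geq 4$; if the intersection is connected but contains two (hence consecutive) shared edges, the vertex between them is $2$--valent with in-degree $1$, contradicting (G5). You instead split by edges versus vertices. Your treatment of the shared edge is genuinely different and arguably cleaner: a simple closed curve through the interiors of $r_0$ and $r_1$, crossing the two putative shared edges once each, violates the third condition of Definition \ref{primedefn} directly, since the endpoints of either edge are separated by the curve. This handles adjacent and non-adjacent shared edges uniformly, uses neither (G5) nor the hypothesis $|\Tr(\mathcal{G},v)|<4$, and so in fact shows that in any $\mathcal{G}\in\Gamma$ two face boundaries share at most one edge. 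Your treatment of the extra common vertex is the same tree-counting idea as the paper's disconnected case, but made explicit: the $2^m\geq 4$ distinct \spantree{\mathcal{F}}s together with Lemma \ref{extendtreeslemma} replace the paper's appeal to the picture. The one step deserving more care than your parenthetical gives it is the assertion that the common vertices of $P_0$ and $P_1$ occur in the same order along both paths: the non-crossing argument only gives that the cyclic orders on $\partial r_0$ and $\partial r_1$ agree up to reversal, and the reversed order must then be excluded by noting that both directed boundary cycles traverse $e_1$ from $a$ to $b$, so on each cycle the common vertex immediately following $a$ is $b$. This is worth spelling out, but it is not a gap.
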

\begin{proof}
First suppose that $\partial r_o\cap\partial r_1$ has at least two components. Then $\mathcal{G}$ contains the graph shown in Figure \ref{threetreegraphspic5}, where $v'$ and $w'$ may coincide. Then $|\Tr(\mathcal{G},v)|\geq 4$. Thus $\partial r_o\cap\partial r_1$ has only one component.
\begin{figure}[htbp]
\centering
%LaTeX with PSTricks extensions
%%Creator: 0.46
%%Please note this file requires PSTricks extensions
\psset{xunit=.5pt,yunit=.5pt,runit=.5pt}
\begin{pspicture}(200,145)
{
\pscustom[linewidth=1.5,linecolor=black,linestyle=dashed,dash=4 4]
{
\newpath
\moveto(155.02988397,67.78057166)
\curveto(155.02900425,42.94057168)(132.62829028,22.78121427)(105.02829029,22.78200602)
\curveto(77.42829031,22.78279778)(55.0290043,42.94344035)(55.02988403,67.78344034)
\curveto(55.03076375,92.62344032)(77.43147772,112.78279773)(105.03147771,112.78200598)
\curveto(132.63123379,112.78121423)(155.03038059,92.62097681)(155.02988399,67.78119632)
}
}
{
\pscustom[linewidth=1.5,linecolor=black,linestyle=dashed,dash=4 4]
{
\newpath
\moveto(80.16853847,28.88317878)
\curveto(56.31059111,16.39429326)(25.68550034,23.68506076)(11.80896087,45.15721338)
\curveto(-2.0675786,66.629366)(6.03327417,94.19194769)(29.89122153,106.68083322)
\curveto(45.90438649,115.06322174)(64.19170394,115.05753042)(80.19842373,106.66517677)
}
}
{
\pscustom[linewidth=1.5,linecolor=black,linestyle=dashed,dash=4 4]
{
\newpath
\moveto(130.02988153,107.78199245)
\curveto(153.88782889,120.27087798)(184.51291966,112.98011048)(198.38945913,91.50795786)
\curveto(212.2659986,70.03580523)(204.16514583,42.47322354)(180.30719847,29.98433802)
\curveto(164.29403351,21.6019495)(146.00671606,21.60764082)(129.99999627,29.99999446)
}
}
{
\pscustom[linewidth=8,linecolor=black,fillstyle=solid,fillcolor=black]%circle
{
\newpath
\moveto(85.029884,27.782006)
\curveto(85.029884,25.022006)(82.789884,22.782006)(80.029884,22.782006)
\curveto(77.269884,22.782006)(75.029884,25.022006)(75.029884,27.782006)
\curveto(75.029884,30.542006)(77.269884,32.782006)(80.029884,32.782006)
\curveto(82.789884,32.782006)(85.029884,30.542006)(85.029884,27.782006)
\closepath
}
}
{
\pscustom[linewidth=8,linecolor=black,fillstyle=solid,fillcolor=black]%circle
{
\newpath
\moveto(85.029884,107.782006)
\curveto(85.029884,105.022006)(82.789884,102.782006)(80.029884,102.782006)
\curveto(77.269884,102.782006)(75.029884,105.022006)(75.029884,107.782006)
\curveto(75.029884,110.542006)(77.269884,112.782006)(80.029884,112.782006)
\curveto(82.789884,112.782006)(85.029884,110.542006)(85.029884,107.782006)
\closepath
}
}
{
\pscustom[linewidth=8,linecolor=black,fillstyle=solid,fillcolor=black]%circle
{
\newpath
\moveto(125.16854,108.88316523)
\curveto(125.16854,111.64316523)(127.40854,113.88316523)(130.16854,113.88316523)
\curveto(132.92854,113.88316523)(135.16854,111.64316523)(135.16854,108.88316523)
\curveto(135.16854,106.12316523)(132.92854,103.88316523)(130.16854,103.88316523)
\curveto(127.40854,103.88316523)(125.16854,106.12316523)(125.16854,108.88316523)
\closepath
}
}
{
\pscustom[linewidth=8,linecolor=black,fillstyle=solid,fillcolor=black]%circle
{
\newpath
\moveto(125.16854,28.88316523)
\curveto(125.16854,31.64316523)(127.40854,33.88316523)(130.16854,33.88316523)
\curveto(132.92854,33.88316523)(135.16854,31.64316523)(135.16854,28.88316523)
\curveto(135.16854,26.12316523)(132.92854,23.88316523)(130.16854,23.88316523)
\curveto(127.40854,23.88316523)(125.16854,26.12316523)(125.16854,28.88316523)
\closepath
}
}
{
\pscustom[linewidth=3,linecolor=black,fillstyle=solid,fillcolor=black]%arrowhead
{
\newpath
\moveto(5.0298837,62.78201262)
\lineto(9.0298837,58.78201262)
\lineto(5.0298837,72.78201262)
\lineto(1.0298837,58.78201262)
\lineto(5.0298837,62.78201262)
\closepath
}
}
{
\pscustom[linewidth=3,linecolor=black,fillstyle=solid,fillcolor=black]%arrowhead
{
\newpath
\moveto(55.029884,62.78201262)
\lineto(59.029884,58.78201262)
\lineto(55.029884,72.78201262)
\lineto(51.029884,58.78201262)
\lineto(55.029884,62.78201262)
\closepath
}
}
{
\pscustom[linewidth=3,linecolor=black,fillstyle=solid,fillcolor=black]%arrowhead
{
\newpath
\moveto(205.16854,73.88317262)
\lineto(201.16854,77.88317262)
\lineto(205.16854,63.88317262)
\lineto(209.16854,77.88317262)
\lineto(205.16854,73.88317262)
\closepath
}
}
{
\pscustom[linewidth=3,linecolor=black,fillstyle=solid,fillcolor=black]%arrowhead
{
\newpath
\moveto(155.16854,73.88317262)
\lineto(151.16854,77.88317262)
\lineto(155.16854,63.88317262)
\lineto(159.16854,77.88317262)
\lineto(155.16854,73.88317262)
\closepath
}
}
{
\pscustom[linewidth=3,linecolor=black,fillstyle=solid,fillcolor=black]%arrowhead
{
\newpath
\moveto(100.02988,112.78201262)
\lineto(96.02988,108.78201262)
\lineto(110.02988,112.78201262)
\lineto(96.02988,116.78201262)
\lineto(100.02988,112.78201262)
\closepath
}
}
{
\pscustom[linewidth=3,linecolor=black,fillstyle=solid,fillcolor=black]%arrowhead
{
\newpath
\moveto(110.02988,22.78198262)
\lineto(114.02988,26.78198262)
\lineto(100.02988,22.78198262)
\lineto(114.02988,18.78198262)
\lineto(110.02988,22.78198262)
\closepath
}
}
{
\put(75,5){$v$}
\put(125,5){$w$}
\put(75,122){$v'$}
\put(125,122){$w'$}
\put(100,60){$r_0$}
\put(0,110){$r_1$}
}
\end{pspicture}
\caption{\label{threetreegraphspic5}}
\end{figure}

Now suppose $r_0$ and $r_1$ share at least two consecutive edges (since $\partial r_0$ and $\partial r_1$ are circles, such edges must be consecutive both in $\partial r_0$ and in $\partial r_1$). Then the vertex between these edges had in-degree 1. This contradicts (G5).
\end{proof}

We now know $\mathcal{G}$ contains the digraph $\mathcal{G}_1$ shown in Figure \ref{threetreegraphspic6}.
\begin{figure}[htb]
\centering
%LaTeX with PSTricks extensions
%%Creator: 0.46
%%Please note this file requires PSTricks extensions
\psset{xunit=.45pt,yunit=.45pt,runit=.45pt}
\begin{pspicture}(175,190)
{
\pscustom[linewidth=2,linecolor=black,fillstyle=solid,fillcolor=black]%circle
{
\newpath
\moveto(50,80)
\curveto(50,74.48)(45.52,70)(40,70)
\curveto(34.48,70)(30,74.48)(30,80)
\curveto(30,85.52)(34.48,90)(40,90)
\curveto(45.52,90)(50,85.52)(50,80)
\closepath
}
}
{
\pscustom[linewidth=2,linecolor=black,fillstyle=solid,fillcolor=black]%circle
{
\newpath
\moveto(169.6041,80.812204)
\curveto(169.6041,75.292204)(165.1241,70.812204)(159.6041,70.812204)
\curveto(154.0841,70.812204)(149.6041,75.292204)(149.6041,80.812204)
\curveto(149.6041,86.332204)(154.0841,90.812204)(159.6041,90.812204)
\curveto(165.1241,90.812204)(169.6041,86.332204)(169.6041,80.812204)
\closepath
}
}
{
\pscustom[linewidth=2,linecolor=black,fillstyle=solid,fillcolor=black]%circle
{
\newpath
\moveto(110,165)
\curveto(110,159.48)(105.52,155)(100,155)
\curveto(94.48,155)(90,159.48)(90,165)
\curveto(90,170.52)(94.48,175)(100,175)
\curveto(105.52,175)(110,170.52)(110,165)
\closepath
}
}
{
\pscustom[linewidth=2,linecolor=black]
{
\newpath
\moveto(150,80.00000262)
\lineto(50,80.00000262)
}
}
{
\pscustom[linewidth=2,linecolor=black]
{
\newpath
\moveto(40,80.00000262)
\curveto(47.528112,116.78959262)(57.053882,152.01580262)(95,165.00000262)
}
}
{
\pscustom[linewidth=2,linecolor=black,linestyle=dashed,dash=4 4]
{
\newpath
\moveto(40,75.00000262)
\curveto(81.3853,-15.04451738)(121.30798,-10.01891738)(160,75.00000262)
}
}
{
\pscustom[linewidth=2,linecolor=black,linestyle=dashed,dash=4 4]
{
\newpath
\moveto(160,80.00000262)
\curveto(152.47189,116.78959262)(142.94612,152.01580262)(105,165.00000262)
}
}
{
\pscustom[linewidth=1.5,linecolor=black,fillstyle=solid,fillcolor=black]%arrowhead
{
\newpath
\moveto(100,80.00000262)
\lineto(106,86.00000262)
\lineto(85,80.00000262)
\lineto(106,74.00000262)
\lineto(100,80.00000262)
\closepath
}
}
{
\pscustom[linewidth=1.5,linecolor=black,fillstyle=solid,fillcolor=black]%arrowhead
{
\newpath
\moveto(95,9.99998262)
\lineto(89,3.99998262)
\lineto(110,9.99998262)
\lineto(89,15.99998262)
\lineto(95,9.99998262)
\closepath
}
}
{
\pscustom[linewidth=1.5,linecolor=black,fillstyle=solid,fillcolor=black]%arrowhead
{
\newpath
\moveto(48.29179607,116.58359475)
\lineto(50.97507764,108.53375003)
\lineto(55,130.00000262)
\lineto(40.24195135,113.90031318)
\lineto(48.29179607,116.58359475)
\closepath
}
}
{
\pscustom[linewidth=1.5,linecolor=black,fillstyle=solid,fillcolor=black]%arrowhead
{
\newpath
\moveto(145.29179607,124.41641048)
\lineto(137.24195135,127.09969206)
\lineto(152,111.00000262)
\lineto(147.97507764,132.4662552)
\lineto(145.29179607,124.41641048)
\closepath
}
}
{
\put(115,65){$e_1$}
\put(20,100){$e_2$}
\put(115,170){$v$}
\put(95,110){$r_0$}
\put(95,40){$r_1$}
\put(15,155){$\mathcal{G}_1$}
}
\end{pspicture}
\caption{\label{threetreegraphspic6}}
\end{figure}
Let $r_2$ be the region of $\sphere\setminus\mathcal{G}$ that meets $r_0$ along edge $e_2$. Since $\partial r_2$ is a cycle, part of $\partial r_2$ forms a directed path from $v$ to a vertex of $\mathcal{G}_1$. As $\mathcal{G}$ is prime, this path cannot end at $v$. Thus $\mathcal{G}$ contains one of the digraphs $\mathcal{G}_2$--$\mathcal{G}_5$ shown in Figure \ref{threetreegraphspic7}.
\begin{figure}[htb]
\centering
\input{pictexfiles/threetreegraphspic7}
\caption{\label{threetreegraphspic7}}
\end{figure}
Note that $|\Tr(\mathcal{G}_2,v')|=|\Tr(\mathcal{G}_4,v')|=4$, so neither of these cases can occur. In addition, $\mathcal{G}_3$ cannot arise, as otherwise $r_0$ and $r_2$ would contradict Claim \ref{meetoneedgeclaim}.

Thus $\mathcal{G}$ contains $\mathcal{G}_5$.
There is a directed path from $w$, creating one of the digraphs $\mathcal{G}_6$--$\mathcal{G}_{10}$ shown in Figure \ref{threetreegraphspic9}.
\begin{figure}[htb]
\centering
\input{pictexfiles/threetreegraphspic9}
\caption{\label{threetreegraphspic9}}
\end{figure}
If $i\in\{6,7,8,10\}$ then $|\Tr(\mathcal{G}_i,v)|\geq 4$. This leaves $\mathcal{G}_{9}$ as the only possibility.

There is another directed path beginning at $w'$. By discarding cases that have already been considered, we see that one of the digraphs $\mathcal{G}_{11}$--$\mathcal{G}_{13}$ shown in Figure \ref{threetreegraphspic10} is contained in $\mathcal{G}$.
\begin{figure}[htb]
\centering
\input{pictexfiles/threetreegraphspic10}
\caption{\label{threetreegraphspic10}}
\end{figure}
If $i\in\{11,12,13\}$, then $|\Tr(\mathcal{G}_i,w')|>4$.
\end{proof}
%Master document is alexpolypaper.tex.

\begin{lemma}
Let $\mathcal{G}\in\{\mathcal{G}_{\alpha},\mathcal{G}_{\beta},\mathcal{G}_{\gamma},\mathcal{G}_{\delta},\mathcal{G}'_{\delta}\}$, where $\mathcal{G}'_{\delta}$ is the reflection of $\mathcal{G}_{\delta}$. Then $\mmfld{M}{\mathcal{G}}$ is an almost product sutured manifold.
\end{lemma}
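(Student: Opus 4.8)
The plan is to reduce each of the five sutured manifolds, by a finite sequence of product disc decompositions, to the complementary sutured manifold of $\mathcal{G}_{\alpha}$, and then to appeal to the fact that the almost product property is preserved under such decompositions. Recall that a product sutured manifold is automatically an almost product sutured manifold, since any incompressible surface with boundary $s$ in $S_+(M)\times[-1,1]$ is parallel to $S_+(M)$, and that by \cite{MR1026928} (Lemmas 2.1, 2.2) a product disc decomposition $(M,s)\stackrel{T}{\to}(M',s')$ has $M$ almost product if and only if $M'$ is. The base case $\mathcal{G}_{\alpha}$ is immediate: $\nmfld{M}{\mathcal{G}_{\alpha}}$ is a single $0$--handle, so $\nmfld{M}{\mathcal{G}_{\alpha}}$ and $\mmfld{M}{\mathcal{G}_{\alpha}}$ are both balls carrying one suture, hence product sutured manifolds. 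Moreover $\mathcal{G}'_{\delta}$ is the mirror image of $\mathcal{G}_{\delta}$, so $\mmfld{M}{\mathcal{G}'_{\delta}}$ is the image of $\mmfld{M}{\mathcal{G}_{\delta}}$ under an orientation--reversing homeomorphism of $\Sphere$; this carries product discs to product discs, and so the case $\mathcal{G}'_{\delta}$ follows from that of $\mathcal{G}_{\delta}$.

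The essential difficulty is that none of $\mathcal{G}_{\beta}$, $\mathcal{G}_{\gamma}$, $\mathcal{G}_{\delta}$ contains a loop or a $2$--valent vertex, so neither move~1 nor move~2 (Lemmas~\ref{mgraphmoveonelemma} and~\ref{mgraphmovetwolemma}) can be applied to $\mathcal{G}$ to begin the reduction. What each of these graphs does possess is a collection of bigon faces, that is, regions of $\sphere\setminus\mathcal{G}$ bounded by two parallel edges. The key step I would establish is that every bigon face of $\mathcal{G}$ determines a product disc in $\mmfld{M}{\mathcal{G}}$, decomposition along which amalgamates the two parallel edges into one. This is precisely the content of Lemma~\ref{mgraphmovetwolemma} read for the dual graph $\mathcal{G}^{*}$: a bigon face of $\mathcal{G}$ is a $2$--valent vertex of $\mathcal{G}^{*}$, and interchanging the two checkerboard colourings turns the local model of Figure~\ref{mgraphpic4} into a product disc for $\mmfld{M}{\mathcal{G}}$ at that face. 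I would prove this by repeating the local analysis of Lemma~\ref{mgraphmovetwolemma} with the roles of black and white reversed.

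Granting the bigon--face decomposition, the reductions become mechanical. For $\mathcal{G}_{\beta}$ and $\mathcal{G}_{\gamma}$ (two vertices joined by four and six parallel edges respectively) one repeatedly collapses a bigon face, each time reducing the number of parallel edges by one, until only a double edge remains; a single application of move~2 then leaves one vertex carrying a loop, which move~1 removes, producing $\mmfld{M}{\mathcal{G}_{\alpha}}$. For $\mathcal{G}_{\delta}$ one first decomposes along the product discs of its three bigon faces, turning the doubled triangle into an ordinary triangle, and then collapses this triangle by move~2 and a final loop removal to reach the ball. Because collapsing edges destroys the alternating condition, I would track the intermediate stages directly as complementary sutured manifolds of progressively simpler bands in $\Sphere$, rather than insisting that each intermediate digraph lie in $\Gamma$; the decompositions themselves remain valid product disc decompositions throughout.

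I expect the genuine obstacle to lie in making the case of $\mathcal{G}_{\delta}$ (and its mirror) watertight. Here the leading coefficient is $3$, the underlying link is not fibred, and $\mmfld{M}{\mathcal{G}_{\delta}}$ is truly not a product sutured manifold, so the conclusion genuinely needs the weaker ``almost product'' formulation and the two--way nature of the \cite{MR1026928} lemma: reconstructing $\mmfld{M}{\mathcal{G}_{\delta}}$ from the ball by the reverse decompositions yields an almost product, but in general non--product, manifold. The two points demanding care are verifying that the three bigon faces of $\mathcal{G}_{\delta}$ yield pairwise disjoint product discs in the genus--two handlebody $\mmfld{M}{\mathcal{G}_{\delta}}$, so that the decompositions can be carried out in succession, and confirming that the piece remaining after all collapses really is the product ball. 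Both are finite and explicit checks, carried out with the handle description of $\nmfld{M}{\mathcal{G}}$ in hand.
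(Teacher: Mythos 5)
Your reduction strategy fails at its central step, and not repairably. Lemma \ref{mgraphmovetwolemma} produces a product disc in $\mmfld{M}{\mathcal{G}}$ from a $2$--valent \emph{vertex} of $\mathcal{G}$, that is, from a \emph{white} bigon of the diagram $(\graphm{M})^{-1}(\mathcal{G})$; white bigons lie in the complement of the Seifert surface $R$, which is exactly what makes them give product discs on the complementary side. A bigon \emph{face} of $\mathcal{G}$ is a \emph{black} bigon of the diagram, i.e.\ a disc of $R$ itself with two bands attached: the obvious disc it determines (an arc across the bigon times $[1,2]$) is a product disc in the product $\nmfld{M}{\mathcal{G}}$, not in $\mmfld{M}{\mathcal{G}}$. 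There is no colour-exchange symmetry converting one statement into the other: $\nmfld{M}{\mathcal{G}}$ is a product sutured manifold by construction while $\mmfld{M}{\mathcal{G}}$ in general is not, and the dual graph $\mathcal{G}^{*}$ is not even a digraph to which Lemma \ref{mgraphmovetwolemma} could be applied. Concretely, for $\mathcal{G}_{\beta}$ the surface is the twice-twisted annulus spanning the $(2,4)$--torus link with antiparallel orientation, so $\mmfld{M}{\mathcal{G}_{\beta}}$ is a solid torus on whose boundary each of the two sutures is homologous to $\mu+2\lambda$, where $\mu$ is the meridian and $\lambda$ a longitude of that solid torus. Every essential disc in a solid torus is a meridian disc, and its boundary meets these sutures in four points, not two; a product disc with boundary inessential in $\partial M$ merely cuts off a ball containing a single arc of suture and simplifies nothing. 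So $\mmfld{M}{\mathcal{G}_{\beta}}$ (and likewise $\mmfld{M}{\mathcal{G}_{\gamma}}$, with $3$ in place of $2$) contains no essential product disc whatsoever: your reduction cannot even begin, and no sequence of product disc decompositions reaches the one-suture ball. This is precisely why the graphs in $\Gamma$ are the terminal objects of the move~1/move~2 reduction in Theorem \ref{juhaszthmspecial}; the content of the lemma is that these irreducible pieces are nonetheless almost products, and that cannot be extracted from further decomposition.

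The paper's proof is accordingly of a completely different kind: apart from the trivial case $\mathcal{G}_{\alpha}$ (and reflection for $\mathcal{G}'_{\delta}$), it quotes Kobayashi's directly verified examples --- \cite{MR1026928} Example 2.8 for $\mmfld{M}{\mathcal{G}_{\beta}}$ and $\mmfld{M}{\mathcal{G}_{\gamma}}$, and Example 4.5 for $\mmfld{M}{\mathcal{G}_{\delta}}$ and $\mmfld{M}{\mathcal{G}'_{\delta}}$ --- in which the incompressible surfaces in these particular sutured manifolds are classified by hand. One fragment of your plan can be salvaged: the surface for $\mathcal{G}_{\delta}$ is the $(2,2,2)$--pretzel surface, which is the plumbing of a Hopf band onto the twice-twisted annulus, so $\mmfld{M}{\mathcal{G}_{\delta}}$ does admit one genuine product disc (of the kind used in the proof of Corollary \ref{incompcor}), and decomposing along it yields $\mmfld{M}{\mathcal{G}_{\beta}}$; together with the quoted Proposition from \cite{MR1026928} (Lemmas 2.1, 2.2) this reduces the $\delta$ cases to the $\beta$ case. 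But $\mathcal{G}_{\beta}$ and $\mathcal{G}_{\gamma}$ are genuinely irreducible, so the base of any such argument must still be a direct analysis of incompressible surfaces in a solid torus with non-longitudinal sutures, which is what the cited examples supply and your proposal does not.
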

\begin{proof}
$\mmfld{M}{\mathcal{G}_{\alpha}}$ is a 3--ball with a single suture, which is a product sutured manifold. The other cases are proved by Kobayashi.
$\mmfld{M}{\mathcal{G}_{\beta}}$ and $\mmfld{M}{\mathcal{G}_{\gamma}}$ give cases of \cite{MR1026928} Example 2.8.
$\nmfld{M}{\mathcal{G}_{\delta}}$ and $\nmfld{M}{\mathcal{G}'_{\delta}}$ are both \cite{MR1026928} Example 4.5, but with opposite orientations.
\end{proof}

\begin{remark}
Considering $M^{\mathcal{H}}$ or $M^{\mathcal{K}}$ would give the same sutured manifolds, but in a different order.
\end{remark}

\begin{theorem}\label{juhaszthmspecial}
Let $L$ be a prime, special, alternating link with $|\nrap{L}{0}|<4$. Then there is a unique incompressible Seifert surface $R$ for $L$.
\end{theorem}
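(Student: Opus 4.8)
The plan is to exhibit a single minimal genus Seifert surface for $L$ whose complementary sutured manifold is almost product, and to identify that manifold using the classification of $\Gamma$-digraphs just obtained. Since a special alternating diagram is either positive or negative, and reflection preserves $\nrap{L}{0}$ while inducing a bijection between the incompressible Seifert surfaces of $L$ and those of its mirror image, I may assume $L$ is positive. Choose a reduced special alternating diagram of $L$ and, applying flypes (which preserve the link and the property of being special alternating), arrange it to be twist-reduced; call the result $D$. As $L$ is prime, $D$ looks prime, and being reduced it has no nugatory crossings. Let $R$ be the black surface of $D$, i.e. the Seifert surface produced by Seifert's algorithm. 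By \cite{MR860665} $R$ is minimal genus, hence incompressible, and by \cite{MR1002465} the connected special alternating (so homogeneous) diagram $D$ presents a link that is neither split nor a boundary link. Thus the remark following Theorem \ref{connectedthm} applies: $R$ is the unique incompressible Seifert surface of $L$ if and only if its complementary sutured manifold $M=\mmfld{M}{\graph{M}{D}}$ is an almost product sutured manifold. It therefore suffices to prove that $M$ is almost product.

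Next I would reduce the digraph $\graph{M}{D}$ to one lying in $\Gamma$. The digraph $\graph{M}{D}$ is connected, planar, and has incoming and outgoing edges alternating at each vertex, but it may still carry white bigons, which appear as loops bounding discs or as $2$-valent vertices. I remove these one at a time by the two moves of Definition \ref{movedefn}: a move $1$ deletes a loop bounding a disc, and a move $2$ collapses an edge whose terminal vertex has no other incoming edge. By the loop and contraction lemmas these moves leave $|\Tr(\cdot,v)|$ unchanged, and by Lemma \ref{mgraphmoveonelemma} and Lemma \ref{mgraphmovetwolemma} they leave $\mmfld{M}{\cdot}$ unchanged up to equivalence, respectively replace it by a product disc decomposition of itself; in either case the almost product property is preserved. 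Once no loops or $2$-valent vertices remain, the resulting digraph $\mathcal{G}$ is black-twist-reduced, and primeness of $L$ rules out cut vertices and separating two-edge curves, so $\mathcal{G}\in\Gamma$; indeed $\mathcal{G}=\graph{M}{D'}$ for a diagram $D'\in\Lambda$. Tracking the two invariants through the reduction gives $|\Tr(\mathcal{G},v)|=\nrap{L}{0}$ for every $v$, and $M$ is almost product if and only if $\mmfld{M}{\mathcal{G}}$ is.

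Finally, since $|\Tr(\mathcal{G},v)|=\nrap{L}{0}<4$, the classification theorem for $\Gamma$-digraphs proved above shows that, up to reflection, $\mathcal{G}$ is one of $\mathcal{G}_{\alpha},\mathcal{G}_{\beta},\mathcal{G}_{\gamma},\mathcal{G}_{\delta}$. By the preceding lemma identifying these with Kobayashi's examples, $\mmfld{M}{\mathcal{G}}$ is almost product for each of these digraphs (and for the reflection $\mathcal{G}'_{\delta}$). Hence $M$ is almost product, and $R$ is the unique incompressible Seifert surface of $L$.

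The main obstacle is the middle step: verifying that the geometric simplification of $D$ to a diagram in $\Lambda$ can be carried out purely as a finite sequence of moves $1$ and $2$ on $\graph{M}{D}$, and that these moves simultaneously preserve the spanning-tree count $|\Tr|$ and the almost product property of the complementary sutured manifold $\mmfld{M}{\cdot}$ (the latter, for move $2$, resting on the behaviour of almost product manifolds under product disc decomposition). Once this bookkeeping is in place, the conclusion is immediate from the classification of $\Gamma$-digraphs with fewer than four spanning trees together with the computation that their complementary sutured manifolds are Kobayashi's almost product examples. The remaining reductions — to the positive case, and the minimal genus and incompressibility of $R$ — are routine citations.
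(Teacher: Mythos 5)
Your proposal is correct and follows essentially the same route as the paper's own proof: reflect to the positive case, flype to a twist-reduced diagram, reduce $\graph{M}{D}$ by moves 1 and 2 to a digraph in $\Gamma$, invoke the classification of $\Gamma$-digraphs with fewer than four spanning subtrees together with Kobayashi's examples, and transfer the almost product property back through Lemmas \ref{mgraphmoveonelemma} and \ref{mgraphmovetwolemma}. The step you flag as the main obstacle — showing the reduced digraph is prime — is exactly the point the paper handles by pulling a separating curve back through the moves to $\mathcal{G}_0$ and using twist-reducedness to see that one side is a line of bigons removable by moves 2.
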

\begin{proof}
Let $D_0$ be a special, alternating diagram for $L$. Then $D_0$ is either positive or negative. Possibly reflecting $D_0$ ensures it is positive. By a sequence of flypes we can also ensure that $D_0$ is twist-reduced. Let $R$ be the surface given by applying Seifert's algorithm to $D_0$. Then $R$ is a minimal genus Seifert surface for $L$.

Let $\mathcal{G}_0=\graph{M}{D_0}$. Apply moves 1 and 2 to $\mathcal{G}_0$ as many times as possible, giving a sequence of digraphs $\mathcal{G}_1,\ldots,\mathcal{G}_n$ where each $\mathcal{G}_{i+1}$ is obtained from $\mathcal{G}_i$ by a move 1 or a move 2 (see Definition \ref{movedefn}). These moves can be chosen so that any move 1 removes an innermost loop $e$, so that $e$ bounds a disc. Then, for any $v\in\V(\mathcal{G}_n)$,
\[
|\Tr(\mathcal{G}_n,v)|=|\Tr(\mathcal{G}_{n-1},v)|=\cdots=|\Tr(\mathcal{G}_0,v)|=\nrap{L}{0} <4.
\]

There are no loops in $\mathcal{G}_n$, since no move 1 is possible. Neither move type can create a cut vertex, and $\mathcal{G}_0$ has no cut vertex because $D_0$ is connected and prime. Suppose there is a simple closed curve $\rho$ that meets the edges of $\mathcal{G}_n$ twice. We can reverse each move in the complement of $\rho$, so that $\rho$ meets the edges of $\mathcal{G}_0$ twice. Then $\rho$ meets two crossings of $D_0$, and otherwise lies in the black regions. As $\mathcal{G}_0$ is twist-reduced, one side of $\rho$ contains only a line of white bigons. This means that, in $\mathcal{G}_0$, that side of $\rho$ contains a single topological arc, composed of edges and two-valent vertices. Each such vertex can be removed at any stage by a move 2. Thus the points where $\rho$ meets $\mathcal{G}_n$ lie on a single edge of $\rho$. We conclude that $\mathcal{G}_n\in\Gamma$.

Therefore, up to reflection, $\mathcal{G}_n\in\{\mathcal{G}_{\alpha},\mathcal{G}_{\beta},\mathcal{G}_{\gamma},\mathcal{G}_{\delta}\}$, so $\mmfld{M}{\mathcal{G}_n}$ is an almost product sutured manifold. Then, from Lemmas \ref{mgraphmoveonelemma}, \ref{mgraphmovetwolemma}, we see that $\mmfld{M}{\mathcal{G}_i}$ is an almost product sutured manifold for each $i\leq n$. In particular, $\mmfld{M}{\mathcal{G}_0}{}=\mmfld{M}{\graph{M}{D_0}{}}$ is an almost product sutured manifold, so $R$ is unique.
\end{proof}

\begin{remark}
The diagram $D_0$ has properties (L1)--(L5). The graphs $\mathcal{G}_1,\cdots,\mathcal{G}_n$ correspond via $\graphm{M}$ to diagrams $D_1,\cdots,D_n$. As we have seen, a move 2 corresponds to removing a white bigon, and a move 1 to untwisting a nugatory crossing. This point of view explains why $D_n\in\Lambda$.
\end{remark}

\begin{corollary}[see \cite{MR2443240} p604]\label{fibrednesscor}
Let $L$ be a special, alternating link.  If $\nrap{L}{0}{}=1$ then $L$ is fibred.
\end{corollary}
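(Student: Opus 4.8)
The plan is to re-run the reduction-and-classification argument from the proof of Theorem \ref{juhaszthmspecial} and feed its conclusion into Proposition \ref{fibredsumprop}, which governs fibredness under plumbing. The only new input beyond Theorem \ref{juhaszthmspecial} is that the sharper hypothesis $\nrap{L}{0}=1$ forces the fully reduced digraph to be the single-vertex graph $\mathcal{G}_{\alpha}$, whose associated surface is a disc.

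First I would reduce to the case that $L$ is prime. Take a special, alternating diagram of $L$; splitting it along each simple closed curve meeting the diagram transversely in two edges (Lemma \ref{connectedsumlemma}) expresses $L=L_1\#\cdots\#L_k$ with each $L_i$ prime, special and alternating. Multiplicativity of the reduced Alexander polynomial under connected sum gives $1=\nrap{L}{0}=\prod_i\nrap{L_i}{0}$, and as each factor is a positive integer we get $\nrap{L_i}{0}=1$ for all $i$. Since a connected sum is a Murasugi sum with $n=1$ (the Remark following Definition \ref{msumdefn}), Proposition \ref{fibredsumprop} shows $L$ is fibred as soon as each $L_i$ is. So it suffices to treat prime $L$.

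Now I would follow the opening of the proof of Theorem \ref{juhaszthmspecial}: pick a positive, twist-reduced special alternating diagram $D_0$ with Seifert surface $R$, put $\mathcal{G}_0=\graph{M}{D_0}$, and apply moves 1 and 2 until neither is available, obtaining $\mathcal{G}_n\in\Gamma$ with $|\Tr(\mathcal{G}_n,v)|=\nrap{L}{0}=1$. By the classification theorem above (Figure \ref{threetreegraphspic1}), $\mathcal{G}_n$ is, up to reflection, one of $\mathcal{G}_{\alpha},\mathcal{G}_{\beta},\mathcal{G}_{\gamma},\mathcal{G}_{\delta}$; of these only $\mathcal{G}_{\alpha}$ has a single directed spanning subtree, the other three each having at least two. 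Hence $\mathcal{G}_n=\mathcal{G}_{\alpha}$, a single vertex with no edges, which corresponds under $(\graphm{M})^{-1}$ to the crossingless diagram of the unknot, whose Seifert surface $R_n$ is a disc.

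Finally I would read the reduction backwards at the level of surfaces. By Lemma \ref{mgraphmoveonelemma} each move 1 leaves $\nmfld{M}{\mathcal{G}_i}$ unchanged up to equivalence of sutured manifolds in $\Sphere$, hence fixes the corresponding Seifert surface up to ambient isotopy; by Lemma \ref{mgraphmovetwolemma} and the discussion preceding Theorem \ref{msumthm}, each move 2 is the removal of a white bigon, which pulls a Hopf band off the surface. Reversing the sequence, $R$ is obtained from the disc $R_n$ by a succession of Hopf-band plumbings. A disc is the fibre of the unknot and a Hopf band is the fibre of the Hopf link, so Proposition \ref{fibredsumprop}, applied once for each plumbing, shows $R$ is a fibre surface; thus $L$ is fibred with fibre $R$. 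I expect the main obstacle to be precisely this last step: the proof of Theorem \ref{juhaszthmspecial} only transports the almost-product property, which secures uniqueness but not fibredness, so I must verify that the strictly stronger product (equivalently fibre) structure survives each move — for move 2 this is exactly the identification of its product disc decomposition with Hopf-band plumbing and the use of Proposition \ref{fibredsumprop}.
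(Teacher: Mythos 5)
Your proposal is correct and takes essentially the same approach as the paper: run the reduction of Theorem \ref{juhaszthmspecial}, observe that $\nrap{L}{0}=1$ forces $\mathcal{G}_n=\mathcal{G}_\alpha$ (whose complementary sutured manifold is that of a disc), conclude that $R$ is obtained from a disc by plumbing Hopf bands, and invoke Proposition \ref{fibredsumprop}. The only difference is that you explicitly reduce the non-prime case to the prime one via connected sums and multiplicativity of the reduced Alexander polynomial, a step the paper's very terse proof leaves implicit (its argument is written in the setting of Theorem \ref{juhaszthmspecial}, which assumes primality), so your version is if anything slightly more complete.
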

\begin{proof}
In this case, $\mathcal{G}_n=\mathcal{G}_\alpha$. Thus $\mmfld{M}{\mathcal{G}_n}$ is a 3--ball with a single suture, which is the complement of a disc. Hence $R$ is obtained from a disc by plumbing with Hopf bands. By Lemma \ref{fibredsumprop}, $L$ is fibred with fibre $R$.
\end{proof}

\begin{theorem}[see \cite{MR2443240} Corollary 2.4]\label{fulljuhaszproof}
Let $L$ be an homogeneous link. If $\nrap{L}{0}<4$ then $L$ has a unique incompressible Seifert surface.
\end{theorem}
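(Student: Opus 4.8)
The plan is to reduce the general homogeneous case to the prime special alternating case already settled in Theorem \ref{juhaszthmspecial}, by peeling off fibred Murasugi summands one group at a time. First I would fix a connected homogeneous diagram $D$ of $L$; by the Proposition of \cite{MR1002465} quoted above this guarantees $L$ is neither split nor a boundary link, so the framework of incompressible surfaces and Corollary \ref{incompcor} applies. By Definition \ref{homogeneousdefn}, $D$ is assembled from special alternating diagrams by connected sums (Lemma \ref{connectedsumlemma}) and $*$--products (Definition \ref{starproddefn}). Splitting each special alternating factor further into its prime summands via Lemma \ref{connectedsumlemma}, I obtain a presentation of $L$ as an iterated $*$--product and connected sum of prime special alternating links $L_1,\dots,L_m$. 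The Seifert surface $R$ produced by Seifert's algorithm on $D$ is then an iterated Murasugi sum of the surfaces $R_i$ obtained by applying Seifert's algorithm to the $L_i$ (the connected sum being the case $n=1$ of Definition \ref{msumdefn}, and the $*$--product the Murasugi sum along the disc bounded by the splitting Seifert circle, as in the remark following Definition \ref{starproddefn}). Each $R_i$ is minimal genus, hence incompressible, and so is $R$.

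Next I would record the two arithmetic inputs. By Proposition \ref{alexpolyproductprop} for $*$--products together with multiplicativity of $\nrap{\cdot}{t}$ under connected sum, I get $\nrap{L}{0}=\prod_{i=1}^m \nrap{L_i}{0}$. Each factor is a positive integer, since for a special alternating diagram $\nrap{L_i}{0}=|\Tr(\graph{M}{D_i},v)|\geq 1$; because the product is $<4$, at most one factor exceeds $1$, and that factor is at most $3$. I would then isolate the fibredness statement demanded by Corollary \ref{incompcor} as a lemma: \emph{a homogeneous link $L'$ with $\nrap{L'}{0}=1$ is fibred with fibre the Seifert surface of its homogeneous diagram}. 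This follows from the same decomposition: every prime special alternating summand of $L'$ then has $\nrap{}{0}=1$, hence is fibred by Corollary \ref{fibrednesscor}, and iterating Proposition \ref{fibredsumprop} over the Murasugi sum shows $L'$ is fibred with the asserted fibre.

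Finally I would run an induction on $m$. The base case $m=1$ is exactly Theorem \ref{juhaszthmspecial}. For $m>1$, undoing the last connected sum or $*$--product used to build $D$ expresses $R$ as a single Murasugi sum of $R_A$ and $R_B$, where the $L_i$ are partitioned into two non-empty groups with links $L_A,L_B$ and $\nrap{L}{0}=\nrap{L_A}{0}\,\nrap{L_B}{0}$. Since both factors are positive integers with product $<4$, one of them, say $\nrap{L_B}{0}$, equals $1$; by the fibredness lemma $L_B$ is fibred with fibre $R_B$, while $\nrap{L_A}{0}=\nrap{L}{0}<4$ and $L_A$ is homogeneous with strictly fewer prime summands, so by the inductive hypothesis $L_A$ has a unique incompressible Seifert surface. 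Corollary \ref{incompcor} then yields that $L$ has a unique incompressible Seifert surface if and only if $L_A$ does, completing the induction. The step I expect to require the most care is the structural identification of the first paragraph: verifying that the diagrammatic $*$--product and connected sum genuinely realise $R$ as a Murasugi sum of the $R_i$ compatibly with the hypotheses of Corollary \ref{incompcor} — in particular that at each stage the accumulated surface is incompressible and the peeled-off group is a genuine fibre — since all of the polynomial bookkeeping and the reduction to Theorem \ref{juhaszthmspecial} hinge on this.
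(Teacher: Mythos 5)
Your proposal is correct and follows essentially the same route as the paper: decompose the homogeneous diagram into prime, special, alternating pieces, use multiplicativity of $\nrap{\cdot}{0}$ to conclude that at most one piece has constant term exceeding $1$, apply Theorem \ref{juhaszthmspecial} to that piece and Corollary \ref{fibrednesscor} to the others, and finish with Corollary \ref{incompcor}. Your explicit induction on the number of prime summands, together with the packaged fibredness lemma via Proposition \ref{fibredsumprop}, is simply a careful formalisation of what the paper calls ``repeated use of Corollary \ref{incompcor}''.
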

\begin{proof}
Let $D$ be a homogeneous diagram for $L$ with no nugatory crossings. 
Break up $D$ into special diagrams, and then break these into prime pieces.
This gives prime, special, alternating link diagrams $D_1,\cdots,D_n$ for some $n\in\mathbb{N}$ such that $D$ can be constructed by combining $D_1,\cdots,D_n$ by taking connected sums and $*$--products. Let $L_i$ be the link with diagram $D_i$ for $1\leq i\leq n$.
Then $\nrap{L}{0}{}=\prod_{i=1}^n\nrap{L_i}{0}$ by Proposition \ref{alexpolyproductprop}. Since $\nrap{L}{0}{}\in\{1,2,3\}$, without loss of generality, $\nrap{L_{1}}{0}{}=\nrap{L}{0}$ and $\nrap{L_{i}}{0}{}=1$ for all $i>1$.

By Theorem \ref{juhaszthmspecial}, $L_{i}$ has a unique incompressible Seifert surface $R_{i}$ for each $i$, which is given by Seifert's algorithm. For $i>1$, $L_{i}$ is fibred with fibre $R_{i}$. By repeated use of Corollary \ref{incompcor}, $L$ has a unique incompressible Seifert surface. 
\end{proof}

In \cite{MR0312487}, Riley proves the following using Hermitian forms. He notes that an alternative proof based on \cite{MR0099665} is possible.

\begin{corollary}[\cite{MR0312487} Theorem]
Choose $g\geq 0$ and $m,n\geq 1$. Then there are finitely many alternating links $L$ with $m$ link components and genus $g$ such that $\nrap{L}{0}=n$.
\end{corollary}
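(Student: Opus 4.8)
The plan is to bound the crossing number of a reduced alternating diagram of $L$ in terms of $g$, $m$ and $n$; since only finitely many link diagrams have a bounded number of crossings, this yields the result. Two facts drive the bound: the finiteness of the building blocks supplied by Theorem~\ref{gammafinitethm}, and the observation that fixing $g$ and $m$ fixes the Euler characteristic of the Seifert surface. Concretely, since $\nrap{L}{0}=n\geq 1$ the link $L$ is non-split (a split link has vanishing Alexander polynomial), so it admits a connected, reduced alternating diagram $D$. Let $R$ be the surface obtained by applying Seifert's algorithm to $D$; this is a minimal genus Seifert surface, so by Theorem~\ref{degreerapthm} we have $\chi(R)=1-\deg\nrap{L}{t}=2-2g-m$, which depends only on $g$ and $m$. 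As in the proof of Theorem~\ref{fulljuhaszproof}, I would decompose $D$ along non-special Seifert circles and connected-sum curves into prime, special, alternating pieces $D_1,\dots,D_k$, and then delete all white bigons from each $D_i$ to obtain diagrams $D_i'\in\Lambda$ with associated digraphs $\mathcal{G}_i=\graph{M}{D_i'}\in\Gamma$.

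The next step is to confine each piece to a finite set. By Proposition~\ref{alexpolyproductprop} together with multiplicativity of $\nrap{}{}$ under connected sum, $n=\nrap{L}{0}=\prod_i\nrap{L_i}{0}$, and since each factor is a positive integer we obtain $\nrap{L_i}{0}=|\Tr(\mathcal{G}_i,v)|\leq n$. Hence every $\mathcal{G}_i$ lies in $\Phi_n$, which is finite by Theorem~\ref{gammafinitethm}. In particular the digraphs $\mathcal{G}_i$, and hence the reduced pieces $R_i'$ given by Seifert's algorithm on $D_i'$, range over a finite set, each of bounded size and each with $\chi(R_i')\leq 0$.

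It remains to bound how many atomic pieces can occur. The surface $R$ is assembled from the reduced pieces $R_1',\dots,R_k'$ together with one Hopf band for each deleted white bigon, all combined by Murasugi summation ($*$-products, connected sums and Hopf plumbings; see Definitions~\ref{starproddefn}, \ref{msumdefn}). If $N$ denotes the total number of atomic pieces, then they are joined by $N-1$ Murasugi sums, and since each sum is glued along a disc one has $\chi(R)=\sum\chi(\mathrm{piece})-(N-1)$. As each reduced piece satisfies $\chi\leq 0$ and each Hopf band has $\chi=0$, this yields $N\leq 1-\chi(R)=2g+m-1$. Thus both the number of reduced pieces (each drawn from the finite set above) and the number of white bigons are bounded by $2g+m-1$, so the total number of crossings of $D$ is bounded by a function of $g$, $m$ and $n$, and the conclusion follows.

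The main obstacle will be the Euler characteristic bookkeeping in the last step: one must verify that $R$ genuinely is built from the atomic pieces by exactly $N-1$ disc-summing operations, and that every atomic piece contributes non-positively to $\chi$, so that the fixed value $\chi(R)=2-2g-m$ really does cap $N$. In particular care is needed to confirm that the white bigons, being Hopf bands of Euler characteristic $0$, are counted correctly and cannot escape this bound, and that the combining operations introduce no further unbounded crossings beyond those within the finitely many reduced pieces and the boundedly many Hopf bands.
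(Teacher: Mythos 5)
Your strategy is essentially the paper's --- decompose $D$ into prime, special, alternating pieces, reduce each piece until its digraph lies in $\Gamma$, invoke the finiteness of $\Phi_n$ (Theorem \ref{gammafinitethm}), and cap the number of atomic pieces by Euler characteristic --- but the inequality carrying your final step is false as stated. Writing your identity $\chi(R)=\sum_i\chi_i-(N-1)$ (where $\chi_i$ is the Euler characteristic of the $i$th piece) as $1-\chi(R)=\sum_{i=1}^N(1-\chi_i)$, a Hopf band contributes $1$ and a reduced piece with $\chi_i\leq 0$ contributes at least $1$, but a reduced piece that is a \emph{disc} contributes $0$, and discs do occur: a non-trivial prime, special, alternating piece can collapse completely under white-bigon deletion together with the untwisting of the nugatory crossings that such deletions create (note that deletion alone need not land you in $\Lambda$; this untwisting is exactly the mechanism of collapse). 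The end result is then the trivial diagram, whose digraph is $\mathcal{G}_{\alpha}$ and whose Seifert surface is a disc with $\chi=1$. The trefoil already defeats your bound: its $3$--crossing diagram reduces to the trivial diagram after two bigon deletions and one untwist, so its fibre surface is assembled from one disc and two Hopf bands, giving $N=3$, while $1-\chi(R)=2g+m-1=2$; a connected sum of $j$ trefoils gives $N=3j$ against a claimed bound of $2j$. So ``every atomic piece contributes non-positively to $\chi$'' --- precisely the point you flagged as the main obstacle --- is where the argument breaks.

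The gap is repairable, but it needs one further observation. Each disc piece comes from a non-trivial prime piece $D_i$ whose diagram is reduced, so $\graph{M}{D_i}$ has no loops and the first reduction move applied to it must be a white-bigon deletion; hence every disc piece carries at least one Hopf band of its own, so the number of discs is at most the number of Hopf bands, which is at most $1-\chi(R)$, and therefore $N\leq 2(2g+m-1)$ --- still enough for your crossing-number bound. The paper sidesteps the issue by splitting the bookkeeping into two stages: it first bounds the number of prime pieces while all diagrams are still reduced and all links non-trivial, where $\chi(R_1),\chi(R_2)>\chi(R)$ holds at every splitting because no such piece is a disc; only afterwards, inside each prime special piece, does it bound the number of bigon deletions by the total Euler characteristic change (each deletion changes $\chi$ of the associated surface by exactly one) and the number of nugatory-crossing untwists by the number of deletions (each deletion creates at most one loop, and the initial reduced diagram has none).
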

\begin{proof}
Let $L$ be such a link with a reduced, alternating diagram $D$. Let $L_1,L_2$ be non-trivial alternating links with reduced, alternating diagrams $D_1,D_2$. Suppose $L=L_1 * L_2$. Let $R,R_1,R_2$ be the minimal genus Seifert surfaces given by applying Seifert's algorithm to $D,D_1,D_2$ respectively. Then $\chi(R_1),\chi(R_2)>\chi(R)$, and $\nrap{L_1}{0},\nrap{L_2}{0}\leq\nrap{L}{0}$. In addition, there are only finitely many ways of combining $D_1$ and $D_2$ by a $*$--product. The same is true if $L=L_1\#L_2$. These facts allow us to reduce to the case that $D$ is prime and special.

Starting from $\graph{M}{D}$, form a sequence of graphs as in the proof of Theorem \ref{juhaszthmspecial}, with final graph $\mathcal{G}$. 
Then $\mathcal{G}\in\Phi_n$, so by Theorem \ref{gammafinitethm} there are only finitely many possibilities for $\mathcal{G}$. 
We know that $\graph{M}{D}$ is obtained from $\mathcal{G}$ by reversing a finite sequence of moves 1 and 2. 
It therefore remains to bound the length of this sequence. 
Note that no move 1 can be performed on $\graph{M}{D}$ as $D$ is reduced, and each move 2 creates at most one loop. 
It thus suffices to bound the number of times a move 2 occurs.
Since a move 2 reduces the Euler characteristic of the corresponding surface, such a bound exists. 
\end{proof}

\subsection{Limitations of the proof}

It seems unlikely that the methods we have used can be extended to give a complete proof of Theorem \ref{juhaszthm}. 
Our proof relies heavily on using the Seifert surface distinguished by applying Seifert's algorithm to a fixed diagram. 
By examining the link of this surface in $\is(L)$ we can establish that there are no 1--simplices in $\is(L)$ or $\ms(L)$. 
That is, we are able to construct a maximal dimensional simplex. It is not clear how to do so in general. 
Applying Seifert's algorithm to different alternating diagrams of $L$ can yield more than one minimal genus Seifert surface for $L$, but a maximl dimensional simplex need not include such a vertex.
It is not even known whether $\ms(L)$ always contains a maximal dimensional simplex with a vertex that can be formed in this way. 

%------------------------------

\bibliography{alexpolyrefs}
\bibliographystyle{hplain}

%------------------------------

\bigskip
\noindent
Mathematical Institute

\noindent
University of Oxford

\noindent
24--29 St Giles'

\noindent
Oxford OX1 3LB

\noindent
England

\smallskip
\noindent
\textit{jessica.banks[at]lmh.oxon.org}
\end{document}